\DeclareFontFamily{OT1}{rsfs}{}
\DeclareFontShape{OT1}{rsfs}{n}{it}{<-> rsfs10}{}
\DeclareMathAlphabet{\curly}{OT1}{rsfs}{n}{it}
\newtheorem{Thm}{Theorem}[section]
\newtheorem*{Thm*}{Theorem}
\newtheorem{Prop}[Thm]{Proposition}
\newtheorem{Def}[Thm]{Definition}
\newtheorem{Def/Thm}[Thm]{Definition/Theorem}
\newtheorem{Lemma}[Thm]{Lemma}
\theoremstyle{definition}
\newtheorem{Rmk}[Thm]{Remark}
\newcommand{\id}{{\operatorname{id}}}
\renewcommand\hom{\curly H\!om}
\newcommand{\rank }{{\mathrm{rank}\,}}
\newcommand{\cA}{{\mathcal{A}}}
\newcommand{\cO}{{\mathcal{O}}}
\newcommand{\cM}{{\mathcal{M}}}
\renewcommand{\cL}{{\mathcal{L}}}
\newcommand{\cE}{{\mathcal{E}}}
\newcommand{\cU}{{\mathcal{U}}}
\newcommand{\cX}{{\mathcal{X}}}
\newcommand{\PP }{{\mathbb P}}
\newcommand{\EE }{{\mathbb E}}
\newcommand{\CC }{{\mathbb C}}
\newcommand{\RR }{{\mathbb R}}
\newcommand{\ch}{\mathrm{ch}}
\let\wt\widetilde
\newcommand{\T}{\mathsf{T}}
\newcommand{\td}{\mathrm{td}}
\renewcommand{\AA}{\mathbb{A}}
\renewcommand\;{\hspace{.7pt}}
\newcommand\C{\mathbb C}
\newcommand\Q{\mathbb Q}
\newcommand\R{\mathbb R}
\newcommand\N{\mathbb N}
\newcommand\Z{\mathbb Z}
\newcommand{\LL}{\mathbb{L}}
\newcommand\loc{_{\mathrm{loc}}^{}}
\renewcommand\t{\mathfrak t}
\renewcommand\c{\mathfrak c}
\newcommand\e{\sqrt{\mathfrak e}\;}
\newcommand\m{\mathfrak m}
\newcommand\Ohat{\widehat\cO^{\;\vir}_{\!M}}
\renewcommand\({\big(}
\renewcommand\){\big)}
\newcommand{\so}{\ \ext@arrow 0359\Rightarrowfill@{}{\hspace{3mm}}\ }
\newcommand{\rt}[1]{\xrightarrow{\ #1\ }}
\newcommand\To{\longrightarrow}
\newcommand\into{\hookrightarrow}
\newcommand{\Into}{\,\ensuremath{\lhook\joinrel\relbar\joinrel\rightarrow}\,}
\newcommand\INTO{\ \ar@{^(->}[r]<-.2ex>}
\newcommand\onto{\to\hspace{-3mm}\to}
\renewcommand\_{^{}_}
\newcommand\Mapsto{\ \longmapsto\ }
\newcommand\take{\smallsetminus}
\renewcommand\={\ =\ }
\newcommand\dbar{\overline\partial}
\renewcommand\udot{^{\bullet}}
\DeclareMathSymbol{\lefttorightarrow}{3}{mathb}{"FC}
\DeclareMathSymbol{\righttoleftarrow}{3}{mathb}{"FD}
\newcommand\At{\operatorname{At}}
\newcommand\rk{\operatorname{rank}}
\newcommand\vir{\operatorname{vir}}
\newcommand\vd{\operatorname{vd}}
\newcommand\tr{\operatorname{tr}}
\newcommand\coker{\operatorname{coker}}
\newcommand\im{\operatorname{im}}
\newcommand\Hom{\operatorname{Hom}}
\renewcommand\hom{\curly H\!om}
\newcommand\End{\operatorname{End}}
\newcommand\Ext{\operatorname{Ext}}
\newcommand\Aut{\operatorname{Aut}}
\newcommand\Pic{\operatorname{Pic}}
\newcommand\Proj{\operatorname{Proj}}
\newcommand\Spec{\operatorname{Spec}}
\newcommand\ZZ{\textstyle{\Z\big[\frac12\big]}}
\renewcommand\ast{\scalebox{0.65}{*}}
\newcommand\Ld{\Lambda^{\hspace{-.4mm}*}}
\newcommand\beq[1]{\begin{equation}\label{#1}}
\newcommand\eeq{\end{equation}}
\newcommand\beqa{\begin{eqnarray*}}
\newcommand\eeqa{\end{eqnarray*}}
\renewcommand\_{^{}_}
\newcommand\arXiv[1]{\href{http://arxiv.org/abs/#1}{arXiv:#1}}
\newcommand\mathAG[1]{\href{http://arxiv.org/abs/math/#1}{math.AG/#1}}
\begin{document}

\title[Counting sheaves on Calabi-Yau 4-folds, I]{Counting sheaves on Calabi-Yau 4-folds, I}
\author{Jeongseok Oh and Richard P. Thomas}

\begin{abstract}
Borisov-Joyce constructed a real virtual cycle on compact moduli spaces of stable sheaves on Calabi-Yau 4-folds, using derived differential geometry.

We construct an algebraic virtual cycle. A key step is a localisation of Edidin-Graham's square root Euler class for $SO(2n,\CC)$ bundles to the zero locus of an isotropic section, or to the support of an isotropic cone.

We prove a torus localisation formula, making the invariants computable and extending them to the noncompact case when the fixed locus is compact.

We give a $K$-theoretic refinement by defining $K$-theoretic square root Euler classes and their localised versions.

In a sequel we prove our invariants reproduce those of Borisov-Joyce.
\end{abstract}

\maketitle
\vspace{-6mm}
\setcounter{tocdepth}{1}
\tableofcontents
\vspace{-1cm}

\section{Introduction}
Let $X$ be a Calabi-Yau 4-fold: a smooth complex projective variety with trivial canonical bundle $K_X\cong\cO_X$. Let $ M$ denote a moduli space of Gieseker stable sheaves on $X$ of fixed topological type. We assume throughout that there are no strictly semistable sheaves, so that $ M$ is projective.

First order deformations of a sheaf $F\in M$ are given by $\mathrm{Ext}^1(F,F)$ and there is a natural obstruction space $\mathrm{Ext}^2(F,F)$. These fit together over $ M$ to give it an ``obstruction theory" in the sense of \cite{BF, LT}; see Section \ref{Sect:realization} for more details. But this obstruction theory is \emph{not} ``perfect" because the higher obstruction space $\mathrm{Ext}^3(F,F)$ need not vanish. Therefore the theories of Behrend-Fantechi and Li-Tian do not give $ M$ a virtual cycle, and we must find another approach.

\subsection*{Formal picture} The crucial feature of the set-up is that \emph{morally} we should think of $ M$ as the
zero locus of an \emph{isotropic} section $s\in\Gamma(\cA,E)$ of an $SO(r,\CC)$ bundle $E$ over a smooth ambient variety $\cA$,
\begin{equation}\label{model}
\xymatrix@=0pt{
& (E,q)\ddto \\  && \hspace{15mm}q(s,s)=0.\hspace{-5mm} \\
 M\ =\ s^{-1}(0)\ \subset\hspace{-2mm} & \cA,\ar@/^{-2ex}/[uu]_s}
\end{equation}
Here $q$ is the quadratic form on $E$ and $q(s,s)=0$ is the isotropic condition.

This moral picture is literally true --- \emph{globally} --- if we allow $\cA$ to be infinite dimensional. This is for gauge theoretic reasons \cite{DT, RTthesis, CL} explained in Appendix \ref{gauge}; in particular see \eqref{dbarsec}.

In finite dimensional algebraic geometry it is also true \emph{locally} about $F\in M$ by the results of \cite{BBBJ, BG, BBJ}, building on \cite{PTVV}.  Here we can take $\cA=\Ext^1(F,F)$ and $(E,q)$ to be the trivial bundle with fibre $\Ext^2(F,F)$ and quadratic form given by Serre duality. (We also use further work \cite{CGJ} of Team Joyce to show the $O(r,\CC)$ bundle $(E,q)$ can be taken to be an $SO(r,\CC)$ bundle.) And while there is in general \emph{no global} finite dimensional model like \eqref{model}, its infinitesimal version --- the derivative of \eqref{model} at $M\subset\cA$ --- \emph{does} globalise. In particular we show in Proposition \ref{isoprop} that the limit of the graphs $\Gamma_{\!ts}$ of the sections $ts$ defines a global \emph{isotropic cone} $C$ in an $SO(r,\CC)$ bundle $E$ over $M$,
\beq{iscone}
C\ :=\ \lim_{t\to\infty}\Gamma_{\!ts}\ \subset\ E.
\eeq
This will be sufficient to construct the ``\emph{right}" virtual cycle, though that is  \emph{not} the intersection of the graph of $s$, or the cone $C$, with the zero section of $E$ (i.e. a localised Euler class of $E$). In the gauge theoretic set-up of Appendix \ref{gauge} this would be the ``\emph{wrong}" answer because $ds|_M$ is not Fredholm. In finite dimensions it would require the (complex) virtual dimension to be $ext^1(F,F)-ext^2(F,F)$, but this varies as we deform $F$.\footnote{Another point of view is that if we change the local model \eqref{model} by replacing $\cA$ by its product with a vector space $V$, then we replace $E$ by $E\oplus\underline{V}\oplus\underline V^*$ with its obvious quadratic structure. This changes the virtual dimension $\dim\cA-\rank E$ by $-\dim V$.}

\subsection*{Borisov-Joyce}
Since $SO(r,\CC)$ is homotopy equivalent to $SO(r,\RR)$ there is an associated real subbundle $E_\RR\subset E$ such that $E=E_\RR\otimes\CC$ and on which the quadratic form is a real metric $|\,\cdot\,|^2$. With respect to the splitting
\begin{equation}\label{EEE}
E\ \cong\ E_{\RR}\oplus iE_{\RR}
\end{equation}
we have a splitting of the section $s=(s^+,s^-)$. The isotropic condition $q(s,s)=0$ implies $|s^+|^2-|s^-|^2=0$, so $|s|^2=|s^+|^2+|s^-|^2=2|s^+|^2$ and\footnote{These are the precise finite dimensional analogues of the gauge theoretic equations \eqref{2eqns} in Appendix \ref{gauge}.}
\begin{equation}\label{s+-}
s\,=\,0\ \iff\ s^+\,=\,0.
\end{equation}
Therefore we can consider $ M$ as being cut out (set-theoretically) by $s^+=0$ instead of $s=0$. So we might give $ M$ a (real!) virtual cycle by intersecting the graph of $s^+$ with the zero section inside $E_\RR$. Equivalently by the projection formula for $E\to E_\R$ we can intersect $\Gamma_{\!s}$ or our isotropic cone $C$ \eqref{iscone} with $iE_\R$ inside $E$; the result lies in the zero section $M$ since $q$ is definite on $iE_\R$ but vanishes identically on $\Gamma_{\!s}$ or $C$.

This is effectively what Borisov-Joyce do in their seminal work \cite{BJ}. They work with real derived differential geometry to glue (in a weak categorical sense) the finite dimensional local models $(\cA,E_\RR,s^+)$ to produce a Kuranishi structure on $ M$ and hence a virtual cycle $[ M]^{\vir}\in H_{\vd}(M,\Z)$. Here the \emph{real} virtual dimension $\vd:=2ext^1(F,F)-ext^2(F,F)$ is
\begin{equation}\label{vd}
\vd\ =\ ext^1(F,F)-ext^2(F,F)+ext^3(F,F)\=2-\chi(F,F)
\end{equation}
by Serre duality. This is independent of $F\in M$.

However, real derived differential geometry is both unfamiliar and complicated, leading to invariants which seem almost impossible to calculate.

\subsection*{Square root Euler classes} 
Morally then --- pretending for now the model \eqref{model} is global --- the Borisov-Joyce virtual cycle is the Euler class of $E_\RR$, localised to the zeros of $s^+$. When $r$ is odd --- equivalently when $\vd$ \eqref{vd} is odd --- this Euler class is 2-torsion, and in \cite{OT2} we show the Borisov-Joyce class vanishes when we use $\ZZ$ coefficients.

So for the rest of this Introduction we assume $\vd$ is even, write $r=2n$ and consider a holomorphic $SO(2n,\C)$ bundle $E$ over a scheme $Y$. (We do \emph{not} require its quadratic structure to be Zariski locally trivial --- in general it will only be locally trivial in the \'etale or analytic topology.) Then \eqref{EEE} gives $e(E)=(-1)^ne(E_\RR)^2$ so we can think of $e(E_\RR)$ as defining a ``square root Euler class" for $E$,
$$
\sqrt e\;(E)\ :=\ e(E_\RR).
$$
It is natural to ask if it lifts to the Chow cohomology group $A^n(Y,\Z)$. While Field and Totaro \cite{Fi} proved it does not in general, Edidin-Graham \cite[Theorem 3]{EG:Ch} proved that it \emph{does} lift if we use $\ZZ$ coefficients.

In place of the real subbundle $E_\RR\subset E$, Edidin-Graham use a maximal isotropic holomorphic subbundle\footnote{By replacing $Y$ by a certain bundle $\wt Y$ over it (see Section \ref{EdidinGraham} and cf. the splitting principle) we may assume without loss of generality that such a $\Lambda$ exists. This is the step that forces us to invert 2 in the coefficients.} $\Lambda\subset E$. Notice that projection to $E_\RR$ gives an isomorphism $\Lambda\cong E_\RR$ by the computation \eqref{s+-}. Thus $e(E)=(-1)^ne(\Lambda)^2$, as can also be seen from the exact sequence
$$
0\To\Lambda\To E\To\Ld\To0.
$$
We denote the resulting Edidin-Graham class by
$$
\sqrt e\;(E)\ :=\ (-1)^{|\Lambda|}c_n(\Lambda)\ \in\ A^n(Y).
$$
This is independent of $\Lambda$. The sign $(-1)^{|\Lambda|}$ is specified in Definition \ref{Lor}.

\subsection*{Cosection localisation} 
This suggests defining a version of the Edidin-Graham class localised to the zero locus of an isotropic section, or to the support of an isotropic cone, to produce an algebraic version of Borisov-Joyce's class in the Chow group of $ M$.

So given an isotropic section $s$, we try to intersect its graph $\Gamma_{\!s}$ with a maximal isotropic subbundle $\Lambda\subset E$. This is entirely analogous to the real setting where we intersected $\Gamma_{\!s}$ with $iE_\RR\subset E$. But while the computation \eqref{s+-} showed that the intersection $\Gamma_{\!s}\cap iE_\RR$ lies in the zero section $0_E$, this does \emph{not} hold for $\Gamma_{\!s}\cap\Lambda$ in general.

Instead we note that the normal bundle to $\Lambda\subset E$ is (the pullback of) $\Ld$ so the tautological section of $\Lambda$ defines a natural \emph{cosection} $N_{\Lambda/E}\to\cO$. The isotropic condition implies that the cone $C_{\Gamma_{\!s}\cap\Lambda/\Gamma_{\!s}}\subset N_{\Lambda/E}$ lies in the kernel of this cosection. Therefore by Kiem-Li \cite{KL} the Fulton-MacPherson intersection of $\Gamma_{\!s}$ and $\Lambda$ inside $E$ can be localised to the zeros of this cosection on $\Gamma_{\!s}\cap\Lambda$, i.e. to the zeros of $s$.

\subsection*{Results} We summarise some of our results, leaving full explanations and definitions to the main text.

\begin{Thm*}
Fix an $SO(2n,\C)$ bundle $E$ over a scheme $Y$ and an isotropic section $s$ with zeros $\iota\colon Z(s)\into Y$. There is an operator
$$
\sqrt e\;(E,s)\ \colon\,A_*\(Y,\ZZ\)\To A_{*-n}\(Z(s),\ZZ\)
$$
such that $\iota_*\circ\sqrt e\;(E,s)$ is cap product with the Edidin-Graham class $\sqrt e\;(E)$.
\end{Thm*}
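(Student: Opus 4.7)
The plan is to imitate the Edidin-Graham construction of $\sqrt e\;(E)$ while localising to $Z(s)$ via the cosection technique of Kiem-Li \cite{KL}. First I reduce to the case that $E$ admits a maximal isotropic subbundle $\Lambda\subset E$: replace $Y$ by (a component of) its orthogonal Grassmann bundle $\widetilde Y:=\mathrm{OG}(n,E)\to Y$. Over $\widetilde Y$ there is a tautological maximal isotropic $\Lambda$, the pulled-back $s$ is still isotropic, and $\widetilde\pi^{\,*}$ is injective on Chow groups with $\ZZ$ coefficients --- which is where the $\tfrac12$ enters. The sign $(-1)^{|\Lambda|}$ tracks the choice of component. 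I think of the output as an operator sending a cycle $V\subset Y$ to a localised class on $Z(s)\cap V$, so it suffices to produce one localised class per cycle.

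With $\Lambda$ chosen, the Fulton-MacPherson refined Gysin map for the regular embedding $i_\Lambda\colon\Lambda\hookrightarrow E$ of codimension $n$, applied to the graph $\Gamma_{\!s}\subset E$, yields a class in $A_{*-n}(\Gamma_{\!s}\cap\Lambda)$. Locally, in a splitting $E=\Lambda\oplus\Lambda^\vee$ with $s=(s^+,s^-)$, this intersection is $Z(s^-)\subset Y$. The quadratic form identifies $N_{\Lambda/E}\cong p^*\Lambda^\vee$, and the tautological section $\tau$ of $p^*\Lambda$ defines a cosection
$$
\sigma\colon N_{\Lambda/E}\big|_{\Gamma_{\!s}\cap\Lambda}\,\lra\,\cO,\qquad \ell\longmapsto\ell(\tau),
$$
whose zero locus is precisely $\{(y,s(y)):s(y)=0\}=Z(s)\subset\Gamma_{\!s}\cap\Lambda$.

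Next I verify the Kiem-Li hypothesis: the normal cone $C_{\Gamma_{\!s}\cap\Lambda/\Gamma_{\!s}}\subset N_{\Lambda/E}|_{\Gamma_{\!s}\cap\Lambda}$ lies in $\ker\sigma$. In local coordinates the isotropic condition becomes $s^-(s^+)=0$; the normal cone of $Z(s^-)\subset Y$ is generated by the vectors $ds^-(\xi)\in\Lambda^\vee$; and differentiating $s^-(s^+)=0$ along $Z(s^-)$ gives $ds^-(\xi)(s^+)=0$, i.e.\ $\sigma(ds^-(\xi))=0$. Kiem-Li's cosection-localised Gysin map then factors $i_\Lambda^{\,!}$ through $Z(s)\hookrightarrow\Gamma_{\!s}\cap\Lambda$ and produces the desired operator $\sqrt e\;(E,s)\colon A_*(Y,\ZZ)\to A_{*-n}(Z(s),\ZZ)$. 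Pushing down by $\iota_*$ forgets the cosection refinement, leaving ordinary $i_\Lambda^{\,!}$ applied to $s_*\alpha$, which on $\alpha\in A_*(Y)$ gives $c_n(\Lambda)\cap\alpha$ and, with the sign $(-1)^{|\Lambda|}$, equals $\sqrt e\;(E)\cap\alpha$; descent via injectivity of $\widetilde\pi^{\,*}$ returns us to the original $Y$.

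The main obstacle will be showing the localised operator is independent of the choice of $\Lambda$. Without the cosection refinement this is essentially Edidin-Graham's argument \cite{EG:Ch}, but the refined version demands agreement in $A_{*-n}(Z(s))$, not merely in $A_{*-n}(Y)$. I expect to prove it by deforming between two choices of $\Lambda$ through a family of maximal isotropics (possibly after adding a hyperbolic summand to $E$ so the two subbundles live in a common ambient), verifying that the isotropic section hypothesis, the induced cosection, and the Kiem-Li cone condition all propagate compatibly across the family, thereby furnishing a rational equivalence of the two localised classes on $Z(s)$.
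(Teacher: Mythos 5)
Your proposal takes essentially the same route as the paper: reduce to the case of a maximal isotropic $\Lambda$ by passing to a cover, compute the Fulton--MacPherson intersection, verify the Kiem--Li cone condition, apply cosection localisation, and descend. (Your presentation via $i_\Lambda^{\,!}\,\Gamma_{\!s}$ is equivalent to the paper's via $0_{\Lambda^*}^{\,!}\,\Gamma_{\!s^*}$ through the identification $\Gamma_{\!s}\cap\Lambda\cong Z^*$, $N_{\Lambda/E}\cong\Lambda^*$.) Two steps, however, are not established at the level of generality you need.

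First, the verification of the Kiem--Li hypothesis. You argue that the cone $C_{Z^*/Y}\subset\Lambda^\vee|_{Z^*}$ is ``generated by the vectors $ds^-(\xi)$'' and hence annihilated by $\sigma$ because $ds^-(\xi)(s^+)=0$. That differential argument describes $C_{Z^*/Y}$ only at points where $Z^*$ is smooth with the expected codimension; in general the normal cone is a possibly non-reduced, higher-dimensional subscheme of $\Lambda^\vee|_{Z^*}$, and first-order information does not pin it down. The paper's Lemma \ref{Lem:Zero} instead notes that the fibrewise linear function $\wt s_\Lambda$ vanishes \emph{identically} on every graph $\Gamma_{\!ts^*}$ (this is exactly $t\langle s^*,s_\Lambda\rangle=0$), and since $C_{Z^*/Y}=\lim_{t\to\infty}\Gamma_{\!ts^*}$ in the Hilbert scheme, the vanishing passes to the limit. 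You should replace the differentiation by this limit argument. Second, you descend via $\widetilde Y=\mathrm{OG}(n,E)$ and assert $\widetilde\pi^{\,*}$ is split-injective on $A_*(\cdot,\ZZ)$; but the splitting used in \cite{EG:Ch} --- the class $h$ of degree $2^{n-1}$ in \eqref{hdef} --- is constructed specifically on the bundle of length-$(n-1)$ isotropic flags \eqref{cover}, not on $\mathrm{OG}(n,E)$. You would need to produce (or pull back) an analogue of $h$ for the Grassmannian; it is simpler to use the flag bundle as in the paper. Finally, your plan to prove independence of $\Lambda$ by deformation is not required for the statement: the paper sidesteps it by fixing the canonical $\Lambda_\rho$ on the flag cover and defining $\sqrt e(E,s):=2^{1-n}\rho_*(h\cup\sqrt e(\rho^*E,\rho^*s,\Lambda_\rho))$, reconciling with a fixed $\Lambda$ on $Y$ only afterwards in \eqref{EsEsL}.
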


The Fulton-MacPherson construction of the localisation of the classical Euler class to the zeros of a section uses the Gysin operator $0_E^{\;!}$ associated to the zero section $0_E$ of the bundle. In our setting there is also a square-rooted analogue.

\begin{Thm*}
Fix an $SO(2n,\C)$ bundle $E$ over a scheme $Y$ and an isotropic cone $C\subset E$ supported over $Z\subseteq Y$. There is an operator
$$
\sqrt{0_E^{\;!}}\ \colon\,A_*\(C,\ZZ\)\To A_{*-n}\(Z,\ZZ\)
$$
with good properties. When $C$ is contained in a maximal isotropic subbundle $\Lambda\subset E$ then $\surd\;{0_E^{\;!}}=(-1)^{|\Lambda|}0_\Lambda^{\;!}$.
\end{Thm*}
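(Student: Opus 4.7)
The plan is to reduce the construction to the Kiem-Li cosection-localised Gysin map of a maximal isotropic subbundle. First, by the splitting-principle reduction already used in the Edidin-Graham construction --- pulling back along a bundle $\wt Y\to Y$ parametrising choices of maximal isotropic subspaces of $E$ --- I may assume without loss of generality that $E$ admits a maximal isotropic holomorphic subbundle $\Lambda\subset E$ of rank $n$. The invariance under this pullback, which is the step that forces the inversion of $2$, is exactly the machinery already in place for $\sqrt e\;(E)$.

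Next, I exploit the canonical structure on the embedding $\Lambda\subset E$: the quadratic form identifies $N_{\Lambda/E}\cong\Ld$, and pairing with the tautological section of $\Lambda|_\Lambda$ produces a tautological cosection
$$
\sigma\colon N_{\Lambda/E}|_\Lambda\To\cO_\Lambda
$$
whose zero locus is exactly the zero section $Y\subset\Lambda$. Given the isotropic cone $C\subset E$ supported over $Z$, I form the fibre product $C\cap\Lambda:=C\times_E\Lambda$, with projection to $Z$, and consider the normal cone $C_{C\cap\Lambda/C}\subset N_{\Lambda/E}|_{C\cap\Lambda}$. A tangent-space calculation shows that differentiating the isotropic condition $q(c,c)=0$ along $C$ at a point $\lambda\in C\cap\Lambda$ forces any normal direction $v\in N_{\Lambda/E}$ in the cone to satisfy $\sigma(v)=q(\lambda,v)=0$. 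Hence $C_{C\cap\Lambda/C}$ lies in the kernel cone of $\sigma$, exactly the condition required to apply Kiem-Li.

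With this in place I define
$$
\sqrt{0_E^{\;!}}\ :=\ (-1)^{|\Lambda|}\;0_{\Lambda,\sigma}^{\;!}\colon A_*(C)\To A_{*-n}(Z),
$$
where $0_{\Lambda,\sigma}^{\;!}$ is Kiem-Li's cosection-localised Gysin operator applied to the specialisation of $[C]$ to its normal cone $C_{C\cap\Lambda/C}$. When $C\subset\Lambda$ already, the intersection $C\cap\Lambda$ equals $C$ and the normal cone collapses to $C$ sitting in the zero section of $N_{\Lambda/E}|_C$, on which $\sigma$ vanishes identically; Kiem-Li's construction then reduces to the ordinary Fulton-MacPherson Gysin $0_\Lambda^{\;!}$, proving the stated identity. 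The ``good properties'' --- functoriality under proper pushforward and flat pullback, compatibility with base change, and the compatibility with $\sqrt e\;(E)$ asserted in the preceding theorem --- follow from the corresponding properties of $0_{\Lambda,\sigma}^{\;!}$ together with standard deformation-to-the-normal-cone diagram chases.

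The main obstacle is proving independence of the choice of $\Lambda$. The plan is to interpolate two maximal isotropics $\Lambda_0,\Lambda_1$ by a one-parameter family $\kL\subset E\times\PP^1$ over $Y\times\PP^1$; such a family exists because the orthogonal isotropic Grassmannian for $SO(2n,\C)$ --- as opposed to $O(2n,\C)$ --- is connected within a single component, which is exactly what makes the sign $(-1)^{|\Lambda|}$ well defined and constant in the family. Deformation invariance of Kiem-Li's cosection-localised Gysin classes across this $\PP^1$-family, pulled back to the universal isotropic cone in $E\times\PP^1$, produces the required equality. The technical heart is checking that the family of cosections $\sigma_t$ is genuinely compatible with the specialisation, which in turn is forced by the isotropic condition on $C$ holding throughout the family.
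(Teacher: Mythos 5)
Your construction is essentially the paper's Definition \ref{IsoCone} unwound: pass to the isotropic-flag bundle $\wt Y$ to obtain a maximal isotropic $\Lambda$ (the paper's $\Lambda_\rho$), identify $N_{\Lambda/E}\cong\Lambda^*$ with the tautological cosection $\sigma$, observe that the isotropic condition forces the normal cone of $C\cap\Lambda$ in $C$ into $\ker\sigma$ (this is Lemma \ref{Lem:Zero} applied to the tautological section $\tau_E$ of $p^*E|_C$), and then apply Kiem-Li. So far this matches the paper step for step.

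The gap is in the claim that when $C\subset\Lambda$ the resulting cosection-localised class ``reduces to the ordinary Fulton-MacPherson Gysin $0_\Lambda^{\;!}$.'' This is not automatic, and cannot literally hold with the sign you wrote. In that case your construction produces the cosection-localised class $0^{\;!,\mathrm{loc}}_{\Lambda^*|_C,\sigma}\big[0_{\Lambda^*|_C}\big]\in A_*(Z)$, built in the bundle $\Lambda^*$, whereas $0_\Lambda^{\;!}[C]\in A_*(Z)$ is a refined Gysin built in $\Lambda$. Their pushforwards to $A_*(C)$ are $c_n(\Lambda^*)\cap[C]$ and $c_n(\Lambda)\cap[C]$ respectively, which already differ by $(-1)^n$; and even after fixing that sign, equality of the lifts to $A_*(Z)$ does not follow, since pushforward along $Z\into C$ is not injective. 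Precisely this identity is Lemma \ref{Schwantz}, and its proof is a genuine argument: pass to the projective completion $\overline\pi\colon\overline\Lambda\to Y$ using Lemma \ref{glbl}, exhibit the maximal isotropic $T_{\overline\pi}(-1)\subset\overline E$ coming from the relative Euler sequence, observe that $\overline\tau$ cuts out $0_{\overline\Lambda}$ transversally as a section of $T_{\overline\pi}(-1)$, and compare honest global Euler classes on $\overline\Lambda$ where no cosection localisation is needed. You would need this, or some equivalent compatibility statement between the section-localised Euler class of a bundle and the cosection-localised Euler class of its dual, to close the step.

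On independence of $\Lambda$: the proposed $\PP^1$-interpolation is not well-founded. Connectedness of a component of the orthogonal Grassmannian bundle yields no algebraic family $\kL\subset E\times\PP^1$ over $Y\times\PP^1$ joining two prescribed maximal isotropics, and generically no such family exists. It is also not needed: on $\wt Y$ the isotropic $\Lambda_\rho$ is canonical, so the definition is determined, and one descends via $\tfrac1{2^{n-1}}\rho_*(h\cup(\ \cdot\ ))$ exactly as for the undelocalised Edidin-Graham class. What does require proof is the consistency of the $\wt Y$-construction with a globally existing $\Lambda\subset E$; this is Lemma \ref{newlem}, proved by extending $\pi^*\Lambda$ over a blow-up of the projective completion $\overline C$, not by a one-parameter deformation.
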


Precise details are in Section \ref{EdidinGraham}. There we prove various compatibilities and desirable properties of these operators such as their behaviour under passing to the reduction $K^\perp/K$ by an isotropic subbundle $K\subset E$, and a Whitney sum formula.

We also prove $K$-theoretic analogues. For explanations and full details see Section \ref{Ksec}; for now let us simply recall that the $K$-theoretic Euler class of a bundle $E$ on a scheme $Y$ is
$$
\mathfrak e(E)\ :=\ \sum_{i=0}^{\rk E}(-1)^i\Lambda^iE^*\ \in\ K^0(Y).
$$
When $E$ has a section $s$ transverse to its zero section $0_E$ this is the class $[\cO_{Z(s)}]$ of the structure sheaf of its zero locus. When $E$ is an $SO(2n,\C)$ bundle we define a square-rooted analogue $\e(E)$.

\begin{Thm*}
Fix an $SO(2n,\C)$ bundle $E$ over a scheme $Y$. There is a natural class
$$
\e(E)\ \in\ K^0\(Y,\ZZ\) \quad\text{such that }\ \e(E)^2\=(-1)^n\mathfrak e(E).
$$
When $E$ admits a maximal isotropic subbundle $\Lambda\subset E$ then
$$
\e(E)\=(-1)^{|\Lambda|}\mathfrak e(\Lambda)\otimes\sqrt{\det\Lambda}.
$$
\end{Thm*}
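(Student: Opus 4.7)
My plan is to mimic the Edidin--Graham strategy carried out for the Chow square-root Euler class in Section~\ref{EdidinGraham}, replacing the cohomological ingredient $c_n(\Lambda)$ by the $K$-theoretic class $\mathfrak e(\Lambda)\otimes\sqrt{\det\Lambda}$. The construction breaks into three stages: define $\e(E)$ and verify its square under the assumption that a global maximal isotropic $\Lambda\subset E$ exists; make sense of $\sqrt{\det\Lambda}$ over $\ZZ$-coefficients; and extend to the general case by descent from the bundle $\wt Y\to Y$ of maximal isotropics already used in the Chow case, proving independence of $\Lambda$ along the way.

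For the first stage, assume $\Lambda\subset E$ is a maximal isotropic subbundle. The quadratic form identifies $E/\Lambda\cong\Lambda^*$, producing a short exact sequence $0\to\Lambda\to E\to\Lambda^*\to 0$. By multiplicativity of $\mathfrak e$ we have $\mathfrak e(E)=\mathfrak e(\Lambda)\,\mathfrak e(\Lambda^*)$, and applying $\Lambda^i\Lambda\cong\det\Lambda\otimes\Lambda^{n-i}\Lambda^*$ termwise to $\mathfrak e(\Lambda^*)=\sum_i(-1)^i\Lambda^i\Lambda$ gives $\mathfrak e(\Lambda^*)=(-1)^n\det\Lambda\cdot\mathfrak e(\Lambda)$. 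Hence
\[
\mathfrak e(E)\=(-1)^n\,\det\Lambda\cdot\mathfrak e(\Lambda)^2,
\]
so the proposed formula $\e(E):=(-1)^{|\Lambda|}\mathfrak e(\Lambda)\otimes\sqrt{\det\Lambda}$ squares to $(-1)^n\mathfrak e(E)$ as required. To make sense of $\sqrt{\det\Lambda}$ itself, I would pass to a $2$-cover of $Y$ over which $\det\Lambda$ admits a genuine square-root line bundle and descend; this descent is canonical with $\ZZ$-coefficients since the covering degree is then invertible.

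For independence of $\Lambda$ (and hence for the descent from $\wt Y$ that removes the hypothesis of a global $\Lambda$) I would follow the path of Section~\ref{EdidinGraham}. Given two maximal isotropics $\Lambda_1,\Lambda_2\subset E$, a further cover lets one assume $K:=\Lambda_1\cap\Lambda_2\subset E$ is an isotropic subbundle, and one reduces to the transverse case inside the $SO(2(n-\rk K),\CC)$-bundle $K^\perp/K$ by invoking the isotropic-reduction compatibility established for the Chow square-root Euler class in Section~\ref{EdidinGraham}. In the transverse case one may split off rank-$2$ pieces via the splitting principle and verify the identity directly. The main obstacle I expect is pinning down the sign $(-1)^{|\Lambda|}$ consistently across the two connected components of the maximal-isotropic Grassmannian---exactly the subtlety the Chow argument navigates, and the reason $2$ must be inverted in the coefficient ring.
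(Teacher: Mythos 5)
Your squaring computation in the presence of a maximal isotropic is exactly the paper's: the sequence $0\to\Lambda\to E\to\Lambda^*\to0$ together with $\mathfrak e(\Lambda^*)=(-1)^n\det\Lambda\cdot\mathfrak e(\Lambda)$ gives $\e(E)^2=(-1)^n\mathfrak e(E)$, and your ``transverse'' check that $\Lambda$ and $\Lambda^*$ yield the same class is the paper's identity \eqref{dualeps}. The genuine gap is in how you make sense of $\sqrt{\det\Lambda}$. Passing to a degree-$2$ cover on which $\det\Lambda$ acquires an honest square-root line bundle is problematic twice over: such a finite cover need not exist (the obstruction to a square root lives in $H^2(Y,\Z/2)$ and is not killed by an arbitrary double cover), and even when it does, ``descending'' in $K$-theory is not division by the covering degree --- for a finite flat $\pi\colon Y'\to Y$ of degree $2$ one has $\pi_*\pi^*(\alpha)=\alpha\otimes\pi_*\cO_{Y'}$, and $\pi_*\cO_{Y'}$ is not $2$ in $K^0(Y)$. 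The paper instead defines $\sqrt L$ for any line bundle \emph{intrinsically} in $K^0\(Y,\ZZ\)$ (Lemma \ref{sqrts}): since $1-L$ lies in the nilpotent subring $K^0(Y)^1$ of classes supported in positive codimension, the binomial series $1-\sum_i a_i(1-L)^{\otimes i}$ with $a_i\in\ZZ$ is a finite sum, and it is the unique square root of $L$ in $1+K^0(Y)^1$. That is where $2$ gets inverted; it has nothing to do with covers.

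Two further divergences are worth noting. For independence of $\Lambda$ you propose the Edidin--Graham-style reduction to the case where $\Lambda_1\cap\Lambda_2$ is an isotropic subbundle; this intersection is not a bundle in general, and arranging it to be one (by stratification or deformation) is real work you have not supplied. The paper sidesteps this entirely by writing $\e(E)=\sqrt{\c\(E,-\tfrac12\)^{-1}}\;\epsilon(E)$, where $\epsilon(E)$ is Anderson's class --- already known to be independent of $\Lambda$ --- and the prefactor is manifestly independent of $\Lambda$. Finally, the descent from the isotropic flag bundle $\rho\colon\wt Y\to Y$ in $K$-theory is \emph{integral}: $\rho$ is an iterated bundle of smooth quadrics, so $\rho_*\rho^*=\id$ on $K$-theory, and one simply sets $\e(E):=\rho_*\e(\rho^*E)$ after checking that $\e(\rho^*E)$ is a pullback (Proposition \ref{welld}). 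No $2^{n-1}$ is divided out, in contrast to the Chow-theoretic picture your sketch is modelled on; your closing attribution of the inverted $2$ to the sign/descent issue misplaces where the denominator actually enters.
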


The use of $\ZZ$ coefficients ensures the existence and uniqueness of $\surd\!\det\Lambda$; see Lemma \ref{sqrts}. In various special cases --- such as when the $SO(2n,\C)$ bundle is Zariski locally trivial, admits a spin bundle or admits a maximal isotropic subbundle --- there are many related classes in the literature  \cite{And, CLL, Chiodo, EG:Ch, KO, OS, PV:A}. Some use different twistings, some use rational coefficients, some are localised to the zeros of an isotropic section, and most are defined only up to an overall sign. Our class unifies them, fixing the overall sign and twisting to ensure its square is $(-1)^n\mathfrak e(E)$, while removing any assumptions of Zariski local triviality, the existence of maximal isotropics or of spin bundles.

\begin{Thm*}
Fix an $SO(2n,\C)$ bundle $E$ over a scheme $Y$ and an isotropic section $s$ with zeros $\iota\colon Z(s)\into Y$. There is an operator
$$
\sqrt{\mathfrak e}\;(E,s)\ \colon\,K_0\(Y,\ZZ\)\To K_0\(Z(s),\ZZ\)
$$
such that $\iota_*\circ\sqrt{\mathfrak e}\,(E,s)$ is tensor product with $\e(E)\in K^0\(Y,\ZZ\)$.
\end{Thm*}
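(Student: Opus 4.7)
The plan is to mirror the Chow-theoretic construction from the preceding theorem, replacing each cycle-theoretic ingredient by its $K$-theoretic counterpart. First I would reduce, via the usual splitting-principle argument that lets us invert $2$, to the case where $E$ admits a maximal isotropic subbundle $\Lambda\subset E$; more precisely I would pass to the bundle $\wt Y\to Y$ over which such a $\Lambda$ exists, construct the operator there, and descend by the projection-formula and \'etale-descent arguments already used in Section \ref{EdidinGraham}. On $\wt Y$ the isotropic section $s$ defines the subscheme $\Gamma_{\!s}\cap\Lambda\subset\Lambda$, and the isotropic condition $q(s,s)=0$ translates into the statement that the intrinsic normal cone of $\Gamma_{\!s}\cap\Lambda$ inside $\Gamma_{\!s}$ lies in the kernel of the tautological cosection $N_{\Lambda/E}\cong\Lambda^*\to\cO$ restricted to $\Gamma_{\!s}\cap\Lambda$.

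Given this, I would define $\sqrt{\mathfrak e}\,(E,s)$ as the sign $(-1)^{|\Lambda|}$ times the tensor product with $(\surd\!\det\Lambda)|_{Z(s)}$ of the $K$-theoretic cosection-localised Gysin map of $0_\Lambda$ applied to the isotropic section $s$. For the cosection localisation in $K$-theory I would use the Kiem--Park construction (the $K$-theoretic refinement of Kiem--Li) applied to the cosection $N_{\Lambda/E}\to\cO$ on $\Gamma_{\!s}\cap\Lambda$; this produces a class in $K_0(Z(s),\ZZ)$ whose pushforward under $\iota$ is $\mathfrak e(\Lambda)\otimes(-)$. Multiplying by the line class $(-1)^{|\Lambda|}\surd\!\det\Lambda$ recovers exactly the definition of $\e(E)$ given in the previous theorem, so the defining compatibility $\iota_*\circ\sqrt{\mathfrak e}\,(E,s)=\e(E)\otimes(-)$ holds by construction.

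The remaining content is independence of the auxiliary data. For two choices $\Lambda_0,\Lambda_1$ of maximal isotropic I would use the same deformation argument as in the Chow-theoretic case: either connect them through a family of maximal isotropics in the orthogonal Grassmannian $OG(n,E)$ after a further base change and invoke deformation-invariance of the $K$-theoretic cosection class, or reduce to a common refinement by passing to $K^\perp/K$ for $K=\Lambda_0\cap\Lambda_1$ and using the reduction compatibility and Whitney-sum-type formula for $\e$ proved earlier. In both steps one must check the sign $(-1)^{|\Lambda|}$ and the square-root twist $\surd\!\det\Lambda$ transform correctly; this is precisely the calculation that forced the definition of $\e(E)$ in its present form, so once the $K$-theoretic cosection class is deformation-invariant the comparison is automatic. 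Descent from $\wt Y$ to $Y$ follows because the ambiguity $SO(2n)/P$ for a maximal isotropic parabolic $P$ is a cellular fibration with $\ZZ$-trivial $K$-theory after inverting $2$.

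The main obstacle I expect is the $K$-theoretic cosection localisation itself: one needs an operator $K_0(\Gamma_{\!s})\to K_0(Z(s),\ZZ)$ that refines the Kiem--Li Chow construction, is compatible with the isotropic cone $C$ from \eqref{iscone} in place of the graph $\Gamma_{\!s}$, and satisfies enough functoriality (base change, deformation invariance, compatibility with the reduction $K^\perp/K$) to support the independence arguments above. Once that package is established, assembling the theorem and verifying $\iota_*\circ\sqrt{\mathfrak e}\,(E,s)=\e(E)\otimes(-)$ is essentially formal from the definition of $\e(E)$ in the previous theorem.
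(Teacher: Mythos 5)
Your proposal follows essentially the same route as the paper: in the presence of a maximal isotropic $\Lambda$ one projects $s$ to $\Lambda^*$, deforms to the normal cone of $Z(s^*)=\Gamma_{\!s}\cap\Lambda$, applies $K$-theoretic cosection localisation with the cosection coming from the ``other half'' of $s$ (the tool you need is Kiem--Li's \cite[Theorem 4.1]{KL:K}, not Kiem--Park), twists by $(-1)^{n+|\Lambda|}\sqrt{\det\Lambda^*}$ to match $\e(E)$ via \eqref{dualeps}, and in general works on the isotropic flag bundle $\rho\colon\wt Y\to Y$ with the canonical $\Lambda_\rho$. The only differences from the paper are cosmetic: descent is achieved integrally by $\rho_*\rho^*=\id$ and the projection formula (the $2$ is inverted only for $\sqrt{\det\Lambda}$), and the independence-of-$\Lambda$ issue you worry about is sidestepped by always using $\Lambda_\rho$.
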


The $K$-theoretic analogue of the Gysin operator $0_E^{\;!}$ is the (derived) pullback operator $0_E^*$. Again, there is a square-rooted version.

\begin{Thm*}
Fix an $SO(2n,\C)$ bundle $E$ over a scheme $Y$ and an isotropic cone $C\subset E$ supported over $Z\subset Y$. There is an operator
$$
\sqrt{0_E^*}\ \colon K_0\(C,\ZZ\)\To K_0\(Z,\ZZ\)
$$
with good properties. When $C$ is contained in a maximal isotropic subbundle $\Lambda\subset E$ then $\surd\;{0_E^*}=(-1)^{|\Lambda|}\sqrt{\det\Lambda}\cdot0_\Lambda^*$.
\end{Thm*}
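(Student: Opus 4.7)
The plan is to mirror, in $K$-theory, the construction of the Chow-theoretic operator $\surd\;0_E^{\;!}$ from the earlier theorem: first define it when $E$ admits a maximal isotropic subbundle $\Lambda$, then handle the general case by passing to an isotropic flag bundle on which such a $\Lambda$ exists tautologically, and finally descend using the splitting principle. The statement of the theorem itself forces the definition in the presence of $\Lambda$, so the real content is the construction in general and verifying that the formula on $\Lambda$-containing opens descends consistently.

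First I would treat the case when a maximal isotropic $\Lambda\subset E$ exists globally. Given an isotropic cone $C\subset E$ supported on $Z$, I would intersect $C$ with $\Lambda$ using the derived pullback $0_\Lambda^*\colon K_0(C)\to K_0(C\cap\Lambda)$. The isotropy of $C$ means the cone $C_{C\cap\Lambda/C}\subset N_{\Lambda/E}=\Lambda^*|_{C\cap\Lambda}$ is annihilated by the tautological cosection of $\Lambda^*$; this is exactly the hypothesis needed for the $K$-theoretic cosection localisation of Kiem--Park, which refines $0_\Lambda^*[\cO_C]$ to a class in $K_0(Z,\ZZ)$. I would then define
\[
\sqrt{0_E^*}\=(-1)^{|\Lambda|}\sqrt{\det\Lambda}\cdot(\text{cosection-localised } 0_\Lambda^*),
\]
where $\sqrt{\det\Lambda}\in K^0(Y,\ZZ)$ exists and is unique by Lemma \ref{sqrts} since we have inverted $2$. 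When $C\subset\Lambda$ the cone $C_{C/C}$ is zero and cosection localisation reduces to ordinary $0_\Lambda^*$, recovering the stated formula.

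To pass to the general case I would follow the Edidin--Graham construction of Section \ref{EdidinGraham}: form the isotropic Grassmannian bundle $\pi\colon\wt Y\to Y$ parametrising maximal isotropic subspaces in fibres of $E$, so that $\pi^*E$ carries a tautological maximal isotropic $\Lambda$. Apply Step 1 on $\wt Y$ to produce an operator $\sqrt{0_{\pi^*E}^*}$, then descend to $Y$ using that $\pi$ is smooth projective with $\pi_*\cO_{\wt Y}=\cO_Y$ after inverting $2$ (accounting for the two connected components of the maximal isotropic Grassmannian of $SO(2n,\C)$). Independence of $\Lambda$ on overlaps where two choices $\Lambda,\Lambda'$ coexist would be checked by setting $K=\Lambda\cap\Lambda'$ and using the reduction property from Section \ref{EdidinGraham} along $K\subset K^\perp\subset E$, together with the identification $\Lambda'/K\cong(\Lambda/K)^*$; combined with the Whitney formula for $\e$, this gives $(-1)^{|\Lambda|}\sqrt{\det\Lambda}\cdot\mathfrak e(\Lambda)=(-1)^{|\Lambda'|}\sqrt{\det\Lambda'}\cdot\mathfrak e(\Lambda')$, so the two operators agree.

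The main obstacle I anticipate is Step 3, namely proving the precise compatibility of the twists $(-1)^{|\Lambda|}\sqrt{\det\Lambda}$ under change of maximal isotropic, which is what makes the local definition descend to $Y$. This requires one to pin down the sign convention $|\Lambda|$ and the square root of $\det\Lambda$ simultaneously, coherently with the reduction and Whitney properties already established for $\e(E)$ in the previous theorem, and exploits the fact that on $\wt Y\to Y$ the monodromy acting on $\Lambda$ lies in a group whose action on $\sqrt{\det\Lambda}$ is trivial precisely because $2$ is invertible. The good properties (compatibility with proper pushforward, base change, Whitney formula, reduction along an isotropic subbundle $K\subset E$) would then be inherited from the corresponding properties of $0_\Lambda^*$ and Kiem--Park cosection localisation, verified one by one on the cover $\wt Y$ and descended.
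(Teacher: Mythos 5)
Your plan follows the paper's route at the level of overall strategy: treat first the case of a global maximal isotropic $\Lambda$ via $K$-theoretic cosection localisation, then handle the general case by passing to a cover on which a tautological maximal isotropic exists and pushing forward, and finally verify consistency. The first two steps match the paper's Definitions~\ref{def1} and~\ref{def2} and the identity~\eqref{Klam}. Step~3, however, has a genuine gap.

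You propose to check independence of the choice of $\Lambda$ by setting $K=\Lambda\cap\Lambda'$ and invoking the reduction along $K\subset K^\perp\subset E$. But $\Lambda\cap\Lambda'$ is in general \emph{not} a vector subbundle of $E$: its fibre rank can jump (within a fixed connected component of the orthogonal Grassmannian the rank of $\Lambda\cap\Lambda'$ takes values $n, n-2, n-4,\dots$ and is not locally constant). This is not a corner case one can ignore: the place where independence actually has to be used is on the fibre product $\wt Y\times_Y\wt Y$ in the descent argument, where the two tautological maximal isotropics $\rho_1^*\Lambda_\rho$ and $\rho_2^*\Lambda_\rho$ vary and their intersection rank jumps. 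The paper circumvents this issue by proving independence of $\Lambda$ first at the level of the \emph{unlocalised} class $\e(E)$ (Proposition~\ref{well}): it relates $\e(E)$ to Anderson's $K$-theoretic Edidin--Graham class $\epsilon(E)$ via $\e(E)=\sqrt{\c(E,-\tfrac12)^{-1}}\,\epsilon(E)$ and then invokes Anderson's independence theorem \cite[Appendix B, Theorem 3]{And}. The localised operator is then \emph{defined} via the cover (Definitions~\ref{def1}, \ref{def2}) rather than glued from local formulae on overlaps, so the only independence statement it ever needs is the one for the class, which Anderson supplies. Your $\Lambda\cap\Lambda'$ argument would need to be replaced by something of this kind.

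Some smaller corrections. The $K$-theoretic cosection localisation the paper uses is Kiem--Li \cite{KL:K}, not Kiem--Park (the latter, \cite{KP}, is cited elsewhere for cosection-localising the $4$-fold virtual cycle itself). The cover used is the bundle of length $n-1$ isotropic flags \eqref{cover}, not the maximal isotropic Grassmannian, though either carries a tautological $\Lambda_\rho$. Finally, contrary to your remark, the $K$-theoretic descent $\rho_*$ works integrally: the paper notes after Definition~\ref{KEGdfn} that $R\rho_*\cO_{\wt Y}=\cO_Y$ (the cover is an iterated bundle of smooth quadrics), so $\rho_*\rho^*=\id$ on $K_0$ over $\Z$. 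The only reason to invert $2$ is the earlier step of forming $\sqrt{\det\Lambda_\rho}$ via Lemma~\ref{sqrts}.
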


\subsection*{Virtual cycles}
Proposition \ref{isoprop} proves the cone $C\subset E$ of \eqref{iscone} is isotropic. Therefore we may
apply $\surd\;{0_E^{\;!}}$ to $[C]$, and $\surd\;{0_E^*}$ to $[\cO_C]$, to define our virtual cycle and twisted\footnote{The twisting by $\sqrt{\det T_\cA^*}$ is necessary to get a well-defined class; see the discussion after Definition \ref{vss}.} virtual structure sheaf.

\begin{Thm*}\label{thm1} The following are well-defined, independent of choices, and deformation invariant,
\beqa
[M]^{\vir} &:=& \sqrt{0_E^{\;!}}\,[C]\ \in\ A_{\frac12\!\vd}\( M,\ZZ\), \\
\Ohat &:=& \sqrt{0_E^*}\,[\cO_C]\cdot\sqrt{\det T_\cA^*}\ \in\ K_0\(M,\ZZ\).
\eeqa
They are related over $\Q$ by a virtual Riemann-Roch formula,
$$
\tau\_M\(\Ohat\)\=\sqrt\td\(T_M^{\vir}\)\cap[M]^{\vir},
$$
where $T^{\vir}_M|\_F=\Ext^1(F,F)-\Ext^2(F,F)+\Ext^3(F,F)$. In particular,
$$
\chi\(\Ohat\)\=\int_{[M]^{\vir}}\sqrt\td\(T_M^{\vir}\).
$$
\end{Thm*}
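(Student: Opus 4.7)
The plan is to assemble the theorem from the square-root Euler class machinery already established in the preceding theorems, together with the globalised isotropic cone of Proposition \ref{isoprop}. First I would recall that by \cite{BBBJ, BG, BBJ, CGJ} each $F\in M$ has a neighbourhood modelled on $s^{-1}(0)\subset\cA$ for some isotropic $s\in\Gamma(\cA,E)$ in an $SO(2n,\CC)$ bundle, and while these charts do \emph{not} globalise, Proposition \ref{isoprop} does produce a globally-defined isotropic cone $C\subset E$ inside the $SO(2n,\CC)$ obstruction bundle $E$ over $M$. Since $C$ is intrinsic to $M$, the classes
\[[M]^{\vir}\=\sqrt{0_E^{\;!}}\,[C],\qquad\Ohat\=\sqrt{0_E^*}\,[\cO_C]\cdot\sqrt{\det T_\cA^*}\]
are defined by direct application of the operators from the two preceding theorems, and the dimension count $\tfrac12\vd$ follows from $\dim C=\dim\cA$, the shift $-n$ of $\sqrt{0_E^{\;!}}$, and \eqref{vd}. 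For well-definedness the only nontrivial issue is the twisting line bundle $\sqrt{\det T_\cA^*}$; I would check that $\det T_\cA|_M\cong\det T^{\vir}_M$ (using $\det E\cong\cO$ since $E$ is $SO(2n,\CC)$), and that the canonical self-duality of $T^{\vir}_M$ coming from Serre duality on the Calabi-Yau 4-fold pins down a canonical square root via Lemma \ref{sqrts}.

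Deformation invariance I would prove by the standard relative argument. Given a smooth family $\cX\to B$ of Calabi-Yau 4-folds, the no-strictly-semistables assumption makes the relative moduli space $\cM\to B$ proper, and the derived algebraic geometry inputs of \cite{PTVV, BBBJ, BG, BBJ, CGJ} relativise to produce a relative $SO(2n,\CC)$ obstruction bundle $\cE\to\cM$ equipped with a relative isotropic cone $\cC\subset\cE$ whose fibre over $b\in B$ recovers the absolute cone for $M_b$. Applying the restriction-to-a-point compatibilities of $\sqrt{0_{(-)}^{\;!}}$ and $\sqrt{0_{(-)}^*}$ (properties collected in Section \ref{EdidinGraham}) then transports both classes between fibres. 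I expect the relativisation of Proposition \ref{isoprop}, including the isotropic structure of the limit cone, to be the main technical obstacle of the paper.

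For the virtual Riemann-Roch formula I would reduce to the case where a maximal isotropic subbundle $\Lambda\subset E$ exists, via the bundle modification used in Section \ref{EdidinGraham} to define $\sqrt e\;(E)$. The explicit formulas
\[\sqrt{0_E^{\;!}}\,[C]\=(-1)^{|\Lambda|}\,0_\Lambda^{\;!}\,[C],\qquad\sqrt{0_E^*}\,[\cO_C]\=(-1)^{|\Lambda|}\sqrt{\det\Lambda}\cdot 0_\Lambda^*\,[\cO_C]\]
from the preceding theorems then reduce the statement to comparing the refined Gysin pullback $0_\Lambda^{\;!}$ with the derived pullback $0_\Lambda^*$, for which the classical Fantechi-G\"ottsche/Ciocan-Fontanine-Kapranov virtual Riemann-Roch formula supplies the Todd class of $\Lambda$. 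Combining this with the twist $\sqrt{\det T_\cA^*}\cdot\sqrt{\det\Lambda}$ and the local identification $T^{\vir}_M=T_\cA-\Lambda-\Ld$ in $K$-theory, the Todd factors reorganise into $\sqrt{\td}\,(T^{\vir}_M)$ using the fact that under Serre duality $\td(\Lambda)\td(\Ld)(\det\Lambda)^{-1}$ is the square of a canonical class. The Euler characteristic statement then follows by pushforward to a point and the projection formula.
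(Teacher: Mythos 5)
Your outline has the right ingredients but misses the central difficulty of well-definedness. The cone $C$ and the bundle $E$ are \emph{not} intrinsic to $M$: they are produced from a \emph{choice} of self-dual locally free resolution $E\udot=\{T\to E\to T^*\}$ of $\EE$ (Proposition \ref{form}), and the cone is the realisation \eqref{recipe} of the intrinsic normal cone in the stupid truncation $\{E\to T^*\}$. Different resolutions give different orthogonal bundles, different cones, and even different twists $\sqrt{\det T^*}$ (in the statement $T_\cA$ is only the moral name for the degree $-2$ term $T$). The bulk of the paper's proof of well-definedness is exactly the comparison \eqref{wont} between two resolutions: one represents the quasi-isomorphism by a roof, extracts from each leg a canonical self-dual complex via \eqref{BtoE}, interpolates linearly between the two self-dual structures $\theta\udot_e,\theta\udot_f$ and applies cosection-localised deformation invariance \cite[Theorem 5.2]{KL}, and then proves Proposition \ref{give} using the Whitney sum formula (Proposition \ref{E1E2}), the reduction formula (Proposition \ref{KperpK}) for $K^\perp/K$, and Lemma \ref{Schwantz}. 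It is precisely in this comparison that the twist $\sqrt{\det T^*}$ is forced (it compensates the change of the omitted term $T$); your observation that $\det E\cong\cO$ pins down the twist of a \emph{fixed} resolution but says nothing about independence of the resolution. Your deformation-invariance sketch, by contrast, matches the paper's relative argument (family Darboux plus \eqref{defo}), modulo the orientation monodromy issue which the paper handles by passing to a $\Z/2$-cover of the base.

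Your route to the Riemann-Roch formula also has a gap: the identities $\sqrt{0_E^{\;!}}=(-1)^{|\Lambda|}0_\Lambda^{\;!}$ and $\sqrt{0_E^*}=(-1)^{|\Lambda|}\sqrt{\det\Lambda}\cdot0_\Lambda^*$ hold only when the cone $C$ is \emph{contained in} the maximal isotropic $\Lambda$, and this fails in general --- pulling back to the isotropic flag bundle $\rho\colon\wt Y\to Y$ produces $\Lambda_\rho\subset\rho^*E$ but does not force $\rho^*C\subset\Lambda_\rho$ (locally the cone can be $\{xy=0\}$ in $(\C^2,xy)$, contained in no isotropic line). So one cannot reduce to a Behrend--Fantechi situation and quote Fantechi--G\"ottsche/Ciocan-Fontanine--Kapranov directly. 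The paper's Theorem \ref{virRR} instead unravels both definitions into the chain (pull back by $\rho$) $\to$ (specialise to the normal cone of $Z^*$) $\to$ (cosection-localised $0^{*,\mathrm{loc}}_{\Lambda^*_\rho,\wt\tau}$ resp.\ $0^{\;!,\mathrm{loc}}_{\Lambda^*_\rho,\wt\tau}$) $\to$ (push down by $\rho_*$), and checks commutativity with $\tau$ square by square; the essential new input is the Kiem--Li $K$-theoretic cosection-localisation Riemann--Roch \cite[Equation 5.21]{KL:K}, whose correction factor $\sigma=\ch\sqrt{\det\Lambda_\rho^*}\;\td(-\Lambda_\rho^*)=(\sqrt{\td}\,E)^{-1}$ combines with \cite[Proposition 3.1]{FG} applied to the truncation and with \eqref{FF*} to produce $\sqrt{\td}(E_\bullet)$. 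Finally, your bookkeeping $T_M^{\vir}=T_\cA-\Lambda-\Lambda^*$ omits the $\Ext^3$ contribution: in $K$-theory the virtual tangent bundle is $T+T^*-E$, and the $T^*$ term is exactly what the twist $\sqrt{\det T^*}$ accounts for via \eqref{FF*}.
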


In fact we can define $[M]^{\vir},\,\Ohat$ when $X$ is quasi-projective and $M$ parameterises compactly supported sheaves thereon. (Projectivity is only required to prove deformation invariance.) To show they are nontrivial we calculate them on \emph{local} Calabi-Yau 4-folds $X=K_Y$ in Section \ref{KY}. There they reduce to the usual 3-fold virtual cycle --- and Nekrasov-Okounkov-twisted virtual structure sheaf --- on the moduli space of sheaves on $Y$ when all stable sheaves on $X$ are pushed forward from the zero section $Y\into X$ (for instance when $Y$ is Fano).

In the sequel \cite{OT2} we show that on projective Calabi-Yau 4-folds $X$ our virtual cycle agrees with Borisov-Joyce's \cite{BJ} on inverting 2. 

\begin{Thm*}[\cite{OT2}] For $X$ projective and $\vd$ even our class $[M]^{\vir}$ and Borisov-Joyce's $[M]^{\vir}_{BJ}\in H_{\vd}(M,\Z)$ have the same image under the maps
$$
\xymatrix@R=-5pt{
[M]^{\vir}\,\in\,A_{\frac12\!\vd}\(M,\ZZ\) \\
& H_{\vd}\(M,\ZZ\). \ar@{<-}[ul]+<65pt,-1pt>\ar@{<-}[dl]+<65pt,1pt> \\
\hspace{7mm}[M]_{BJ}^{\vir}\,\in\,H_{\vd}(M,\Z)}
$$
If $\vd$ is odd then Borisov-Joyce's class is zero after inverting 2.
\end{Thm*}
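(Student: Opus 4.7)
The plan is to reduce both virtual cycles to a common local model, then patch the agreement on overlaps.  The key point is that both constructions are assembled from the same input data --- an \'etale-local isotropic model $(\cA,E,s)$ as in \eqref{model} --- and the two cycles are obtained from this data by two different intersection-theoretic procedures (intersecting $\Gamma_{\!s}$, or equivalently $C$, with a \emph{holomorphic} maximal isotropic $\Lambda\subset E$ versus with the \emph{real} subbundle $iE_\R\subset E$).  The strategy is to compare these intersections chart-by-chart and then check that the global gluing is compatible up to boundaries, using $\ZZ$ coefficients to kill the sign ambiguities that arise from the choice of square root of $\det\Lambda$.

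First I would work in a fixed local model $(\cA,E,s)$ with a maximal isotropic subbundle $\Lambda\subset E$ available, which exists on an \'etale cover of $M$ (at the cost of inverting 2, as in the construction of the Edidin--Graham class).  In this setting Proposition \ref{isoprop} identifies the isotropic cone $C\subset E$ of \eqref{iscone}, and the first theorem of Section~\ref{EdidinGraham} gives
\[
[M]^{\vir}\big|_{\mathrm{chart}}\ =\ \sqrt{0_E^{\;!}}\,[C]\ =\ (-1)^{|\Lambda|}\,0_{\Lambda}^{\;!}[C],
\]
which via the Kiem--Li cosection-localisation interpretation reviewed in the introduction is the Fulton--MacPherson localised Euler class of $\Lambda$ at the zeros of the projected section.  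On the Borisov--Joyce side, the chart-level class is (by construction \cite{BJ}) the localised Euler class of $E_\R$ at the zeros of $s^+$.  The paper already records the key isomorphism: projection $E\to E_\R$ restricts to an isomorphism $\Lambda\cong E_\R$ (via \eqref{s+-}), and under this isomorphism the holomorphic section $\mathrm{pr}_\Lambda(s)$ corresponds to $s^+$ up to a path through isotropic sections.  Consequently, after passing to the integral homology $H_\vd(M,\ZZ)$ through the cycle-to-homology maps, the two localised Euler classes agree in the chart.

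The main obstacle is the global gluing.  Borisov--Joyce work with real derived Kuranishi data, whereas our construction is purely algebraic/cone-theoretic, so the cocycle conditions live in different categorical contexts.  I would bridge this by constructing an intermediate object: a Čech-type system of holomorphic maximal isotropics $\{\Lambda_\alpha\}$ on an \'etale cover, and on each double overlap a homotopy through real maximal isotropic subbundles of $E_\alpha\cap E_\beta$ (in the $4n$-dimensional real bundle $E$ with the real part of $q$) between $\Lambda_\alpha$, $\Lambda_\beta$ and a common real model $E_\R$.  The space of such real maximal isotropics is connected, and the resulting chain homotopy equates the two globally-assembled cycles in $H_\vd(M,\ZZ)$.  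The $\ZZ$ coefficients are essential: the holomorphic signs $(-1)^{|\Lambda_\alpha|}$ depend on a square-root choice of $\det\Lambda_\alpha$ (Lemma \ref{sqrts}), whereas the real orientation fixing the BJ class is unambiguous only up to the same 2-torsion data, so the identification is canonical only after inverting $2$.

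For the odd $\vd$ case, the rank of $E_\R$ is $2n+1$, and for any oriented real bundle $V$ of odd rank $e(V)\in H^{2n+1}(-,\Z)$ is the integral Bockstein $\beta(w_{2n}(V))$, hence 2-torsion; accordingly the Thom class of $V$ becomes 2-torsion on restriction to the zero section.  The Borisov--Joyce class is by construction a cap product with (the localised version of) this Euler class, so it is annihilated by $2$ in $H_*(M,\Z)$.  The cleanest way to make this precise is to exhibit a sign-reversing automorphism of the Borisov--Joyce Kuranishi data when the real rank is odd: replacing $E_\R$ by $E_\R$ with reversed orientation negates the class, but an orientation-reversing isometry of $E_\R$ (available fibrewise and compatible with the $SO(2n+1,\C)$ structure after inverting 2) produces an isomorphism of Kuranishi data under which the class is preserved.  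Combining these two yields $2[M]^{\vir}_{BJ}=0$, hence $[M]^{\vir}_{BJ}=0$ in $H_\vd(M,\ZZ)$.
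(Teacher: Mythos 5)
This theorem is not proved in the present paper: it is stated as a citation of the sequel \cite{OT2}, so there is no in-paper argument to compare your proposal against. I can only assess the proposal on its own terms, against the heuristics the paper does supply.

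Your local picture is the right starting point and matches the paper's own motivation: on a chart with a maximal isotropic $\Lambda$, Lemma \ref{Schwantz} gives $\sqrt{0_E^{\;!}}=(-1)^{|\Lambda|}0_\Lambda^{\;!}$, and the isomorphism $\Lambda\cong E_\R$ of \eqref{LamER} (with Proposition \ref{realiso} controlling orientations) is exactly the bridge between the holomorphic and real intersections. But there is a genuine gap at the gluing step, and it is precisely the hard part of the theorem. A virtual cycle is a global class in $H_{\vd}(M)$; the restriction maps to the charts of an \'etale or open cover are far from injective, so ``the two classes agree chart by chart'' carries essentially no information about whether the global classes agree. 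Reassembling a homology class from chart-level data is exactly what Borisov--Joyce's derived differential geometry is engineered to do, and your proposed \v{C}ech system of isotropics with homotopies on double overlaps does not explain how the resulting ``chain homotopies'' assemble into an identity of global cycles (there is no Mayer--Vietoris-type principle for virtual classes here). Any correct proof must compare the two constructions through genuinely global data --- e.g.\ the global isotropic cone $C\subset E$ of Proposition \ref{isoprop} and the global bundle $\rho\colon\wt Y\to Y$ carrying $\Lambda_\rho$ --- rather than patching local agreements.

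Two smaller inaccuracies: the sign $(-1)^{|\Lambda|}$ is fixed by the orientation $o$ via Definition \ref{Lor}, not by a choice of square root of $\det\Lambda$ (Lemma \ref{sqrts} is a $K$-theoretic statement and plays no role in the Chow/homology comparison); and the assertion that $\mathrm{pr}_\Lambda(s)$ ``corresponds to $s^+$ up to a path through isotropic sections'' elides the real content --- as the introduction stresses, $\Gamma_{\!s}\cap\Lambda$ does \emph{not} lie in the zero section, so the holomorphic side requires Kiem--Li cosection localisation while the real side is a transverse intersection, and identifying the two localised classes is a nontrivial step, not a consequence of the bundle isomorphism $\Lambda\cong E_\R$. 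Your argument for the odd-$\vd$ vanishing (an orientation-reversing automorphism of odd-rank $E_\R$ forcing $2[M]^{\vir}_{BJ}=0$) is the standard and plausible mechanism, though to be a proof it must be implemented at the level of the Kuranishi/derived data rather than of a global Euler class, since the Borisov--Joyce class is not literally a cap product with a globally defined Euler class.
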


In particular the resulting invariants --- given by integrating insertions in integral cohomology over the virtual cycle --- lie in $\Z\subset\ZZ$, which is not immediately obvious from our construction.

As a further indication that our classes should be more computable than Borisov-Joyce's, we prove torus localisation formulae for them. 

\begin{Thm*} Suppose $\T:=\C^*$ acts on a quasi-projective Calabi-Yau 4-fold $X$ \emph{preserving the holomorphic 4-form}. Let $\iota\colon M^\T\into M$ denote the fixed locus of the induced $\T$ action on $M$. Then
\beqa
\quad [M]^{\vir} &=& \iota_*\,\frac{\big[M^\T\big]^{\vir}}{\sqrt{e\_{\T}}\;(N^{\vir})}\ \in\ A^\T_{\frac12\!\vd}(M,\Q)\big[t^{-1}\big], \\
\Ohat &=& \iota_*\,\frac{\widehat\cO^{\;\vir}_{\!M^\T}}{\sqrt{\mathfrak e\_{\T}}\;(N^{\vir})}\ \in\ K_0^\T(M)\otimes\_{\;\Z[\t,\t^{-1}]}\Q(\t^{1/2}).
\eeqa
\end{Thm*}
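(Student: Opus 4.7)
The plan is to combine an equivariant Graber--Pandharipande virtual localisation with the reduction-by-isotropic-subbundle properties of $\sqrt{0_E^{\;!}}$ and $\sqrt{0_E^*}$ proved in Section \ref{EdidinGraham}.

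First I would lift the global picture to $\T$-equivariant objects. Because $\T$ preserves the holomorphic 4-form on $X$, Serre duality on $\Ext^\bullet(F,F)$ is $\T$-equivariant of weight $0$, so the quadratic form $q$ on $E$ is $\T$-invariant and $E$ acquires an equivariant $SO(2n,\CC)$-structure. The isotropic cone $C=\lim_{t\to\infty}\Gamma_{\!ts}$ of \eqref{iscone} is then $\T$-invariant by construction, and the twisting line $\sqrt{\det T_\cA^*}$ admits a canonical equivariant lift.

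Over the fixed locus $M^\T$, weight decomposition gives $E|_{M^\T}=E^{\mathrm{fix}}\oplus E^{\mathrm{mov}}$ with $E^{\mathrm{fix}}$ an equivariant $SO(2n_0,\CC)$ bundle (since $q$ has weight $0$) and $E^{\mathrm{mov}}=V\oplus V^*$, where $V:=\bigoplus_{a>0}E^{(a)}$ is a $\T$-equivariant maximal isotropic subbundle with quotient $V^*$. The $\T$-fixed part $C^\T:=C\cap E^{\mathrm{fix}}$ of the restricted cone should compute $[M^\T]^{\vir}$ and $\widehat\cO^{\;\vir}_{\!M^\T}$, in direct analogy with the classical identification of the $\T$-fixed part of an equivariant perfect obstruction theory with the intrinsic obstruction theory on the fixed locus.

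The central calculation then compares $\sqrt{0_E^{\;!}}[C]$ with $\sqrt{0_{E^{\mathrm{fix}}}^{\;!}}[C^\T]$ after inverting $t$. Applying the reduction property of $\sqrt{0_E^{\;!}}$ under the isotropic subbundle $V\subset E|_{M^\T}$ produces $E^{\mathrm{fix}}=V^\perp\!/V$ together with a correction $\sqrt{e\_\T}(E^{\mathrm{mov}})^{-1}=\pm\,e\_\T(V)^{-1}$. Combined with $e\_\T(T_\cA^{\mathrm{mov}})^{-1}$ from the ordinary Fulton--MacPherson localisation in the ambient $\cA$, the total correction is $\sqrt{e\_\T}(N^{\vir})^{-1}$, where $N^{\vir}=T_\cA^{\mathrm{mov}}-E^{\mathrm{mov}}$ is understood as a virtual $SO$ bundle via the Serre-duality self-pairing on $T^{\vir,\mathrm{mov}}_M$. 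The $K$-theoretic version runs in parallel through $\sqrt{0_E^*}$, with the additional check that $\sqrt{\det T_\cA^*}$ splits equivariantly and absorbs the $\sqrt{\det V}$ twist appearing in $\e(E^{\mathrm{mov}})=\pm\,\mathfrak e(V)\otimes\sqrt{\det V}$.

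The main obstacle is the global equivariant lift in step one: the $SO$-structure on $E$ and the cone $C$ are assembled from the (possibly only \'etale-local) Darboux-type results of Brav--Bussi--Joyce and Cao--Gross--Joyce, and one must verify these gluings can be performed $\T$-equivariantly, possibly at the cost of passing to an equivariant analogue of the auxiliary bundle $\wt Y$ of Section \ref{EdidinGraham} to ensure the required maximal isotropic subbundles exist equivariantly after inverting~$2$.
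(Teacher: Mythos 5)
Your high-level picture — split $E|_{M^\T}$ into fixed and moving parts, note $E^m=E^{>0}\oplus(E^{>0})^*$ has the canonical positive-weight maximal isotropic, and extract $\sqrt{e\_\T}(N^{\vir})^{-1}$ from the Whitney and reduction properties of $\surd\;0_E^{\;!}$ — is the right one and matches the paper's strategy in outline. But the proposal has a genuine gap at its core: you run the comparison "in the ambient $\cA$" via "ordinary Fulton--MacPherson localisation", and no such global $\cA$ (or section $s$) exists; only the cone $C_{E\udot}\subset E$ is global. The paper replaces this with a Graber--Pandharipande-style argument: a $\T$-equivariant embedding $M\into P$ into a smooth projective $\T$-variety from GIT, the presentation $C_{E\udot}=(C_{M/P}\oplus T)/T_P|_M$ of \eqref{recipe}, Vistoli's rational equivalence $i^![C_{M/P}]=[C_{M^\T/P^\T}]$, a Jouanolou-trick splitting of $T_P^m\into T^m$, and an explicit deformation making $i^![C_{E\udot}]$ the class of $C_{E\udot_f}\oplus(T^m/T^m_P)$. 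Your assertion that "$C^\T:=C\cap E^{\mathrm{fix}}$ should compute $[M^\T]^{\vir}$ in direct analogy with the classical identification" is precisely the step that needs this machinery; moreover the Euler-class factors do not come from the reduction Proposition \ref{KperpK} applied to $V\subset E|_{M^\T}$ (that proposition produces \emph{no} correction), but from evaluating $\sqrt{0^{\;!}_{E^m}}$ on the moving \emph{linear} piece $K=T^m/T^m_P$ of the cone, which by \eqref{Kred} gives $\sqrt{e\_\T}(E^m)/e\_\T(K)$; combined with $e\_\T(T^m_P)^{-1}$ this yields $\sqrt{e\_\T}(E^m)/e\_\T(T^m)=\sqrt{e\_\T}(N^{\vir})^{-1}$. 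As written, your bookkeeping ("a correction $\sqrt{e\_\T}(E^{\mathrm{mov}})^{-1}$" times "$e\_\T(T_\cA^{\mathrm{mov}})^{-1}$") puts $\sqrt{e\_\T}(E^m)$ on the wrong side.

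Two further points. First, the equivariant lift is \emph{not} obtained by gluing local Darboux models equivariantly: the complex $E\udot$ is a global equivariant resolution of the equivariant obstruction theory (the universal sheaf and Atiyah class are $\T$-linearisable, and each step of Proposition \ref{form} is equivariant); the Darboux results are only used locally and non-equivariantly to prove isotropy of the cone. Second, the $K$-theoretic statement cannot be run "in parallel" without a new construction: $\sqrt L$ as defined in \eqref{sqdf} diverges in $K^0_\T$ because $1-L$ is not nilpotent there, so one must define a square-root \emph{operator} $\sqrt L\,\star$ on $K_0^\T(\cdot)\loc$ via the localisation isomorphism \eqref{stardef}, work over $\Q(\t^{1/2})$, and verify the non-equivariant limit; your proposal does not address this.
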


Here $\t$ is the 1-dimensional weight 1 representation of $\T$ and $t=c_1(\t)$. For full details and explanations see Section \ref{torus}. In particular this allows us to make sense of CY$^4$ sheaf counting (for insertions which lift to equivariant cohomology) and $K$-theoretic sheaf counting in \emph{noncompact} equivariant settings, so long as the fixed locus $ M^{\T}$ is compact. Finally we show in \eqref{RRT} that the isomorphism $\tau\_{M^\T}\colon K_0(M^\T)_\Q\to A_*(M^\T)_\Q$ maps the second localisation formula to the first\,$\times\sqrt\td(T^{\vir}_M)$. Thus the equivariant holomorphic Euler characteristic $\chi\_t$ (the $\T$-character of $R\Gamma$) can be described by
$$
\chi\_{e^t}\!\left(\frac{\widehat\cO^{\;\vir}_{\!M^\T}}{\sqrt{\mathfrak e\_{\T}}\;(N^{\vir})}\right)\=\int_{\big[M^\T\big]^{\vir}}\frac{\sqrt\td\(T_M^{\vir}\)\big|_{M^\T}}{\sqrt{e_\T}\,(N^{\vir})}\,.
$$
Furthermore Kiem and Park \cite{KP} have now proved a version of cosection localisation for $[M]^{\vir},\,\Ohat$, writing them as the pushforward of classes supported on the zero locus of an isotropic section of the obstruction sheaf.

There are already a number of papers making calculations of DT$^4$ invariants by assuming the Borisov-Joyce virtual cycle takes a nice algebro-geometric form in special cases \cite{CK, CK3, CKM1, CKM2,CMT1,CMT2,CT1, CT2, DSY, Ne, NP}. This paper gives those references a theoretical foundation, justifying many of their results.
\medskip

\noindent\textbf{Acknowledgements.} Cao-Leung \cite{CL} were the first to highlight the relevance of maximal isotropic subbundles and the Edidin-Graham class to DT$^4$ theory. We are very grateful to Yalong Cao for the suggestion \cite{Cao} that the virtual cycle might therefore be algebraic.

We thank Sara Filippini for introducing us, Nick Addington, Dave Anderson, Noah Arbesfeld, Arkadij Bojko, Chris Brav, Dennis Borisov, Young-Hoon Kiem, Andrei Okounkov, Rahul Pandharipande, Mauro Porta, J{\o}rgen Rennemo, Feng Qu, Yukinobu Toda, Burt Totaro, Ming Zhang and two conscientious referees for  useful comments and conversations.

We particularly thank Dominic Joyce for generous help with \cite{BBBJ, PTVV}, Martijn Kool for many suggestions --- especially his insistence that we work out the $K$-theoretic story and explain the twist \eqref{NOtwist} --- and Davesh Maulik for a crucial observation about orientations on $N^{\vir}$ in \makebox{Section \ref{torus}.}

J.O. thanks the Fields Institute for excellent working conditions and acknowledges support of a KIAS Individual Grant MG063002.
R.P.T. acknowledges support from a Royal Society research professorship and EPSRC grant EP/R013349/1.\medskip

\noindent\textbf{Notation.} We use $E^*$ for the dual of a vector bundle $E$, reserving $E^\vee$ for the derived dual of a complex or object of the derived category.

Given a codimension $d$ regular embedding $\iota\colon Z\into Y$ of schemes, we use $\iota^!$ to denote both the Gysin map $A_*(Y)\to A_{*-d}(Z)$ and the refined Gysin map $A_*(Y')\to A_{*-d}(Z')$ for any scheme $Y'\to Y$ and $Z':=Y'\times\_YZ$. These are denoted $\iota^*$ and $\iota^!$ respectively in \cite[Section 6.2]{F}.

For determinants, duals and signs we adopt the conventions of \cite{KM}. For vector bundles $A,B$ of ranks $n,m$ we use the standard isomorphism
\begin{eqnarray}\label{A+B}
\det(A\oplus B) &\cong& \det A\otimes\det B, \\ \nonumber
a_1\wedge\ldots\wedge a_n\wedge b_1\wedge\ldots\wedge b_m &\mapsto& (a_1\wedge\ldots\wedge a_n)\otimes(b_1\wedge\ldots\wedge b_m),
\end{eqnarray}
which we write as ${\bf a\wedge b}\mapsto{\bf a\otimes b}$. Combined with the isomorphism $A\oplus B\cong B\oplus A,\ (a,b)\mapsto(b,a)$ we find that the identification
\beq{ABBA}
\det B\otimes\det A\ \cong\ \det(B\oplus A)\ \cong\ \det(A\oplus B)\ \cong\ \det A\otimes\det B
\eeq
is given by the Koszul sign rule\footnote{In \cite{KM} this is made explicit by decorating $\det A$ by the integer $n=\rk(A)$; we silently remember $\rk(A)$ mod $2$ without recording it.} ${\bf b}\otimes{\bf a}\mapsto(-1)^{mn}\,{\bf a}\otimes{\bf b}$. For line bundles $L,M$ we identify $(L\otimes M)^*\cong M^*\otimes L^*$ in the obvious way \cite[page 31]{KM}. Combining these two conventions leads to the pairing
\beq{pairA}
\det A\otimes\det A^*\To\cO, \quad (a_1\wedge\dots\wedge a_n)\otimes(a_n^*\wedge\dots\wedge a_1^*)\Mapsto 1,
\eeq
where $\{a_i\}_{i=1}^n$ is a local basis of sections of $A$ and $\{a_i^*\}_{i=1}^n$ is the dual basis. Beware this pairing privileges $\det A$ over $\det A^*$; if we swap their roles and apply \eqref{ABBA} we get another map $\det A\otimes\det A^*\to\cO$ which differs from \eqref{pairA} by the sign $(-1)^n$.

These conventions ensure the commutativity of the diagram
\beq{AB*BA}
\xymatrix@R=18pt{
\ \ \,\det A\otimes\det B\otimes\det B^*\otimes\det A^* \ar[d]_{\eqref{pairA}_B}\ar@{=}[r]& \det\!\((A\oplus B)\oplus(A\oplus B)^*\)\, \ar[d]^{\eqref{pairA}_{A\oplus B}} \\
\ \det A\otimes\det A^* \ar[r]^-{\eqref{pairA}_A}& \cO.\!}
\eeq
We write \eqref{pairA} as ${\bf a}\otimes{\bf a}^*\mapsto 1$. That is, we have set
\beq{adual}
{\bf a}^*\ :=\ a_n^*\wedge\dots\wedge a_1^*.
\eeq
The alternative convention of setting ${\bf a}^*$ to be $a_1^*\wedge\dots\wedge a_n^*$ has the advantage of eliminating the unpleasant sign in Definition \ref{orr} below, but at the (worse) expense of making \eqref{AB*BA} commute only up to a sign.

Finally we note that where we refer to specific numbering in published papers, we use the arXiv version where possible.

\section{(Special) orthogonal bundles}
In this paper a central role will be played by orthogonal bundles, their maximal isotropic subbundles, their maximal positive definite real subbundles, and the relationsip between the two. Throughout we work over a complex quasi-projective scheme $Y$. 

\subsection{$O(r)$ and $SO(r)$ bundles}
By an orthogonal bundle we mean a pair $(E,q)$, where $E$ is a rank $r$ vector bundle over $Y$ and
$$
q\ \colon\,E\otimes E\To\cO_Y
$$
is a nondegenerate quadratic form. By the Gram-Schmidt process (which uses square roots) we can pick an \'etale-local orthonormal basis for $(E,q)$,
\beq{o.n.}
e_1,\dots,e_r \quad\text{such that}\quad q(e_i,e_j)=\delta_{ij}.
\eeq
Transition functions from one such normal form to another lie in $O(r,\C)$, giving a 1-1 correspondence between isomorphism classes of pairs $(E,q)$ and $O(r,\C)$ principal bundles in the \'etale topology. 

The quadratic form induces an isomorphism $q\colon E\xrightarrow{\,\sim\,}E^*$ and thus an isomorphism $\det q\colon\det E\to\det E^*$, giving
\beq{or2}
(\det E\;)^{\otimes2}\ \xrightarrow[\raisebox{.5ex}{$\sim$}]{\ 1\otimes\det q\ }\ \det E\otimes\det E^*\ \rt\sim\ \cO_Y,
\eeq
where the final arrow is \eqref{pairA}.
Choosing $\{f_i\}$ to be the dual basis to the \'etale local orthonormal trivialisation \eqref{o.n.} we have $q(e_i)=f_i$, so \eqref{or2} takes
$$
(e_1\wedge\dots\wedge e_r)^{\otimes2}\,\Mapsto\,(e_1\wedge\dots\wedge e_r)\otimes(f_1\wedge\dots\wedge f_r)\,\Mapsto\,(-1)^{\frac{r(r-1)}2}.
$$
We define an \emph{orientation} to be a global choice of $\pm e_1\wedge\dots\wedge e_r$, when one exists. Notice this is a $\Z/2$ choice, not a $\C^*$ choice.

\begin{Def}\label{orr} An orientation on $(E,q)$ is a trivialisation
\beq{OR}
o\ \colon\,\cO_Y\To\Lambda^rE
\eeq
whose square maps to $(-1)^{\frac{r(r-1)}2}$ under \eqref{or2}.
\end{Def}

Transition functions from one orthonormal basis to another, preserving $o$, lie in $SO(r,\C)$. Therefore an orientation is equivalent to a reduction of structure group of our \'etale principal frame bundle from $O(r,\C)$ to $SO(r,\C)$, giving a 1-1 correspondence between isomorphism classes of $SO(r,\C)$ principal bundles and oriented orthogonal bundles $(E,q,o)$. 

Note that if $(E,q)$ is a Zariski locally trivial\footnote{By this we mean $E$ admits $q$-orthonormal frames Zariski locally. Such bundles $(E,q)$ are in 1-1 correspondence with Zariski locally trivial $O(r,\C)$ bundles, and admit two reductions to $SO(r,\C)$ bundles. Beware that \cite{EG:Ch} use a different definition of Zariski local triviality, probably for characteristic 2 reasons that we can ignore.} orthogonal bundle then its $\Z/2$-bundle of local orientations is Zariski locally trivial and hence trivial. Thus $(E,q)$ is automatically orientable.

\subsection{Maximal positive definite real subbundle}
Let $E_\R\subset E$ denote a maximal real subbundle on which $q$ is real and positive definite. For instance we can use a partition of unity argument to patch together the real spans of the local orthonormal frames \eqref{o.n.}. We get a real orthogonal splitting
$$
E\ \cong\ E_\R\oplus iE_\R\=E_\R\otimes\_\R\C.
$$
Conversely, given a positive definite real quadratic form $q\_\R$ on $E_\R$, we get a negative definite quadratic form on $iE_\R$, and their direct sum is a nondegenerate complex quadratic form $q=q\_\R\otimes\C$ on $E$. Thus, topologically, $O(r,\C)$ and $O(r,\R)$ bundles are equivalent, reflecting the homotopy equivalence $O(r,\R)\subset O(r,\C)$.

Since $SO(r,\R)\subset SO(r,\C)$ is also a homotopy equivalence, an orientation on $E$ in the sense of Definition \ref{orr} must be equivalent to an orientation on $E_\R$ in the usual sense. Obviously a trivialisation $o\_{\;\R}$ of $\Lambda^r_\R E_\R$ induces a trivialisation $o\_{\;\R}\!\otimes\_\R\!\C$ of $\Lambda^rE\cong(\Lambda^r_\R E_\R)\otimes\_\R\C$. In our local orthonormal basis \eqref{o.n.} this takes $e_1\wedge\_\R\dots\wedge\_\R e_r$ to $e_1\wedge\dots\wedge e_r$, which satisfies Definition \ref{orr}. Conversely, given an orientation $o$ in the sense of \eqref{OR}, its real part $o\_{\;\R}$ under $\Lambda^rE\cong\Lambda^r_\R E_\R\,\oplus\,i\Lambda^r_\R E_\R$ gives a classical orientation on $E_\R$ --- an element of $\Lambda_\R^rE_\R$ satisfying $o_{\;\R}^{\otimes2}\mapsto(-1)^{\frac{r(r-1)}2}$ under
$$
\Lambda_\R^rE_\R\otimes\Lambda_\R^rE_\R\rt{1\otimes \Lambda_\R q\_\R}\Lambda_\R^rE_\R\otimes\Lambda_\R^rE^*_\R\rt{\ \eqref{pairA}_\R\ }\cO_Y.
$$

\subsection{Maximal isotropic subbundles} Write $r$ as either $2n$ or $2n+1$. We call an algebraic subbundle $\Lambda\subset(E,q)$ \emph{isotropic} if $q|_\Lambda\equiv0$ and \emph{maximal isotropic} if it also has maximal rank $n$. The quadratic form gives a surjection $q\colon E/\Lambda\to\Ld$. When $r=2n$ and $\Lambda$ is maximal isotropic, we thus get a short exact sequence
\beq{LL*}
0\To\Lambda\To E\To\Ld\To0.
\eeq
Zariski locally we may pick a basis of sections $e_1,\dots,e_n$ for $\Lambda$ with dual basis $f_1,\dots,f_n$ for $\Ld$. By a version of the Gram-Schmidt process (without square roots!)\footnote{Replace any lift of $f_1$ to $E$ by $f_1-\frac12q(f_1,f_1)e_1$ to make it isotropic. Do the same for $f_2$, then replace it by $f_2-q(f_1,f_2)e_1$ to ensure it is orthogonal to $f_1$. Etc.\label{GStriv}} we can lift the $f_i$ to $E$ in such a way that their span $\Ld\subset E$ is also maximal isotropic. Thus, Zariski locally, \eqref{LL*} splits and $E$ is trivial with basis of sections $e_1,\dots,e_n,f_1,\dots,f_n$ satisfying
\beq{efij}
q(e_i,e_j)\=0\=q(f_i,f_j), \qquad q(e_i,f_j)\=\delta_{ij}.
\eeq
When $r=2n+1$, instead $E/\Lambda$ sits inside an exact sequence
$$
0\To\Lambda^\perp/\Lambda\To E/\Lambda\To\Ld\To0,
$$
where $\Lambda^\perp/\Lambda$ is a line bundle with a nondegenerate quadratic form inherited from $q$. Working \'etale locally this time, we may choose an orthonormal section $e$ of $\Lambda^\perp/\Lambda$ and split as before to get an \'etale local basis of sections $e_1,\dots,e_n,f_1,\dots,f_n,e$ satisfying \eqref{efij} and
\beq{ije}
q(e,e_i)\=0\=q(e,f_i), \qquad\ q(e,e)=1.
\eeq
Thus we get normal forms \eqref{efij} and \eqref{ije} for $(E,q)$ which differ from the orthonormal form \eqref{o.n.}. (They correspond to writing the first $2n$ terms of the quadratic form $\sum_{i=1}^rx_i^2$ as $\sum_{i=1}^nx_iy_i$ by a change of basis.)

\subsection*{Signs of maximal isotropics}
Fix $r=2n$. Continuing with the same local basis $\{e_i,f_i\}$ from \eqref{efij}, let $\{E_i,F_i\}$ denote the dual basis of $E^*$. Then $q\colon E\to E^*$ maps $e_i\mapsto F_i,\,f_i\mapsto E_i$ so \eqref{or2} takes $(e_1\wedge f_1\wedge\dots\wedge e_n\wedge f_n)^{\otimes2}$ to
$$
(e_1\wedge f_1\wedge\dots\wedge e_n\wedge f_n)\otimes(F_1\wedge E_1\wedge\dots\wedge F_n\wedge E_n)\hspace{3cm}\vspace{-2mm}
$$$$
\hspace{1cm}\=(e_1\wedge f_1\wedge\dots\wedge e_n\wedge f_n)\otimes(F_n\wedge E_n\wedge\dots\wedge F_1\wedge E_1)\stackrel{\eqref{pairA}}{\Mapsto}1.
$$
So in this normal form, an orientation $o$ is one of $\pm i^ne_1\wedge f_1\wedge\dots e_n\wedge f_n$ whose square maps to $(-1)^n=(-1)^{\frac{r(r-1)}2}$ under \eqref{or2}.

\begin{Def}\label{Lor} Given a maximal isotropic subbundle $\Lambda\subset(E,q,o)$, define its sign $(-1)^{|\Lambda|}$  by $o=(-1)^{|\Lambda|}(-i)^ne_1\wedge f_1\wedge\dots\wedge e_n\wedge f_n$.

We call $\Lambda$ \emph{positive} if $(-1)^{|\Lambda|}=+1$ and \emph{negative} if $(-1)^{|\Lambda|}=-1$.
\end{Def}

Notice $\Lambda$ is isomorphic (as a real bundle) to $E_\R$ via the composition
\beq{LamER}
\Lambda\Into E\ \cong\ E_\R\oplus iE_\R\To E_\R,
\eeq
since $q$ is negative definite on $iE_\R$ and zero on $\Lambda$ so their intersection is $\{0\}$; see also \cite[Proposition 2.9]{CK}. The choice of sign in Definition \ref{Lor} ensures that $\Lambda$ is positive if and only if its real orientation (induced by its complex structure) agrees with the real orientation $o\_{\;\R}$ on $E_\R$ under \eqref{LamER}; see Proposition \ref{realiso} below.

It is easy to check that picking a different local basis $e_i$ for $\Lambda$ we get the same notion of sign. Replacing $o$ by $-o\,$ swaps signs. The sets of positive and negative maximal isotropic subspaces of $\C^{2n}$ (with its standard quadratic form) form the two connected components of the orthogonal Grassmannian $\operatorname{OGr}(n,2n)$.\medskip

Put another way, given a maximal isotropic $\Lambda\subset(E,q)$, the exact sequence \eqref{LL*} gives a canonical isomorphism
$$
\Lambda^{2n}E\ \cong\ \Lambda^n(\Lambda)\otimes\Lambda^n(\Ld)\ \cong\ \cO_Y,
$$
taking $(e_1\wedge\dots\wedge e_n)\otimes(f_n\wedge\dots\wedge f_1)$ in the central term to 1 in $\cO_Y$ according to our convention \eqref{pairA}. The $2n$-forms corresponding to $\pm i^n\in\Gamma(\cO_Y)$ are orientations in the sense of Definition \ref{orr}. Choosing
\beq{specify}
(-i)^n\ \in\ \Gamma(\cO_Y)\ \cong\ \Gamma(\Lambda^{2n}E)
\eeq
defines an orientation 
$(E,q,o)$ with respect to which $\Lambda$ is positive.
In particular the existence of a maximal isotropic reduces the structure group of $(E,q)$ from $O(2n,\C)$ to $SO(2n,\C)$. (In fact more is true: $(E,q)$ is Zariski locally trivial, as we showed in \eqref{efij}.) \medskip

So on $\C^2$ with its standard quadratic form $z_1^2+z_2^2$ and orientation ${1\choose0}\wedge{0\choose 1}$ our conventions are that $\Lambda_+=\langle{1\choose-i}\rangle$ is positive and $\Lambda_-=\langle{1\choose i}\rangle$ is negative. Let $\R^2\subset\C^2$ be the standard maximal positive definite real subspace Im$\,z_i=0$. From the orientation on $\C^2$ it inherits the real orientation ${1\choose0}\wedge\_\R{0\choose 1}$.

Under \eqref{LamER} we see ${1\choose-i}$ and $i{1\choose-i}$ map to ${1\choose0}$ and ${0\choose 1}$ respectively. So \eqref{LamER} preserves orientations. Taking products of this standard model proves the following.

\begin{Prop}\label{realiso} Fix an $SO(2n,\C)$ bundle $(E,q,o)$. A maximal isotropic $\Lambda\subset E$ is positive if and only if the orientation it induces on $E_\R$ is $o\_{\;\R}.\hfill\qed$
\end{Prop}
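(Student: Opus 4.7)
\noindent\emph{Proof plan.} Both sides of the equivalence are pointwise, fiberwise conditions that vary continuously in families (a maximal isotropic $\Lambda$ stays in the same connected component of $\operatorname{OGr}(n,2n)$, and the orientation it induces on $E_\R$ varies continuously with the choice of $E_\R$). So it suffices to verify the statement at a single fiber, and moreover we may freely deform $E_\R$ through the contractible space of maximal positive definite real subbundles of $E$ to one most convenient for the computation.

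The plan is to reduce the general rank $2n$ statement to the rank $2$ case, which has already been carried out explicitly in the two paragraphs preceding the proposition. Fix $y\in Y$, and pick a local basis $e_1,f_1,\dots,e_n,f_n$ of $(E,q)$ adapted to $\Lambda$ as in \eqref{efij}, so that $\Lambda_y=\langle e_1,\dots,e_n\rangle$. This yields an orthogonal direct sum decomposition
\[
E_y\=H_1\oplus\cdots\oplus H_n, \qquad H_i\=\langle e_i,f_i\rangle,
\]
into rank-$2$ nondegenerate summands, each containing the rank-$1$ isotropic $\langle e_i\rangle$. Deforming $(E_\R)_y$ if necessary, we can arrange $(E_\R)_y=(E_\R)_1\oplus\cdots\oplus(E_\R)_n$ with $(E_\R)_i\subset H_i$ maximal positive definite real, and the induced splitting of the real orientation $o_\R$ is the one picked out by \eqref{specify} restricted to each $H_i$.

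Now I would verify two multiplicativity facts. First, Definition \ref{Lor} combined with the convention \eqref{specify} and the Koszul rule \eqref{ABBA} implies that the sign of $\Lambda_y$ with respect to $o$ is the product of the signs of $\langle e_i\rangle\subset H_i$ with respect to the induced orientation $o_i$ on $H_i$; the required computation is the identity $(-i)^n e_1\wedge f_1\wedge\cdots\wedge e_n\wedge f_n=\bigotimes_i(-i)\,e_i\wedge f_i$ under the isomorphism $\Lambda^{2n}E\cong\bigotimes_i\Lambda^2H_i$ from \eqref{A+B}. Second, the map \eqref{LamER} respects the decomposition, so the real orientation $\Lambda_y$ induces on $(E_\R)_y$ is the wedge (in order) of the orientations induced on each $(E_\R)_i$ by $\langle e_i\rangle$. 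Together these two facts show the global equivalence is just the product of the rank-$2$ equivalences.

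For each rank-$2$ piece $H_i$, the statement is exactly the standard model calculation on $\CC^2$ spelled out in the excerpt: with orientation $\binom10\wedge\binom01$, the positive isotropic is $\langle\binom1{-i}\rangle$, and \eqref{LamER} sends $\binom1{-i}\wedge_\R i\binom1{-i}$ to $\binom10\wedge_\R\binom01=o_\R$, while $\langle\binom1i\rangle$ is negative and reverses the real orientation. The main obstacle is the bookkeeping of Koszul signs through the iterated orthogonal decomposition --- in particular verifying that no spurious signs appear when permuting basis elements between the orderings $(e_1,f_1,\dots,e_n,f_n)$ and $(e_1,\dots,e_n,f_n,\dots,f_1)$ used in the conventions \eqref{pairA} and \eqref{adual}. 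This is mechanical but needs care, and the author's diagram \eqref{AB*BA} together with the commutativity statement following it is precisely what ensures that these signs cancel consistently.
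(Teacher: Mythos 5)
Your proof is correct and matches the paper's approach: the paper verifies the rank-$2$ case on $\C^2$ with its standard quadratic form and then states that "taking products of this standard model proves the following," which is exactly the reduction you carry out. What you add is the explicit spelling-out of the two multiplicativity checks (sign of $\Lambda$ under the hyperbolic-plane decomposition, and compatibility of \eqref{LamER} with that decomposition), both of which are clean precisely because each $H_i$ has even rank so no Koszul signs intervene.
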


\section{Localised Edidin-Graham class}\label{lEG}

Fix a quasi-projective scheme $Y$. 
Since $SO(r,\C)\supset SO(r,\R)$ is a homotopy equivalence, the Euler class $e(E_\R)\in H^r(Y,\Z)$ provides a topological characteristic class for $SO(r,\C)$ bundles $(E,q,o)$ over $Y$. We call it a ``\emph{square root Euler class}" $\sqrt e\;(E)$ since\footnote{An oriented basis $e_1,\dots,e_r$ for $E_\R$ gives a real basis $e_1,\dots,e_r,ie_1,\dots,ie_r$ for $E=E_\R\oplus iE_\R$. The induced orientation differs by $(-1)^{\frac12r(r-1)}$ from the natural real orientation induced by the complex structure on $E$, which has oriented real basis $e_1,ie_1,\dots,e_r,ie_r$.}
$$
e(E)\=(-1)^{\frac12r(r-1)}e(E_\R)^2.
$$
For the rest of this Section we fix $r=2n$. It is then natural to ask if $e(E_\R)$ lifts to the Chow cohomology group $A^n(Y,\Z)$. Field and Totaro \cite{Fi} proved it does not in general, but Edidin-Graham \cite[Theorem 3]{EG:Ch} proved that it \emph{does} lift if we use $\ZZ$ coefficients.

In place of the maximal real positive definite subbundle $E_\R\subset E$, Edidin-Graham use maximal isotropic subbundles $\Lambda\subset E$. Since these need not exist on $Y$ in general, they pull back to a bundle $\wt Y\to Y$ on which they do (cf. the splitting principle). Pulling back to $\wt Y$ is what loses the 2-primary information, forcing us to invert 2.

\subsection{Edidin-Graham square root Euler class} \label{EdidinGraham}
Given an $SO(2n,\C)$ bundle $(E,q,o)$ we let $\rho\colon\wt Y\to Y$ be the bundle of length $n-1$ isotropic flags in $E$ \cite[Section 6]{EG:Ch}. So as a set,
\beq{cover}
\wt Y\ :=\ \big\{(y,E_1\subset\dots\subset E_{n-1}\subset E_y)\ \colon\,y\in Y,\ \dim E_i=i,\ q|\_{E_i}\equiv0\big\}.
\eeq
Let $\cE_i\subset\rho^*E$ denote the tautological isotropic subbundle of rank $i$. Then $\cE_{n-1}^\perp/\cE_{n-1}$ inherits an orthogonal structure from $q$.  Via the composition 
$$
\rho^*\Lambda^{2n}E\,\cong\,\Lambda^2\(\cE_{n-1}^\perp/\cE_{n-1}\)\otimes\Lambda^{n-1}\cE_{n-1}\otimes\Lambda^{n-1}(\cE_{n-1}^*)\xrightarrow[\raisebox{.5ex}{$\sim$}]{\eqref{pairA}}\Lambda^2\(\cE_{n-1}^\perp/\cE_{n-1}\)
$$
the trivialisation $i^{n-1}\rho^*(o)$ of the left hand side induces an orientation on the right hand side. Thus $\cE_{n-1}^\perp/\cE_{n-1}$ is an $SO(2,\C)$ bundle, and so splits as a direct sum of isotropic line bundles $L\oplus L^{-1}$, with $L$ positive and $L^{-1}$ negative. (Locally this is just the observation that the quadratic form $xy$ on $\C^2$ admits precisely two isotropic lines $x=0$ and $y=0$; globally we use the orientation to choose the positive line and call it $L$.) 

Pulling $L\subset\cE_{n-1}^\perp/\cE_{n-1}$ back to $\cE_{n-1}^\perp$ defines a positive maximal isotropic{\footnote{Picking local bases $e_1,...,e_{n-1}$ for $\cE_{n-1}$ and $e_n$ for $L$ with the dual bases $f_1,...,f_{n-1}$ for $\cE^*_{n-1}$ and $f_n$ for $L^*$, \eqref{pairA} takes $-i\;e_1\wedge f_1\wedge\dots\wedge e_n\wedge f_n$ to $-ie_n\wedge f_n$. The positivity of $L$ implies then $i^{n-1}\rho^*(o)=-i\;e_1\wedge f_1\wedge\dots\wedge e_n\wedge f_n$, which tells us $\Lambda_\rho$ is positive.}
$$
\Lambda_\rho\ :=\ L+\cE_{n-1}\ \subset\ \cE_{n-1}^\perp\ \subset\ \rho^*E
$$
and Edidin-Graham prove that $c_n(\Lambda_\rho)$ descends to $Y$ if we invert 2.

More precisely, there exists a distinguished class with degree $2^{n-1}$ \mbox{over $Y,$}
\beq{hdef}
h\ \in\ A^{n(n-1)}\(\wt Y,\Z\)\ \text{ with }\ \rho_*\;h\=2^{n-1}
\eeq
by \cite[Proposition 5]{EG:Ch}. Using this Edidin-Graham define
\beq{pushy}
\sqrt e\;(E)\ :=\ \frac1{2^{n-1}}\rho_*\(h\cup c_n(\Lambda_\rho)\)\ \in\ A^n\(Y,\textstyle{\Z\big[\frac12\big]}\)
\eeq
and show it satisfies
$$
\rho^*\sqrt e\;(E)\=c_n(\Lambda_\rho).
$$
Moreover it is the unique class with this property since $\rho^*$ is injective with left inverse $2^{1-n}\rho_*(h\cup\ \cdot\ )$ on $A^*\(\ \cdot\ ,\Z\big[\frac12\big]\)$. The exact sequence $0\to\Lambda_\rho\to\rho^*E\to\Lambda_\rho^*\to0$ gives $\rho^*\;c_{2n}(E)=c_n(\Lambda_\rho)c_n(\Lambda_\rho^*)=(-1)^nc_n(\Lambda_\rho)^2$, so
$$
e(E)\=(-1)^n\(\sqrt e\;(E)\)^2.
$$
Moreover, since the composition $\Lambda_\rho\subset\rho^*E\to\rho^*E_\R$ preserves orientations, $c_n(\Lambda_\rho)=e(E_\R)$ in cohomology, so
$$
\sqrt e\;(E)\=e(E_\R)\,\text{ in }\,H^{2n}\(Y,\textstyle{\Z\big[\frac12\big]}\).
$$
Finally if $E$ (rather than its pullback $\rho^*E$) admits a maximal isotropic $\Lambda\subset E$ then by \cite[Theorem 1(c)]{EG:Ch} we have
\beq{cL}
c_n(\Lambda)\=(-1)^{|\Lambda|}\sqrt e\;(E).
\eeq
As a consequence, given two oriented orthogonal bundles $(E_i,q_i,o_i)$ we can compute $\sqrt e\(E_1\oplus E_2)$ using the positive maximal isotropic $\Lambda_{\rho_1}\oplus\Lambda_{\rho_2}$ on the bundle $\wt Y:=\wt Y_1\times_Y\wt Y_2$ over $Y$. Here $\rho_i\colon\wt Y_i\to Y$ is the cover associated to $E_i$. Since $e\;(\Lambda_{\rho_1}\oplus\Lambda_{\rho_2})=e\;(\Lambda_{\rho_1})\;e\;(\Lambda_{\rho_2})$ on $\wt Y$ we find a Whitney sum formula
\beq{Whitney}
\sqrt e\;(E_1\oplus E_2)\=\sqrt e\;(E_1)\sqrt e\;(E_2).
\eeq

Given an isotropic subbundle $K\subset E$ of an orthogonal bundle $(E,q)$, a standard operation is to take its reduction
\beq{reduc}
K^\perp/K.
\eeq
Locally $E=K^\perp/K\oplus(K\oplus K^*)$, and we can give the orthogonal bundle $K\oplus K^*$ the standard orientation of \eqref{specify} so that $K\into K\oplus K^*$ is a positive maximal isotropic. Thus an orientation on $E$ then induces an orientation on $K^\perp/K$. Let $\rho\colon\wt Y\to Y$ be the bundle of isotropic flags for $K^\perp/K$, with positive maximal isotropic $\Lambda_\rho\subset K^\perp/K$. Then we get an induced positive maximal isotropic
\beq{17}
\Lambda\ :=\ \Lambda_\rho\times\_{K^\perp/K}K^\perp\ \subset\ \rho^*E
\eeq
sitting in an exact sequence $0\to K\to\Lambda\to\Lambda_\rho\to 0$. This gives $c_n(\Lambda)=c_k(K)\;c_{n-k}(\Lambda_\rho)$ where $k:=\rk K$, and so
\beq{Kred}
\sqrt e\;(E)\=\sqrt e\(K^\perp/K\)\;e\;(K).
\eeq

\subsection{Localisation by an isotropic section}\label{lociso}
Let $(E,q,o)$ be an oriented $SO(2n, \CC)$-bundle over $Y$ and suppose $s\in\Gamma(E)$ is an isotropic section: $q(s,s)=0$.
Let $i\colon Z(s)\into Y$ denote its zero scheme. In this section, we will construct a \emph{localised} square root Euler class --- an operator\footnote{In fact it can be constructed as a \emph{bivariant class} \cite[Chapter 17]{F} $\sqrt e\;(E,s)\in A^n\(i\colon Z(s)\to Y, \ZZ\)$ whose composition with $i_*$ gives the bivariant class $\sqrt e\;(E)\in A^n\(\id\colon Y\to Y, \ZZ\)$, but we will not strictly need this language.}
\begin{align*}
\sqrt e\;(E, s)\ \colon\, A_*\(Y,\ZZ\) \To  A_{*-n}\(Z(s),\ZZ\) 
\end{align*}
such that
$$
i_*\circ\sqrt e\;(E,s) \=  \sqrt e\;(E)\cap(\ \cdot\ ).
$$

\subsection*{Special case} To begin with we suppose that $E$ admits a maximal isotropic $\Lambda\subset E$. Here we can construct a localised class with integer coefficients. By \cite{KO} it coincides, over $\Q$ at least, with the one constructed by Polishchuk-Vaintrob \cite{PV:A} by different methods.

We have the exact sequence
\beq{LLs}
0\To\Lambda\To E\rt\pi\Ld\To0.
\eeq
By \eqref{cL} we know $(-1)^{n+|\Lambda|}\sqrt e\;(E)=(-1)^nc_n(\Lambda)=c_n(\Ld)$, which is represented by the Fulton-MacPherson intersection of the graph of $s^*:=\pi(s)\in\Gamma(\Ld)$ with the 0-section of $\Lambda^*$.

In turn this is described by first linearising the graph $\Gamma_{\!s\ast}\subset\Ld$ about the zero locus $Z^*$ of $s^*$, replacing it with the cone
$$
C_{Z^*/Y}\ \subset\ \Ld\big|_{Z^*}.
$$
This is the limit\footnote{As a subscheme of the total space $\Ld$. That is, we are taking the unique limit in the Hilbert scheme of $\Ld$.} of the graphs $\Gamma_{ts\ast}\subset\Ld$ as $t\to\infty$; see \cite[Remark 5.1.1]{F} for instance.

Intersecting $C_{Z^*/Y}$ with the zero section of $\Ld|_{Z^*}$ defines the Fulton-MacPherson localisation of $c_n(\Ld)$ to $Z^*$. But we would like to localise it further to $Z(s)\subset Z^*$ by using the ``\emph{other half}" of the section $s$.

Note that on restriction to $Z^*$ the section $s\in\Gamma(E)$ factors through $\Lambda|_{Z^*}$, thus defining a map of bundles over $Z^*$
\beq{function}
\wt s\ \colon\,\Ld\big|_{Z^*}\To\cO_{Z^*}
\eeq
with zero scheme $Z(s)\subset Z^*$. In the language of \cite{KL}, \eqref{function} is a \emph{cosection} of $\Ld$. By Lemma \ref{Lem:Zero} below, the fact that $s$ is isotropic forces $\wt s$ to be identically zero on $C_{Z^*/Y}\subset\Ld|_{Z^*}$. Therefore, by \cite[Proposition 1.3]{KL}, the intersection of $C_{Z^*/Y}$ with the zero section $0_{\Ld|_{Z^*}}$ can be \emph{cosection localised} to the zero locus $Z(s)$ of $\wt s$ by an operator
\beqa
0^{\;!,\,\mathrm{loc}}_{\Ld,\,\wt s}\ \colon\, A_*\(C_{Z^*/Y}\)\To A_{*-n}\(Z(s)\)
\eeqa
sitting in the following commutative diagram with the Fulton-MacPherson Gysin operator $0^{\;!}_{\Ld}$,
$$
\xymatrix@R=18pt@C=30pt{
A_*(Y) \ar[r]& A_*\(C_{Z^*/Y}\) \ar[r]^{0^{\;!,\,\mathrm{loc}}_{\Ld,\,\wt s}}\ar[d]& A_{*-n}\(Z(s)\) \ar[d]\ar[dr]^{i_*} \\
& A_*\(\Ld\big|_{Z^*}\) \ar[r]^{0^{\;!}_{\Ld}}& A_{*-n}(Z^*) \ar[r]& A_{*-n}(Y).\!}
$$
Here the vertical arrows, and the arrows in the triangle, are the obvious pushforward maps. Letting the first horizontal arrow be the specialisation map $W\mapsto C_{W\cap\; Z^*/W}$ of \cite[Proposition 5.2]{F}, the composition right-down-right-right $A_*(Y)\to A_{*-n}(Y)$ is cap product with $c_n(\Lambda^*)=(-1)^{n+|\Lambda|}\sqrt e\,(E)$ by \cite[Example 6.3.4]{F}.
Therefore denoting the horizontal composition across the top of the diagram by $(-1)^{n+|\Lambda|}\sqrt e\;(E,s,\Lambda)$ defines an operator
\beq{eLam}
\sqrt e\;(E,s,\Lambda)\ \colon\, A_*(Y)\To A_{*-n}\(Z(s)\)
\eeq
such that
\beq{I*}
i_*\circ \sqrt e\;(E,s,\Lambda)\=\sqrt e\;(E)\cap(\ \cdot\ ).
\eeq

\begin{Lemma} \label{Lem:Zero}
The cosection $\wt s$ of \eqref{function} is zero on $C_{Z^* / Y}\subset\Ld|_{Z^*}$.
\end{Lemma}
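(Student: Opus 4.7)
The plan is to reduce everything to a local computation. First I would work Zariski locally on $Y$ where $E$ admits a complementary maximal isotropic $\Lambda' \subset E$; such a $\Lambda'$ exists Zariski locally by the Gram--Schmidt argument in footnote~\ref{GStriv}. This gives a splitting $E = \Lambda \oplus \Lambda'$, and the projection $\pi$ restricts to an isomorphism $\Lambda' \xrightarrow{\sim} \Ld$. I would decompose $s = s_\Lambda + s^*$ with $s_\Lambda \in \Gamma(\Lambda)$ and $s^* \in \Gamma(\Lambda') \cong \Gamma(\Ld)$. Since both $\Lambda$ and $\Lambda'$ are isotropic, the hypothesis $q(s,s) = 0$ collapses to the single identity $q(s_\Lambda, s^*) = 0 \in \cO_Y$.

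Next I would extend the cosection $\wt s$ to a function on all of $\Ld|_Y$: define
\[
F \colon \Ld \To \cO_Y, \qquad F(v) \;:=\; q(s_\Lambda, \tilde v),
\]
where $\tilde v \in \Lambda'$ is the unique lift of $v \in \Ld$ through the local splitting. On $Z^*$ we have $s^* = 0$, so $s = s_\Lambda$ there, and $F|_{Z^*}$ agrees with $\wt s$. The identity $q(s_\Lambda, s^*) = 0$ says exactly that $F$ vanishes identically on the graph $\Gamma_{s^*} \subset \Ld$; by linearity in $v$, it vanishes on every rescaled graph $\Gamma_{ts^*}$.

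Since the normal cone is the Hilbert-scheme limit $C_{Z^*/Y} = \lim_{t \to \infty} \Gamma_{ts^*}$ (as in \cite[Remark 5.1.1]{F}), the vanishing of the algebraic function $F$ on each graph forces its vanishing on the limit. To make this rigorous at the scheme level I would verify it through the Rees-algebra presentation of the cone: choosing local bases $e_1, \dots, e_n$ of $\Lambda$ with dual lifts $f_1, \dots, f_n$ in $\Lambda'$, write $s_\Lambda = \sum h_i e_i$ and $s^* = \sum g_i f_i$, so that $\cI_{Z^*} = (g_1, \dots, g_n)$. In the fibre coordinates $x_i$ on $\Ld$ dual to the $f_i$, the cosection $\wt s$ corresponds to the polynomial $\sum h_i x_i \in (A / \cI_{Z^*})[x_1, \dots, x_n]$, while the embedding $C_{Z^*/Y} \hookrightarrow \Ld|_{Z^*}$ is cut out by the kernel of the surjection $(A/\cI_{Z^*})[x_1, \dots, x_n] \to \mathrm{gr}_{\cI_{Z^*}}(A)$ sending $x_i \mapsto g_i$. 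The image of $\wt s$ under this map is $\sum h_i g_i \in \cI_{Z^*}/\cI_{Z^*}^2$, which vanishes because $\sum h_i g_i = 0 \in A$ by the isotropy of $s$.

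The main obstacle is really only bookkeeping: organising the splitting consistently, tracking the identification of $\Lambda^*$ with $\Ld$ through $q$, and unwinding the Rees-algebra description of the embedding $C_{Z^*/Y} \hookrightarrow \Ld|_{Z^*}$. There is no deeper geometric difficulty, since the whole argument ultimately pivots on the single consequence $q(s_\Lambda, s^*) = 0$ of the isotropy condition.
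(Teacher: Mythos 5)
Your argument is correct and is essentially the paper's own proof: split $E\cong\Lambda\oplus\Ld$ locally, reduce isotropy to $\langle s_\Lambda,s^*\rangle=0$, view $s_\Lambda$ as a fibrewise linear function on $\Ld$ vanishing on each graph $\Gamma_{ts^*}$, and pass to the limit cone. Your Rees-algebra verification is just a more explicit algebraic rendering of that same limit step, and it checks out.
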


\begin{proof}
We may work locally, where \eqref{LLs} splits as in \eqref{efij} so that $E\cong\Lambda\oplus\Ld$ with the canonical quadratic form. Therefore $s=s\_\Lambda\oplus s^*$ also splits, and the isotropic condition $q(s,s)=0$ becomes
$$
\langle s^*,s\_\Lambda\rangle\=0.
$$
Multiplying by $t$ and considering $s\_\Lambda$ as a (fibrewise linear) function $\wt s\_\Lambda$ on the total space of $\Ld$, this says that
\beq{tiden}
\wt s\_\Lambda\big|_{\Gamma_{\!ts\ast}}\ \equiv\ 0.
\eeq
Since $\wt s\_\Lambda$ restricts on $\Ld|_{Z^*}$ to the function $\wt s$ of \eqref{function}, taking 
$\lim_{t\to\infty}$\;\eqref{tiden} gives $\wt s\;|\_{C_{Z^*/Y}}\equiv0$.
\end{proof}

\subsection*{General case} We can now define the localised square root Euler class in general by working on the cover $\rho\colon\wt Y\to Y$ \eqref{cover}. As before, to descend back to $Y$ via \eqref{pushy} we have to invert 2.

\begin{Def}\label{defdef} Given an isotropic section $s\in\Gamma(E)$ of an $SO(2n,\C$) bundle $(E,q,o)$ we define the localised square root Euler class
$$
\sqrt e\;(E,s)\ \colon\,A_*\(Y,\ZZ\)\To A_{*-n}\(Z(s),\ZZ\)
$$
by
\beq{rhoh}
\sqrt e\;(E,s)\ :=\ \frac1{2^{n-1}}\rho_*\(h\cup\sqrt e\;(\rho^*E,\rho^*s,\Lambda_\rho)\).
\eeq
\end{Def}

By \eqref{I*} and \eqref{pushy} this satisfies
\beq{push}
i_*\circ \sqrt e\;(E,s)\=\sqrt e\;(E)\cap(\ \cdot\ ).
\eeq

When $E$ admits a positive maximal isotropic $\Lambda\subset E$, the operators \eqref{eLam} and \eqref{rhoh} become equal on inverting 2,
\beq{EsEsL}
\sqrt e\;(E,s)\=\sqrt e\;(E,s,\Lambda).
\eeq
This can be shown by combining Lemmas \ref{newlem} and \ref{shr} below with the deformation of $\Gamma_{\!s}\subset E$ to $C_{Z(s)/Y}\subset E$ through $(\Gamma_{\!\;ts})_{\;t\in\C}$. Since we do not need it we omit the details, but see \cite[Theorem 5.2]{KP} for a complete proof.


\subsection{Localisation by an isotropic cone}
\label{Sect:IsoCone}

We continue with an $SO(2n,\C)$ bundle $(E,q,o)$ over $Y$. Given an isotropic section we have described how to localise $\sqrt e\;(E)$ to $Z(s)$. In our application, $Z(s)$ will provide a local model for the moduli space of sheaves on a Calabi-Yau 4-fold. We will not be able to see $E,\,Y$ or $s$, but the obstruction theory on the moduli space will enable us to see the limiting data\footnote{Note this is a \emph{different} cone from the one $C_{Z^*/Y}$ considered in the previous Section.} $C_{Z(s)/Y}\subset E|_{Z(s)}$. So we would like to make sense of the idea that the cone $C_{Z(s)/Y}$ should be isotropic in $E|_{Z(s)}$, and recover the localised operator $\sqrt e\;(E,s)$ from this data alone.\medskip

So suppose given the data of a subscheme $Z\subset Y$ and a cone $C\subset E|_Z$ supported on $Z$. We call $C$ isotropic if $q$, thought of as a function on the total space of $E$ (quadratic on the fibres), vanishes on the subscheme $C$.

Letting $p\colon E\to Y$ be the projection map from the total space, and denoting the tautological section of $p^*E$ by
$$
\tau\_E\ \in\ \Gamma(E,p^*E),
$$
we see that $C$ is isotropic if and only if $\tau\_E|\_C$ is an isotropic section of $p^*E|_C$.

\begin{Def}\label{IsoCone}
For an isotropic cone $C\subset E|_Z$ we define the {\em square root Gysin map} by 
\beq{sqGy}
\sqrt{0_E^{\;!}}\ :=\ \sqrt e\(p^*E\big|_C\;,\,\tau\_E\big|_C\)\ \colon\, A_*\(C,\ZZ\) \To
A_{*-n} \(Z, \ZZ\), 
\eeq
noting that the support $Z$ of the cone is the zero locus of $\tau\_E\big|_C$.
\end{Def}

In the special case that the isotropic cone $C\subset E$ factors through a maximal isotropic subbundle $\Lambda\subset E$, this operator is familiar. It will follow from Lemma \ref{Schwantz} below that it is
$$
\sqrt{0_E^{\;!}}\=(-1)^{|\Lambda|}0_\Lambda^{\;!}\ \colon\,A_*\(C,\ZZ\) \To
A_{*-n} \(Z, \ZZ\).
$$
First we need a preliminary result giving an expression for $\surd\;0_E^{\;!}$ that does not use cosection localisation. It is a square-rooted version of \cite[Proposition 3.3]{F}. Using the zero section and projection
$$
\xymatrix@C=30pt{
Z\ \ar@{^(->}[r]<.4ex>^{0_C}& \,C, \ar@{->>}[l]<0.3ex>^{\pi}}
$$
let's pretend for a minute that $\pi$ is proper so that $\pi_*\;0_{C*}=\id$ and $0_{C*}$ does not lose any information. This is useful because after pushing forward, our cosection localised operator becomes the usual Edidin-Graham class, so
\beq{fant}
\sqrt{0_E^{\;!}}\,a\=\pi_*\;0_{C*}\(\sqrt e\(\pi^*E,\tau\_E\big|_C\)a\)\ 
\stackrel{\eqref{push}}=\ \pi_*\(\sqrt e\;(\pi^*E)\cap a\).
\eeq
We can turn this fantasy into reality by replacing $\pi\colon C\to Z$ by its projective completion\footnote{To define $\overline C=\PP(C\oplus\cO_Z)$ write $C=\Spec A\udot$ for some positively graded algebra $A\udot$. Take its graded tensor product with $B\udot:=\bigoplus_{i\ge0}\cO_Z$ and set $\overline C:=\Proj(A\udot\otimes B\udot)$.\label{fnC}} $\overline\pi\,\colon\overline C\to Z$ and extending the pair $(\pi^*E,\tau\_E|\_C)$ of an orthogonal bundle and an isotropic section from $C$ to $\overline C$.

To do this we give $\overline\pi^*(E\oplus\cO_Z\oplus\cO_Z)$ the quadratic form $\overline\pi^*q\oplus(xy)$, in the obvious notation. This makes $\cO_{\overline C}(-1)$ an isotropic subbundle orthogonal to $\{0\}\oplus\cO_Z\oplus\{0\}$. Thus on $\overline C$ we get the data of
\begin{enumerate}
\item[(i)] an orthogonal bundle $\overline E:=\cO(-1)^\perp/\cO(-1)$,
\item[(ii)] an isotropic section $\overline\tau$ of $\overline E$ given by the image of the section $(0,-1,0)$ of $\overline\pi^*(E\oplus\cO_Z\oplus\cO_Z)$, such that
\item[(iii)] the zero locus of $\overline\tau$ is the 0-section $0_{\overline C}\;$, and
\item[(iv)] on the cone $j\colon C\subset\overline C$ the pair $\(\;\overline E,\overline\tau\)$ restricts to $\(\pi^*E,\tau\_E|\_C\)$.
\end{enumerate}
This is enough to revive our argument to get a formula for $\surd\;0_E^{\;!}$ like \eqref{fant} in terms of a global (rather than cosection localised) Edidin-Graham class.

\begin{Lemma}\label{glbl} Using the above notation, given $a\in A_*\(C,\ZZ\)$ choose any $\overline a\in A_*\(\overline C,\ZZ\)$ such that $j^*\;\overline a=a$. Then
\beq{globale}
\sqrt{0_E^{\;!}}\,a\=\overline\pi_*\big[\sqrt e\;(\overline E)\cap\overline a\big].
\eeq
\end{Lemma}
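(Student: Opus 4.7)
The plan is to apply the fundamental pushforward identity \eqref{push} for the localised square root Euler class on $\overline C$ and then match the result to $\sqrt{0_E^{\;!}}(a)$ by restriction along the open immersion $j\colon C\into\overline C$.

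By property (iii), the zero locus of $\overline\tau$ is precisely the zero section $i\colon Z\into\overline C$ of $\overline\pi$. Applying \eqref{push} to the isotropic pair $\(\overline E,\overline\tau\)$ on $\overline C$ gives
$$
i_*\circ\sqrt e\;\(\overline E,\overline\tau\)\=\sqrt e\;\(\overline E\)\cap(\ \cdot\ ).
$$
Since $\overline\pi\circ i=\id_Z$, pushing forward under $\overline\pi_*$ and evaluating at any lift $\overline a$ of $a$ yields
$$
\sqrt e\;\(\overline E,\overline\tau\)(\overline a)\=\overline\pi_*\big[\sqrt e\;\(\overline E\)\cap\overline a\big].
$$
So it remains to identify the left-hand side with $\sqrt{0_E^{\;!}}(a)=\sqrt e\;\(\pi^*E|_C,\tau\_E|_C\)(a)$, which is the content of Definition \ref{IsoCone}.

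For this I would invoke compatibility of the localised Edidin-Graham operator with restriction to open subschemes. The inclusion $j\colon C\into\overline C$ is open; by property (iv) it pulls $\(\overline E,\overline\tau\)$ back to $\(\pi^*E,\tau\_E|_C\)$; and the common zero locus $Z$ of the two isotropic sections lies entirely inside $C$. Thus for any lift $\overline a\in A_*\(\overline C,\ZZ\)$ of $a$, the cosection-localised intersection defining $\sqrt e\;\(\overline E,\overline\tau\)(\overline a)$ takes place in an infinitesimal neighbourhood of $Z\subset C$ and so agrees with the corresponding construction performed on $C$ applied to $a=j^*\overline a$. As a byproduct the right-hand side of \eqref{globale} is automatically independent of the chosen lift $\overline a$.

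The main obstacle is justifying this restriction compatibility rigorously. I would do so by unwinding Definition \ref{defdef}: the isotropic flag bundle $\rho\colon\wt Y\to Y$ of \eqref{cover} is functorial in $Y$, so it pulls back compatibly along $j$ together with its tautological positive maximal isotropic $\Lambda_\rho$ and the distinguished class $h$ of \eqref{hdef}; the cosection-localised Gysin operator of \cite[Proposition 1.3]{KL} used in \eqref{eLam} respects open immersions whenever the zero locus of the cosection is preserved; and the specialisation map to the normal cone of $Z(\pi(\overline\tau))$ is likewise compatible with open restriction. Assembling these ingredients gives the identity $\sqrt e\;\(\overline E,\overline\tau\)=\sqrt e\;\(\pi^*E|_C,\tau\_E|_C\)\circ j^*$ on the image of $j^*$, which completes the proof.
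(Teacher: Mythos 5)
Your proof is correct and follows essentially the same route as the paper: both rest on the observation that the zero locus of $\overline\tau$ lies in the open subscheme $j(C)\subset\overline C$, so the cosection-localised construction restricts compatibly along $j$ to give $\sqrt e\;(\overline E,\overline\tau)(\overline a)=\sqrt{0_E^{\;!}}\,a$, and then apply \eqref{push} together with $\overline\pi_*\;0_{\overline C*}=\id$. The paper records the restriction step tersely as ``immediate from the construction of the localised class''; your elaboration of why (functoriality of the flag bundle $\rho$, compatibility of specialisation and of Kiem--Li cosection localisation with open restriction when the zero locus is preserved) is a reasonable unpacking of the same point.
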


\begin{proof}
Since the zero locus $0_{\overline C}$ of $\overline\tau$ lies in the image of $j$ it is immediate from the construction of the localised class that
$$
\sqrt e\(\;\overline E,\overline\tau\)(\overline a)\=\sqrt e\;(j^*\overline E,j^*\overline\tau)(j^*\overline a).
$$
Since this is $\sqrt e\(\pi^*E,\tau\_E|\_C\)(a)=\sqrt{0_E^{\;!}}\,a$ we obtain
\[
\sqrt{0_E^{\;!}}\,a\=
\overline\pi_*\;0_{\overline C*}\sqrt e\(\overline E,\overline\tau\)(\overline a)\ 
\stackrel{\eqref{push}}=\ \overline\pi_*\big[\sqrt e\;(\overline E)\cap\overline a\big].\qedhere
\]
\end{proof}

Our first application is the following.

\begin{Lemma}\label{Schwantz} Suppose $\Lambda\subset E$ is a maximal isotropic subbundle. Thinking of it as an isotropic cone supported on $Y$, we have
\beq{Kevin34}
\sqrt{0_E^{\;!}}\=(-1)^{|\Lambda|}\;0_\Lambda^{\;!}\ \colon\,A_*\(\Lambda,\ZZ\) \To
A_{*-n} \(Y, \ZZ\).
\eeq
\end{Lemma}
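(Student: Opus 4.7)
The plan is to invoke Lemma~\ref{glbl} and reduce the identity to a concrete Chern class computation on the projective completion $\overline\pi\colon\overline\Lambda:=\PP(\Lambda\oplus\cO_Y)\to Y$. Since $\pi\colon\Lambda\to Y$ is a vector bundle, flat pullback $\pi^*\colon A_*(Y,\ZZ)\to A_{*+n}(\Lambda,\ZZ)$ is an isomorphism, so it suffices to verify the claimed equality on classes of the form $a=\pi^*b$, for which $0_\Lambda^{\;!}\,a=b$. Taking the natural extension $\overline a:=\overline\pi^*b$, Lemma~\ref{glbl} together with the projection formula gives
\[
\sqrt{0_E^{\;!}}\,\pi^*b \=\overline\pi_*\bigl[\sqrt e\;(\overline E)\cap\overline\pi^*b\bigr]\=\bigl(\overline\pi_*\sqrt e\;(\overline E)\bigr)\cdot b,
\]
so the lemma reduces to showing $\overline\pi_*\sqrt e\;(\overline E)=(-1)^{|\Lambda|}\in A^0(Y,\ZZ)$.

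To evaluate this pushforward I will exhibit a global maximal isotropic subbundle of $\overline E$. Recall that $\overline E=\cO(-1)^\perp/\cO(-1)$ inside $\overline\pi^*(E\oplus\cO_Y\oplus\cO_Y)$, with $\cO(-1)\subset\overline\pi^*(\Lambda\oplus\cO_Y)$ embedded via the first $\cO$-summand. The subbundle $\overline\pi^*(\Lambda\oplus\cO_Y)$ has rank $n+1$, contains $\cO(-1)$, and is isotropic for $\overline\pi^*q\oplus(xy)$ because $\Lambda$ is isotropic in $E$ and its second $\cO$-coordinate vanishes; hence the quotient
\[
\overline\Lambda' \,:=\, \overline\pi^*(\Lambda\oplus\cO_Y)\big/\cO(-1)
\]
is a maximal isotropic subbundle of $\overline E$, and moreover coincides with the universal rank-$n$ quotient bundle of the $\PP^n$-bundle $\overline\Lambda\to Y$. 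By~\eqref{cL}, $\sqrt e\;(\overline E)=(-1)^{|\overline\Lambda'|}c_n(\overline\Lambda')$; expanding $c_n(\overline\Lambda')=\sum_{i=0}^n\overline\pi^*c_i(\Lambda)\cdot h^{n-i}$ with $h:=c_1(\cO(1))$ and applying the standard Segre identity $\overline\pi_*h^{n-i}=s_{-i}(\Lambda\oplus\cO_Y)=[i=0]$ yields $\overline\pi_*c_n(\overline\Lambda')=1$.

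The main obstacle will be pinning down the sign, namely that $|\overline\Lambda'|=|\Lambda|$. This is a purely local sign check which I would split into three ingredients. First, the orientation on $E\oplus\cO_Y\oplus\cO_Y$ is the product of $o$ with the standard orientation on the hyperbolic plane $(\cO\oplus\cO,\,xy)$ making its first $\cO$-summand positive, so expanding both factors in a local basis adapted to $\Lambda$ as in Definition~\ref{Lor} shows $|\Lambda\oplus\cO_Y|=|\Lambda|$. Second, pullback along $\overline\pi$ preserves orientations, and hence the sign of a maximal isotropic. Third, for an isotropic subbundle $K\subset(F,q_F,o_F)$ contained in a maximal isotropic $\Lambda_0$, the induced orientation on $K^\perp/K$ from the construction preceding~\eqref{Kred} forces $(-1)^{|\Lambda_0/K|}=(-1)^{|\Lambda_0|}$, as one verifies by factoring the expression for $o_F$ along the local splitting $F=K^\perp/K\oplus(K\oplus K^*)$ with $K$ positive in $K\oplus K^*$. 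Applying these three facts to the filtration $\cO(-1)\subset\overline\pi^*(\Lambda\oplus\cO_Y)\subset\overline\pi^*(E\oplus\cO_Y\oplus\cO_Y)$ gives $|\overline\Lambda'|=|\Lambda|$ and completes the proof.
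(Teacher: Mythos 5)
Your proposal is correct and follows essentially the same route as the paper: both pass to the projective completion $\overline\Lambda=\PP(\Lambda\oplus\cO_Y)$, apply Lemma \ref{glbl}, and exhibit $\overline\pi^*(\Lambda\oplus\cO_Y)/\cO(-1)$ (the paper writes it as $T_{\overline\pi}(-1)$ via the relative Euler sequence) as a maximal isotropic of $\overline E$ extending $\pi^*\Lambda$, then invoke \eqref{cL}. The only divergence is the endgame: the paper evaluates $\overline\pi_*\big[e(T_{\overline\pi}(-1))\cap\overline a\big]$ for an arbitrary extension $\overline a$ by noting that the section $\overline\tau$ of $T_{\overline\pi}(-1)$ cuts out $0_{\overline\Lambda}$ transversally, whereas you first reduce to $a=\pi^*b$ by homotopy invariance and then compute $\overline\pi_*c_n(\overline\Lambda')=1$ by the Segre class identity; both are valid, and your sign analysis is at least as detailed as the paper's, which simply asserts $\pm=(-1)^{|\Lambda|}$.
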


\begin{proof}
We suppress the $\ZZ$ coefficients throughout. Applying the construction above to the isotropic cone $C=\Lambda\subset E$ gives us the data (i)-(iv) and a diagram of maps
$$
\xymatrix@=20pt{
\Lambda\ \ar[dr]<0.5ex>^\pi\ar@{^(->}[rr]^j&& \,\overline\Lambda \ar[dl]<-0.5ex>_{\overline\pi} \\
& Y.\! \ar@{^(->}[ul]<0.5ex>^{0\_\Lambda\!\!}\ar@{_(->}[ur]<-0.5ex>_{0_{\;\overline\Lambda}}}
$$
Moreover in this situation we can construct a maximal isotropic subbundle of $\overline E$ over $\overline\Lambda$ that restricts on $\Lambda\subset\overline\Lambda$ to $\pi^*\Lambda\subset\pi^*E$. We start with the maximal isotropic subbundle
$$
\overline\pi^*(\Lambda\oplus\cO_Y)\ \Into\ \overline\pi^*E\oplus\cO_{\overline\Lambda}\oplus\cO_{\overline\Lambda\,}.
$$
The tautological line bundle $\cO(-1):=\cO_{\PP(\Lambda\oplus\cO_Y)}(-1)$ 
is a line subbundle of both, and $\overline\pi^*(\Lambda\oplus\cO_Y)$ lies in its orthogonal $\cO(-1)^\perp$ in $\overline\pi^*E\oplus\cO_{\overline\Lambda}\oplus\cO_{\overline\Lambda}$. So dividing both by $\cO(-1)$ gives, by the relative Euler sequence, the following maximal isotropic subbundle of the orthogonal bundle $\overline E$,
\beq{oyluah}
T_{\overline\pi}(-1)\ \Into\ \overline E\=\cO(-1)^\perp\big/\cO(-1)\ \text{ on }\ \overline\Lambda.
\eeq
The section $(0,-1)$ of $\overline\pi^*(\Lambda\oplus\cO_Y)$ projects, under quotienting by $\cO(-1)$, to a  section $\overline\tau$ of $T_{\overline\pi}(-1)$. Its image in $\overline E$ is the isotropic section $\overline\tau$ of (ii).
It restricts over $\Lambda\subset\overline\Lambda$ to the Euler vector field on $\Lambda\to Y$, i.e. the tautological section $\tau\_\Lambda$ of $\pi^*\Lambda\cong T_\pi$. 

Furthermore $\overline\tau$ cuts out the zero section $0_{\overline\Lambda}\subset\overline\Lambda$, and transversally when thought of as a section of $T_{\overline\pi}(-1)$. Thus
\beq{oiler}
0\_{\overline\Lambda*}0_{\overline\Lambda}^{\;!}\=e\(T_{\overline\pi}(-1)\)\cap(\ \cdot\ ).
\eeq
Therefore, choosing any class $\overline a\in A_*(\overline\Lambda)$ such that $j^*\overline a=a$,
\begin{eqnarray*}
\sqrt{0_E^{\;!}}\,a &\stackrel{\eqref{globale}}=&
\overline\pi_*\big[\sqrt e\;(\overline E)\cap\overline a\big]\ \stackrel{\eqref{cL}}=\ \pm\;\overline\pi_*\(e\(T_{\overline\pi}(-1)\)\cap\overline a\) \\ \nonumber &\stackrel{\eqref{oiler}}=& \pm\;\overline\pi_*\;0\_{\overline\Lambda*}0_{\overline\Lambda}^{\;!}\,\overline a\ =\ \pm\;0_{\overline\Lambda}^{\;!}\,\overline a
\=\pm\;0_{j^*\overline\Lambda}^{\;!}(j^*\overline a)=\pm\;0_{\Lambda}^{\;!}a,
\end{eqnarray*}
where $\pm$ is the sign $(-1)^{|\Lambda|}$.
%
\end{proof}

Recall our definition \eqref{sqGy} $\surd\;{0_E^{\;!}}:=\sqrt e\(p^*E\big|_C\;,\,\tau\_E\big|_C\)$. If $E$ admits a maximal isotropic subbundle $\Lambda\subset E$ there is an obvious alternative definition using the operator $\sqrt e\(p^*E|\_C\;,\,\tau\_E|\_C,p^*\Lambda|\_C\)$ of \eqref{eLam}. Our second application of Lemma \ref{glbl} is to show that they're the same.

\begin{Lemma}\label{newlem} Suppose $E$ admits a maximal isotropic subbundle $\Lambda\subset E$ and an isotropic cone $C\subset E$. Let $\pi\colon C\to Z\subseteq Y$ denote the projection. Then
$$
\sqrt{0_E^{\;!}}\=\sqrt e\(\pi^*E,\,\tau\_E|\_C\;,\pi^*\Lambda\)\ \colon\, A_*\(C,\ZZ\) \To A_{*-n} \(Z, \ZZ\).
$$
\end{Lemma}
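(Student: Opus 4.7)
The plan is to apply Lemma \ref{glbl} to convert the left hand side into a computation on the projective completion $\overline\pi\colon\overline C\to Z$, and to construct a maximal isotropic subbundle $\overline\Lambda\subset\overline E$ over $\overline C$ extending $\pi^*\Lambda$, so that both sides can be compared via the same projective-completion framework.

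First, Lemma \ref{glbl} yields
$$
\sqrt{0_E^{\;!}}\,a \= \overline\pi_*\bigl[\sqrt e\,(\overline E)\cap\overline a\bigr]
$$
for any extension $\overline a\in A_*(\overline C,\ZZ)$ of $a$, where $\overline E=\cO(-1)^\perp/\cO(-1)$ is the orthogonal bundle on $\overline C$ from the construction preceding Lemma \ref{glbl}. The key step is then to construct a maximal isotropic subbundle $\overline\Lambda\subset\overline E$ on $\overline C$ whose restriction to the dense open $C\subset\overline C$ matches $\pi^*\Lambda\subset\pi^*E$ under the natural identification $\overline E|_C\cong\pi^*E$. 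A candidate is obtained from the rank-$(n{+}1)$ isotropic subbundle $\overline\pi^*(\Lambda\oplus\cO_Z)\subset\overline\pi^*(E\oplus\cO_Z\oplus\cO_Z)$ (using the third factor on which the hyperbolic form $xy$ is isotropic), by intersecting with $\cO(-1)^\perp$ and quotienting by $\cO(-1)$. A careful local analysis is needed to verify that the result is locally free of rank $n$ across the infinity divisor $\PP(C)\subset\overline C$, where the naive modification degenerates.

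Once $\overline\Lambda$ is constructed, with $(-1)^{|\overline\Lambda|}=(-1)^{|\Lambda|}$ by orientation tracking, \eqref{cL} gives $\sqrt e\,(\overline E)=(-1)^{|\Lambda|}c_n(\overline\Lambda)$ and hence
$$
\sqrt{0_E^{\;!}}\,a \= (-1)^{|\Lambda|}\,\overline\pi_*\bigl[c_n(\overline\Lambda)\cap\overline a\bigr].
$$
A parallel projective-completion argument for the right hand side -- extending the cone $C_{Z^*/C}$ and cosection $\wt s$ from \eqref{function} to $\overline C$ using the same $\overline\Lambda$ -- produces the same pushforward expression for $\sqrt e\,(\pi^*E,\tau_E|_C,\pi^*\Lambda)(a)$, completing the comparison.

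The main obstacle is the construction of $\overline\Lambda$ as a genuine subbundle across the infinity divisor $\PP(C)\subset\overline C$, particularly along $\PP(C\cap\Lambda)$ where $\cO(-1)$ and $\overline\pi^*\Lambda$ meet non-trivially. Handling this will likely require either a blow-up along $\PP(C\cap\Lambda)$ or a more canonical construction exploiting the Clifford-algebraic structure of $\overline E$. An alternative route avoiding this technicality is a rigidity argument showing the cosection-localised class of \eqref{eLam} to be independent of the choice of maximal isotropic subbundle used in its definition (with the built-in sign $(-1)^{|\Lambda'|}$ correctly tracking the two connected components of the orthogonal Grassmannian); such independence would let one compare $\Lambda_\rho$ and $\rho^*\pi^*\Lambda$ on the flag bundle cover from Definition \ref{defdef} and conclude by the projection formula using $\rho_*h=2^{n-1}$.
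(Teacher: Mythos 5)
Your proposal follows essentially the same route as the paper: pass to the projective completion via Lemma \ref{glbl}, extend the maximal isotropic, and identify both sides with the pushforward of $\sqrt e\;(\cdot)\cap\overline a$. The obstacle you flag is resolved in the paper exactly by the blow-up you anticipate --- $\pi^*\Lambda$ defines a section of the orthogonal Grassmannian bundle $\operatorname{OGr}(\overline E)\to\overline C$ over $C$, and the closure of this section is a blow-up $b\colon\wt C\to\overline C$ (away from $C$) on which the universal subbundle gives the extension $\wt\Lambda\subset b^*\overline E$ --- after which the comparison is immediate from \eqref{push} applied to $(\wt E,\wt\tau)$ and \eqref{I*} applied to $(\wt E,\wt\tau,\wt\Lambda)$, both yielding $\wt\pi_*\big[\sqrt e\;(\wt E)\cap\wt a\big]$.
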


\begin{proof}
Lemma \ref{glbl} rests on extending $(\pi^*E,\tau\_E|\_C)$ from $C$ to its projective completion $\overline C$, giving the orthogonal bundle and isotropic section $(\overline{E},\overline\tau)$. There may be no extension $\wt\Lambda\subset\overline E$ of the  maximal isotropic $\pi^*\Lambda\subset\pi^*E$, but there is one if we replace $\overline C$ by a certain blow up (away from $C$).

Namely, $\pi^*\Lambda\subset\pi^*E$ defines a section of the orthogonal Grassmannian bundle $\operatorname{OGr}(\overline E)\to\overline C$ over $C\subset\overline C$. Taking its closure defines the blow up $b\colon\wt C\to\overline C$ on which the universal subbundle  on $\operatorname{OGr}(\overline E)$ restricts to give the maximal isotropic $\wt\Lambda\subset b^*\overline E$.

So now replacing $C\stackrel j\Into\overline C\rt{\overline\pi}Z$ by $C\stackrel\jmath\Into\wt C\rt{\!\wt\pi:=b\circ\overline\pi\!}Z$, the zero section $0_{\overline C}$ by $0_{\wt C}$, the orthogonal bundle and isotropic section $(\overline{E},\overline\tau)$ by $(\wt{E},\wt\tau):=(b^*\overline{E},b^*\overline\tau)$,
and $\overline a$ by any $\wt a$ such that $\jmath^*\;\wt a=a$, the same proof gives
$$
\sqrt e\(\pi^*E,\,\tau\_E|\_C\)(a)\=\wt\pi_*\;0_{\wt C*}\big[\sqrt e\,(\wt E,\wt\tau)(\wt a)\big]\ 
\stackrel{\eqref{push}}=\ \wt\pi_*\big[\sqrt e\;(\wt E)\cap\wt a\big].
$$
But replacing \eqref{push} by \eqref{I*} we similarly get
$$
\sqrt e\(\pi^*E,\,\tau\_E|\_C,\pi^*\Lambda\)(a)\=\wt\pi_*\;0_{\wt C*}\big[\sqrt e\,(\wt E,\wt\tau,\wt\Lambda)(\wt a)\big]\ 
\stackrel{\eqref{I*}}=\ \wt\pi_*\big[\sqrt e\;(\wt E)\cap\wt a\big].
$$
Thus the left hand sides of these two expressions are equal.
\end{proof}

Our third application of Lemma \ref{glbl} is to derive a Whitney sum formula for the localised operators $\surd\;0_E^{\;!}$ by using the global Whitney sum formula \eqref{Whitney} for Edidin-Graham classes. From now on we work over all of $Y$ instead of specialising to the support $Z\subseteq Y$ of a cone, i.e. we set $Z=Y$.

Suppose $C_1,\,C_2$ are isotropic cones in $SO(2n_i,\C)$ bundles $E_1,\,E_2$ and set
$$
C\ :=\ C_1\oplus C_2\ \subset\ E_1\oplus E_2\ =:\ E.
$$
Let $p_i\colon E_i\to Y$ denote the projections from the total spaces. The total space of $E\to Y$ is the total space of $p_1^*E_2\to E_1$. So we may think of the isotropic cone $C\subset E$ as lying in $p_1^*E_2|_{C_1}$, with 0-section $C_1$, and form the composition
\beq{whit}
A_*\(C,\ZZ\)\rt{\surd\;0_{p\ast\!\!\!\_1E_2}^{\;!}}A_{*-n_2}\(C_1,\ZZ\)
\rt{\surd\;0_{E_1}^{\;!}}A_{*-n_1-n_2}\(Y,\ZZ\).
\eeq

\begin{Prop}\label{E1E2}
The composition \eqref{whit} is $\surd\;0_E^{\;!},$
$$
\sqrt{0_{E_1}^{\;!}}\circ\sqrt{0_{p_1^*E_2}^{\;!}}\=\sqrt{0_E^{\;!}}\ \colon\,A_*\(C,\ZZ\)\To A_{*-n_1-n_2}\(Y,\ZZ\).
$$
\end{Prop}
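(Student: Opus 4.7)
The idea is to apply Lemma~\ref{glbl} to each of the three operators and conclude via the global Whitney sum formula~\eqref{Whitney}. First, apply Lemma~\ref{glbl} separately to $C_i \subset E_i$ to obtain projective completions $\overline\pi_i \colon \overline C_i \to Y$ with orthogonal bundles $\cE_i$ and isotropic sections $\overline\tau_i$ whose zero loci are the zero sections $Y \into \overline C_i$ and whose restrictions to $C_i$ recover $(\pi_i^*E_i, \tau_{E_i}|_{C_i})$.

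Form the fibre product $\widetilde C := \overline C_1 \times_Y \overline C_2$ with projections $\widetilde p_i \colon \widetilde C \to \overline C_i$ and $\widetilde\pi := \overline\pi_1 \circ \widetilde p_1$. Then $\widetilde E := \widetilde p_1^*\cE_1 \oplus \widetilde p_2^*\cE_2$ is an orthogonal bundle on $\widetilde C$ and $\widetilde\tau := \widetilde p_1^*\overline\tau_1 + \widetilde p_2^*\overline\tau_2$ is an isotropic section cutting out the zero section $Y \into \widetilde C$. The open subscheme $C = C_1 \times_Y C_2 \subset \widetilde C$ contains this zero section, and $(\widetilde E, \widetilde\tau)|_C = (\pi^*E, \tau_E|_C)$. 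By the locality of the cosection-localised class (cf.\ the proof of Lemma~\ref{glbl}), for any $\widetilde a \in A_*(\widetilde C, \ZZ)$ restricting on $C$ to $a$ we have $\sqrt e\;(\widetilde E, \widetilde\tau)(\widetilde a) = \sqrt{0_E^{\;!}}\,a$. Pushing forward via the zero section $Y \into \widetilde C$ (which composes with $\widetilde\pi$ to $\id_Y$) and applying \eqref{push} and \eqref{Whitney} gives
$$
\sqrt{0_E^{\;!}}\,a \;=\; \widetilde\pi_*\bigl[\sqrt e\;(\widetilde E) \cap \widetilde a\bigr] \;=\; \widetilde\pi_*\bigl[\widetilde p_1^*\sqrt e\;(\cE_1) \cdot \widetilde p_2^*\sqrt e\;(\cE_2) \cap \widetilde a\bigr].
$$

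The projection formula along $\widetilde p_1$ rewrites the right-hand side as $\overline\pi_{1*}\bigl[\sqrt e\;(\cE_1) \cap \overline b\bigr]$, where $\overline b := \widetilde p_{1*}\bigl[\widetilde p_2^*\sqrt e\;(\cE_2) \cap \widetilde a\bigr] \in A_{*-n_2}(\overline C_1, \ZZ)$. Flat base change along the Cartesian square obtained from the open immersion $J_1 \colon C_1 \into \overline C_1$ and $\widetilde p_1$ identifies $J_1^*\overline b$ with $\sigma_*\bigl[\pr_2^*\sqrt e\;(\cE_2) \cap \widetilde a|_{C_1 \times_Y \overline C_2}\bigr]$, where $\sigma \colon C_1 \times_Y \overline C_2 \to C_1$ is the projection. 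Since the projective completion of $C \to C_1$ used in Lemma~\ref{glbl} to compute $\sqrt{0_{p_1^*E_2}^{\;!}}$ is precisely $C_1 \times_Y \overline C_2$, with associated orthogonal bundle $\pr_2^*\cE_2$ (the hyperbolic plane added in that construction only involves $E_2$-data pulled back from $\overline C_2$), this coincides with the formula Lemma~\ref{glbl} gives for $\sqrt{0_{p_1^*E_2}^{\;!}}\,a$. Hence $\overline b$ extends $\sqrt{0_{p_1^*E_2}^{\;!}}\,a$ from $C_1$ to $\overline C_1$, and Lemma~\ref{glbl} applied once more to $\sqrt{0_{E_1}^{\;!}}$ identifies $\overline\pi_{1*}\bigl[\sqrt e\;(\cE_1) \cap \overline b\bigr]$ with $\sqrt{0_{E_1}^{\;!}}\bigl(\sqrt{0_{p_1^*E_2}^{\;!}}\,a\bigr)$, completing the argument.

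The main technical point is the identification $\overline C_{/C_1} = C_1 \times_Y \overline C_2$ and the compatibility of the orthogonal bundle Lemma~\ref{glbl} assigns there with the pullback $\pr_2^*\cE_2$; once that is in hand, the Proposition reduces to the global Whitney sum formula combined with the projection formula and flat base change.
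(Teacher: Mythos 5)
Your proof is correct and follows essentially the same route as the paper: completing $C=C_1\oplus C_2$ inside $\overline C_1\times_Y\overline C_2$ with the orthogonal bundle $\overline E_1\oplus\overline E_2$ and section $(\overline\tau_1,\overline\tau_2)$, then combining the formula of Lemma \ref{glbl} with the Whitney sum formula \eqref{Whitney} and the projection formula, and finally recognising the inner pushforward as $\sqrt{0^{\;!}_{p_1^*E_2}}$ via Lemma \ref{glbl} again. The only cosmetic difference is that you push forward along $\widetilde p_1$ before restricting to $C_1$ (justified by flat base change), whereas the paper restricts to $\overline C_1\times_YC_2$ first; these are interchangeable.
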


\begin{proof}
Associated to the cones $C_i\subset E_i$ we have the data $(\overline C_i,\overline E_i,\overline \tau_i,j_i)$ from (i)-(iv) above. Thus we can complete $C=C_1\oplus C_2$ with $\overline C_1\times_Y\overline C_2$ instead of $\overline C$. On this we have the orthogonal bundle\footnote{We suppress some obvious pullback maps for the sake of clarity. The functoriality $\sqrt e\;(p^*F)=p^*\sqrt e\;(F)$ of the Edidin-Graham class means this adds no hidden dangers.} $\overline E_1\oplus \overline E_2$ and isotropic section $\overline\tau:=(\overline\tau_1,\overline\tau_2)$ with zero locus the 0-section $0_{\overline C_1\times_Y\overline C_2}$. Applied to these, the argument of Lemma \ref{glbl} then gives
$$
\sqrt{0_E^{\;!}}\,a\=(\overline\pi_1\times\_Y\overline\pi_2)_*\big[\sqrt e\;(\overline E_1\oplus\overline E_2)\cap\overline a\big]
$$
for any $\overline a\in A_*(\overline C_1\times_Y\overline C_2)$ whose pullback $(j_1\times_Yj_2)^*\,\overline a$ to $C_1\oplus C_2$ is $a$.
By \eqref{Whitney} and the projection formula this is
$$
\overline\pi_{1*}\left[\sqrt e\;(\overline E_1)\cap\overline\pi_{2*}\(\sqrt e\;(\overline E_1)\cap\overline a\)\right]\ \stackrel{\eqref{globale}}=\ \overline\pi_{1*}\left[\sqrt e\;(\overline E_1)\cap\sqrt{0^{\;!}_{\overline\pi_1^*E_2}}\,\wt a\right],
$$
where $\wt a$ is the restriction of $\overline a$ to $\overline C_1\times_Y C_2$. Finally applying \eqref{globale} to $\overline\pi_{1*}$ gives
\begin{equation*}
\sqrt{0_E^{\;!}}\,a\=\sqrt{0^{\;!}_{E_1}}\left(j_1^*\sqrt{0^{\;!}_{\overline\pi_1^*E_2}}\,\wt a\right)
\=\sqrt{0^{\;!}_{E_1}}\sqrt{0^{\;!}_{p_1^*E_2}}\,a.\qedhere
\end{equation*}
\end{proof}

A closely variant of this result is the following. Suppose we have\footnote{If $C$ is an isotropic \emph{subbundle} then $K\subset C$ implies $C\subset K^\perp$, but for cones this need not be true, e.g. if $K:=\{x=0\}$ in $C:=\{xy=0\}$ inside $\C^2$ with quadratic form $xy$.}
$$
K\,\subset\,C\,\subset\,K^\perp\,\subset\,(E,q,o)
$$
with $K$ a rank $k$ isotropic subbundle of a rank $2n$ oriented orthogonal bundle $(E,q,o)$ and $C$  an isotropic cone. Then $C$ descends to an isotropic cone
$$
C/K\ \subset\ K^\perp/K
$$
in the reduction \eqref{reduc} of $E$ by $K$. Using the orientation described after \eqref{reduc}, the formula \eqref{Kred} for the Edidin-Graham class of $K^\perp/K$ then gives the following. Note the projection $p\colon C\to C/K$ is flat, so $p^*$ is defined on cycles.

\begin{Prop}\label{KperpK} The square root Gysin classes of $E$ and its reduction $K^\perp/K$ are related by
$$
\sqrt{0_{K^\perp/K}^{\;!}}\=\sqrt{0_E^{\;!}}\circ p^*\ \colon\,A_*\(C/K,\,\ZZ\)\To A_{*+k-n}\(Y,\,\ZZ\).
$$
\end{Prop}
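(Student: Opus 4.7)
My plan is to mirror the proof of Proposition \ref{E1E2}: I will use Lemma \ref{glbl} to rewrite both sides as pushforwards of global Edidin-Graham classes, then invoke the reduction formula \eqref{Kred} applied to an isotropic subbundle of the extended orthogonal bundle.

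First, I would apply Lemma \ref{glbl} to the isotropic cone $C':=C/K\subset K^\perp/K$: this yields a projective completion $\overline{\pi'}\colon\overline{C'}\to Y$ together with an orthogonal extension $\overline{E'}$ of $\pi'^*(K^\perp/K)$ and an isotropic section $\overline\tau'$ of $\overline{E'}$ cutting out the zero section $0_{\overline{C'}}$. For any lift $\overline a'\in A_*(\overline{C'})$ of $a\in A_*(C')$,
$$
\sqrt{0^{\;!}_{K^\perp/K}}(a)\= \overline{\pi'}_*\bigl[\sqrt e\;(\overline{E'})\cap\overline a'\bigr].
$$

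Second, I would construct a parallel projective completion $\overline\pi\colon\overline C\to Y$ of $C\subset E$ admitting a compatible projection $\sigma\colon\overline C\to\overline{C'}$ restricting to $p\colon C\to C'$ on the open subset $C\subset\overline C$. The orthogonal extension $\overline E$ of $\sigma^*\overline{\pi'}^*E$ is built via the same $\cO(-1)$ construction used for $\overline{E'}$, with the tautological line $\cO(-1)\subset\overline{\pi'}^*(K^\perp/K)\oplus\cO^{\oplus2}$ lifted compatibly into $\overline{\pi'}^*E\oplus\cO^{\oplus2}$ via the inclusion $K^\perp\subset E$ (the lift is orthogonal to $\overline{\pi'}^*K$ because $K^\perp$ is, and because $\cO^{\oplus2}$ is orthogonal to $K$). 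By construction, $\overline K:=\sigma^*\overline{\pi'}^*K\subset\overline E$ is an isotropic subbundle with $\overline K^\perp/\overline K=\sigma^*\overline{E'}$, and $\overline\tau:=\sigma^*\overline\tau'$ lifts to an isotropic section of $\overline E$ with zero locus $0_{\overline C}$. Applying Lemma \ref{glbl} to $\overline C$ with $\overline b:=\sigma^*\overline a'$ (which lifts $p^*a$ along $C\subset\overline C$),
$$
\sqrt{0^{\;!}_E}(p^*a)\= \overline{\pi}_*\bigl[\sqrt e\;(\overline E)\cap\sigma^*\overline a'\bigr].
$$

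Third, the reduction formula \eqref{Kred} applied to $\overline K\subset\overline E$ gives $\sqrt e\;(\overline E)=\sqrt e\;(\overline K^\perp/\overline K)\cdot e(\overline K)=\sigma^*\sqrt e\;(\overline{E'})\cdot e(\overline K)$. The projection formula, combined with the fibrewise integration identity $\sigma_*\bigl(e(\overline K)\cap\sigma^*(-)\bigr)=(-)$ --- a relative version of the Euler-vector-field computation \eqref{oiler} from Lemma \ref{Schwantz}, obtained by arranging the relative compactification $\overline C\to\overline{C'}$ so that $\overline K$ plays the role of the maximal isotropic there --- then collapses the right-hand side:
$$
\overline\pi_*\bigl[\sqrt e\;(\overline E)\cap\sigma^*\overline a'\bigr]\= \overline{\pi'}_*\sigma_*\bigl[\sigma^*\bigl(\sqrt e\;(\overline{E'})\cap\overline a'\bigr)\cdot e(\overline K)\bigr]\= \overline{\pi'}_*\bigl[\sqrt e\;(\overline{E'})\cap\overline a'\bigr],
$$
matching the first display.

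The main obstacle will be the construction in the second paragraph: since $p\colon C\to C'$ is only a $K$-torsor (not a product), one must build $\overline C$ as a genuine relative projective completion over $\overline{C'}$ rather than as a fibre product, together with a globally consistent lift of $\cO(-1)$ yielding $\overline E$ with $\overline K\subset\overline E$ isotropic. The verification of the fibrewise integration identity is then a relative analogue of the calculation of $0_{\overline\Lambda*}0_{\overline\Lambda}^{\;!}$ that appears in Lemma \ref{Schwantz}, which is where the specific choice of compactification enters.
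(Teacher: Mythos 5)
Your overall strategy — rewrite both sides via Lemma \ref{glbl}, then reduce $\overline E$ by an isotropic subbundle $\overline K$ via \eqref{Kred} and collapse with a fibrewise Euler‐class integration — is exactly the paper's second proof (they also sketch an alternative by deforming $C\subset E$ to $C/K\oplus K\subset(K^\perp/K)\oplus(K\oplus K^*)$ and applying Proposition \ref{E1E2}). But your explicit definition $\overline K:=\sigma^*\overline{\pi'}^*K$ is wrong, and the error is fatal to the final step. First, $\Pi^*K$ does not embed as a subbundle of $\overline E=\cO_{\overline C}(-1)^\perp/\cO_{\overline C}(-1)$: over the boundary divisor of the $\PP^k$‐fibration $\sigma\colon\overline C\to\overline{C/K}$, the tautological line $\cO_{\overline C}(-1)$ lies inside $\Pi^*K$, so the composite $\Pi^*K\into\cO_{\overline C}(-1)^\perp\onto\overline E$ has a nontrivial kernel there. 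Second, even ignoring that, $e(\sigma^*\overline{\pi'}^*K)$ is pulled back from $\overline{C/K}$, so by the projection formula $\sigma_*\bigl(e(\sigma^*\overline{\pi'}^*K)\cap\sigma^*(-)\bigr)=e(\overline{\pi'}^*K)\cap\sigma_*\sigma^*(-)$, and $\sigma_*\sigma^*$ vanishes in the relevant degrees (it raises dimension by $k>0$). The fibrewise integration identity thus fails. You gesture at the right fix in your final paragraph — "$\overline K$ plays the role of the maximal isotropic" in Lemma \ref{Schwantz} — but that is inconsistent with the definition you wrote. The correct $\overline K$ is the twisted quotient
$$
\overline K\ :=\ \overline p^*\bigl(q^*\,\cO_{\overline{C/K}}(-1)\bigr)\big/\cO_{\overline C}(-1),
$$
a genuine rank‐$k$ subbundle of $\overline E$ that restricts to $K$ over $C$ but to $T_{\PP^k}(-1)$ on each fibre of $\sigma$ (via the relative Euler sequence). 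It is precisely this twist that makes $\overline p_*\,e(\overline K)=1$, which is what the fibrewise integration requires.
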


\begin{proof}
This result can be proved by deforming the pair ($C\subset E$) through (isotropic cones in orthogonal bundles) to $C/K\oplus K\subset(K^\perp/K)\oplus(K\oplus K^*)$. Then we can apply  Proposition \ref{E1E2}, which amounts to working in the compactification $\overline{C/K}\times\_Y\overline K$ of $C/K\oplus K$.

Alternatively we can generalise the proof of Proposition \ref{E1E2} by working in the right compactification of the undeformed $C$ from the beginning. It is locally isomorphic to $\overline{C/K}\times\_Y\overline K$ but is globally twisted.

We start with the projective completion $\overline\pi\colon\overline{C/K}\to Y$ of Footnote \ref{fnC} with the data (i)-(iv) over it. In particular we have the isotropic line subbundle $\cO_{\overline{C/K}}(-1)\into\overline\pi^*(K^\perp/K\oplus\cO\oplus\cO)$ and\vspace{-1mm} the orthogonal bundle $\overline{K^\perp/K}:=\cO_{\overline{C/K}}(-1)^\perp/\cO_{\overline{C/K}}(-1)$ extending $K^\perp/K$ over $C/K\subset\overline{C/K}$.
Via the surjection $K^\perp\onto K^\perp/K$ we get the Cartesian diagram
\beq{EKbar}
\xymatrix@=16pt{
q^*\;\cO_{\overline{C/K}}(-1)\ \ar@{^(->}[r]\ar@{->>}[d]& \ \overline\pi^*\(K^\perp\oplus\cO\)\ \ar@{->>}[d]^q\ar@{^(->}[r]& \ \overline\pi^*(E\oplus\cO\oplus\cO) \\
\cO_{\overline{C/K}}(-1) \ \ar@{^(->}[r]& \ \overline\pi^*\(K^\perp/K\oplus\cO\).}
\eeq
Our compactification of $C$ is the projective bundle\vspace{-1mm} $\overline C:=\PP(q^*\;\cO_{\overline{C/K}}(-1))$ over $\overline{C/K}$. The inclusion $C\subset\overline C$ takes $c\in C$ to the point $[c:1]$ in the projectivisation of $K^\perp\oplus\;\cO$ sat over the point $[${\footnotesize{[}}$c\;${\footnotesize{]}}$\,:1]\in\PP(C/K\oplus\;\cO_Y)=\overline{C/K}$. We use the projections
$$
\xymatrix{
\overline C \ar[r]^-{\overline p}& \overline{C/K} \ar[r]^{\overline\pi}& Y. \ar@{<-}@/^.8pc/[ll]+<5pt,-6pt>^-{\Pi\,=\,\overline\pi\;\circ\;\overline p}}
$$
As the projectivisation of a vector bundle, $\overline C:=\PP(q^*\;\cO_{\overline{C/K}}(-1))$ carries a tautological line bundle
\beqa 
\quad\cO_{\overline C}(-1)\Into\overline p^*\(q^*\;\cO_{\overline{C/K}}(-1)\)\Into\Pi^*\(K^\perp\oplus\cO\)\Into\Pi^*(E\oplus\cO\oplus\cO),
\eeqa
where we have applied $\overline p^*$ to \eqref{EKbar}. Since $\cO_{\overline C}(-1)$ is isotropic in $\Pi^*(E\oplus\cO\oplus\cO)$ we may define the orthogonal bundle
$$
\overline E\ :=\ \cO_{\overline C}(-1)^\perp\big/\;\cO_{\overline C}(-1)\ \text{ over }\,\overline C.
$$
The image of the section $(0,-1,0)$ of $\cO_{\overline C}(-1)^\perp\subset\Pi^*(E\oplus\cO\oplus\cO)$ is an isotropic section $\overline\tau$ of $\overline E$ which cuts out precisely the zero section $Y\into C\into\overline C$. Therefore \eqref{globale} applies again, giving, for $a\in A_*(C)$,
\beq{abc}
\sqrt{0^{\;!}_E}\,a\=\Pi_*\big[\sqrt e\(\;\overline E\;\)\cap\overline a\big],
\eeq
for any $\overline a\in A_*(\overline C)$ which restricts to $a$ on $C\subset\overline C$.
We can also define
$$
\overline K\ :=\ \frac{\overline p^*\(q^*\;\cO_{\overline{C/K}}(-1)\)}{\cO_{\overline C}(-1)}\ \subset\ \frac{\cO_{\overline C}(-1)^\perp}{\cO_{\overline C}(-1)}\=\overline E
$$
and check that over $C\subset\overline C$ this gives $K\subset E$. By elementary linear algebra
\beq{Kbarbar}
\overline K^\perp\!\big/\,\overline K\ \cong\ \overline p^*\,\overline{(K^\perp/K)},
\eeq
since being orthogonal to $q^*\;\cO_{\overline{C/K}}(-1)$ is equivalent to being orthogonal to $K$ and $\cO_{\overline{C/K}}(-1)$. Moreover, $\overline C/\overline K=\overline{C/K}$.

So now suppose $a=p^*b$ for $b\in A_*(C/K)$. Choose $\overline b\in A_*\(\overline{C/K}\)$ restricting to $b$ and take $\overline a=\overline p^*\overline b$. Then by \eqref{Kred} and \eqref{Kbarbar}, equation \eqref{abc} becomes
$$
\sqrt{0^{\;!}_E}\,p^*b\=\overline\pi_*\overline p_*\big[\overline p^*\sqrt e\(\overline{K^\perp/K}\)\;e\(\;\overline K\;\)\cap\overline p^*\overline b\,\big].
$$
Now $\overline p_*e\(\;\overline K\;\)$ is a constant which can be computed on any fibre of $\overline p$. Since each fibre is a projective space $\PP^{\;k}$ on which $\overline K$ is $T_{\PP^{\;k}}(-1)$ (cf. \eqref{oyluah}) we find the constant is 1. Therefore, by the projection formula,
\[
\sqrt{0^{\;!}_E}\,p^*b\=\overline\pi_*\big[\sqrt e\(\overline{K^\perp/K}\)\cap\overline b\,\big]\ \stackrel{\eqref{globale}}=\ \sqrt{0_{K^\perp/K}^{\;!}}\,b. \qedhere
\]
\end{proof}

Our final application of Lemma \ref{glbl} is to check that the square root Gysin operator $\surd\;0_E^{\;!}$ commutes with refined Gysin operators $f^{\;!}$. Suppose we have a Cartesian diagram with $f$ a regular embedding of codimension $d$,
$$
\xymatrix@R=16pt{
f'^*C \ar[r]^{f''}\ar[d]_{\pi'} & C \ar[d]^\pi \\
X' \ar[d]\ar[r]^{f'}& Y' \ar[d] \\
X \ar[r]^f& Y,\!}
$$
where $C$ is an isotropic cone in an $SO(2n,\C)$ bundle $(E,q,o)$ over $Y'$. Given $a\in A_*(C)$ we get the cycle $f^{\;!}a$ in the isotropic cone $f'^*C\subset f'^*E$.

\begin{Lemma}\label{shr}
In the above notation we have
\beq{Eshriek}
f^{\;!}\sqrt{0_E^{\;!}}\=\sqrt{0_{f'^*E}^{\;!}}\ f^{\;!}\ \colon\,A_*\(C,\ZZ\)\To A_{*-d-n}\(X',\ZZ\).
\eeq
\end{Lemma}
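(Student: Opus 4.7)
The plan is to use Lemma \ref{glbl} to rewrite both sides in terms of the global Edidin-Graham class on the projective completion, and then invoke the bivariance of that class together with the standard compatibility of refined Gysin operators with proper pushforward.

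Let $\overline\pi\colon\overline C\to Y'$ be the projective completion of $\pi\colon C\to Y'$ as in the proof of Lemma \ref{glbl}, with inclusion $j\colon C\hookrightarrow\overline C$, orthogonal bundle $\overline E$ and isotropic section $\overline\tau$ satisfying (i)--(iv). Pulling back along $f'$ gives a Cartesian diagram
$$
\xymatrix@R=14pt{
\overline{C}' \ar[r]^{g}\ar[d]_{\overline\pi'} & \overline C \ar[d]^{\overline\pi} \\
X' \ar[r]^{f'}& Y',\!}
$$
in which $\overline{C}'$ is the projective completion of $f'^*C\to X'$ with orthogonal bundle $g^*\overline E$ and isotropic section $g^*\overline\tau$ satisfying (i)--(iv) for $f'^*C$.

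Given $a\in A_*(C,\ZZ)$, pick $\overline a\in A_*(\overline C,\ZZ)$ with $j^*\overline a=a$. Since refined Gysin commutes with flat pullback, $f^{\;!}\overline a\in A_{*-d}(\overline{C}',\ZZ)$ restricts to $f^{\;!}a$ on $f'^*C\subset\overline{C}'$, and hence is a valid choice of extension in Lemma \ref{glbl} applied to $f^{\;!}a$. Thus \eqref{Eshriek} reduces to the identity
$$
f^{\;!}\,\overline\pi_*\bigl[\sqrt e\;(\overline E)\cap\overline a\bigr]\=\overline\pi'_*\bigl[\sqrt e\;(g^*\overline E)\cap f^{\;!}\overline a\bigr].
$$

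This in turn follows from two standard compatibilities of the refined Gysin operator $f^{\;!}$: (a) commutativity with proper pushforward along Cartesian squares, giving $f^{\;!}\,\overline\pi_*=\overline\pi'_*\,f^{\;!}$ \cite[Theorem 6.2]{F}; and (b) commutativity with cap products by bivariant classes, giving $f^{\;!}\bigl(\sqrt e\;(\overline E)\cap\cdot\bigr)=\sqrt e\;(g^*\overline E)\cap f^{\;!}(\cdot)$ \cite[Chapter 17]{F}. For (b), one needs that $\sqrt e\;(\overline E)$ is a bivariant class and pulls back compatibly under $g$; both are immediate from its definition \eqref{pushy} as a proper pushforward along the isotropic flag bundle of a polynomial in Chern classes, together with the Cartesian property of the flag bundle construction and the functoriality of Chern classes. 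The main (mild) obstacle is this bookkeeping identification of $\sqrt e\;(\overline E)$ as bivariant, but once granted, everything is routine from Fulton's framework and no further geometric input beyond Lemma \ref{glbl} is required.
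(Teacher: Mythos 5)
Your proof is correct and follows essentially the same route as the paper: reduce via Lemma \ref{glbl} to the projective completion, then commute $f^{\;!}$ past $\overline\pi_*$ (using \cite[Theorem 6.2(a)]{F}) and past the Edidin-Graham class. The only cosmetic difference is in the last step: where you invoke bivariance of $\sqrt e\;(\overline E)$, the paper makes this concrete by passing to the isotropic flag bundle $\rho$ of \eqref{cover} (using the $\ZZ$ coefficients), so that $\sqrt e\;(\overline E)$ becomes the honest Chern class $e(\Lambda)$ of a maximal isotropic and \cite[Proposition 6.3]{F} applies directly.
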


\begin{proof}
Using the notation of Lemma \ref{glbl} we have projective completions $\overline\pi\colon\overline C\to Y'$ and  $\overline\pi'\colon\overline{f'^*C}\to X'$ of the cones $C,\,f'^*C$, with a map $\overline{f''}$ between them covering $f''$. By \eqref{globale},
$$
\sqrt{0_E^{\;!}}\,a\=\overline\pi_*\big[\sqrt e\;(\overline E)\cap\overline a\big],
$$
where as usual $\overline a$ is any extension of $a$ to $A_*(\overline C)$.

Since we are using $\ZZ$ coefficients we may replace $\overline C$ by the bundle $\rho$ \eqref{cover} over it (and replace $\overline{f'^*C}$ by the corresponding flat basechange) to assume that $\overline E$ admits a positive maximal isotropic $\Lambda$. Thus by \eqref{cL},
$$
\sqrt{0_E^{\;!}}\,a\=\overline\pi_*\big[e(\Lambda)\cap\overline a\big].
$$
By the usual commutativity of Chern classes and refined Gysin maps \cite[Proposition 6.3]{F},
$$
f^{\;!}\big[e(\Lambda)\cap\overline a\big]\=
e\(\;\overline{f''}^*\Lambda\)\cap f^{\;!}\;\overline a.
$$
Now pushing down by $\overline\pi'_*$ and using $\overline\pi'_*f^{\;!}=f^{\;!}\overline\pi_*$ \cite[Theorem 6.2(a)]{F} gives
$$
f^{\;!}\sqrt{0_E^{\;!}}\,a\=\overline\pi'_*\big[e\(\;\overline{f''}^*\Lambda\)\cap f^{\;!}\;\overline a\big],
$$
which by another application of \eqref{cL} and \eqref{globale} is precisely $\sqrt{0_{f'^*E}^{\;!}}\ f^{\;!}a$.
\end{proof}

\section{Moduli of sheaves on CY$^4$ via orthogonal bundles}
\label{CY4M}

Let $(X,\cO_X(1))$ be a smooth projective 4-fold with a fixed trivialisation of $K_X$.
Fix a class $c\in H^*(X,\Q)$ such that $\cO_X(1)$-Gieseker semistable sheaves with  Chern character $c$ are all stable. Then there is a \emph{projective} moduli space $M=M(X,c)$ of stable sheaves $F$ of charge $c$. We let $\LL_M\in D(M)$ denote the truncated cotangent complex in the bounded derived category of coherent sheaves on $M$. We use $\pi$ to denote \emph{any} projection $X\times N\to N$ down $X$.

\subsection{Obstruction theory} \label{Sect:realization}
Let $\cE$ be any universal twisted sheaf on $X \times M$, whose existence is proved in \cite[Propositions 3.3.2, 3.3.4]{Ca}. The twistings cancel in $R\hom(\cE,\cE)$, giving a complex of sheaves in $D(M\times X)$. By \cite[Theorem 4.1]{HT} the truncated Atiyah class of \cite[Equation 4.2]{HT} defines an obstruction theory for $M$,
\beq{ObsTh}
\EE\ :=\ \tau^{[-2,0]}\(R\pi_*\;R\hom(\cE,\cE)[3]\)\rt\At\LL_M,
\eeq
in the sense of \cite[Definition 4.4]{BF}. That is, $h^0(\At)\colon h^0(\EE) \to \Omega_M$ is an isomorphism, and $h^{-1}(\At) \colon h^{-1}(\EE) \onto h^{-1}(\mathbb{L}_{M})$ is a surjection. The obstruction theory  \eqref{ObsTh} has virtual dimension
$$
\vd\ :=\ \rk\EE\=2-\chi(F,F),
$$
for any sheaf $F$ on $X$ of the same Chern character $c$. Relative Serre duality down the map $\pi$ gives an isomorphism
\beq{duel}
\theta\,\colon\,\EE \rt\sim \EE^\vee[2] \,\text{ such that }\, \theta\=\theta^\vee[2]\ \in\ \Hom_{D(M)\!}\(\EE,\EE^\vee[2]\).
\eeq
Since $\EE$ is perfect\footnote{$R\pi_*\;R\hom(\cE,\cE)$ is perfect. From it remove $R^4\pi_*\;\cO_{X\times M}[-4]=\cO_M[-4]$ via trace and $R^0\pi_*\;\cO_{X\times M}=\cO_M$ via the identity map to give $\EE[-3]$, which is therefore also perfect.} of amplitude $[-2,0]$, rather than $[-1,0]$, the obstruction theory $\At$ is not \emph{perfect}. (In general $\Ext^3(F,F)=\Ext^1(F,F)^*$ is nonzero for $F\in M$, so $h^{-2}(\EE)$ can be nonzero.) So we cannot apply \cite{BF, LT} to get a virtual cycle. Instead we will follow the Behrend-Fantechi recipe as far as producing a cone in a vector bundle --- in fact an \emph{isotropic} cone in an \emph{oriented orthogonal} bundle --- then we will replace their intersection with the zero section $0^{\;!}_E$ by the square-rooted analogue $\surd\;0_E^{\;!}$ of Definition \ref{IsoCone}.

To show the cone \eqref{iscone} is isotropic, we do not know if it is sufficient to have the duality \eqref{duel}. Instead our proof will use a lifting of this symmetry to a $(-2)$-shifted symplectic structure \cite{PTVV}, whereupon we can employ the results of \cite{BBBJ, BBJ, BG}. (Contraction with the shifted symplectic form then induces a shifted duality $\LL_M^{\vir}\cong(\LL_M^{\vir})^\vee[2]$ on the virtual cotangent bundle $\LL_M^{\vir}:=\EE$, recovering \eqref{duel}.) For this
we will need an alternative description of the obstruction theory \eqref{ObsTh} using the wonders of derived stacks.

\subsection*{Derived description}
Let $\cM^{\mathrm{der}}$ denote the derived stack \cite{TVa} of stable sheaves of charge $c$, with underlying Artin stack $\cM$ and coarse moduli space $M$. Since stable sheaves are simple the projection $\cM\to M$ is a $B\C^*$-bundle.
Let $\curly E$ denote the universal sheaf on $X\times\cM^{\mathrm{der}}$. The (derived) cotangent bundle of $\cM^{\mathrm{der}}$ is
$$
\LL_{\cM^{\mathrm{der}}}\=\(R\pi_*\;R\hom(\curly E,\curly E)[1]\)^\vee\ \cong\ R\pi_*\;R\hom(\curly E,\curly E)[3],
$$
by relative Serre duality and the fixed trivialisation of the dualising sheaf $K_X$ of $\pi$. Restricting to $\cM\subset\cM^{\mathrm{der}}$ and truncating gives the composition
$$
\tau^{[-2,0]}\(R\pi_*\;R\hom(\curly E,\curly E)[3]\)\To\tau^{[-2,0]}\LL_{\cM}\To\tau^{[-1,0]}\LL_{\cM}.
$$
The latter is the pullback of $\LL_M$ from $M$,\footnote{Since $\cE$ and $\curly E$ differ locally by a line bundle, their derived endomorphisms coincide.} and by \cite[Appendix A]{STV} the result is the pullback of the map $\At\colon\EE\to\LL_M$ of \eqref{ObsTh}.

\subsection*{Normal form for $\EE$}
We begin by getting the virtual cotangent bundle $\EE$ into a normal form. Call a 3-term complex of locally free sheaves $E\udot$ \emph{self-dual} if it has the form
\beq{Kdot}
E\udot\ :=\ \big\{T\rt aE\rt{a^*}T^*\big\},
\eeq
where $(E,q)$ is an orthogonal bundle, inducing the isomorphism $E\cong E^*$ used in forming the map $a^*$ above. Such a complex has an obvious duality
\beq{dueldot}
\hspace{5mm}\xymatrix@R=2pt{
E\udot\! \ar[dd]&& T \ar@{=}[dd]\ar[r]^a& E \ar[dd]^(.4){q}_(.4)\wr\ar[r]^-{a^*}& T^* \ar@{=}[dd]<-.4ex> \\  &= \\
\hspace{-9mm}E_\bullet[2]\,:=\,(E\udot)^\vee[2] && (T^*)^* \ar[r]^-{(a^*)^*}& E^*\! \ar[r]^{a^*}& T^*.\!\!}
\eeq
We use the standard notation for complexes $E\udot$ that $E^i$ appears in degree $i$, with $E_{-i}:=(E^i)^*$ in degree $-i$ in the dual complex $E_\bullet$. Given a map of complexes $\psi^\bullet\colon E\udot\to F\udot$ we denote its dual by $\psi_\bullet:=(\psi\udot)^\vee\colon F_\bullet\to E_\bullet$.

We will show there is a quasi-isomorphism from $\EE$ to a self-dual complex $E\udot$ intertwining the Serre duality map $\theta\colon\EE^\vee[2]\to\EE$ of \eqref{duel} with the duality \eqref{dueldot}. To state this we are careful to distinguish between morphisms in $D(M)$ and genuine maps of complexes. We denote the former by single letters such as $\alpha$, and the latter by $\alpha\udot$.

\begin{Prop}\label{form} There is a self-dual 3-term complex of locally free sheaves $E\udot$ \eqref{Kdot} and an isomorphism $\alpha\colon E\udot\to\EE$ in $D(M)$, such that the following diagram commutes in $D(M)$,
\beqa 
\xymatrix@R=18pt@C=50pt{
E\udot\! \ar[d]^-\wr_-{\alpha}\ar[r]^-{\eqref{dueldot}} & E_\bullet[2] \\
\EE \ar[r]^-\theta & \EE^\vee[2].\!\! \ar[u]^-\wr_{\alpha^\vee[2]}}
\eeqa
Furthermore, given an embedding $M\subset A$ in a smooth scheme with ideal $I$, we may assume that $\At\circ\,\alpha\colon E\udot\to\LL_M$ is represented by a genuine map of complexes $E\udot\to\{I/I^2\to\Omega_A|_M\}$ which is surjective in each degree.
\end{Prop}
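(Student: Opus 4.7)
The plan is to proceed in two stages: first produce an arbitrary three-term locally free resolution of $\mathbb{E}$ compatible with the embedding $M\subset A$, then symmetrise it using Serre duality $\theta$.

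For the first stage, by quasi-projectivity of $M$ I choose a locally free sheaf $F^0$ with a surjection $F^0\twoheadrightarrow\Omega_A|_M$; composition with $\Omega_A|_M\twoheadrightarrow\Omega_M=h^0(\mathbb{E})$ lifts (because $F^0$ is locally free) to a morphism $\phi\colon F^0\to\mathbb{E}$ in $D(M)$. Since $\mathbb{E}$ is perfect of amplitude $[-2,0]$ and $\phi$ is surjective on $h^0$, the cone of $\phi$ is perfect of amplitude $[-2,-1]$ and so admits a locally free representative $\{F^{-2}\to F^{-1}\}$. Assembling these gives a three-term locally free resolution $F^\bullet:=\{F^{-2}\to F^{-1}\to F^0\}\xrightarrow{\sim}\mathbb{E}$, and after enlarging $F^{-1}$ by a locally free piece surjecting onto $I/I^2$ (extended to $F^0$ via the relevant boundary), I may assume $\At$ lifts to a termwise surjective chain map $F^\bullet\to\{I/I^2\to\Omega_A|_M\}$.

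For the second stage, I lift $\theta$ to an actual chain map $\theta^\bullet\colon F^\bullet\to F_\bullet[2]$, possible because $F^\bullet$ is a bounded locally free complex. The identity $\theta=\theta^\vee[2]$ in $D(M)$ forces $\theta^\bullet$ and $(\theta^\bullet)^\vee[2]$ to be chain-homotopic, so (in characteristic zero) averaging them produces a strictly self-dual quasi-isomorphism, meaning $\theta^{-1}\colon F^{-1}\to(F^{-1})^*$ is symmetric and $\theta^0=(\theta^{-2})^*$. By iteratively adjoining symmetric pairs of acyclic summands $\{L\xrightarrow{\operatorname{id}}L\}$ in degrees $[-2,-1]$ together with $\{L^*\xrightarrow{\operatorname{id}}L^*\}$ in degrees $[-1,0]$ (with trivial augmentation to $\mathbb{L}_M$), I can promote $\theta^\bullet$ to a strict isomorphism of complexes without destroying either its self-duality or the termwise surjection onto $\mathbb{L}_M$. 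At this point $\theta^{-1}$ is a symmetric isomorphism, giving $E:=F^{-1}$ an orthogonal structure $q$; setting $T:=F^{-2}$ (identified with $(F^0)^*$ via $\theta^{-2}$) and $T^*:=F^0$, the chain-map condition forces the second differential $E\to T^*$ to equal $a^*$, the $q$-dual of the first differential $a\colon T\to E$. Thus $F^\bullet$ is literally of the self-dual form $\{T\xrightarrow{a}E\xrightarrow{a^*}T^*\}$, the morphism $\alpha$ is the chosen quasi-isomorphism to $\mathbb{E}$, and the required commutative square in $D(M)$ holds because $\alpha$ intertwines the strict self-duality of $E^\bullet$ with $\theta^\bullet$, which represents $\theta$.

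The main technical obstacle will be Step 2: simultaneously enforcing strict self-duality, strict isomorphism, and termwise surjectivity onto $\mathbb{L}_M$, since all three are controlled through the choice of acyclic summands and interact with one another. The fix is to symmetrise the construction from the outset --- always adjoining summands in self-dual pairs, with the augmentation to $\mathbb{L}_M$ taken to vanish on every new summand --- so that no later enlargement can undo any property established earlier.
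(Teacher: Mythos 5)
Your two-stage strategy (build a three-term resolution, then symmetrise using $\theta$) is the same outline the paper follows, but the execution has two genuine gaps where you assert what actually has to be argued.

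The first gap is in realising derived-category morphisms by genuine chain maps. You claim the lift of $F^0\to h^0(\EE)$ to $F^0\to\EE$ exists ``because $F^0$ is locally free,'' and later that $\theta$ lifts to a chain map $\theta^\bullet\colon F^\bullet\to F_\bullet[2]$ ``because $F^\bullet$ is a bounded locally free complex.'' Neither is automatic on a non-affine scheme: a bounded complex of vector bundles is not $K$-projective, so $\Hom_{D(M)}(F^\bullet,G^\bullet)$ is not computed by honest chain maps in general. The obstruction lives in higher $\Ext$ groups and vanishes only after twisting by $\cO(-N)$ for $N\gg0$. This is exactly why the paper first passes to a \emph{sufficiently negative} resolution $\psi^\bullet\colon B^\bullet\to A^\bullet$ before lifting $\theta$ and $\At$ to chain maps $f^\bullet,g^\bullet$, and only afterwards truncates back to three terms with $\tau^{[-2,0]}$. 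Your resolution $F^\bullet$, being already truncated to $[-2,0]$ from the start, does not have this property, and the proof does not go through without it.

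The second, more serious gap is the claim that adjoining the self-dual acyclic summands $\{L\xrightarrow{\id}L\}$ in degrees $[-2,-1]$ and $\{L^*\xrightarrow{\id}L^*\}$ in degrees $[-1,0]$ ``promotes $\theta^\bullet$ to a strict isomorphism of complexes.'' After symmetrising, $\theta^{-1}\colon F^{-1}\to(F^{-1})^*$ is a symmetric pairing, but it may well be degenerate even though $\theta^\bullet$ is a quasi-isomorphism. Adding $L\oplus L^*$ with its hyperbolic form produces the block-diagonal form $\theta^{-1}\oplus\begin{pmatrix}0&1\\1&0\end{pmatrix}$ on the new middle term; this does nothing to repair a kernel of $\theta^{-1}$, so the new map is degenerate whenever the old one was. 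Making the construction work would require non-trivial off-diagonal blocks mixing the new summands into $F^{-1}$ in a way compatible with both the differentials and the self-duality, and you give no such construction. The paper sidesteps this entirely: instead of trying to stabilise $\theta^{-1}$ into an isomorphism, it \emph{extracts} an orthogonal bundle as a kernel, $E:=\ker\big(B_1\oplus B^0\xrightarrow{d_2^*\oplus\theta_2}B_2\big)\cong\coker\big(B^{-2}\xrightarrow{d_2\oplus\theta_2^*}B^{-1}\oplus B_0\big)\cong E^*$, using the acyclicity of the total complex of the diagram \eqref{tu}. The perfect pairing comes for free from this kernel-cokernel identification, and the self-dual complex $E^\bullet=\{B_0\to E\to B^0\}$ is then wedged between $B^\bullet$ and $B_\bullet$ by genuine quasi-isomorphisms \eqref{BtoE}. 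That step is the actual content of the proposition and your proposal does not supply a substitute for it.
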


\begin{proof} Choose a locally free resolution $\phi\colon A\udot\xrightarrow{\,\sim\,}\EE$ in $D(M)$. Then $\theta\colon\EE\to\EE^\vee[2]$ becomes the morphism $\phi^\vee[2]\circ\;\theta\circ\;\phi\;\colon A\udot\to A_\bullet[2]$ in $D(M)$.

This need not be a genuine map of complexes, however. So we now further resolve $A\udot$ by a map of complexes $\psi\udot\colon B\udot\to A\udot$, where $B\udot$ is a \emph{sufficiently negative} locally free resolution that there exist \emph{maps of complexes} $f\udot,\,g\udot$ filling in commutative diagrams in $D(M)$,
$$
\xymatrix@R=18pt@C=55pt{
B\udot \ar[d]^-\wr_-{\psi\udot}\ar@{-->}[dr]^-{f\udot} &&
B\udot \ar[d]_{\phi\circ\psi\udot}\ar@{-->}[dr]^{g\udot} \\
A\udot \ar[d]_{\phi}^\wr\ar[r]_-{\phi^\vee[2]\circ\;\theta\circ\;\phi} & A_\bullet[2] & \EE \ar[r]_-\At& L\udot_M, \\
\EE \ar[r]_-\theta& \EE^\vee[2] \ar[u]^\wr_{\phi^\vee[2]}}
$$
where $L\udot_M$ is defined to be the complex $\{I/I^2\rt{d}\Omega_A|_M\}\cong\LL_M$ made from the fixed embedding $M\subset A$ with ideal $I$.
Composing then gives another commutative diagram in $D(M)$,
\beq{commdg}
\xymatrix@R=18pt@C=50pt{
B\udot \ar[d]^-\wr_-{\phi\circ\psi\udot}\ar[r]^-{\psi_\bullet[2]\circ f\udot} & B_\bullet[2]\! \\
\EE \ar[r]^-\theta & \EE^\vee[2],\!\! \ar[u]^\wr_-{\psi_\bullet\circ\phi^\vee[2]}}
\eeq
representing $\theta$ by a genuine map of complexes $\theta_\bullet:=\psi_\bullet[2]\circ f\udot\colon B\udot\to B_\bullet[2]$,
\begin{equation}\label{cxis2}
\xymatrix@R=15pt{
\dots \ar[r]^{d_3}& B^{-2} \ar[r]^{d_2}\ar[d]^{\theta_0}& B^{-1} \ar[d]^{\theta_1}\ar[r]^{d_1}& B^0 \ar[r]^{d_0}\ar[d]^{\theta_2}& \dots \\
\dots \ar[r]^{d_0^*}& B_0 \ar[r]^{d_1^*}& B_1 \ar[r]^{d_2^*}& B_2 \ar[r]^{d_3^*}& \dots}
\end{equation}
Since $\theta_\bullet=\theta\udot[2]$ as morphisms in $D(M)$, we may replace $\theta_\bullet$ by $\frac12(\theta_\bullet+\theta\udot[2])$ to ensure $\theta_i=\theta_{2-i}^*$ while \eqref{commdg} still commutes.

The restriction of $\EE$ to any closed point has cohomology only in degrees $[-2,0]$, so the truncation $\tau^{[-2,0]}$ of \eqref{cxis2} gives canonically quasi-isomorphic complexes of locally free sheaves. That is we replace $B^{-2}$ and $B^0$ by the locally free sheaves coker$\,d_3$ and $\ker d_0$ respectively. Dually we replace $B_2$ by $\ker d_3^*$ and $B_0$ by  coker$\,d_0^*$. The maps $\theta_i$ induce maps of complexes on the truncations. Thus, by applying $\tau^{[-2,0]}$ to \eqref{commdg}, we may assume $B^{-i}=0=B_i$ for $i\not\in[0,2]$ in \eqref{cxis2}. Similarly replacing $g\udot$ by $\tau^{[-2,0]}g\udot$ we also keep the map to $\tau^{[-2,0]}L\udot_M=L\udot_M$.

So we now have a commutative diagram in $D(M)$ like \eqref{commdg} intertwining $\theta$ with the map of complexes
\begin{equation}\label{tu}
\hspace{1cm}\xymatrix@C=30pt{
B^{-2} \ar[r]^{d_2}\ar[d]^{\theta_2^*}& B^{-1} \ar[d]_{\theta_1\!}^{\!=\;\theta_1^*}\ar[r]^{d_1}& B^0\! \ar[d]^{\theta_2}\! \\
B_0 \ar[r]^{d_1^*}& B_1 \ar[r]^{d_2^*}& B_2.\!\!}
\end{equation}
Since $\theta$ is a quasi-isomorphism the total complex of \eqref{tu} is acyclic. In particular $d_2^*\oplus\theta_2$ is onto and
$$
E\ :=\ \ker\big(B_1\oplus B^0\xrightarrow{d_2^*\oplus\;\theta_2}B_2\big)\ \cong\ \mathrm{coker}\big(B^{-2}\xrightarrow{d_2\oplus\;\theta_2^*}B^{-1}\oplus B_0\big)\ \cong\ E^*
$$
defines a bundle with orthogonal structure $E\cong E^*$, i.e. an $O(r,\C)$ bundle.

Now the acyclicity of \eqref{tu} is equivalent to the vertical map of complexes
$$
\xymatrix@R=18pt@C=40pt{
B^{-2} \ar[r]^-{d_2\oplus\;\theta_2^*}& B^{-1}\oplus B_0 \ar[r]^-{d_1\oplus\;0}\ar[d]_{\theta_1\oplus\;-d_1^*}& B^0 \ar[d]^{\theta_2} \\
& B_1 \ar[r]^{d_2^*}& B_2}
$$
being a quasi-isomorphism. Dividing by the injection $d_2\oplus\theta_2^*$ shows the top row is just the complex $E\to B^0$, quasi-isomorphic to the bottom row $B_1\to B_2$. Therefore the vertical quasi-isomorphism of horizontal complexes \eqref{tu} factors through the vertical quasi-isomorphisms of complexes
\beq{BtoE}
\xymatrix@R=16pt{
B^{-2} \ar[r]\ar[d]& B^{-1} \ar[d]\ar[r]& B^0\! \ar@{=}[d] && B\udot \ar[d]<-0.3ex>^-{\eta\udot} \\
B_0 \ar[r]\ar@{=}[d]& E \ar[d]\ar[r]& B^0\! \ar[d] &=:& E\udot\,\cong\ \EE\hspace{-9mm} \ar[d]<-0.3ex>^-{\eta_\bullet} \\
B_0 \ar[r]& B_1 \ar[r]& B_2\! && B_\bullet.\!\!}
\eeq
By their construction the two arrows in $E\udot:=\{B_0\to E\to B^0\}$ are dual to each other, so it is a self-dual complex. 
So setting $\alpha$ to be composition of the inverse of $\eta\udot\colon B\udot\rt{\!\sim\!}E\udot$ with the morphism $\phi\circ\psi\udot\colon B\udot\to\EE$ of \eqref{commdg} gives the result claimed.\medskip

Finally, the map of complexes $g\udot$,
$$
\xymatrix@R=15pt{
B^{-2} \ar[r]& B^{-1} \ar[r]\ar[d]_-{g^1}& B^0 \ar[d]^-{g^0} \\
& I/I^2 \ar[r]_d& \Omega_A|_M}
$$
factors through $E\udot$ by
$$
\xymatrix@R=20pt@C=28pt{
B^{-2} \ar[r]\ar[d]_{\theta_2^*}& B^{-1} \ar[r]\ar[d]_{(1,0)}& B^0\! \ar@{=}[d]&& B\udot\! \ar[d]^{\eta\udot} \\
B_0 \ar[r]^-{0\;\oplus-1}& \frac{B^{-1}\oplus B_0}{B^{-2}} \ar[r]\ar[d]_-{g^1\oplus\;0}& B^0\! \ar[d]^-{g^0} &=& E\udot\! \ar[d] \\
& I/I^2 \ar[r]_d& \Omega_A|_M && L\udot_M.\!\!}
$$
By adding copies of the acyclic self-dual complex
$$
\xymatrix{
\cO_M(N) \ar[r]^-{(1,0)}& \cO_M(N)\oplus\cO_M(-N) \ar[r]^-{(0,1)}& \cO_M(-N), & N\gg0,}
$$
to $E\udot$, mapping to $L\udot_M$ by sections of $I/I^2(N)$, we may assume that $E\to I/I^2$ is a surjection. Since $h^0(E\udot)\to h^0(L\udot_M)$ is also a surjection (in fact an isomorphism) this shows that $E\udot\to L\udot_M$ is a surjection in each degree.
\end{proof}

\subsection*{Determinants} We defined orientations for orthogonal bundles in  Definition \ref{orr}. To get a similar definition for $E\udot$ and $\EE$ we first need to review determinants of complexes. Fix a bounded complex of bundles $A\udot$ over a quasi-projective scheme $Y$, with $A^j$ in degree $j$,
$$
A\udot\=\ldots\to A^{2i-1}\to A^{2i}\to A^{2i+1}\to\ldots.
$$
Following the conventions of \cite{KM} we define its determinant by 
$$
\det A\udot\ :=\ \Lambda^{\mathrm{top}} \Big[\bigoplus\nolimits_i A^{2i}\ \oplus\ \bigoplus\nolimits_j (A^{2j+1})^*\Big].
$$
In \cite{KM} this is shown to extend to give a determinant functor, unique up to canonical isomorphism, from the category of perfect complexes (with morphisms quasi-isomorphisms) to the category of line bundles (with morphisms the isomorphisms) satisfying natural compatibilities. The key is to describe the right isomorphism $\det f\colon\det A\udot\rt\sim\det B\udot$ induced by a quasi-isomorphism $f\colon A\udot\to B\udot$ between complexes of bundles. We may write $f$ as a composition $b\udot\circ(a\udot)^{-1}$ or roof
\beq{roof}
\xymatrix@R=14pt@C=8pt{ &C\udot \ar@{->>}[dl]_{a\udot}\ar@{->>}[dr]^{b\udot} \\ A\udot \ar@{-->}_f[rr]&& B\udot,\!}
\eeq
where $a\udot$ and $b\udot$ are genuine chain \emph{maps of complexes} which are also surjective and quasi-isomorphisms. By symmetry it is then enough to describe $\det a\udot\colon\det C\udot\rt{\!\sim\!}\det A\udot$. As a map of sheaves (in fact line bundles) it is sufficient to describe it \emph{locally}. Here we may assume $a\udot$ is \emph{split} by an injective map of complexes $A\udot\into C\udot$ so it becomes
\beq{CKA}
C\udot\=K\udot\oplus A\udot\rt{(0,1)}A\udot,
\eeq
where $K\udot:=\ker a\udot$ is acyclic. Since we are working locally we can write $K\udot$ as a sum of 2-term complexes of the form
$$
K[i]\rt\sim K[i-1] .
$$
Using \eqref{A+B} it is now sufficient to trivialise the determinant of this by
\beq{AA}
\det\!\(K[i] \rt\sim K[i-1]\) \= \det (K \oplus K^*) \rt\sim \cO_Y, \quad {\bf k}\wedge{\bf k}^* \Mapsto 1.
\eeq
Here, as in \eqref{pairA} and \eqref{adual}, our convention is that if $\{k_i\}_{i=1}^n$ is a local basis of sections of $K$ with dual basis $\{k^*_i\}_{i=1}^n$ of $K^*$ and ${\bf k}:=k_1\wedge\dots\wedge k_n \in \det K$, then its dual is ${\bf k}^*=k_n^*\wedge\dots\wedge k_1^* \in \det K^*$. The resulting trivialisation \eqref{AA} appears in \cite[p32]{KM} for $i$ odd and \cite[p33]{KM} for $i$ even. 



\subsection*{Pairing}
Fix a complex of bundles $A\udot$ and set ${\bf A}:=\oplus_i \; A^i$. We define
\beq{pairA.}
p\_{\!A\udot}\,\colon\det A\udot \otimes \det (A\udot)^\vee \ \stackrel{\eqref{A+B}}\cong\ \det({\bf A}\oplus{\bf A}^{\!*}) \To \cO_Y, \quad {\bf a}\wedge {\bf a}^* \Mapsto 1.
\eeq
This definition reduces to \eqref{pairA} when $A\udot=A[0]$ is a bundle. Like \eqref{pairA} it privileges $A\udot$ over its dual $(A\udot)^\vee$: if we use \eqref{ABBA} and $p\_{(A\udot)^{\!\vee}}$ to produce another map 
$\det A\udot \otimes \det (A\udot)^\vee\to\cO_Y$ then it differs from \eqref{pairA.} by the sign $(-1)^{\rk A\udot}$.

The pairing \eqref{pairA.} induces a pairing on the derived category of perfect complexes, so we can write $\det\AA\otimes\det\AA^{\!\vee}\to\cO_Y$ for any $\AA\in D^b(\mathrm{Perf}\,Y)$. Again the key point is to handle the roofs \eqref{roof} locally, where we now check that (\ref{CKA}, \ref{AA}) induce an isomorphism $\det\!\({\bf A}\oplus{\bf K}\oplus({\bf A}\oplus{\bf K})^*\)\to\det({\bf A}\oplus{\bf A}^{\!*})$ intertwining $p\_{\!A\udot\oplus K\udot}$ and $p\_{\!A\udot}$. By repeated use of \eqref{A+B}
we get
$$
\xymatrix@=25pt{
\det{\bf A}\otimes\det{\bf A}^{\!*}\otimes\det{\bf K}\otimes\det{\bf K}^*\ar[d]_{\eqref{AA}}\ar[r]^{\eqref{ABBA}} & \det{\bf A}\otimes\det{\bf K}\otimes\det{\bf K}^*\otimes\det{\bf A}^{\!*} \ar[d]^{p\_{\!A\udot\oplus K\udot}} \\ \det{\bf A}\otimes\det{\bf A}^{\!*} \ar[r]_{p_{\!A\udot}}&\cO_Y,\!}
$$
which by inspection acts commutatively as follows,
$$
\xymatrix@=18pt{
\ {\bf a}\otimes{\bf a^*}\otimes{\bf k}\otimes{\bf k^*}\ \ar@{|->}[d]\ar@{|->}[r] & \ {\bf a}\otimes{\bf k}\otimes{\bf k^*}\otimes{\bf a^*}\!\!\! \ar@{|->}[d] \\ \ \ {\bf a}\otimes{\bf a^*}\ \ar@{|->}[r]& \,1.\!}
$$


\subsection*{Orientations}
The duality $\theta\colon\EE\rt\sim\EE^\vee[2]$ of \eqref{duel} induces an isomorphism
\beq{detEE}
\(\!\det \EE\)^{\otimes2}\ \xrightarrow[\raisebox{.5ex}{$\sim$}]{\ 1\otimes\:\det\theta\ }\ \det \EE\otimes\det \EE^\vee\ \xrightarrow[\raisebox{.5ex}{$\sim$}]{\ p\_{\;\EE} \ }\ \cO_M
\eeq
on the moduli space $M=M(X,c)$ of stable sheaves of Chern character $c$ and virtual dimension $\vd:=\rk\EE$. By analogy with Definition \ref{orr} we define an \emph{orientation} on $\EE$ to be an isomorphism
\beq{orcx}
o\ \colon\,\cO_M\rt\sim\det\EE\,\text{ such that }\,\eqref{detEE}\circ o^{\;\otimes2}\=(-1)^{\frac{\vd(\vd-1)}2}.
\eeq
Using different sign conventions, Team Joyce \cite[Corollary 1.17]{CGJ} has constructed an orientation on $\EE$, canonical up to sign. Throughout this paper we arbitrarily fix one of these two orientations; choosing the other would simply multiply our virtual cycle by $-1$.

By Proposition \ref{form} there exists an orthogonal bundle $(E,q)$ and a quasi-isomorphism
\beq{EEEdot}
\EE\ \cong\ E^\bullet\ :=\ \{T \to E \to T^* \}
\eeq
to a self-dual complex $E\udot$ in degrees $-2,-1,0$, intertwining $\theta$ with
\beq{thedot}
\theta_{E^\bullet}\ :=\ \(\id_T,\: q,\: \id_{T^*}\)\ \colon\,E^\bullet \rt\sim E_\bullet[2].
\eeq

\begin{Prop}\label{por} The choice of orientation \eqref{orcx} defines a canonical orientation \eqref{OR} on $(E,q)$. Thus $E$ is an $SO(r,\C)$ bundle.
\end{Prop}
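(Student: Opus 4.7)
The strategy is to transport $o\colon\cO_M\rt\sim\det\EE$ through the quasi-isomorphism $\alpha\colon E\udot\to\EE$ of Proposition \ref{form}. Since $\alpha$ intertwines $\theta$ with $\theta_{E\udot}$, this gives a trivialisation $o\udot:=(\det\alpha)^{-1}\circ o\colon\cO_M\to\det E\udot$ satisfying $(o\udot)^{\otimes 2}\mapsto(-1)^{\vd(\vd-1)/2}$ under the self-duality pairing of $\det E\udot$ induced by $\theta_{E\udot}$ and the pairing $p_{E\udot}$ of \eqref{pairA.}.

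Next I would identify $\det E\udot$ explicitly. With $T$ in even degree $-2$, $E$ in odd degree $-1$, and $T^*$ in even degree $0$, the Knudsen--Mumford convention yields $\det E\udot\cong\det T\otimes\det T^*\otimes\det E^*$. The canonical pairing \eqref{pairA} trivialises $\det T\otimes\det T^*$, so $o\udot$ corresponds to a trivialisation of $\det E^*=(\det E)^*$, and hence by duality to a trivialisation $o_E\colon\cO_M\to\det E=\Lambda^rE$. This is the candidate orientation on $(E,q)$.

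To verify $o_E$ satisfies \eqref{OR}, I would decompose the self-duality pairing on $(\det E\udot)^{\otimes 2}$ along the splitting $\theta_{E\udot}=(\id_T,q,\id_{T^*})$. Using the commutativity \eqref{AB*BA} of determinant pairings with direct sums, the map $(\det E\udot)^{\otimes 2}\to\cO_M$ factors into the trivial pairing on the $(\det T,\det T^*)$-pieces tensored with the $q$-induced pairing $(\det E^*)^{\otimes 2}\to\cO_M$. Under $\det E^*=(\det E)^*$ this latter pairing is precisely \eqref{or2} for $E$, so the sign $(-1)^{\vd(\vd-1)/2}$ imposed on $(o\udot)^{\otimes 2}$ translates to a sign on $o_E^{\otimes 2}$ under \eqref{or2}; we need this to be $(-1)^{r(r-1)/2}$.

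The main obstacle will be the Koszul sign bookkeeping. Writing $\vd=r-2\rk T$, a direct computation gives $\vd(\vd-1)/2\equiv r(r-1)/2+\rk T\pmod{2}$, so the naive signs differ by $(-1)^{\rk T}$. This mismatch must be absorbed by the Koszul signs arising from the rearrangement $\det E\udot\otimes\det(E\udot)^\vee\xrightarrow{\eqref{A+B}}\det(\mathbf{A}\oplus\mathbf{A}^*)$ hidden inside the definition of $p_{E\udot}$, which interleaves the two copies each of $\det T$, $\det T^*$ with the $\det E^*$, $\det E$ factors. A careful permutation count --- using that swapping factors of ranks $t$ and $r$ contributes the Koszul sign $(-1)^{tr}$ (and similarly for $(-1)^{t^2}$ and $(-1)^{r^2}$) --- should produce exactly the missing $(-1)^{\rk T}$, yielding the required match. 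Canonicity of $o_E$, i.e.\ independence from the choice of resolution $(E\udot,\alpha)$, should then follow from the functoriality of $\det$ together with the essential uniqueness (up to self-duality-preserving chain equivalence) of the resolutions produced by Proposition \ref{form}.
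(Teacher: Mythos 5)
Your strategy is the paper's own: transport $o$ through $\det\alpha$ using Proposition \ref{form}, identify $\det E\udot\cong\det T\otimes\det T^*\otimes\det E^*$ via the Knudsen--Mumford convention, strip off the hyperbolic factor $\det T\otimes\det T^*$, and read off a trivialisation of $\det E$. Up to the last step the argument is sound, and your parity computation $\vd(\vd-1)/2\equiv r(r-1)/2+\rk T\pmod 2$ correctly isolates the sign that has to be accounted for.

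The gap is in how you account for it. The discrepancy $(-1)^{\rk T}$ is \emph{not} absorbed by Koszul signs in the rearrangement inside $p\_{E\udot}$: the conventions \eqref{A+B}--\eqref{AB*BA} are set up precisely so that the duality pairing on a direct sum factors through the pairings on the summands with no residual sign (each $\det F\otimes\det F^*$ block one commutes past has even total rank). What actually absorbs the $(-1)^{\rk T}$ is the choice of trivialisation of the hyperbolic summand. Regarded as an orthogonal bundle, $T\oplus T^*$ has canonical orientation $o\_{\;T}=(-i)^{\rk T}\,{\bf t}\wedge{\bf t}^*$ as in \eqref{oTis}, whose square maps to $(-1)^{\rk T}$ under \eqref{or2}; the pairing trivialisation ${\bf t}\otimes{\bf t}^*\mapsto1$ of \eqref{pairA} that you use instead squares to $+1$. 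Consequently the class $o_E$ you construct satisfies $o_E^{\otimes2}\mapsto(-1)^{r(r-1)/2+\rk T}$ and fails Definition \ref{orr} whenever $\rk T$ is odd. The fix is the paper's: identify an orientation of $E\udot$ with an orientation of the rank-$(2\rk T+r)$ orthogonal bundle $(T\oplus T^*\oplus E^*,Q)$ via $\det\theta_{E\udot}=\det Q$, write it as $o\_{\;T}\otimes o^*$, and transport $o^*$ to $E$ by $(\det q)^{-1}$. The factor $(-i)^{\rk T}$ is exactly the point emphasised in \eqref{specify} and again in \eqref{oror}, where the trivialisation $(-i)^{\rk K}$ is used ``rather than 1''. (Your closing remark about independence of the resolution is not needed for the proposition as stated; the paper handles stabilisation by acyclic complexes separately in \eqref{addK}--\eqref{oror}.)
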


\begin{proof}
Let $Q$ be the obvious quadratic form on $T\oplus T^*\oplus E^*$ --- the direct sum of the pairing on $T\oplus T^*$ with the inverse $q^{-1}\colon E^*\to E$ of the quadratic form $q\colon E\rt\sim E^*$. Comparing with \eqref{thedot} we find
\beq{theta=Q}
\det\theta_{E\udot}\=\det Q\ \colon\,\det\!\(T\oplus T^*\oplus E^*\)\To\det\!\(E\oplus T\oplus T^*\).
\eeq
Therefore, under this identification, an orientation \eqref{orcx} on $E\udot$ is the same as an orientation on $(T\oplus T^*\oplus E^*,Q)$.

Now $T\oplus T^*$ has a canonical orientation --- the one with respect to which $T$ is a positive maximal isotropic subbundle. By Definition \ref{Lor} it is
\beq{oTis}
o\_{\;T}\ :=\ (-i)^m\,t_1\wedge t_1^*\wedge\dots\wedge t_m\wedge t_m^*\=(-i)^m\,{\bf t}\wedge{\bf t}^*,
\eeq
where $\{t_i\}_{i=1}^m$ is a local basis of sections of $T$ with dual basis $\{t_i^*\}$ for $T^*$. With respect to
$$
\det E\udot\ \cong\ \det\!\(T \oplus T^*\) \otimes \det E^*
$$
we can write any orientation on $E\udot$ as $o\_{\;T}\otimes o^*$, where $o^*$ is an orientation on $E^*$. Thus \eqref{orcx} endows $E^*$ with a canonical orientation $o^*$. Via $(\det q)^{-1}\colon\det E^*\to\det E$ this induces a canonical orientation $o'$ on $E$. 
\end{proof}

Suppose we change the representative $E\udot$ of $\EE$ by adding an acyclic complex $K\udot=\{K\to K\oplus K^*\to K^*\}$,
\beq{addK}
E\udot\oplus K\udot\=\{T\oplus K\to E\oplus K\oplus K^*\to T^*\oplus K^*\},
\eeq
where the quadratic form on $E\oplus K\oplus K^*$ is the direct sum of $q$ and the pairing. Then the orientation on $E\oplus K\oplus K^*$ prescribed by Proposition \ref{por} works out to be
\beq{oror}
o'\otimes o\_K\ \in\ \det E\otimes\det(K\oplus K^*),
\eeq
with $o\_K$ the canonical orientation \eqref{oTis}. That is, under the isomorphism \eqref{pairA} $\det(K\oplus K^*)\rt\sim\cO_M$ we use the trivialisation $(-i)^{\;\rk K}$ rather than 1, just as in \eqref{specify}.

%
%

\subsection{Virtual cycle}\label{vcsec} In an act of violence, we denote by $\tau E\udot$ the stupid truncation $\{E\to T^*\}$ of the complex $E\udot$ \eqref{Kdot}. Composing with the obstruction theory \eqref{ObsTh},
\beq{taupot}
\tau E\udot\To E\udot\rt\alpha\EE\rt\At\LL_M
\eeq
defines a \emph{perfect} obstruction theory $\tau E\udot\to\LL_M$ for $M$ since each map induces an isomorphism on $h^0$ and a surjection on $h^{-1}$. (Note that $\tau E\udot\to\LL_M$ is only a morphism in $D(M)$; we do not require it to be described via a map of complexes.)

Therefore by \cite[Section 4]{BF} we get a cone $C_{E\udot}\subset E^*\cong E$ by pulling back the intrinsic normal cone $\mathfrak C_M\subset h^1/h^0\big((\tau E\udot)^\vee\big)=[E^*/T]$,
\beq{recipe}
C_{E\udot}\ :=\ \mathfrak C_M\times\_{[E^*/T]}E^*\ \subset\ E^*\ \cong\ E.
\eeq
The Behrend-Fantechi virtual cycle would then be the Fulton-MacPherson intersection $0_E^{\;!}[C_{E\udot}]\in A_{\vd}(M)$ of $C_{E\udot}$ with the zero section $0_{E}$, but our use of the truncation $\tau$ makes this the wrong thing to do --- the result (in fact even its virtual dimension) is not a quasi-isomorphism invariant of $E\udot\cong\EE$. Instead the following result will allow us to replace $0_E^{\;!}[C_{E\udot}]$ by $\surd\;0_E^{\;!}[C_{E\udot}]$, giving a cycle in $M$ of \emph{half} the virtual dimension.

\begin{Prop}\label{isoprop} The cone $C_{E\udot}\subset E$ is isotropic.
\end{Prop}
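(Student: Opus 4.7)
The plan is to reduce to an étale-local Darboux-type presentation where the cone $C_{E\udot}$ is visibly the specialisation of the graph of an isotropic section. Isotropy of $C_{E\udot}\subset E$ amounts to the quadratic form $q\colon E\to\cO$, viewed as a regular function on the total space of $E$, pulling back to zero on $C_{E\udot}$; this is a closed condition on $M$ and hence can be verified étale-locally.

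By \cite{PTVV} the derived moduli stack $\cM^{\mathrm{der}}$ carries a canonical $(-2)$-shifted symplectic structure whose induced self-duality on $\EE$ recovers $\theta$ of \eqref{duel}. The Darboux-type theorem of \cite{BBBJ} (refined using \cite{CGJ} to reduce the structure group to $SO(r,\CC)$) then produces, around any $[F]\in M$, an étale neighbourhood in which $M$ is cut out as the zero locus $Z(s)$ of an isotropic section $s\in\Gamma(\cA,\tilde E)$ of an $SO(r,\CC)$ bundle $(\tilde E,\tilde q)$ on a smooth scheme $\cA$, with $\EE$ quasi-isomorphic to the local self-dual complex $\{T_\cA|_M\xrightarrow{ds}\tilde E|_M\xrightarrow{ds^\vee}\Omega_\cA|_M\}$ compatibly with the orthogonal structures. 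Under this identification the stupid truncation $\tau E\udot$ becomes the standard two-term obstruction theory $\{\tilde E|_M\to\Omega_\cA|_M\}$ of $M=Z(s)\subset\cA$, and the Behrend--Fantechi recipe \eqref{recipe} identifies $C_{E\udot}$ with the ordinary normal cone $C_{M/\cA}$ embedded in $\tilde E|_M$ via $ds$, or equivalently with the specialisation $\lim_{t\to\infty}\Gamma_{ts}\subset\tilde E|_M$ of the graphs of $ts$. Because $\tilde q(ts,ts)=t^2\tilde q(s,s)\equiv 0$, every graph $\Gamma_{ts}$ lies in the closed subscheme $\{\tilde q=0\}\subset\tilde E$, and so does its limit; hence $C_{E\udot}$ is isotropic locally, and then globally by étale descent.

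The main obstacle is the compatibility clause in the Darboux statement: one needs the local quasi-isomorphism from $E\udot$ (as supplied by Proposition~\ref{form}) to the Darboux model to intertwine the two \emph{quadratic forms} on $E|_M\cong\tilde E|_M$, not merely the Serre duality morphism $\theta$ in $D(M)$. Without this upgrade from a derived symmetry to an honest orthogonal structure on an actual vector bundle, the isotropy of the local section $s$ would not translate into isotropy of $C_{E\udot}$ with respect to the global $q$. This promotion from $\theta$ to $q$ is precisely the extra content provided by the PTVV $(-2)$-shifted symplectic structure on $\cM^{\mathrm{der}}$ and its BBBJ Darboux resolution; everything else in the proof is a formal unwinding of Behrend--Fantechi's cone construction together with the closed-subscheme specialisation $\lim_{t\to\infty}\Gamma_{ts}$.
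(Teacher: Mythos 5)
You correctly locate the crux --- one must upgrade the derived self-duality $\theta$ to a statement about an honest quadratic form on an honest bundle containing the cone --- but the proposal does not actually resolve it; it asserts that the BBBJ Darboux theorem supplies the needed compatibility "on $E|_M\cong\tilde E|_M$". That identification does not exist in general. The Darboux theorem produces a \emph{local} self-dual complex $Q\udot=\{T_V|_U\to Q\to\Omega_V|_U\}$ with its own orthogonal bundle $Q$ (trivial with fibre $\Ext^2(F,F)$), which is only \emph{quasi-isomorphic} to the globally chosen $E\udot$ of Proposition \ref{form}; the middle terms $E$ and $Q$ are different bundles, typically of different ranks, so the stupid truncation $\tau E\udot$ is not the Darboux two-term theory and $C_{E\udot}\subset E$ is not literally $\lim_{t\to\infty}\Gamma_{ts}\subset Q$. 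Isotropy of the latter (which does follow from $q(ts,ts)\equiv0$, exactly as you say) does not by itself give isotropy of the former.

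The missing step, which is the substantial part of the paper's argument, is the comparison of the two self-dual complexes. Locally one represents the quasi-isomorphism by a genuine map of complexes $f\udot\colon Q\udot\to E\udot|_U$ intertwining the dualities up to homotopy; evaluating at $[F]$ (where the differentials of $Q\udot$ vanish) shows $f_\bullet\circ f\udot=\id$, so after shrinking $U$ the $f^{-i}$ are injective and $f_{2-i}$ surjective, yielding an \emph{orthogonal} splitting $E\udot\cong Q\udot\oplus K_\bullet$ with $K_\bullet=\ker f_\bullet[2]$ an acyclic self-dual three-term complex. The Behrend--Fantechi recipe then gives $C_{E\udot}\cong C_{Q\udot}\oplus K^0\subset Q\oplus K^1$, and exactness plus self-duality of $K_\bullet$ force $\im(K^0\into K^1)$ to be a maximal isotropic; only then is $C_{E\udot}$ isotropic for the global $q$ on $E$. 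Without this splitting argument your reduction "by \'etale descent" has nothing to descend, since the local isotropy statement you prove concerns a different cone in a different bundle.
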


\begin{proof}
Since this is a local statement it is sufficient to prove it on any \'etale neighbourhood $U$ of a point $[F]\in M$. The proof is quite lengthy; we first use \cite{BBBJ} to get the nice local form \eqref{FisE} for the obstruction theory \eqref{ObsTh}. Then we compare the resulting self-dual complex $Q\udot$ to our representative $E\udot$ from Proposition \ref{form}.

We work on a ``minimal standard form neighbourhood" of $F\in\cM^{\mathrm{der}}$ provided by \cite[Theorem 2.8]{BBBJ}. This is a result about the structure of derived Artin stacks\footnote{In particular this does not use the shifted symplectic structure.} that gives a derived affine \emph{scheme} $\cU$ and a smooth morphism $\cU\to\cM^{\mathrm{der}}$ of relative dimension $1=\dim\Aut(F)$ such that $\LL_{\;\cU/\cM^{\mathrm{der}}}=\cL$ is a line bundle on $\cU$. So at the level of cdgas, $\cU\to\cM^{\mathrm{der}}$ modifies only the degree 1 piece governing stabilisers.

If we ignore the derived structure, $\cU\to\cM^{\mathrm{der}}$ becomes an atlas $U\to\cM$ for an open neighbourhood of $F\in\cM$. Here $U$ is the scheme underlying $\cU$, and the composition $U\to\cM\to M$ is an \'etale neighbourhood of $[F]\in M$. 

By \cite{PTVV}, $\cM^{\mathrm{der}}$ carries a $(-2)$-shifted symplectic structure. Its pullback to $\cU$ is no longer $(-2)$-shifted symplectic because we have removed $\cL[-1]$ from $\LL_{\cM^{\mathrm{der}}}$ but not yet its shifted dual $\cL^*[3]$. This is done by \cite[Theorem 2.10(c)]{BBBJ}, replacing $\cU$ by another affine derived scheme $\cU'$ with the same underlying affine scheme $U$ but with its cdga replaced by a sub-cdga such that
$$
\LL_{\;\cU/\;\cU'}\ \cong\ \LL_{\;\cU/\cM^{\mathrm{der}}}^\vee[3]\ \cong\ \cL^*[3],
$$
by \cite[Equation 2.14]{BBBJ}. After possibly shrinking the \'etale neighbourhood $U$ of $[F]\in M$ if necessary, \cite{BBBJ} prove that the pullback to $\cU$ of the $(-2)$-shifted symplectic structure on $\cM^{\mathrm{der}}$ descends to a $(-2)$-shifted symplectic structure on $\cU'$ which can be written in the standard Darboux form of \cite[Theorem 5.18(ii) and Example 5.16]{BBJ}; we describe this next.

Since $U$ is also the underlying scheme of $\cU'$ it inherits an obstruction theory from the derived structure of $\cU'$. This is the pullback of the obstruction theory $\At\colon\EE\to\LL_M$ \eqref{ObsTh} to our \'etale neighbourhood $U\to M$ of $[F]\in M$ on which we will show the cone $C_{E\udot}\subset E$ is isotropic. The Darboux form is described in terms of the tangent and obstruction spaces of $\cU'$ at a point lying over $[F]\in M$, which are
$$
h^0\(\LL^\vee_{\cM^{\mathrm{der}}}|\_F\)\=\Ext^1(F,F)\ \text{ and }\ h^1\(\LL^\vee_{\cM^{\mathrm{der}}}|\_F\)\=\Ext^2(F,F).
$$
Let $q$ denote the quadratic form on the latter given by Serre duality. Then by \cite[Example 5.16]{BBJ} there is an open neighbourhood $V$ of $0\in\Ext^1(F,F)$ and an isotropic map
\beq{ext2}
s\,\colon V\To\Ext^2(F,F)\,\text{ such that }\, ds|\_0\,=\,0,\ q(s,s)\,\equiv\,0 \,\text{ and }\,s^{-1}(0)\,\cong\,U
\eeq
compatible with the obstruction theory $\LL_{\cM^{\mathrm{der}}}|\_U\to\LL_U$ in the following sense.
Let $Q\cong Q^*$ denote the trivial orthogonal bundle with fibre $\Ext^2(F,F)$ over $V$, and think of $s$ as an isotropic section of $Q$. Then $\LL_{\cM^{\mathrm{der}}}|\_U\to\LL_U$ is isomorphic to
\beq{FisE}
\xymatrix@R=2pt{\hspace{8mm}
Q\udot \quad\ :=\ \quad \big\{T_V|_U \ar[r]^-{ds}& Q \ar[dd]^-s\ar[r]^-{(ds)^*}& \Omega_V|_U\big\}\!\! \ar@{=}[dd]<-.3ex> && \EE|_U \ar[dd] \\ &&& \cong \\
& I/I^2 \ar[r]^d& \Omega_V|_U && \LL_{\;U}.\!\!}
\eeq
Here $I$ is the ideal of $Z(s)\subset V$ generated by $s$. Finally, this isomorphism intertwines the Serre duality \eqref{duel} with the obvious self-duality $Q\udot\cong Q_\bullet[2]$ \eqref{dueldot} of the above complex.\medskip

Let $E\udot\cong\EE$ be the self-dual complex given to us by Proposition \ref{form}. Since we are working locally, where morphisms in $D(M)$ can be lifted to homotopy classes of maps of complexes of locally free sheaves, the isomorphism \eqref{FisE} and dualities it intertwines can be represented by a map of complexes $f\udot$ such that
\beqa 
\xymatrix@R=18pt@C=45pt{
Q\udot \ar[d]_{f\udot}\ar@{=}[r]^{\eqref{dueldot}}& Q_\bullet[2] \\
E\udot|_U \ar@{=}[r]_{\eqref{dueldot}}& E_\bullet|_U[2] \ar[u]_{f_\bullet[2]}}
\eeqa
commutes \emph{only up to homotopy}.

On restriction to the point $[F]\in U$ the differentials $ds,\,(ds)^*$ in $Q\udot$ vanish, so the three terms of $Q\udot|_{[F]}$ equal its cohomology groups $\Ext^i(F,F)$ and the composition $f_\bullet\circ f\udot$ is precisely $\id_{Q\udot}$. Thus the $f^{-i}$ are injections and the $f_{2-i}$ are surjections at $[F]$ --- and therefore also over $U$, after shrinking if necessary.
In particular $K_\bullet:=\ker f_\bullet[2]$ is a 3-term acyclic complex of vector bundles $\{K_{-2}\to K_{-1}\to K_0\}$. By the definition of $f_\bullet$ it is orthogonal to $f\udot(Q\udot)$ under the identification $E\udot\cong E_\bullet[2]$, giving an orthogonal splitting
\begin{eqnarray}\nonumber
E\udot &=& f\udot(Q\udot)\,\oplus\,\ker\(f_\bullet[2]\) \\ &\cong& Q\udot\,\oplus\,K_\bullet
\label{splitEF}
\end{eqnarray}
over $U$.  The pairing on $E\udot$ makes $K_\bullet$ isomorphic to its shifted dual via the composition
$$
K_\bullet\=\ker f_\bullet[2]\Into E_\bullet[2]\ \cong\ E\udot\To\coker f\udot\=(K_\bullet)^\vee[2],
$$
so $K_\bullet$ is a self-dual complex.

The quasi-isomorphism $Q\udot\cong\EE$ over $U$ gives a cone $C_{Q\udot}\subset Q$ by Behrend-Fantechi's recipe \eqref{recipe}. As in \cite[Remark 5.1.1]{F} it is the flat limit (in the Hilbert scheme of subschemes of the total space of $E$) of the graphs of $ts$,
\beq{limiso}
C_{Q\udot}\=\lim_{t\to\infty}\Gamma_{\!ts}\ \subset\ Q.
\eeq
Considering the quadratic form $q$ on $Q$ to be a function on its total space (quadratic on the fibres), it vanishes on $\Gamma_{\!ts}\subset Q$ since $ts$ is an isotropic section. Taking the limit as $t\to\infty$ shows that $C_{Q\udot}$ is an isotropic cone. (Notice the similarity to the proof of Lemma \ref{Lem:Zero}.)

By the recipe \eqref{recipe} and the splitting \eqref{splitEF} we deduce the Behrend-Fantechi cone $C_{E\udot}\subset E\cong E^*$ induced from the quasi-isomorphism $E\udot\cong\EE$ is
\beq{ansa}
C_{E\udot}\ \cong\ C_{Q\udot}\oplus K^0\ \subset\ Q\oplus K^1.
\eeq
Since $K_\bullet$ is self-dual and exact over $U$,
$$
\im\(K^0\into K^1\)\=\ker\(K^1\onto K^2\)\ \cong\ \ker\(K_{-1}\onto K_0\)
$$
shows $\im(K^0)$ is a maximal isotropic in $K^1$. Therefore the cone \eqref{ansa} is isotropic over $U$.
\end{proof}

By Propositions \ref{form} and \ref{por}, $E$ is an $SO(r,\C)$ bundle on $M$, where $r$ has the same parity as the virtual dimension $\vd$ \eqref{vd}. So Proposition \ref{isoprop} and Definition \ref{IsoCone} finally allow us to define a virtual cycle for $M$.

\begin{Def}\label{vdef} If $\vd$ is odd define $[M]^{\vir}=0$. If $\vd$ is even, define
\beq{virMdef}
[M]^{\vir}\=\sqrt{0_E^{\;!}}\,[C_{E\udot}]\ \in\ A_{\frac12\!\vd}\(M,\ZZ\).
\eeq
\end{Def}

We need to check that this definition is independent of choices: given two self-dual complexes $E\udot,\,F\udot$ with quasi-isomorphisms to $\EE$ intertwining $\theta$ as in Proposition \ref{form}, we would like to show that
\beq{wont}
\sqrt{0_{E}^{\;!}}\,[C_{E\udot}]\=\sqrt{0_F^{\;!}}\,[C_{F\udot}].
\eeq
We may represent the quasi-isomorphism $E\udot\cong\EE\cong F\udot$ by a roof of maps of complexes
\beq{oof}
\xymatrix@=14pt{
& A\udot \ar[dl]_{e\udot}^(.4){\rotatebox[origin=c]{45}{${\!}_\sim$}}\ar[dr]^{f\udot}_(.45){\rotatebox[origin=c]{-30}{${}^\sim$}} \\ E\udot && F\udot.\!\!}
\eeq
Replacing $A\udot$ by its truncation $\tau^{[-2,0]}A\udot$, we may assume it is also a 3-term complex of locally free sheaves. It will not, in general, be self-dual, however. It inherits two representatives
\beqa 
\theta\udot_e\ :=\ e_\bullet[2]\circ e\udot\ \text{ and }\ \theta\udot_f\ :=\ f_\bullet[2]\circ f\udot\ \colon\,A\udot\To A_\bullet[2]
\eeqa
of $\theta$, both of which are self-dual: $\theta\udot_e=(\theta\udot_e)^\vee[2]$ and $\theta\udot_f=(\theta\udot_f)^\vee[2]$. From such data --- a 3-term complex with a self-dual map to its dual$[2]$ --- we defined in \eqref{BtoE} a canonical \emph{self-dual} complex. Applied to $(A\udot,\theta\udot_e)$ and $(A\udot,\theta\udot_f)$ we get self-dual complexes $A\udot_e,\,A\udot_f$ respectively. By \eqref{BtoE},
\beq{Ae}
A\udot_e\=\big\{A_0\To A_e\To A^0\big\},\quad\text{where }\, A_e\ :=\ \frac{A^{-1}\oplus A_0}{A^{-2}}\ \cong\ A_e^*
\eeq
is naturally an orthogonal bundle. Note $A\udot_f$ takes the same form, but the arrows (and so the orthogonal bundle $A_f$) induced by $\theta_f\udot$ will be different.

Though this construction is canonical --- and applied to the self-dual complexes $E\udot,\,F\udot$ it returns them unmodified --- it is \emph{not} functorial due to the appearance of both covariant terms $A^{-i}$ and the contravariant term $A_0$. Thus the roof \eqref{oof} does not induce maps amongst $A_e\udot,\,A\udot_f,\,E\udot,\,F\udot$. Nonetheless Proposition \ref{give} below proves the virtual cycles made from $A_e\udot$ and $E\udot$ are the same. By symmetry this also equates the virtual cycles associated to $A_f\udot$ and $F\udot$. So finally to prove the desired equality \eqref{wont} we must check the virtual cycles associated to $A_e\udot$ and $A_f\udot$ are the same.

This follows from a simple deformation argument. Since $\theta\udot_e$ and $\theta\udot_f$ describe the same morphism in $D^b(M)$, for each $t\in\C$ so does
$$
\theta\udot_t\ :=\ t\theta\udot_e+(1-t)\theta\udot_f,
$$
which is also self-dual. Applying \eqref{BtoE} gives orthogonal bundles $A_t$ and self-dual complexes
\beq{tfam}
A\udot_t\=\big\{A_0\To A_t\To A^0\big\},
\eeq
all of which vary in a flat family over $\C\ni t$. Let $\bf A$ denote the bundle on $M\times\C$ whose restriction to $M\times\{t\}$ is $A_t$. Truncating \eqref{tfam} gives a perfect \emph{relative} obstruction theory \cite[Section 7]{BF} $\{{\bf A}\to A^0\}$ for $M\times\C\to\C$. The resulting absolute perfect obstruction theory has a local model just as in Proposition \ref{isoprop}, given by allowing \eqref{ext2} to vary with $t\in\C$. That is, we replace the ambient space $V$ by $V\times\C$, and the section $s$ by $s_t$. This is isotropic in $\bf A$ because $s_t$ is isotropic in $A_t$ for each fixed $t\in\C$.

Therefore we get an isotropic cone ${\bf C}\subset{\bf A}$ over $M\times\C$ just as in \eqref{limiso}. We apply the square root Gysin map of Definition \ref{IsoCone} defined via cosection localisation. By \cite[Theorem 5.2]{KL} it satisfies
\beq{defo}
\sqrt{0_{A_t}^{\;!}}\,[C_{A_t\udot}]\=\iota_t^{\;!}\left(\sqrt{0_{\bf A}^{\;!}}\,[{\bf C}]\right), \quad t\in\C,
\eeq
where $\iota\_t$ is the inclusion of any point $\{t\}\into\C$. Hence it is independent of $t$ and setting $t=0,1$ shows
$$
\sqrt{0_{A_f}^{\;!}}\big[C_{A_f\udot}\big]\=\sqrt{0_{A_e}^{\;!}}\big[C_{A_e\udot}\big].
$$
So we can finally conclude the well-definedness \eqref{wont} of our virtual cycle by proving the following.

\begin{Prop}\label{give} In the above notation, $\displaystyle{\sqrt{0_{E}^{\;!}}\,[C_{E\udot}]\=\sqrt{0_{A_e}^{\;!}}\big[C_{A_e\udot}\big].}$
\end{Prop}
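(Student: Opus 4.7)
The plan is to show $A_e\udot\cong E\udot\oplus K_+\udot$, where $K_+\udot$ is an acyclic self-dual complex with hyperbolic middle bundle; identify $C_{A_e\udot}$ as the direct sum of $C_{E\udot}$ with a positive maximal isotropic in the hyperbolic part; and conclude via the Whitney sum formula (Proposition \ref{E1E2}) combined with Lemma \ref{Schwantz}.

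First, after enlarging $A\udot$ by acyclic self-dual summands of the form \eqref{addK} if needed, I would arrange that $e\udot\colon A\udot\to E\udot$ is a term-wise split surjection with $K\udot:=\ker e\udot$ an acyclic 3-term complex of locally free sheaves, and choose a splitting $A\udot\cong K\udot\oplus E\udot$ as complexes. Under this splitting the self-duality $\theta\udot_e=e_\bullet[2]\circ\theta\_{E\udot}\circ e\udot$ is block-diagonal: zero on $K\udot$, and equal to $\theta\_{E\udot}$ on $E\udot$. Applying the construction \eqref{BtoE} to this input yields
$$
A_e\udot\ \cong\ E\udot\,\oplus\,K_+\udot,\qquad K_+\udot\=\{K_0\To K^0\oplus K_0\To K^0\},
$$
with $K_+\udot$ acyclic self-dual and $K^0\oplus K_0$ the hyperbolic orthogonal bundle, whose two natural maximal isotropics are $K^0$ and $K_0$. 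By Proposition \ref{por} and the convention \eqref{oror}, the orientation on $A_e=E\oplus(K^0\oplus K_0)$ inherited from the one on $\EE$ decomposes as the tensor product of the orientation on $E$ with the standard hyperbolic orientation, and this makes $K_0$ a \emph{positive} maximal isotropic.

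Second, since $\tau K_+\udot$ is acyclic and its map to $\LL_M$ is zero, the intrinsic normal cone $\mathfrak C_M$ lies inside the $E\udot$-summand of the stack $h^1/h^0[(\tau A_e\udot)^\vee]$ with zero component in the $K^0$-factor. Unwinding \eqref{recipe} then gives
$$
C_{A_e\udot}\=C_{E\udot}\oplus K_0\ \subset\ E\oplus(K^0\oplus K_0)\=A_e,
$$
with $K_0\subset K^0\oplus K_0$ the positive maximal isotropic identified above. Finally, applying Proposition \ref{E1E2} with $E_1=E$, $E_2=K^0\oplus K_0$, $C_1=C_{E\udot}$, $C_2=K_0$, the inner operator $\sqrt{0^{\;!}_{p_1^*(K^0\oplus K_0)}}$ evaluated on the maximal isotropic $K_0$ gives $(-1)^{|K_0|}[C_{E\udot}]=[C_{E\udot}]$ by Lemma \ref{Schwantz} and positivity of $K_0$; composing with the outer $\sqrt{0^{\;!}_E}$ then produces the desired equality $\sqrt{0^{\;!}_{A_e}}[C_{A_e\udot}]=\sqrt{0^{\;!}_E}[C_{E\udot}]$. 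The main obstacle is the bookkeeping in the first two steps: verifying globally the decomposition $A_e\udot\cong E\udot\oplus K_+\udot$ compatibly with orientations (which may require further enlargement or a small deformation of $A\udot$ so that the term-wise splitting respects the differentials, of the kind allowed by the Kiem--Li invariance \eqref{defo}), and ensuring that the sign in the Whitney step comes out to $+1$ rather than $-1$.
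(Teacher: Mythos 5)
Your plan rests on being able to rewrite $A\udot$ as an honest direct sum of complexes $A\udot\cong K\udot\oplus E\udot$ with $e\udot$ the projection, but this splitting cannot be achieved in general, even after adding acyclic self-dual summands or deforming. A term-wise split surjection of complexes with acyclic kernel $K\udot$ classifies an extension class in $\Hom_{K^b(M)}(E\udot,K\udot[1])$ (the homotopy category of bounded complexes of bundles), and although $K\udot$ is acyclic so its image in the derived category vanishes, the class in the homotopy category need not: over a non-affine scheme, vector bundles are not projective, so a quasi-isomorphism between bounded complexes of bundles is not automatically a homotopy equivalence, and enlarging $A\udot$ by self-dual acyclic complexes does not kill this obstruction. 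Once the splitting fails, the cokernel $A_e=\coker\(A^{-2}\to A^{-1}\oplus A_0\)$ is still isomorphic to $K^0\oplus E\oplus K_0$ as a bundle, but the differentials of $A_e\udot$ mix $E$ with the $K$-factors via the extension data, so $A_e\udot$ is only \emph{filtered} by $E\udot$ and $K_+\udot$, not a direct sum. Your subsequent identification $C_{A_e\udot}=C_{E\udot}\oplus K_0$ and the application of Proposition~\ref{E1E2} therefore do not apply.

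The paper sidesteps this precisely by not asking for a splitting. Instead, after enlarging $E\udot$ (not $A\udot$) so that $A^0\into T^*$ is injective, it sets $K:=\ker(T\onto A_0)$ (an isotropic \emph{subbundle} of $E$, not a subcomplex of $A\udot$), shows $A_e\cong K^\perp/K$ with $C_{A_e\udot}=C_{E\udot}/K$, and applies the reduction formula Proposition~\ref{KperpK}. Note that Proposition~\ref{KperpK} itself is proved by deforming to a direct-sum situation and invoking Proposition~\ref{E1E2} plus Lemma~\ref{Schwantz}, so your Whitney-sum intuition is exactly the right one at the leaf level; but the reduction $K^\perp/K$ is the correct intermediate statement that handles the non-split case. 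If you repair step one by replacing the claimed direct-sum decomposition of $A_e\udot$ with the reduction $A_e\cong K^\perp/K$ for a suitable isotropic $K\subset E$ and then invoke Proposition~\ref{KperpK}, you recover the paper's argument.
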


\begin{proof}
Start with the map of complexes $A\udot\to E\udot$ \eqref{oof}.
By choosing a sufficiently negative vector bundle $B$ and a surjection $B\onto A_0$ and adding the acyclic self-dual complex\footnote{We endow $B\oplus B^*$ with its obvious quadratic form and canonical orientation $o\_B$ \eqref{oTis} with respect to which $B\into B\oplus B^*$ is a positive maximal isotropic.} 
\beq{Badd}
\xymatrix{B \ar[r]^--{(1,0)}& B\oplus B^* \ar[r]^-{(0,1)}& B^*}
\eeq
to $E\udot$ as in \eqref{addK}, we may assume that $A^0\to E^0=T^*$ is an injection. By \eqref{recipe} the addition of \eqref{Badd} replaces $C_{E\udot}\subset E$ by
$$
C_{E\udot}\oplus B\ \subset\ E\oplus B\oplus B^*
$$
to which we can apply Proposition \ref{E1E2}. Since $\surd\;{0_{B\oplus B^*}^{\;!}}\,[B]=0_B^{\;!}[B]$ is just the zero section by Lemma \ref{Schwantz}, the virtual cycle \eqref{virMdef} is unaffected:
\beq{BBB}
\sqrt{0_{E\oplus B\oplus B^*}^{\;!}}\,\big[C_{E\udot}\oplus B\big]\=
\sqrt{0_{E}^{\;!}}\,\big[C_{E\udot}\big].
\eeq

So without loss of generality $A\udot\to E\udot$ and its dual $E_\bullet[2]\to A_\bullet[2]$ take the form
$$
\xymatrix@R=13pt{
A^{-2} \ar[r]\ar[d]& A^{-1} \ar[r]\ar[d]& A^0 \\
T \ar[r]\ar@{->>}[d]& E \ar[d]\ar[r]& T^*\! \ar[d]\ar@{<-^)}[u]-<0pt,9pt><-.2ex> \\
A_0 \ar[r]& A_1 \ar[r]& A_2.\!}
$$
Set $K:=\ker(T\onto A^0)$. Since $E\udot\to A_\bullet[2]$ is a quasi-isomorphism, the composition $K\into T\to E$ is also an injection, with isotropic image. We get the commutative diagrams with exact columns
$$
\xymatrix@R=13pt{
K \ar@{=}[r]& K &&& A_0 \ar@{=}[d]\ar[r]& \frac{A_0\oplus A^{-1}}{A^{-2}} \ar[r]& A^0 \\
T \ar@{->>}[d]\ar@{<-^)}[u]-<0pt,9pt><-.2ex>\ar[r]& \frac{T\oplus E}{T} \ar@{->>}[d]\ar@{<-^)}[u]-<0pt,9pt><-.2ex>\ar[r]& T^* \ar@{=}[d]<-.6ex> &\text{and}& A_0 \ar[r]& \frac{A_0\oplus E}T \ar[r]\ar@{<-^)}[u]-<0pt,12pt><-.2ex>\ar@{->>}[d]& T^* \ar@{<-^)}[u]-<0pt,9pt><-.2ex>\ar@{->>}[d] \\
A_0 \ar[r]& \frac{A_0\oplus E}T \ar[r] & T^* &&& K^* \ar@{=}[r]& K^*.\!\!}
$$
Letting $K^\perp$ denote the orthogonal to $K\subset E$, by \eqref{Ae} the diagrams become
\beq{dgss}
\xymatrix@R=13pt{
K \ar@{=}[r]& K &&& A_0 \ar@{=}[d]\ar[r]& A_e \ar[r]& A^0 \\
T \ar@{->>}[d]\ar@{<-^)}[u]-<0pt,9pt><-.2ex>\ar[r]& E \ar@{->>}[d]\ar@{<-^)}[u]-<0pt,9pt><-.2ex>\ar[r]& T^* \ar@{=}[d]<-.6ex> &\text{and}& A_0 \ar[r]& (K^\perp)^* \ar[r]\ar@{<-^)}[u]-<0pt,9pt><-.2ex>\ar@{->>}[d]& T^* \ar@{<-^)}[u]-<0pt,9pt><-.2ex>\ar@{->>}[d] \\
A_0 \ar[r]& (K^\perp)^* \ar[r] & T^* &&& K^* \ar@{=}[r]& K^*.\!\!}
\eeq
Let $T\udot$ denote the complex $A_0\to(K^\perp)^*\to T^*$. The diagrams on the left and right describe quasi-isomorphisms $E\udot\onto T\udot\hookleftarrow A_e\udot$ respectively, corresponding to the reduction $K^\perp/K\leftarrow\hspace{-3mm}\leftarrow K^\perp\into E$ of $E$ by $K$.

Letting $\tau E\udot$ denote the stupid truncation $\{E\to T^*\}$ we get the perfect obstruction theory
$$
\tau E\udot\To E\udot\rt\sim\EE\rt\At\LL_M
$$
of \eqref{taupot} and similarly for $\tau A\udot_e$ and $\tau T\udot$. This removes the first columns of the diagrams \eqref{dgss}. We then dualise to find, by \eqref{recipe}, the exact sequences
$$
\xymatrix@R=13pt{
K^* &&& A_e & \ C_{A\udot} \ar@{_(->}[l]<.15ex> \\
E \ar@{->>}[u]& \ C_{E\udot} \ar@{_(->}[l]<.15ex>\ar@{=}[d] &\text{and}& K^\perp \ar@{->>}[u]\ar@{<-_)}[d]+<0pt,10pt><.15ex>& \ C_{T\udot} \ar@{<-_)}[d]+<0pt,10pt><.15ex> \ar@{->>}[u]\ar@{_(->}[l]<.15ex> \\
K^\perp \ar@{^(->}[u]<-.2ex> & \ C_{T\udot} \ar@{_(->}[l]<.15ex>&& \!K \ar@{=}[r]& K.}
$$
The upshot is that $C_{E\udot}\subset E$ lies in $K^\perp\subset E$ and contains $K$; quotienting gives $C_{A_e\udot}=C_{E\udot}/K\subset K^\perp/K$. Thus applying Proposition \ref{KperpK} to $[C_{E\udot}]$ gives
\beq{cones}
\sqrt{0_{E}^{\;!}}\,[C_{E\udot}]\=\sqrt{0_{A_e}^{\;!}}\big[C_{A_e\udot}\big].\qedhere
\eeq
\end{proof}

\subsection*{Deformation invariance} Suppose $X$ moves in a smooth projective family $\cX\to C$ over a smooth connected curve $C$. Then we can repeat our working relative to $C$. We get a coarse moduli space $\cM\to C$ with a relative obstruction theory $\At\colon\EE\to\LL_{\cM/C}$ as in \eqref{ObsTh}. We can resolve it by a self-dual 3-term complex $E\udot=\{T\to E\to T^*\}$ over all of $\cM$. Thus by \cite[Section 7]{BF} we get a cone $C_{E\udot}\subset E^*\cong E$.

Choices of orientation on the fibres of $\cM\to C$ form a $\Z/2$-local system on $C$, essentially because the proof of their existence in \cite{CGJ} depends only on the (locally trivial) topology of the fibres of $\cX\to C$. Replacing $C$ by a $\Z/2$-cover we may assume this has trivial monodromy. Therefore a choice of orientation on one fibre induces one on all of them, thus orienting $E$ as in Proposition \ref{por}.

Now the key point is that the Darboux theorems \cite{BBBJ, BG, BBJ} also work with parameters (in fact \cite{BG} explicitly works over a base) allowing us to write $\cM$ locally as the zeros of an isotropic section of an orthogonal bundle such that the derivative of this description recovers the relative obstruction theory. That is, Proposition \ref{isoprop} goes through in this family setting to show that $C_{E\udot}\subset E$ is isotropic.

The argument \eqref{defo} then produces a cycle $[\cM]^{\vir}\in A_*\(\cM,\ZZ\)$ with  the deformation invariance property that
\beq{defint}
[\cM_t]^{\vir}\=\iota_t^![\cM]^{\vir}
\eeq
for any point $\iota\_t\colon\{t\}\into C$.

\subsection{Generalisations}\label{genrul} We sketch some extensions of the above theory.

\subsection*{Fixed determinant} In Definition \ref{vdef} we allowed our Calabi-Yau 4-fold to have nonzero $H^i(\cO_X)$ for $i=1,2,3$. However when $H^2(\cO_X)\ne0$ it is easy to see that $[M]^{\vir}=0$. In this case we can 
restrict attention to sheaves of strictly positive rank $r>0$ and define a kind of reduced virtual class on the subscheme $M_L\subset M$ of stable sheaves with determinant a fixed line bundle $L$ on $X$. By \cite[Definition 3.1]{STV} there is a derived determinant map to the derived Picard stack,
$$
\cM^{\mathrm{der}}\rt{R\det}R\Pic(X).
$$
Letting $\cM^{\mathrm{der}}_L$ denote the fibre over $\{L\}$, it is a derived Deligne-Mumford stack with stabilisers $\Z/r\Z$ at every point. \'Etale locally, tensoring with $r$th roots of degree 0 line bundles splits $R\det$, giving a local product structure $\cM^{\mathrm{der}}\cong\cM^{\mathrm{der}}_L\times R\Pic(X)$. By \cite[Proposition 3.2]{STV} this induces the splitting of derived cotangent complexes 
$$
R\pi_*\;R\hom (\curly E, \curly E)[3]\ \cong\ R\pi_*\;R\hom (\curly E, \curly E)\_0[3]\,\oplus\,R\pi_*\;\cO_{X\times\cM^{\mathrm{der}}},
$$
splitting the trace map
$$
\tr\ \colon\,R\pi_*\;R\hom (\curly E, \curly E) \To R\pi_*\;\cO_{X\times\cM^{\mathrm{der}}}
$$
by $\frac1r\id_{\curly E}$. We expect that it is obvious to experts\footnote{Related results are explained in \cite[Remarks 3.1, 3.2]{PTVV} for $X$ a K3 surface, as Mauro Porta pointed out to us.} that the $(-2)$-shifted symplectic structure splits accordingly, thus endowing $\cM^{\mathrm{der}}_L$ --- and thus the underlying Deligne-Mumford stack $\cM_L$ and its coarse moduli scheme $M_L$ --- with a $(-2)$-shifted symplectic structure. Consider the restriction map
$$
R\pi_*\;R\hom (\curly E, \curly E)\_0[3]\=\LL_{\cM^{\mathrm{der}}_L}\To\LL_{\cM_L}.
$$
Since the latter is the pullback of $\LL_{M_L}$ this gives an amplitude $[-2,0]$ obstruction theory
\beq{EE}
\EE\ :=\ R\pi_*\;R\Hom(\cE,\cE)\_0[3]\To\LL_{M_L}
\eeq
replacing \eqref{ObsTh}. Here $\cE$ is again any choice of twisted universal sheaf on $X\times M_L$. The shifted symplectic isomorphism $\LL_{\cM^{\mathrm{der}}_L}\cong\LL_{\cM^{\mathrm{der}}_L}^\vee[2]$ becomes the relative Serre duality $\theta\colon \EE\to\EE^\vee[2]$ on \eqref{EE}. Thus we have all the same ingedients as before. Resolving $\EE$ by a self-dual 3-term locally free complex $E\udot$ we get an isotropic cone $C_{E\udot}\subset E$, and a virtual cycle
\beq{redvir}
[M_L]^{\vir}\ :=\ \surd\;0_E^{\;!}\;[C_{E\udot}]\ \in\ A_{\frac12\!\vd-h^1(\cO_X)+\frac12h^2(\cO_X)}\(M_L,\ZZ\).
\eeq
Deformation invariance of this class holds only for families $\cX\to C$ over which the line bundle $L$ (on the central fibre $\cX_0=X$) extends to a line bundle $\cL$ over all of $\cX$. This allows us to form a family moduli space of fixed determinant sheaves and repeat the working relative to $C$ to give the deformation invariance \eqref{defint}.

\subsection*{Quasi-projective Calabi-Yau 4-folds} Though we have worked with projective $X$ out of habit, all of our results --- except deformation invariance --- work for moduli spaces of \emph{compactly supported} stable sheaves on a quasi-projective Calabi-Yau 4-fold $X$. The paper \cite{BD} extends the $(-2)$-shifted symplectic structure of \cite{PTVV} to this setting, and the obstruction theory $\EE\to\LL_M$ is identical to the projective case. Therefore \cite{BBBJ} applies to the stack of stable compactly supported sheaves and we can deduce Proposition \ref{isoprop} as before: the cone $C_{E\udot}\subset E$ is isotropic. The orientation result \cite{CGJ} on $\EE$ is extended to the quasi-projective case in \cite{Bo1}. Thus we have all the same ingredients to produce the virtual cycle in this setting.

\subsection*{$(-2)$-shifted symplectic schemes}
Though we have not worked in such generality, our results apply to any quasi-projective scheme $M$ with a $(-2)$-shifted symplectic structure (in the sense of \cite{PTVV}) and an orientation. Quasi-projectivity ensures $M$ has enough locally free sheaves to allow us to produce a self-dual 3-term locally free resolution of the virtual cotangent bundle. Then the key point is that the papers \cite{BG,BBJ} only use the $(-2)$-shifted symplectic structure, so Proposition \ref{isoprop} works in this generality, producing an isotropic cone $C_{E\udot}\subset E$ to which we can apply $\surd\;0_E^{\;!}$ to define the virtual cycle.

\begin{Thm}
Let $M$ be a quasi-projective $(-2)$-shifted symplectic derived scheme with an orientation in the sense of \eqref{orcx}. If $\,\vd$ is odd, set $[M]^{\vir}=0$. If $\;\vd$ is even then
$$
[M]^{\vir} := \sqrt{0_E^{\;!}}\,[C_{E\udot}]\ \in\ A_{\frac12\!\vd}\( M,\ZZ\)
$$
is independent of the choice of self-dual resolution $E\udot$.

Moreover if $M$ is projective then $[M]^{\vir}$ is deformation invariant in the sense that \eqref{defint} holds.
\end{Thm}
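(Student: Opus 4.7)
The plan is to follow exactly the three-step strategy carried out for sheaf moduli in Sections~\ref{CY4M} and \ref{vcsec}, checking that each ingredient survives when we drop the sheaf-theoretic context and retain only the $(-2)$-shifted symplectic structure and orientation. Namely: (i) produce a self-dual three-term locally free resolution $E\udot=\{T\to E\to T^*\}$ of the virtual cotangent bundle, (ii) show the Behrend--Fantechi cone $C_{E\udot}\subset E\cong E^*$ associated to its truncation $\tau E\udot\to\LL_M$ is isotropic, and (iii) deduce independence of the resulting square-rooted Gysin class. The point is that each step, as already carried out, uses the sheaf-theoretic input only to guarantee the abstract data now hypothesized.

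For (i), Proposition \ref{form} is proved from the existence of a symmetric self-duality $\theta\colon\EE\rt\sim\EE^\vee[2]$ of a perfect obstruction theory of amplitude $[-2,0]$, together with enough locally free sheaves on $M$. The symmetry is now supplied by contracting the $(-2)$-shifted symplectic form, and quasi-projectivity gives the locally free resolutions; Proposition \ref{por} then upgrades the resulting $O(r,\C)$ bundle $E$ to an $SO(r,\C)$ bundle using the chosen orientation in the sense of \eqref{orcx}. For (ii), the proof of Proposition \ref{isoprop} was already reduced to the Darboux theorem of \cite{BBBJ, BBJ, BG} --- which applies to any $(-2)$-shifted symplectic derived scheme --- together with purely formal manipulations of self-dual complexes. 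So the cone $C_{E\udot}\subset E$ is isotropic over each minimal standard form neighbourhood, and hence globally. For (iii), the argument around Proposition \ref{give} takes two self-dual resolutions $E\udot,\,F\udot$, bridges them by a roof $E\udot\leftarrow A\udot\to F\udot$, produces two (different) self-dual structures on $A\udot$ via \eqref{BtoE}, invokes Proposition \ref{KperpK} to identify the virtual cycles built from $A_e\udot$ and $E\udot$ (and symmetrically for $A_f\udot$ and $F\udot$), and finally interpolates the two self-dual structures by the straight line $\theta\udot_t:=t\theta\udot_e+(1-t)\theta\udot_f$ and applies the cosection-localised deformation invariance \eqref{defo}. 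None of these uses sheaf-theoretic input.

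For deformation invariance, suppose $\mathcal{M}\to C$ is a smooth projective family of $(-2)$-shifted symplectic derived schemes over a smooth connected curve $C$, with the given $M$ as a fibre. Projectivity gives a self-dual three-term relative resolution $E\udot=\{T\to E\to T^*\}$ of the relative virtual cotangent bundle over $\cM$, via the family version of Proposition \ref{form}. The relative Darboux theorem of \cite{BG}, which is proved over a base, shows that $\mathcal{M}$ admits local isotropic models over $C$, and the argument of Proposition \ref{isoprop} then applies to the relative cone $C_{E\udot}\subset E$, which is therefore isotropic. Applying $\surd\;0_E^{\;!}$ to $[C_{E\udot}]$ produces an absolute virtual cycle $[\cM]^{\vir}\in A_*(\cM,\ZZ)$, and the cosection-localised compatibility \eqref{defo} with refined Gysin pullback along $\iota_t\colon\{t\}\into C$ yields the deformation invariance identity $[M_t]^{\vir}=\iota_t^{\;!}[\cM]^{\vir}$.

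The only step that is not immediately mechanical is the extension of the orientation across the family. In the sheaf moduli setting this followed from the topological argument of \cite{CGJ} applied to the locally trivial family, which told us that orientations form a $\Z/2$-local system on $C$ so that passing to a double cover allows us to extend. In the abstract derived setting we must assume as part of the deformation-invariance hypothesis that the orientation on the central fibre lies in a $\Z/2$-local system over $C$ of orientations of $\LL_{\cM/C}^{\vir}$ (which is automatic once the underlying topology of the fibres is locally constant), and then the same double cover trick works. This point --- making ``orientation" a notion that propagates in families of abstract $(-2)$-shifted symplectic schemes --- is the only real obstacle; with it in hand, the rest of the proof is a transcription of the one already given for sheaf moduli on projective Calabi--Yau 4-folds.
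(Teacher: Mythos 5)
Your proposal is correct and follows essentially the same route the paper takes for this theorem, which is stated as a remark-style consequence of Propositions \ref{form}, \ref{por}, \ref{isoprop} and the independence argument of Section \ref{vcsec}: quasi-projectivity supplies locally free resolutions, the Darboux theorems of \cite{BG,BBJ,BBBJ} use only the $(-2)$-shifted symplectic structure, and the deformation and comparison arguments (\eqref{defo}, Propositions \ref{KperpK} and \ref{give}) are purely formal. You are right that the propagation of the orientation over the base of a deforming family is a genuine extra hypothesis in the abstract setting — the paper only addresses this in the sheaf-theoretic case via \cite{CGJ} and leaves it implicit in the statement ``deformation invariant in the sense that \eqref{defint} holds''.
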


In particular we can get virtual cycles on projective moduli spaces of simple complexes in the derived category $D(X)$ of sheaves on any projective Calabi-Yau 4-fold. This includes the moduli space $P_n(X,\beta)$ of \emph{stable pairs} $(F,s)$ of \cite{PT}, where
\begin{itemize}
\item $F$ is a pure 1-dimensional sheaf on $X$,
\item $F$ has curve class $[F]=\beta\in H_2(X,\Z)$ and $\chi(F)=n$,
\item $s\in H^0(F)$ has 0-dimensional cokernel.
\end{itemize}
The same applies to the moduli space $J(X,c)$ of \emph{Joyce-Song pairs} $(F,s)$ of \cite{JS}, where
\begin{itemize}
\item $F$ is a semistable sheaf on $X$ of Chern character $c$,
\item $n\gg0$ is sufficiently large that $H^{\ge1}(F(n))=0$ for all semistable $F$,
\item $s\in H^0(F(n))$ factors through no semi-destabilising subsheaf of $F$.
\end{itemize}
Both moduli spaces $P_n(X,\beta),\,J(X,c)$ are naturally projective schemes\footnote{The section rigidifies the pairs $(F,s)$; there are no semistables and no automorphisms.} with obstruction theory governed by the self-dual 3-term complex
$$
R\Hom\_X(I\udot,I\udot)\_0,
$$
where $I\udot\in D(X)$ denotes the complexes $\{\cO_X\rt sF\}$ and $\{\cO_X(-n)\rt sF\}$ respectively. By \cite{CGJ} $P_n(X,\beta),\,J(X,c)$ admit orientations. Choosing one, our theory endows them with virtual cycles.

\begin{Thm} If $\chi(c,c)\in2\Z$ we have algebraic virtual cycles
\beqa
[P_n(X,\beta)]^{\vir} &\in& A_n\(P_n(X,\beta),\ZZ\), \\
{}[J(X,c)]^{\vir} &\in& A_{\;\chi(c\;(n))-\frac12\chi(c,c)}\(J(X,c),\ZZ\)
\eeqa
which, by the sequel \cite{OT2}, map to the integral classes of \cite{BJ} in homology. They are deformation invariant in the sense of \eqref{defint}.
\end{Thm}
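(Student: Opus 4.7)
The plan is to reduce directly to the preceding general theorem for quasi-projective $(-2)$-shifted symplectic derived schemes. Since both $P_n(X,\beta)$ and $J(X,c)$ are projective schemes by construction (the section $s$ rigidifies the pairs, killing scalar automorphisms and ensuring projectivity in the absence of strictly semistable objects), the question reduces to verifying three hypotheses: (a) a $(-2)$-shifted symplectic structure on the derived enhancement, (b) an orientation in the sense of \eqref{orcx}, and (c) the stated virtual dimension $\vd$, which must be even.

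For (a), both moduli spaces sit naturally inside the derived moduli of simple perfect complexes on $X$ via $(F,s)\mapsto I\udot$; this map is injective because $I\udot$ recovers $F$ as its cohomology in degree $0$ and $s$ as its differential, and the rigidification by $s$ makes the source a derived scheme rather than a gerbe. By PTVV \cite{PTVV} the derived moduli of simple complexes on the projective CY 4-fold $X$ carries a canonical $(-2)$-shifted symplectic structure induced by the trivialisation of $K_X$; this restricts, giving the pair moduli the required $(-2)$-shifted symplectic structure. The corresponding self-dual 3-term virtual cotangent complex is $R\Hom(I\udot,I\udot)\_0[3]$ with $\theta\colon\EE\rt\sim\EE^\vee[2]$ coming from Serre duality, exactly as stated in the excerpt. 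For (b) the orientations are supplied by \cite{CGJ}, whose construction applies to moduli of complexes (in particular to Joyce-Song pairs and stable pairs, as both are moduli of simple two-term complexes with prescribed determinant).

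For (c), I would apply Hirzebruch-Riemann-Roch to $I\udot=\{\cO_X\to F\}$ (respectively $\{\cO_X(-n)\to F\}$) using the fact that $\chi(F,F)=0$ for pure $1$-dimensional sheaves on a CY 4-fold (a topological vanishing that follows from the triviality of $K_X$ combined with Serre duality and one-dimensionality of support). For $P_n(X,\beta)$ this yields $\vd=2n$, which is automatically even. For $J(X,c)$ the analogous calculation yields $\vd=2\chi(c\;(n))-\chi(c,c)$, which is even precisely under the standing assumption $\chi(c,c)\in 2\Z$. Plugging into \eqref{virMdef} then produces the advertised classes in $A_n(P_n(X,\beta),\ZZ)$ and $A_{\chi(c\;(n))-\frac12\chi(c,c)}(J(X,c),\ZZ)$, and projectivity delivers deformation invariance \eqref{defint} in the sense of Section \ref{vcsec}.

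The main obstacle is hypothesis (a): one has to verify that the $(-2)$-shifted symplectic structure produced by PTVV on the derived moduli of simple complexes indeed restricts to one on the pair moduli compatibly with the stated Serre-duality self-duality on $R\Hom(I\udot,I\udot)\_0[3]$. Once the forgetful map $(F,s)\mapsto I\udot$ is established as an open embedding of derived schemes and the shifted symplectic form is checked to pull back to the pair moduli (so that its tangent-level symmetric pairing agrees with the Serre-duality pairing on the traceless part), the remainder of the argument is a formal invocation of the general theorem.
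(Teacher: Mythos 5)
Your proposal is correct and takes essentially the same route as the paper: identify each pair space with a projective moduli of simple two-term complexes $I\udot$, invoke the $(-2)$-shifted symplectic structure from \cite{PTVV} (together with the trace-free/fixed-determinant reduction already discussed in Section \ref{genrul}), take orientations from \cite{CGJ}, compute $\vd=2n$ (resp. $\vd=2\chi(c(n))-\chi(c,c)$), and apply the general theorem for oriented quasi-projective $(-2)$-shifted symplectic derived schemes. One tiny quibble: the vanishing $\chi(F,F)=0$ for pure one-dimensional $F$ is purely a degree count (since $\mathrm{ch}(F)\in H^6\oplus H^8$, the product $\mathrm{ch}(F)^\vee\mathrm{ch}(F)\td(X)$ has no $H^8$ part) and does not actually require Serre duality or the Calabi--Yau condition, though the conclusion is of course correct.
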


The first class puts the Calabi-Yau 4-fold conjectures of \cite{CMT2, CT1, CT2, CK3, CKM1, CKM2} on a firmer footing. 
 
When stability$\,=\,$semistability for sheaves $F$ of Chern character $c$, the Joyce-Song moduli space $J(X,c)$ is a projective bundle over $M=M(X,c)$ with fibre $\PP^{\;\chi(F(n))-1}$, so we expect a simple relationship between $[J(X,c)]^{\vir}$ and $[M]^{\vir}$, just as in the Calabi-Yau 3-fold case. More generally --- again by analogy with the Calabi-Yau 3-fold case --- one could hope for a universal wall-crossing formula like \cite[Theorem 5.27]{JS}. This would allow the definition of invariants valued in $\Q$ counting strictly semistable sheaves $F$ by using virtual counts (valued in $\Z$) of the Joyce-Song pairs $I\udot$. This will be addressed in forthcoming work of Dominic Joyce.

\section{$K$-theoretic virtual class}\label{Ksec}
Again let $M$ be a quasi-projective $(-2)$-shifted symplectic derived scheme with an orientation. As usual we have in mind the moduli space of compactly supported stable sheaves --- or stable pairs or Joyce-Song pairs --- of fixed total Chern character $c$ on a smooth quasi-projective 4-fold $X$ with $K_X\cong\cO_X$. 
In this Section we will endow $M$ with a virtual structure sheaf with good properties.

For this we will need a $K$-theoretic analogue of the Edidin-Graham class of an $SO(2n,\C)$ bundle $(E,q,o)$. There are candidate classes in the literature in various special cases \cite{And, CLL, Chiodo, EG:Ch, KO, OS, PV:A}. We find a class which works in maximal generality --- with no assumption of Zariski local triviality, the existence of a maximal isotropic subbundle, or the existence of a spin bundle --- and which uses Definition \ref{Lor} to fix the sign ambiguity present in many of these classes. To do so we find only that we need to invert 2 in the coefficients of $K$-theory.

We make crucial use of Anderson's class\footnote{Anderson works in \emph{connective $K$-theory} \cite[Appendix A]{And}; we specialise his Bott class $\beta$ to $-1$ to work in $K^0$. He also restricts to Zariski locally trivial $SO(2n,\C)$ bundles.} \cite[Appendix B]{And}. We define a variant $\e$ of it with the desirable property that $(-1)^n\e\;^2$ is the $K$-theoretic Euler class; see \eqref{squares}. Our class and his are supported in codimension $n$, agree in codimension $n+1$, but differ in codimension $n+2$; see \eqref{usvA}.

\subsection{$K$-theoretic Edidin-Graham class}
On a quasi-projective scheme $Y$, let $K^0(Y)$ and $K_0(Y)$ denote the Grothendieck groups of vector bundles and coherent sheaves respectively. The former is a ring under (derived) tensor product, and the latter is a module over it.
On inverting 2 we will find that any line bundle has a distinguished square root in $K^0\(Y,\ZZ\)=K^0(Y)\otimes\ZZ$.

Using the Catalan numbers $C_i:=\tfrac1{i+1}\binom{2i}i\in\Z$, set $a_i:=2^{1-2i}C_{i-1}\in\ZZ$. This is minus the $i$th Taylor coefficient of
$$
(1-x)^{\frac12}\=1-\tfrac12x-\tfrac18x^2-\dots\=1-\sum_{i\ge1}a_ix^i\ \in\ \ZZ[\![x]\!].\vspace{-2mm}
$$
Squaring gives the identity
\beq{xhalf}
\left(1-\sum_{i=1}^{n-1}a_ix^i\right)^{\!2}\ =\ (1-x)+2a_nx^n+O(x^{n+1}),
\eeq
where $O(x^{n+1})$ denotes the product of $x^{n+1}$ and a polynomial in $x$.

For $i>0$ let $K^0(Y)^i\subset K^0(Y)$ denote the subring of elements which may be written as a formal difference of sheaves supported in codimension $i$. Since $K^0(Y)^i\cdot K^0(Y)^j\subseteq K^0(Y)^{i+j}$ this subring is nilpotent.

\begin{Lemma}\label{sqrts}
For $E\in 1+K^0(Y)^1$, the finite sum
\beq{sqdf}
\sqrt E\ :=\ 1-\sum_{i\ge1}a_i(1-E)^{\otimes i}\ \in\ K^0\(Y,\ZZ\)
\eeq
is the unique class in $1+K^0(Y)^1$ which squares to $E$. In particular, $\sqrt{E F}=\sqrt E\sqrt F$.
\end{Lemma}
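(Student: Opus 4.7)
The plan is to reduce everything to the nilpotence of the element $x:=1-E\in K^0(Y)^1$. Because $Y$ is quasi-projective, hence of finite Krull dimension $d$, and because the filtration $K^0(Y)^\bullet$ is multiplicatively compatible (as noted in the paper, $K^0(Y)^i\cdot K^0(Y)^j\subseteq K^0(Y)^{i+j}$, and $K^0(Y)^i=0$ for $i>d$), any $x\in K^0(Y)^1$ satisfies $x^{\otimes(d+1)}=0$. This already shows, without inverting $2$, that the sum \eqref{sqdf} is finite and well-defined.

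For existence of the square root, I would simply invoke identity \eqref{xhalf} with $n:=d+1$. Since $x^{\otimes n}=0$ forces the trailing $O(x^{n+1})$ term to vanish as well, \eqref{xhalf} becomes the strict equality
$$
\left(1-\sum_{i=1}^{d}a_i\;x^{\otimes i}\right)^{\otimes 2}\=1-x\=E
$$
in $K^0\(Y,\ZZ\)$. The left hand side is precisely $\sqrt E\,^{\otimes 2}$, since all further Taylor terms vanish by nilpotence.

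Uniqueness is where inverting $2$ enters. Suppose $F,F'\in 1+K^0(Y)^1$ both square to $E$. Then $(F-F')\otimes(F+F')=0$, and
$$
F+F'\=2+(F-1)+(F'-1)
$$
lies in $2+K^0(Y)^1$ with the last two summands nilpotent. Over $\ZZ$ this element is a unit: its inverse is the finite sum $\tfrac12\sum_{k\ge 0}\bigl(-\tfrac12\bigr)^k\bigl((F-1)+(F'-1)\bigr)^{\otimes k}$. Cancelling gives $F=F'$. Multiplicativity $\sqrt{EF}=\sqrt E\,\sqrt F$ then follows immediately from uniqueness, since the right hand side lies in $1+K^0(Y)^1$ and squares to $EF$.

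The main (and only) subtlety is the justification of the nilpotence of $K^0(Y)^1$ from the definition given in the paper: one must confirm that the codimension-of-support filtration is multiplicatively compatible and terminates at the dimension of $Y$. Everything else is formal manipulation of the series $(1-x)^{1/2}$ in a nilpotent extension of $\ZZ$.
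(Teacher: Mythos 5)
Your proof is correct. The existence half is essentially identical to the paper's: both invoke the identity \eqref{xhalf} for $n$ larger than $\dim Y$ together with the nilpotence of $K^0(Y)^1$ (which the paper asserts just before the lemma from $K^0(Y)^i\cdot K^0(Y)^j\subseteq K^0(Y)^{i+j}$, so that point is not really a gap you need to worry about). Where you genuinely diverge is uniqueness. The paper argues by induction on codimension: writing any square root $M$ as the partial sum $1-\sum_{i=1}^{n-1}a_i(1-E)^{\otimes i}$ plus an error $M_n$, it squares, applies \eqref{xhalf}, and shows the error is forced into codimension $\ge n+1$ at each step, terminating at $n=\dim Y$. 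You instead use the factorisation $F^{\otimes 2}-F'^{\otimes 2}=(F-F')\otimes(F+F')$ and observe that $F+F'=2+(\text{nilpotent})$ is a unit once $2$ is inverted. Your argument is shorter and more conceptual --- it isolates exactly where $\ZZ$ coefficients are needed for uniqueness --- whereas the paper's induction has the mild advantage of reproving, along the way, that any square root must agree with the explicit series term by term in each codimension (information it implicitly reuses when comparing its class with Anderson's in \eqref{usvA}). Your deduction of multiplicativity from uniqueness matches what the paper intends by ``in particular.''
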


\begin{proof}
That $(\sqrt E\;)^2=E$ is trivial from \eqref{xhalf} for $n>\dim Y$. For the uniqueness statement, suppose $M\in 1+K^0(Y)^1$ is another class with $M^2=E$. Working in $K^0\(Y,\ZZ\)$ at all times, define $M_n$ by
\beq{ton}
M\=1-\sum_{i=1}^{n-1}a_i(1-E)^{\otimes i}+M_n.
\eeq
Assume inductively that codim$\,M_n\ge n$. (The base case $n=1$ is trivial.)
Squaring gives
\beqa
M^{\otimes2}\=E &=& \!\left(1-\sum_{i=1}^{n-1}a_i(1-E)^{\otimes i}\right)^{\!\otimes2}+2M_n+O(n+1) \\
&\!\stackrel{\eqref{xhalf}}=\!& E+2a_n(1-E)^{\otimes n}+2M_n+O(n+1),
\eeqa
where $O(n+1)$ means a class supported in codimension at least $n+1$. Thus
$$
M_n\=-a_n(1-E)^{\otimes n}+O(n+1),
$$
which by \eqref{ton} shows codim$\,M_{n+1}\ge n+1$. For $n\ge\dim Y$ this shows that $M_{n+1}=0$ and $M=\sqrt E$.
\end{proof}

\begin{Rmk} In particular any line bundle $L$ has a canonical square root
$$
\sqrt L\ \in\ K^0\(Y,\ZZ\),
$$
since $L=1+(L-1)\in 1+K^0(Y)^1$. It is unique in $1+K^0(Y)^1$, so if $M$ is any line bundle on $Y$ such that $M^{\otimes2}=L$ then $M=\sqrt L$ in $K^0\(Y,\ZZ\)$.
\end{Rmk}

The $K$-theory of complex vector bundles is an ``oriented cohomology theory" --- it admits a notion of Chern classes. The $K$-theoretic first Chern class of a line bundle $L$ is
\beq{Kc1}
\c_1(L)\ :=\ 1-L^*\ \in\ K^0(Y)^1,
\eeq
which is the class $[\cO_D]$ of the structure sheaf of any divisor $D\in|L|$. The splitting principle and Whitney sum formula for bundles
\beq{Kc2}
\c(E_1\oplus E_2,t)\=\c(E_1,t)\;\c(E_2,t),\quad\text{where }\,\c(E,t)=1+\sum_{i=1}^{\rk E}t^i\c_i(E),
\eeq
then determine the Chern classes of any object of $K^0(Y)$. In particular the $r$th Chern class $\c_r(E)$ (or Euler class $\mathfrak e(E)$) of a rank $r$ bundle $E$ is
\begin{eqnarray}\nonumber
K^0(Y)^r\ \ni\ \c_r(E) &:=& \Lambda\udot E^*\ :=\ \sum_{i=0}^r(-1)^i\Lambda^iE^* \\
&=& 0_E^*\;0\_{E*}[\cO_Y]\=0_E^*\big[\cO_{\Gamma_{\!s}}\big], \label{cnK}
\end{eqnarray}
where $0_E\colon Y\into E$ is the 0-section, $0_E^*$ is the derived pullback, and $\Gamma_{\!s}\subset E$ is the graph of any section $s\in\Gamma(E)$. If $s$ is transverse to $0_E$ with zero locus $Z$ then this gives $\c_r(E)=[\cO_Z]$. We also have
\beq{crE*}
\c_r(E^*)\ :=\ \Lambda\udot E\=(-1)^{\rk E}\c_r(E)\cdot\det E.
\eeq

\subsection*{Special case} We first define our $K$-theoretic Edidin-Graham class in the presence of a maximal isotropic subbundle.

\begin{Def}\label{KEGdef} Let $(E,q,o)$ be an $SO(2n,\C)$ bundle admitting a maximal isotropic subbundle $\Lambda\subset E$. We define\footnote{In fancier language this is the Atiyah-Bott-Shapiro KO-theory Euler class of the $SO(n,\R)$ bundle $E_\R$ underlying $E$. When $\sqrt{\det\Lambda}$ a genuine line bundle, $\Lambda\udot(\Lambda^*)\cdot\sqrt{\det\Lambda}$ is a spin bundle for $E_\R$ --- as used by Polishchuk-Vaintrob \cite{PV:A} --- and defines an ABS class in KO-theory with integer coefficients.}
\beq{KEG}
\e(E)\ :=\ (-1)^{|\Lambda|}\;\c_n(\Lambda)\cdot\sqrt{\det\Lambda}\ \in\ K^0\(Y,\ZZ\).
\eeq
\end{Def}

\noindent Note that by \eqref{crE*},
\beq{dualeps}
(-1)^{n+|\Lambda|}\c_n(\Lambda^*)\cdot\sqrt{\det\Lambda^*}\=(-1)^{|\Lambda|}\c_n(\Lambda)\cdot\sqrt{\det\Lambda}\=\e(E).
\eeq
So if $E=\Lambda\oplus\Lambda^*$ and we choose $\Lambda^*$ as our maximal isotropic, of sign $(-1)^{|\Lambda|+n}$, we get the same class. More generally we have the following.

\begin{Prop}\label{well} Definition \ref{KEGdef} is well-defined: $\e(E)$ \eqref{KEG} is independent of the choice of $\Lambda\subset E$ and is really a $K$-theoretic square root Euler class:
\beq{squares}
\e(E)^2\=(-1)^n\c_{2n}(E).
\eeq
\end{Prop}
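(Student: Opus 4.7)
I would split the proof into (i) the squaring identity and (ii) independence of $\Lambda$, handling (ii) by first dispatching the ``complementary case'' $\Lambda \cap \Lambda' = 0$ and then reducing the general case to it via the reduction $K^\perp/K$.

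For (i), the short exact sequence $0 \to \Lambda \to E \to \Lambda^* \to 0$ gives $[E] = [\Lambda] + [\Lambda^*] \in K^0(Y)$, hence by the Whitney formula \eqref{Kc2} we have $\c_{2n}(E) = \c_n(\Lambda) \c_n(\Lambda^*)$. Substituting $\c_n(\Lambda^*) = (-1)^n \det\Lambda \cdot \c_n(\Lambda)$ from \eqref{crE*} gives $\c_{2n}(E) = (-1)^n\c_n(\Lambda)^2\det\Lambda$; squaring the definition \eqref{KEG} and using $\(\sqrt{\det\Lambda}\)^{\otimes 2} = \det\Lambda$ (Lemma \ref{sqrts}) then produces $\e(E)^2 = (-1)^n\c_{2n}(E)$.

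For the complementary case of (ii), $\Lambda' \hookrightarrow E \twoheadrightarrow \Lambda^*$ is an isomorphism of rank-$n$ bundles, reducing to the analysis of the splitting $E \cong \Lambda \oplus \Lambda^*$ with the second summand playing the role of $\Lambda'$. Swapping the two halves in the local normal form \eqref{efij} permutes each $e_i \wedge f_i$ to $f_i \wedge e_i$, multiplying the orientation form by $(-1)^n$, so $(-1)^{|\Lambda'|} = (-1)^{n+|\Lambda|}$. Combined with $\c_n(\Lambda') = (-1)^n\det\Lambda \cdot \c_n(\Lambda)$ and $\sqrt{\det\Lambda'} = \(\sqrt{\det\Lambda}\)^{-1}$ (Lemma \ref{sqrts} applied to $\det\Lambda \cdot \det\Lambda^* = \cO_Y$), the class $\e(E)$ computed using $\Lambda'$ simplifies to the one computed using $\Lambda$, exactly as in \eqref{dualeps}. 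For the general case, set $K := \Lambda \cap \Lambda'$ and suppose first that $K$ is a subbundle of rank $k$. Then $\Lambda/K, \Lambda'/K \subset K^\perp/K$ are maximal isotropics with trivial intersection, and by \eqref{17} together with the convention following \eqref{reduc} the induced orientation on $K^\perp/K$ satisfies $(-1)^{|\Lambda|} = (-1)^{|\Lambda/K|}$. Whitney \eqref{Kc2} applied to $0 \to K \to \Lambda \to \Lambda/K \to 0$ and multiplicativity of square roots give
$$\e_\Lambda(E) \= \c_k(K)\sqrt{\det K} \cdot \e_{\Lambda/K}\(K^\perp/K\),$$
and identically for $\Lambda'$. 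Cancelling the common factor reduces to the complementary case applied inside $K^\perp/K$, which has already been handled.

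The main obstacle is removing the constant-rank hypothesis on $K = \Lambda \cap \Lambda'$. The cleanest resolution I would pursue is to pull back to the Edidin-Graham flag cover $\rho \colon \wt Y \to Y$ of \eqref{cover}, which always carries a canonical positive maximal isotropic $\Lambda_\rho \subset \rho^*E$, and to compare each $\rho^*\e_\Lambda(E)$ to the single class $\e_{\Lambda_\rho}(\rho^*E)$ by the two-step argument above: on a stratum of $\wt Y$ where $\Lambda \cap \Lambda_\rho$ has constant rank the reduction argument applies verbatim, and globally equality is propagated from the generic stratum via the localisation sequence in $K$-theory. A possible alternative, if available in sufficient generality, would be to connect $\Lambda$ and $\Lambda'$ by a family of max isotropics inside a connected component of the relative orthogonal Grassmannian $\mathrm{OGr}(E)$ over $Y$ and invoke $\mathbb{A}^1$-homotopy invariance for smooth bases.
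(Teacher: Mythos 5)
Your proof of the squaring identity \eqref{squares} is correct and is essentially the paper's argument verbatim. Your treatment of independence, however, takes a genuinely different route from the paper and contains a gap. The paper makes no attempt at a direct geometric comparison of two maximal isotropics: it observes that the existence of any $\Lambda\subset E$ forces $(E,q)$ to be Zariski-locally trivial, so that Anderson's class $\epsilon(E)=(-1)^{|\Lambda|}\,\c\big(\Lambda^*,-\tfrac12\big)\,\c_n(\Lambda)$ is defined, derives the identity $\sqrt{\c\big(E,-\tfrac12\big)}\,\sqrt{\det\Lambda}=\c\big(\Lambda^*,-\tfrac12\big)$ from the splitting principle and the sequence $0\to\Lambda\to E\to\Lambda^*\to0$, and concludes that $\e(E)=\sqrt{\c\big(E,-\tfrac12\big)^{-1}}\,\epsilon(E)$. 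Independence then follows because Anderson's class is independent of $\Lambda$ by \cite[Appendix B, Theorem 3]{And} --- that is, precisely the hard comparison you are wrestling with is outsourced to a cited theorem.

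Your direct argument is fine in the complementary case (it is \eqref{dualeps}) and in the constant-rank case (your reduction formula is \eqref{K3}, and the orientation bookkeeping is consistent with the conventions after \eqref{reduc}). The gap is the final step, where $\rk(\Lambda\cap\Lambda')$ jumps. The localisation sequence cannot ``propagate equality from the generic stratum'': for a closed $Z\subset Y$ with open complement $U$ the sequence $K_0(Z)\to K_0(Y)\to K_0(U)\to 0$ is only right exact, so knowing the difference of the two classes restricts to zero on $U$ merely places it in the image of $K_0(Z)$ --- it does not kill it. (Worse, the classes live in $K^0$ of a possibly singular $Y$, for which there is no localisation sequence at all.) The alternative via $\operatorname{OGr}(E)$ also fails as stated: two positive maximal isotropic subbundles need not be connected by an algebraic family of isotropic subbundles over $Y$, and $K^0$ of a singular quasi-projective scheme is not $\A^1$-homotopy invariant. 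To complete your route you would need an honest resolution of the jumping locus (a blow-up or degeneration argument of Edidin--Graham/Anderson type); otherwise the clean fix is to do what the paper does and reduce to Anderson's theorem.
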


\begin{proof}
The exact sequence $0\to\Lambda\to E\to\Lambda^*\to0$ and \eqref{dualeps} give
$$
\e(E)^2\=(-1)^n\c_n(\Lambda)\;\c_n(\Lambda^*)=(-1)^n\mathfrak e\;(E).
$$

As noted in Footnote \ref{GStriv} the existence of $\Lambda\subset E$ means $(E,q,o)$ is Zariski-locally trivial, so Anderson's $K$-theoretic Edidin-Graham class $\epsilon(E)$ is defined \cite[Appendix B]{And}. To show $\e(E)$ is well-defined we relate it to $\epsilon(E)$, using some identities we learned from \cite[Appendix B]{And}.

From \eqref{Kc1} it is immediate that for any line bundle $L$,
$$
\c\(L,-\tfrac12\)\otimes L\=\c\(L^*,-\tfrac12\).
$$
By the splitting principle and Whitney sum formula \eqref{Kc2} this gives
$$
\c\(\Lambda,-\tfrac12\)\otimes\det\Lambda\=\c\(\Lambda^*,-\tfrac12\).
$$
Combined with the exact sequence $0\to\Lambda\to E\to\Lambda^*\to0$ this gives
$$
\c\(E,-\tfrac12\)\otimes\det\Lambda\=\c\(\Lambda,-\tfrac12\)\c\(\Lambda^*,-\tfrac12\)\otimes\det\Lambda\=\c\(\Lambda^*,-\tfrac12\)^2.
$$
Since both lie in $1+K^0\(Y,\ZZ\)^1$ we deduce that
$$
\sqrt{\c\(E,-\tfrac12\)}\,\sqrt{\det\Lambda}\=\c\(\Lambda^*,-\tfrac12\).
$$
Now Anderson's class is $\epsilon(E):=(-1)^{|\Lambda|}\;\c\(\Lambda^*,-\tfrac12\)\c_n(\Lambda)$ and is independent of $\Lambda$ by \cite[Appendix B, Theorem 3]{And}. Therefore
\beq{usvA}
\e(E)\=\sqrt{\c\(E,-\tfrac12\)^{-1}}\,\epsilon(E)
\eeq
is also independent of $\Lambda$.
\end{proof}

\subsection*{General case} Fix any \'etale locally trivial $SO(2n,\C)$ bundle $(E,q,o)$. We work, as usual, on the bundle $\rho\colon\wt Y\to Y$ of \eqref{cover}, where $\rho^*E$ admits a canonical positive maximal isotropic $\Lambda_\rho\subset\rho^*E$. Thus $\e(\rho^*E)$ is defined by Proposition \ref{well}. We show it is the pullback by $\rho^*$ of a class we will define to be $\e(E)$.

\begin{Prop}\label{welld}
$\e(\rho^*E)=\rho^*\rho_*\(\e(\rho^*E)\)$. Furthermore, if $E$ admits a maximal isotropic then $\e(E)$ defined by \eqref{KEG} equals $\rho_*\e(\rho^*E)$.
\end{Prop}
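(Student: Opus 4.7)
The plan is to deduce both parts from the vanishing $R\rho_*\cO_{\wt Y}=\cO_Y$ together with the independence of $\e$ from the choice of maximal isotropic (Proposition \ref{well}). First I would establish that vanishing. The fibres of $\rho\colon\wt Y\to Y$ are the isotropic flag varieties $\mathrm{OF}(1,\ldots,n-1;2n)$; using the canonical positive maximal isotropic extending each length-$(n-1)$ flag identifies each fibre with the positive component $\mathrm{OGr}^+(n,2n)\cong SO(2n,\C)/P$. This is a smooth projective rational homogeneous variety whose Bruhat decomposition forces $H^0(\cO)=\C$ and $H^{\ge1}(\cO)=0$. Since $\rho$ is smooth and proper, cohomology and base change then give $R\rho_*\cO_{\wt Y}=\cO_Y$, so by the projection formula $\rho_*\rho^*=\id$ on $K^0\(Y,\ZZ\)$. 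The same conclusion applied to the flat basechange $p_1\colon W:=\wt Y\times\_Y\wt Y\to\wt Y$ gives $Rp_{1*}\cO_W=\cO_{\wt Y}$ and hence the flat base change identity $\rho^*\rho_*=p_{1*}p_2^*$ on $K^0\(\wt Y,\ZZ\)$.

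The second statement is then immediate. If $\Lambda\subset E$ is a global maximal isotropic, then $\rho^*\Lambda\subset\rho^*E$ is another maximal isotropic on $\wt Y$, of the same sign as $\Lambda$. By Proposition \ref{well} the class $\e(\rho^*E)$ is independent of the maximal isotropic used to compute it, so
$$
\e(\rho^*E)\=(-1)^{|\Lambda|}\c_n(\rho^*\Lambda)\cdot\sqrt{\det\rho^*\Lambda}\=\rho^*\e(E),
$$
the last equality using functoriality of $\c_n$ together with the uniqueness clause of Lemma \ref{sqrts}. Applying $\rho_*$ and the projection formula gives $\rho_*\,\e(\rho^*E)=\e(E)$.

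The first statement follows by the same mechanism in the universal setting on $W$. Writing $q:=\rho\circ p_1=\rho\circ p_2$, the bundle $q^*E$ inherits two positive maximal isotropic subbundles $p_1^*\Lambda_\rho$ and $p_2^*\Lambda_\rho$, both positive with respect to the common pullback orientation $q^*o$. Proposition \ref{well} applied to $q^*E$ with these two choices yields $p_1^*\e(\rho^*E)=p_2^*\e(\rho^*E)$ in $K^0\(W,\ZZ\)$. Combining with the flat base change identity above and one more projection formula gives
$$
\rho^*\rho_*\,\e(\rho^*E)\=p_{1*}p_2^*\,\e(\rho^*E)\=p_{1*}p_1^*\,\e(\rho^*E)\=\e(\rho^*E).
$$
The main delicate point in the argument is the vanishing $R\rho_*\cO_{\wt Y}=\cO_Y$, i.e.~the identification of the fibres of $\rho$ as rational homogeneous varieties with acyclic structure sheaf; once that is in hand the rest is pure descent formalism powered by Proposition \ref{well}.
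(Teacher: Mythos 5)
Your proof is correct and follows essentially the same route as the paper's: both arguments pull $\e(\rho^*E)$ back to the fibre product $\wt Y\times_Y\wt Y$, use Proposition \ref{well} to identify the classes defined by the two pullbacks of $\Lambda_\rho$, and then descend via $R\rho_{*}\cO_{\wt Y}=\cO_Y$, the projection formula and flat base change; the second statement is handled identically. One small correction: the fibre of $\rho$ is the full length-$(n-1)$ isotropic flag variety, which is \emph{not} isomorphic to $\operatorname{OGr}^+(n,2n)$ but rather fibres over it with full $GL(n)$-flag varieties as fibres (equivalently, as the paper notes, $\rho$ is an iterated bundle of smooth quadrics); this does not affect your argument, since the fibre is still a connected rational homogeneous projective variety with acyclic structure sheaf, which is all that the descent requires.
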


\begin{proof}
We use the Cartesian diagram of flat maps
$$
\xymatrix@=10pt{
& \wt Y\times\_Y\wt Y \ar[dl]_-{\rho\_1}\ar[dr]^-{\rho\_2} \\
\wt Y \ar[dr]_{r\_2}&& \wt Y \ar[dl]^{r\_1} \\ & \,Y,\!}
$$
where $r\_1=\rho=r\_2$. Let $P\colon\wt Y\times\_Y\wt Y\to Y$ be $r\_2\circ\rho\_1=r\_1\circ\rho_2$. Then $P^*E$ has two positive maximal isotropic subbundles $\rho_1^*\Lambda_\rho$ and $\rho_2^*\Lambda_\rho$, so by Proposition \ref{well},
$$
\c_n(\rho_1^*\Lambda_\rho)\cdot\sqrt{\det(\rho_1^*\Lambda_\rho)}\=
\c_n(\rho_2^*\Lambda_\rho)\cdot\sqrt{\det(\rho_2^*\Lambda_\rho)}
$$
in $K^0\(\wt Y\times_Y\wt Y,\ZZ\)$. Equivalently,
\beq{12}
\rho_1^*\(\e(\rho^*E)\)\=\rho_2^*\(\e(\rho^*E)\).
\eeq
Since $\rho\_1$, like $\rho$, is an iterated bundle of smooth quadrics, we have $\rho\_{1*\;}\rho_1^*\cO_{\wt Y}=\cO_{\wt Y}$ in the derived category and so also in $K$-theory. (As usual we use \emph{derived} pushforward.) Therefore by the projection formula, \eqref{12} and flat basechange,
$$
\e(\rho^*E)\=\rho\_{1*}\rho_1^*\(\e(\rho^*E)\)\=\rho\_{1*}\rho_2^*\(\e(\rho^*E)\)\=r_2^*r\_{1*}\(\e(\rho^*E)\).
$$
That is, $\e(\rho^*E)=\rho^*\rho_*\(\e(\rho^*E)\)$, as required.\medskip

If $\Lambda\subset E$ is a maximal isotropic subbundle then by Proposition \ref{well} the class $\e(\rho^*E)$ --- defined by $\Lambda_\rho$ --- is the same as the class \eqref{KEG} defined by $\rho^*\Lambda$. That is, $\e(\rho^*E)=\rho^*\e(E)$. Applying the projection formula gives $\rho_*\e(\rho^*E)=\e(E)$.
\end{proof}

It follows that the following is well-defined, and gives the same as Definition \ref{KEGdef} when $E$ admits a maximal isotropic.

\begin{Def}\label{KEGdfn} For an $SO(2n,\C)$ bundle $(E,q,o)$ define
$$
\e(E)\ :=\ \rho_*\(\e(\rho^*E)\)\ \in\ K^0\(Y,\ZZ\).
$$
By \eqref{squares} this satisfies $\e(E)^2=(-1)^n\c_{2n}(E)$.
\end{Def}

We note the curious contrast with the cohomological Edidin-Graham class, where we had to invert 2 in order to descend from $\wt Y$ to $Y$. In $K$-theory the descent works over the integers, but we had to invert 2 earlier to define the class \eqref{KEG} even in the presence of a maximal isotropic subbundle.

\subsection{Localisation by an isotropic section}
Fix an isotropic section $s$ of our $SO(2n, \CC)$-bundle $(E,q,o)$, with
zero scheme $i\colon Z(s)\into Y$. We will construct a \emph{localised} $K$-theoretic square root Euler class
$$
\e(E, s)\ \colon\, K_0\(Y,\ZZ\)\To K_0\(Z(s),\ZZ\)
$$
whose pushforward $i_*\circ\e(E,s)$ is tensor product with $\e(E)\in K^0\(Y,\ZZ\)$.

\subsection*{Special case} To begin with we suppose that $E$ admits a maximal isotropic $\Lambda\subset E$. We follow closely what we did in Chow in Section \ref{lociso}, adapting to $K_0$. Via the exact sequence
$$
0\To\Lambda\To E\rt\pi\Ld\To0
$$
we set $s^*:=\pi(s)\in\Gamma(\Ld)$. Then by \eqref{cnK} we have
$$
\c_n(\Ld)\=0^*_{\Lambda\raisebox{1pt}{\ast\,}}\cO\_{\Gamma_{\!s\ast}\,}\ \in\ K^0(Y).
$$
By deformation to the normal cone we can deform the graph $\Gamma_{\!s\ast}\subset\Ld$ to its linearisation $C_{Z^*/Y}\subset\Ld|_{Z^*}$ about the zero locus $j\colon Z^*\into Y$ of $s^*$. Using the $K$-theoretic specialisation map $K_0(Y)\to K_0(C_{Z^*/Y})$ \cite[p\,352]{F} we get the localisation
$$
\xymatrix{
K_0(Y) \ar[r]& K_0(C_{Z^*/Y}) \ar[rr]^-{0_{\Lambda\ast|_{Z\ast}}^*}&& K_0(Z^*)},
$$
whose pushforword to $K_0(Y)$ is $\c_n(\Lambda^*)\;\otimes\ $.

To localise further to $Z(s)\subset Z^*$ by using the ``\emph{other half}" of the section $s$, we use Kiem-Li's $K$-theoretic cosection localisation \cite{KL:K}. By Lemma \ref{Lem:Zero} the cosection $\wt s$ \eqref{function} defined by $s$ is zero on $C_{Z^* / Y}\subset\Ld|_{Z^*}$, so \cite[Theorem 4.1]{KL:K} defines the arrow across the top of the commutative diagram
\beq{eq1}
\xymatrix@R=18pt@C=30pt{
K_0(Y) \ar[r]& K_0\(C_{Z^*/Y}\) \ar[r]^{0^{*,\,\mathrm{loc}}_{\Lambda\ast,\,\wt s}}\ar[d]& K_0\(Z(s)\) \ar[d]\ar[dr]^{i_*} \\
& K_0\(\Ld\big|_{Z^*}\) \ar[r]^{0^*_{\Lambda\ast}}& K_0(Z^*) \ar[r]^{j_*}& K_0(Y)}
\eeq
in which the vertical maps are the obvious pushforwards. Composing the first two arrows on the top row defines an operator
$$
\c_n(\Ld,s)\ \colon\,K_0(Y)\To K_0(Z(s))
$$
such that $i_*\circ\c_n(\Ld,s)$ is tensor product with $\c_n(\Ld)\in K^0(Y)$. Thus we get a localised $K$-theoretic Edidin-Graham operator
\begin{align}\label{eq2}
\e(E,s,\Lambda)\ :=\ (-1)^{n+|\Lambda|}&\c_n(\Ld,s)\cdot\sqrt{\det\Lambda^*}
\end{align}
from $K_0\(Y,\ZZ\)\to K_0\(Z(s),\ZZ\)$
such that $i_*\circ\e(E,s,\Lambda)$ is tensor product with $\e(E)\in K^0\(Y,\ZZ\)$.

\subsection*{General case} We can now define the localised $K$-theoretic Edidin-Graham class in general by using the cover $\rho\colon\wt Y\to Y$ \eqref{cover}.

\begin{Def}\label{def1} Given an isotropic section $s\in\Gamma(E)$ of an $SO(2n,\C$) bundle $(E,q,o)$ we define the localised operator
$$
\e(E,s)\ :=\ \rho_*\(\e(\rho^*E,\rho^*s,\Lambda_\rho)\)\ \colon\, K_0\(Y,\ZZ\)\To K_0\(Z(s),\ZZ\).
$$
\end{Def}

By construction its pushforward to $Y$ is tensor product with $\rho_*\e(\rho^*E)\in K^0(Y)$, so by Proposition \ref{welld} and Definition \ref{KEGdfn} it follows that
\beq{pushK}
i_*\circ\e(E,s)\=\e(E)\;\otimes\ \colon\,K_0\(Y,\ZZ\)\To K_0\(Y,\ZZ\).
\eeq

This allows us to define a $K$-theoretic square root version of the intersection between an isotropic cone $C\subset E$ (supported over $Z\subset Y$) and the zero section $0_E\colon Y\into E$. As usual we let $\tau\_E$ denote the tautological (isotropic) section of $\pi^*E$ on $\pi\colon C\to Y$. Its zero locus is $Z$.

\begin{Def}\label{def2} Given an isotropic cone $C\subset E$ we define
$$
\sqrt{0_E^*}\ :=\ \e(\pi^*E,\tau\_E)\ \colon\,K_0\(C,\ZZ\)\To K_0\(Z,\ZZ\).
$$
\end{Def}

There are obvious $K$-theoretic versions of Lemma \ref{glbl},
\beqa 
\sqrt{0_E^*}\=\overline\pi_*\big[\e\(\overline E\)\otimes(\ \cdot\ ) \big],
\eeqa
and --- when $C$ lies in a maximal isotropic subbundle $\Lambda\subset E$ --- Lemma \ref{Schwantz},
\beq{Klam}
\sqrt{0_E^*}\=(-1)^{|\Lambda|}\sqrt{\det\Lambda}\cdot0_\Lambda^*.
\eeq
The proofs are almost identical on replacing each $e\cap(\ \cdot\ )$ by $\mathfrak e\;\otimes(\ \cdot\ )$, equation \eqref{oiler} by $0\_{\overline\Lambda*}0^*_{\overline\Lambda*}=\Lambda\udot(T_{\overline\pi}(-1))^*\otimes(\ \cdot\ )$ and using \eqref{pushK} in place of \eqref{push}.

Similarly the proof of Lemma \ref{shr} goes through without change to give
\beq{Kshriek}
f^{\;!}\sqrt{0_E^*}\=\sqrt{0_{f'^*E}^*}\ f^{\;!}\ \colon K_0\(C,\ZZ\)\To K_0\(X',\ZZ\).
\eeq
Here $f^{\;!}$ is the $K$-theoretic refined Gysin map of \cite[Section 3]{AP} and \cite[Section 2.1]{YP}. By construction this commutes with tensor product and product with $K$-theoretic Chern classes. It also commutes with proper pushforward by \cite[Lemma 3.1]{AP}, so the result $\overline\pi'_*f^{\;!}=f^{\;!}\overline\pi_*$ used in the proof of Lemma \ref{shr} also holds in $K$-theory.

\subsection{Virtual structure sheaf}
Let $M$ be a quasi-projective $(-2)$-shifted symplectic derived scheme with an orientation. By Proposition \ref{form} its obstruction theory admits a self-dual resolution $E\udot=\{T\to E\to T^*\}\to\LL_M$. By Proposition \ref{por}, $E$ is an $SO(r,\C)$ bundle, where $r$ has the same parity as the virtual dimension $\vd$ \eqref{vd}. If this is odd we define the virtual structure sheaf to be zero. Suppose now it is even. By \eqref{recipe} and Proposition \ref{isoprop} we get an isotropic cone $C_{E\udot}\subset E$.

\begin{Def}\label{vss} We define the twisted virtual structure sheaf of $M$ to be
\beq{Ohatdef}
\Ohat\ :=\ \sqrt{0_E^*}\,\big[\cO_{C_{E\udot}}\big]\cdot\sqrt{\det T^*}\ \in\ K_0\(M,\ZZ\).
\eeq
\end{Def}

Proposition \ref{independence} below shows the twist by $\sqrt{\det T^*}$ ensures independence from the choice of self-dual complex $E\udot$. (Since the other terms in \eqref{Ohatdef} involve only the stupid truncation $\{E\to T^*\}$ of $E\udot$, we should expect some contribution from the omitted term $T$, and this turns out to be it.)

In special situations --- one being the local Calabi-Yau 4-fold case of Section \ref{KY} --- $E$ admits a maximal isotropic subbundle $\Lambda\subset E$ such that $\At\colon E\udot\to\LL_M$ factors through the following quotient of $E\udot$,
\beq{BFhalf}
\tfrac12E\udot\ :=\ \{\Lambda^*\To T^*\}.
\eeq
That is, in this situation $\{T\To\Lambda\}$ can be though of as a virtual tangent bundle for $M$; taking determinants gives
$$
K^{\vir}_M\ :=\ \det T^*\otimes\det\Lambda.
$$
Then $C_{E\udot}$ lies in $\Lambda\subset E$ and, by \eqref{Klam}, the class \eqref{Ohatdef} can be written in the perhaps more suggestive form
\beq{NOtwist}
(-1)^{|\Lambda|}\;0_\Lambda^*\big[\cO_{C_{E\udot}}\big]\cdot\sqrt{K_M^{\vir}}
\eeq
in this situation. 
The last term is the ``Nekrasov-Okounkov twist" \cite{NO} that has proved profitable to use in $K$-theoretic DT$^3$ theory. By contrast it is \emph{necessary} in DT$^4$ theory --- only the \emph{twisted} virtual structure sheaf is well-defined, as was already anticipated in the papers \cite{Ne, NP, CKM1}.

\begin{Prop}\label{independence}
Definition \ref{vss} is independent of the choice of self-dual resolution $E\udot$ of the obstruction theory $\EE$. If $M$ is projective then $\Ohat$ is deformation invariant: in the setting of \eqref{defint} we have $\widehat\cO^{\;\vir}_{\!M_t}=\iota_t^!\;\Ohat$.
\end{Prop}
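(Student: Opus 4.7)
The proof mirrors the cohomological proof of well-definedness of $[M]^{\vir}$ given after Definition \ref{vdef}, but tracks how the Nekrasov-Okounkov twist $\sqrt{\det T^*}$ compensates for the extra determinant factors that appear in the $K$-theoretic versions of the reduction and Whitney sum formulas. Given two self-dual resolutions $E\udot$ and $F\udot$, the plan is to connect them by a roof $A\udot$, form $A_e\udot$ and $A_f\udot$ as in \eqref{Ae}, show that $\Ohat$ computed from $E\udot$ equals $\Ohat$ from $A_e\udot$ (and similarly for $F\udot$, $A_f\udot$) via a $K$-theoretic analog of Proposition \ref{give}, and then deform between $A_e\udot$ and $A_f\udot$ using Kiem-Li's $K$-theoretic cosection localisation \cite{KL:K}.

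First I would establish $K$-theoretic companions to Propositions \ref{E1E2} and \ref{KperpK}. The Whitney identity $\e(E_1 \oplus E_2) = \e(E_1)\cdot\e(E_2)$ is immediate from Definition \ref{KEGdef}, and the argument of Proposition \ref{E1E2} transports through the $K$-theoretic analog of Lemma \ref{glbl}. For the reduction, using \eqref{Klam} and $\det(\Lambda/K) = \det\Lambda \otimes (\det K)^{-1}$, a one-line check gives
$$\sqrt{0_E^*}\bigl[\cO_C\bigr] \= \sqrt{\det K}\cdot\sqrt{0_{K^\perp/K}^*}\bigl[\cO_{C/K}\bigr]$$
for $K\subset C\subset K^\perp\subset E$ with $K$ an isotropic subbundle; the factor $\sqrt{\det K}$ is the only new feature relative to Proposition \ref{KperpK}.

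Next I would run the proof of Proposition \ref{give} in $K$-theory. Adding the acyclic self-dual complex \eqref{Badd} to $E\udot$ sends the degree-$0$ term $T^*$ to $T^*\oplus B^*$, multiplying the twist by $\sqrt{\det B^*}$, while the $K$-theoretic Whitney formula together with \eqref{Klam} multiplies $\sqrt{0_E^*}[\cO_{C_{E\udot}}]$ by $\sqrt{0^*_{B\oplus B^*}}[\cO_B] = \sqrt{\det B}$; these two factors cancel. For the reduction step from $E\udot$ to $A_e\udot$, dualising the exact sequence $0 \to K \to T \to A_0 \to 0$ gives $\det T^* \cong \det A^0 \otimes \det K^*$, so $\sqrt{\det T^*}/\sqrt{\det A^0} = 1/\sqrt{\det K}$, which exactly absorbs the $\sqrt{\det K}$ appearing in the reduction formula above. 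Hence $\Ohat(E\udot) = \Ohat(A_e\udot)$, and symmetrically $\Ohat(F\udot) = \Ohat(A_f\udot)$.

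For the deformation between $A_e\udot$ and $A_f\udot$, the degree-$0$ term $A^0$ (and hence the twist $\sqrt{\det A^0}$) is common to all $A_t\udot = \{A_0 \to A_t \to A^0\}$, so it suffices to show the untwisted $\sqrt{0_{A_t}^*}[\cO_{C_{A_t\udot}}]$ is independent of $t\in\C$; since our $\sqrt{0^*}$ is built from the cosection-localised operator of Definition \ref{def1}, this is the direct $K$-theoretic analog of \eqref{defo}, applied to the flat family of isotropic cones $\mathbf{C} \subset \mathbf{A}$ over $M \times \C$. The deformation invariance statement when $M$ is projective and moves in a family $\cX \to C$ then goes through as in Section \ref{vcsec}: the family Darboux and orientation results \cite{BBBJ, CGJ} globalise, yielding a relative twisted virtual structure sheaf $\widehat\cO^{\;\vir}_{\!\cM}$ on the family moduli, and the $K$-theoretic refined Gysin compatibility \eqref{Kshriek} applied to the inclusion of any point $\iota_t\colon\{t\}\into C$ gives $\widehat\cO^{\;\vir}_{\!M_t} = \iota_t^{\;!}\;\widehat\cO^{\;\vir}_{\!\cM}$. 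The main obstacle is the careful bookkeeping that shows the twist $\sqrt{\det T^*}$ exactly absorbs the $\sqrt{\det K}$ discrepancy at each stage; with that identity in hand the cohomological argument transposes routinely to $K$-theory.
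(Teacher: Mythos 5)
Your proposal is correct and follows essentially the same route as the paper: reduce to the three moves of the cohomological argument (deformation via the $K$-theoretic cosection-localised invariance of Kiem--Li, addition of the acyclic complex $B\udot$, and reduction by $K\subset E$), with the $K$-theoretic reduction formula acquiring the extra factor $\sqrt{\det K}$ exactly as in the paper's \eqref{K3}--\eqref{KKperpK}, cancelled by the change of twist $\sqrt{\det T^*}\mapsto\sqrt{\det A^0}$ via $\det T^*\cong\det A^0\otimes\det K^*$. The only caveat is that your ``one-line check'' of the reduction formula really only identifies the new determinant factor; the formula itself still requires transporting the full compactification argument of Proposition \ref{KperpK} to $K$-theory, which is what the paper does.
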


\begin{proof}
The proof follows that of the cycle version \eqref{wont} very closely. That required invariance under three moves.
\begin{enumerate}
\item The deformation invariance \eqref{defo} was proved by \cite[Theorem 5.2]{KL}. The precise $K$-theoretic analogue is given by \cite[Proposition 5.5]{KL:K}.
\item The invariance \eqref{BBB} under the addition of the acyclic complex $B\udot$ \eqref{Badd} to $E\udot$ was proved by Proposition \ref{E1E2} and Lemma \ref{glbl}.
Replacing $e$ by $\mathfrak e$ and $\surd\;0_E^{\;!}$ by $\surd\;0_E^*$ in the proof of Proposition \ref{E1E2} gives the precise $K$-theoretic analogue. Lemma \ref{Schwantz} has $K$-theoretic analogue \eqref{Klam}, which introduces the twist $\sqrt{\det B}$. But this is cancelled by the $\sqrt{\det T^*}$ twist in \eqref{Ohatdef}, since we have replaced $T^*$ by $T^*\oplus B^*$.
\item Finally, we used Proposition \ref{KperpK} to pass from the deformation complex $E\udot$ made from the orthogonal bundle $E$ to the complex $A\udot_e$ made from $A_e=K^\perp/K$ as in \eqref{cones}. 
\end{enumerate}
To get the $K$-theoretic analogue of the last step we first replace $e$ by $\mathfrak e$ in \eqref{Kred}. That is, fix an isotropic subbundle $K\subset E$ and orient $K^\perp/K$ as described in \eqref{reduc}. Then, in the notation of \eqref{17}, the exact sequence $0\to K\to\Lambda\to\Lambda_\rho\to0$ gives
$$
\Lambda\udot(\Lambda^*)\otimes\sqrt{\det\Lambda}\=\Lambda\udot(\Lambda^*_\rho)\otimes\sqrt{\det\Lambda_\rho}\otimes\Lambda\udot(K^*)\otimes\sqrt{\det K},
$$
which is
\beq{K3}
\e(E)\=\e\(K^\perp/K\)\;\mathfrak e(K)\cdot\sqrt{\det K}.
\eeq
Then the proof of Proposition \ref{KperpK} goes through as before on replacing
$e$ by $\mathfrak e$ and $\surd\;0_E^{\;!}$ by $\surd\;0_E^*$ to give
\beq{KKperpK}
\sqrt{\det K}\cdot\sqrt{0_{K^\perp/K}^*}\=\sqrt{0_E^*}\,\circ p^*.
\eeq
Again the $\sqrt{\det T^*}$ twist in \eqref{Ohatdef} cancels the $\sqrt{\det K}$ since in passing from $E\udot$ to $A_e\udot$ we replaced $T^*$ by $A^0=(T/K)^*$.
\end{proof}


\begin{Rmk} It is natural to ask if there are analogues of Siebert's formula  \cite[Theorem 4.6]{Sie} for the virtual cycle --- or the $K$-theoretic analogue \cite[Theorem 4.2]{Th:K} for the virtual structure sheaf --- in this square-rooted setting. When the cone $C_{E\udot}$ is contained in a maximal isotropic subbundle $\Lambda\subset E$ we are in a Behrend-Fantechi setting (cf. \eqref{BFhalf}), so we can deduce
\begin{align*}
[M]^{\vir}&\=(-1)^{|\Lambda|}\left[ c\(\Lambda - T \) \cap c\_F\(M\) \right]_{\frac12\!\vd}, \\
\Ohat&\=(-1)^{|\Lambda|}\sqrt{\det\Lambda}\cdot\sqrt{\det T^*}\cdot\left[\Lambda\udot\(\Lambda^*-T^*\)\otimes\Lambda_M\right]_{\t=1}.
\end{align*}
Here $c\_F(M)\in A_*(M)$ is Fulton's class \cite[Example 4.2.6]{F} while $\Lambda_M\in K_0(M)[\t]$ is the $K$-theoretic analogue of \cite{Th:K}. In general the situation is more complicated.
\end{Rmk}
%
%

\section{Virtual Riemann-Roch}\label{RRsec}

Using rational coefficients, in this Section we will relate $\Ohat$  \eqref{Ohatdef} in $K$-theory to $[M]^{\vir}$ \eqref{virMdef} in Chow homology via the isomorphism
\beq{taumap}
\tau\_M\ \colon\,K_0(M)_\Q\rt\sim A_*(M)_\Q
\eeq
that holds for any quasi-projective scheme $M$ \cite[Corollary 18.3.2]{F}. Here $\tau\_M=\ch(\ \cdot\ )\cap\(\td(T_M)\cap[M]\)$ when $M$ is smooth. More generally choose an embedding $i\colon M\into P$ in a smooth variety and set
$$
\tau\_M(F)\ :=\ \ch^P_M(F\udot)\cap\(\td(T_P)\cap[P]\)\ \in\ A_*(M)_\Q,
$$
where $F\udot\to i_*F$ is a locally free resolution and $\ch^P_M(F\udot)\in A_*(M\to P)_\Q$ is its localised Chern character of \cite[Theorem 18.1]{F}.

First we recall the Riemann-Roch theorems of Fantechi-G\"ottsche \cite[Lemma 3.5]{FG}
and Ciocan-Fontanine-Kapranov \cite[Theorem 4.4.1]{CFK}. Suppose $M$ has a perfect obstruction theory $E\udot\to\LL_M$ with dual the virtual tangent bundle $E_\bullet$. We get a Behrend-Fantechi virtual cycle $[M]^{\vir}:=0^{\;!}_{E_1}[C_{E\udot}]$ and virtual structure sheaf $\cO^{\vir}_M:=0^*_{E_1}\big[\cO_{C_{E\udot}}\big]$. They prove
\beq{FGCK}
\tau\_M\(\cO^{\vir}_M\)\=\td(E_\bullet)\cap[M]^{\vir}.
\eeq

\medskip In our setting we fix instead a 3-term self-dual obstruction theory
$$
E\udot\=\big\{T\to E\to T^*\big\}\To\LL_M
$$
of even rank and with a fixed orientation. From this we get the virtual cycle $[M]^{\vir}:=\surd\;0^{\;!}_E\,[C_{E\udot}]$ of \eqref{virMdef} and the twisted virtual structure sheaf 
$\Ohat:=\surd\;0_E^*\,\big[\cO_{C_{E\udot}}\big]\cdot\surd\!\det T^*$
of \eqref{Ohatdef}. Since any Todd class $\td\in1+K^0(M)^1$ its square root is uniquely defined and multiplicative as in Lemma \ref{sqrts}. The analogue of \eqref{FGCK} is the following.

\begin{Thm}\label{virRR}
In $A_*(M)_\Q$ we have $\tau\_M\(\Ohat\)\,=\,\sqrt\td(E_\bullet)\cap[M]^{\vir}$.
\end{Thm}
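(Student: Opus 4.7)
The strategy is to reduce to the classical Fantechi--G\"ottsche and Ciocan-Fontanine--Kapranov formula \eqref{FGCK} by rewriting both sides in terms of Behrend--Fantechi classes associated to a ``half'' 2-term obstruction theory. To do so we first pass to the flag cover $\rho \colon \wt M \to M$ of \eqref{cover}, on which $\rho^*E$ carries the positive maximal isotropic $\Lambda_\rho$. By Definitions \ref{defdef}, \ref{def1} and \ref{vss}, both $[M]^{\vir}$ and $\Ohat$ are constructed as $\rho_*$ of their counterparts on $\wt M$. Grothendieck--Riemann--Roch applied to the smooth proper morphism $\rho$, together with the projection formula and the fact that $\rho^*\sqrt\td(E_\bullet)=\sqrt\td(\rho^*E_\bullet)$ (by the uniqueness of square roots in Lemma \ref{sqrts}), reduces the claim to the corresponding statement on $\wt M$. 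In other words, we may assume $E$ admits a maximal isotropic subbundle $\Lambda\subset E$.

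\textbf{The special case.} Suppose, moreover, that the cone satisfies $C_{E\udot}\subset\Lambda$. Then $\tfrac12E\udot=\{\Lambda^*\to T^*\}$ is a Behrend--Fantechi perfect obstruction theory for $M$ with virtual tangent bundle $\tfrac12E_\bullet=\{T\to\Lambda\}$, and by Lemma \ref{Schwantz} and \eqref{Klam},
\begin{align*}
[M]^{\vir} &\= (-1)^{|\Lambda|}\;0_\Lambda^{\;!}[C_{E\udot}]\=(-1)^{|\Lambda|}\;[M]^{\vir}_\mathrm{BF}, \\
\Ohat &\= (-1)^{|\Lambda|}\;0_\Lambda^*[\cO_{C_{E\udot}}]\cdot\sqrt{\det T^*\otimes\det\Lambda}\=(-1)^{|\Lambda|}\;\cO^{\vir}_{M,\mathrm{BF}}\cdot\sqrt{\det T^*\otimes\det\Lambda}.
\end{align*}
Applying the classical virtual Riemann--Roch \eqref{FGCK} to $\tfrac12E\udot$ gives $\tau\_M(\cO^{\vir}_{M,\mathrm{BF}})=\td(T-\Lambda)\cap[M]^{\vir}_\mathrm{BF}$, so multiplying by $(-1)^{|\Lambda|}\ch(\sqrt{\det T^*\otimes\det\Lambda})$ and using $\tau\_M(a\cdot F)=\ch(F)\cdot\tau\_M(a)$ for $F\in K^0(M)$ yields
\[
\tau\_M(\Ohat)\=\ch\!\(\sqrt{\det T^*\otimes\det\Lambda}\)\;\td(T-\Lambda)\cap[M]^{\vir}.
\]

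\textbf{Todd identity.} It remains to check the purely formal identity $\sqrt\td(E_\bullet)=\td(T-\Lambda)\cdot\ch(\sqrt{\det T^*\otimes\det\Lambda})$. For any vector bundle $V$ the relation $\td(V^*)/\td(V)=\ch(\det V^*)$ yields $\sqrt{\td(V)\td(V^*)}=\td(V)\ch(\sqrt{\det V^*})$, which we apply both to $V=T$ and $V=\Lambda$. Using $E_\bullet=T+T^*-E$ in $K$-theory together with $\td(E)=\td(\Lambda)\td(\Lambda^*)$ coming from $0\to\Lambda\to E\to\Lambda^*\to0$,
\[
\sqrt\td(E_\bullet)\=\frac{\sqrt{\td(T)\td(T^*)}}{\sqrt{\td(\Lambda)\td(\Lambda^*)}}\=\frac{\td(T)\ch(\sqrt{\det T^*})}{\td(\Lambda)\ch(\sqrt{\det\Lambda^*})}\=\td(T-\Lambda)\cdot\ch\!\(\sqrt{\det T^*\otimes\det\Lambda}\),
\]
completing the proof in the special case.

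\textbf{Main obstacle.} The hard part is that in general the cone $C_{E\udot}$ need not lie in any (even local) maximal isotropic subbundle --- indeed $\dim C_{E\udot}$ may exceed the rank of a maximal isotropic. To handle this we exploit the cosection construction underlying the definitions: the localised Edidin--Graham operators $\sqrt e(E,s,\Lambda)$ of \eqref{eLam} and their $K$-theoretic analogues $\e(E,s,\Lambda)$ of \eqref{eq2} are built from a cosection of $\Lambda^*$ whose zero locus cuts out the support of $C_{E\udot}$. Via Lemmas \ref{newlem} and \ref{glbl} (and their $K$-theoretic versions), we may rewrite $\sqrt{0_E^{\;!}}[C_{E\udot}]$ and $\sqrt{0_E^*}[\cO_{C_{E\udot}}]$ as sign-twisted cosection-localised Behrend--Fantechi classes for the perfect obstruction theory $\{\Lambda^*\to T^*\}$; the virtual Riemann--Roch for these cosection-localised classes, in the form proved by Kiem--Li \cite{KL,KL:K} (or alternatively their cosection-free reformulation of Kiem--Park \cite{KP} mentioned in the introduction), then takes the role of \eqref{FGCK} in the argument above, and the same Todd identity closes the proof.
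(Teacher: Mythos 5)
Your strategy is essentially the paper's: pass to the flag cover $\rho\colon\wt M\to M$, use the cosection-localised description of $\surd\;0_E^{\;!}$ and $\surd\;0_E^*$ coming from the maximal isotropic $\Lambda_\rho$, invoke Kiem--Li's compatibility between the $K$-theoretic and Chow-theoretic cosection-localised Gysin maps, and finish with the Todd identity $\sqrt\td(F\oplus F^*)=\td(F)\,\ch\sqrt{\det F^*}$. Your special-case computation (cone contained in $\Lambda$) is correct and is exactly the algebra the paper performs at the end, and your reduction to $\wt M$ works provided you record that the Chow descent $\frac1{2^{n-1}}\rho_*(h\cup\,\cdot\,)$ and the $K$-theoretic descent $\rho_*$ agree on classes pulled back by $\rho^*$, because $\rho_*\;\td(T_\rho)=1=\frac1{2^{n-1}}\rho_*\;h$, with Lemma \ref{newlem} guaranteeing that the relevant class is such a pullback.

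The gap is in the general case. The Kiem--Li input \cite[Equation 5.21]{KL:K} only gives commutation of $\tau$ with the cosection-localised Gysin map of $\Lambda_\rho^*$, contributing the factor $\ch\sqrt{\det\Lambda^*_\rho}\;\td(-\Lambda^*_\rho)=\big(\sqrt\td(E)\big)^{-1}$; together with the $\ch\sqrt{\det T^*}$ coming from the twist in $\Ohat$ this accounts for only part of $\sqrt\td(E_\bullet)=\td(T)\,\ch\sqrt{\det T^*}\cdot\big(\sqrt\td(E)\big)^{-1}$. The remaining $\td(T)$ must be supplied by comparing the two \emph{inputs} $[\cO_{C_{E\udot}}]$ and $[C_{E\udot}]$ before any Gysin map is applied: one needs $\tau_{C_{E\udot}}\big[\cO_{C_{E\udot}}\big]=\pi^*\td(T)\cap[C_{E\udot}]$, which is \cite[Proposition 3.1]{FG} applied to the stupidly truncated perfect obstruction theory $\{E\to T^*\}$. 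In your special case this is hidden inside \eqref{FGCK}; in the general case it is not part of any cosection-localised statement in \cite{KL,KL:K} and has to be invoked separately --- without it ``the same Todd identity closes the proof'' leaves the answer off by $\td(T)$. A smaller point: $\{\Lambda^*\to T^*\}$ is generally \emph{not} an obstruction theory for $M$ (that only happens in the situation \eqref{BFhalf}), so ``cosection-localised Behrend--Fantechi classes for $\{\Lambda^*\to T^*\}$'' is not quite the right object; the correct formulation is the cosection-localised intersection of the deformed cone $C_{Z^*/\wt C}\subset\Lambda_\rho^*|_{Z^*}$ with $0_{\Lambda_\rho^*}$, as in \eqref{eLam} and \eqref{eq2}.
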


\begin{proof}
We first recall the definition \eqref{Ohatdef} of $\Ohat$. Using the stupid truncation $\{E\to T^*\}$ of $E\udot$ as a perfect obstruction theory for $M$ gives an isotropic cone $\pi\colon C_{E\udot}\to M$ in the $SO(2n,\C)$ bundle $E$ by Proposition \ref{isoprop}. Unravelling Definitions \ref{def2} and \ref{def1} and equations (\ref{eq2}, \ref{eq1}) we see $\Ohat$ is the image of $(-1)^n[\cO_{C_{E\udot}}]\cdot\pi^*\surd\!\det T^*$ along the top row of the diagram
$$
\xymatrix@C=9pt@R=20pt{
K_0(C_{E\udot}) \ar[d]_\tau\ar[rrr]^{\rho^*}&&& K_0(\wt C) \ar[d]_\tau\ar[rr]&& K_0\(C_{Z^*/\wt C}\) \ar[d]_\tau\ar[rrr]^{\sqrt{\det\Lambda^*_\rho}\,\cdot}_-{0^{*,\,\mathrm{loc}}_{\Lambda\!\raisebox{1pt}{\ast}_{\!\!\rho},\,\wt\tau|_{Z\ast}}}&&& K_0(\wt M) \ar[d]_\tau\ar[rr]^-{\rho_*}&& K_0(M) \ar[d]_\tau \\
A_*(C_{E\udot}) \ar[rrr]^-{\td(T_\rho)\cdot\rho^*}&&& A_*(\wt C) \ar[rr]&& A_*\(C_{Z^*/\wt C}\) \ar[rrr]^{\sigma\cap}_-{0^{\;!,\,\mathrm{loc}}_{\Lambda\!\raisebox{1pt}{\ast}_{\!\!\rho},\,\wt\tau|_{Z\ast}}}&&& A_*(\wt M) \ar[rr]^-{\rho_*}&& A_*(M).\!}
$$
We explain the notation and maps. We use $\rho$ to denote any basechange of the flat map $\rho\colon\wt M\to M$ of \eqref{cover} such as $\wt C:=\rho^*C_{E\udot}\to C_{E\udot}$. Recall $\Lambda_\rho$ denotes the canonical maximal isotropic in the pullback of $E$. We suppress some pullback maps for clarity, so $\Lambda_\rho\subset E$ makes sense on $\wt M$ or $\wt C$ (which one should be clear from the context). Since $\wt C$ is a subscheme of (the pullback of) $E$, it inherits a tautological section $\wt\tau$ of $E$. Projecting this to $E/\Lambda_\rho\cong\Lambda_\rho^*$ gives the section $\wt\tau^*$ whose zero locus we denote $Z^*\subset\wt C$. The second horizontal arrows are the specialisation maps defined by the deformation of $\wt C$ to the normal cone of $Z^*\subset\wt C$,
\beqa 
\xymatrix{\wt C\ \ar@{~>}[r]& \ C_{Z^*\!\;/\;\wt C}\,.}
\eeqa
This normal cone embeds in (the pullback of) $\Lambda_\rho^*|_{Z^*}$ on which $\wt\tau$ factors through $\Lambda_\rho\subset E$ and so defines a cosection $\wt\tau|_{Z^*}\colon\Lambda_\rho^*|_{Z^*}\to\cO_{Z^*}$. The third horizontal arrows use this to define the pullback $0^*_{\Lambda\raisebox{1pt}{\ast}_{\!\!\!\rho}}$\vspace{-1mm} and intersection $0^{\;!}_{\Lambda\raisebox{1pt}{\ast}_{\!\!\!\rho}}$ cosection-localised to the zero locus $\wt M$ of $\wt\tau|_{Z^*}$ by \cite{KL:K} and \cite{KL} respectively. Finally, we use $\Q$ coefficients throughout, and the vertical maps are the $\tau$ maps of \eqref{taumap}.

The first square of the diagram commutes by \cite[Theorem 18.2(3)]{F} (since $\rho$ is flat), the second by \cite[Example 18.3.8]{F} and the fourth by the Grothendieck-Riemann-Roch theorem of \cite[Theorem 18.2(1)]{F} (since $\rho$ is proper). By \cite[Equation 5.21]{KL:K} the third commutes on setting
$$
\sigma\ :=\ \ch\sqrt{\det\Lambda^*}\!\!\!\;_\rho\ \td(-\Lambda^*_\rho).
$$
Since $\td(F^*)=\td(F)\;\ch(\det F^*)$ for any bundle $F$ we have the identity
\beq{FF*}
\sqrt{\td}(F\oplus F^*)=\td(F)\,\ch\sqrt{\det F^*}.
\eeq
It follows that
$$
\sigma\=\(\sqrt\td\,E\)^{-1},
$$
so it is pulled back from $M$.

By a similar unravelling of Definitions \ref{vdef} and \ref{IsoCone} and equations (\ref{rhoh}, \ref{eLam}), we get $[M]^{\vir}$ by starting with\footnote{The twist by $\sqrt\td(E)$ is there to cancel the $\sigma$ term.} $(-1)^n\sqrt\td(E)\cap\rho^*[C_{E\udot}]$ in the second group $A_*(\wt C)$ and moving along the bottom row of the diagram as far as the fourth group $A_*(\wt M)$. Then we cap with the class $\frac1{2^{n-1}}h$ of \eqref{hdef} before applying $\rho_*$ to give $[M]^{\vir}\in A_*(M)$. This is the same as starting with
\beq{startin}
(-1)^n\sqrt\td(E)\cap[C_{E\udot}]
\eeq
in the first group and moving along to the last one, because $\td(T_\rho)$ plays the same role as $\frac1{2^{n-1}}h$ on classes pulled back by $\rho^*$:
$$
\rho_*\(\td(T_\rho)\cap\rho^*a\)\=a\=\frac1{2^{n-1}}\rho_*\(h\cap\rho^*a\).
$$
Here we have used the projection formula and $\rho_*\;\td(T_\rho)=1$ by Grothendieck-Riemann-Roch, and we are applying the formula to $(-1)^n\;0^{\;!,\,\mathrm{loc}}_{\Lambda\raisebox{1pt}{\ast}_{\!\!\!\rho},\wt\tau|_{Z\ast}}\big[C_{Z^*/\wt C}\big]$, which is $\rho^*[M]^{\vir}$ by Lemma \ref{newlem}.

So we now chase $(-1)^n[\cO_{C_{E\udot}}]\surd\!\det T^*$ through the diagram. Applying the key result \cite[Proposition 3.1]{FG} to the stupidly truncated perfect obstruction theory $\tau E^\bullet$ gives
\beqa 
\tau\_{C_{E\udot}}\big[\cO_{C_{E\udot}}\big]\=\pi^*\td(T)\cap[C_{E\udot}],
\eeqa
Combined with the module property \cite[Theorem 18.2(2)]{F} of $\tau$ this shows the first vertical map gives
$$
(-1)^n\ch\(\sqrt{\det T^*}\,\)\;\td(T)[C_{E\udot}]\ \in\ A_*(C_{E\udot}).
$$
This is $\ch\(\sqrt{\det T^*}\,\)\;\td(T)\sqrt\td(E)^{-1}\cap$\eqref{startin}, so across the bottom of the diagram it maps to
\[
\ch\(\sqrt{\det T^*}\,\)\;\td(T)\sqrt\td(E)^{-1}\cap[M]^{\vir}\ \in\ A_*(M).
\]
Applying \eqref{FF*} to $F=T$ shows this is
\[
\sqrt\td(T\oplus T^*)\sqrt\td(E)^{-1}\cap[M]^{\vir}\=\sqrt\td(E_\bullet)\cap[M]^{\vir}. \qedhere
\]
\end{proof}


\section{Torus localisation}\label{torus}
Let $\T:=\C^*$ be the one dimensional algebraic torus. 
If $\T$ acts on a scheme $Y$ then it is elementary to do everything in this paper in $\T$-equivariant Chow (co)homology, for instance by replacing $Y$ by the $Y$-bundle $Y\times\_\T(\C^{N+1}\take\{0\})$ over the finite dimensional approximation $\PP^N$ to the classifying space $B\T$ (and taking the limit as $N\to\infty$). In particular we have $\T$-equivariant versions of the (localised) Edidin-Graham classes and square root Gysin operators, satisfying the identities proved in Section \ref{lEG}.

So now suppose $\T$ acts on a quasi-projective Calabi-Yau 4-fold $(X,\cO_X(1))$ \emph{preserving the holomorphic 4-form}. Thus it acts on any moduli space  $M$ of compactly supported sheaves --- or stable pairs, or Joyce-Song pairs --- on $X$ by pull back of sheaves. There is a lifting of the $\T$ action to the (twisted) universal sheaf $\cE$ by \cite[Proposition 4.2]{Ri}. Thus the complex of (untwisted) sheaves $\EE$ \eqref{ObsTh} is also $\T$-equivariant, as is the obstruction theory $\At\colon\EE\to\LL_M$ given by the Atiyah class of $\cE$ \cite[Theorem 4.3]{Ri}. The orientation on $\EE$ is a $\Z/2$ choice; since $\T$ is connected it preserves it.

Since each step of Proposition \ref{form} (resolution, truncation, Serre duality, etc) can be done $\T$-equivariantly, we get a $\T$-equivariant self-dual 3-term complex of locally free sheaves $E\udot$ and a map in $D(M)$,
\beq{surjec}
E\udot\=\big\{T\to E\to T^*\big\}\To\LL_M
\eeq
resolving $\At\colon\EE\to\LL_M$, with $E=(E,q,o)$ a $\T$-equivariant $SO(2n,\C)$ bundle.\footnote{We assume that $\vd$ \eqref{vd} is even since otherwise $[M]^{\vir}=0$.} The cone $C_{E\udot}\subset E$ is also $\T$-equivariant, so we get a lift of the virtual cycle \eqref{virMdef} to equivariant homology,
\beqa 
[M]^{\vir}\ :=\ \sqrt{0^{\;!}_E}\,\big[C_{E\udot}\big]\ \in\ A_{\frac12\!\vd}^{\T}\(M,\ZZ\).
\eeqa
The restriction of $E\udot$ to $\iota\colon M^\T\into M$ splits into fixed and moving parts,
$$
\iota^*E\udot\=E\udot_f\,\oplus\,(N^{\vir})^\vee,
$$
which are also self-dual 3-term complexes of locally free sheaves,
\begin{eqnarray}\nonumber
E\udot_f &=& \big\{T^f\to E^f\to(T^f)^*\big\}, \\ \label{efemtftm}
(N^{\vir})^\vee &=& \big\{T^m\to E^m\to(T^m)^*\big\}.
\end{eqnarray}
The duality on $N^{\vir}$ pairs weights $w>0$ with weights $-w<0$, so $r:=\rk N^{\vir}$ is even\footnote{It is important to note --- especially when interpreting the formulae (\ref{fixvc}, \ref{eNvir}) --- that $r$ may not be constant but can vary from one connected component of $M^\T$ to another.} and $\det N^{\vir}$ is trivial. (We thank Davesh Maulik for this observation.) Thus it is orientable; choosing one of its two orientations means the orientation on $E\udot$ induces one on $E\udot_f$. Though it does not matter which we choose (ultimately the signs will cancel in the localisation formula), a canonical choice is to apply the convention \eqref{oTis} to the \emph{positive} weight subbundles of $(E^m)^*,\,T^m$ \emph{and} $(T^m)^*$. That is, we recall that by \eqref{theta=Q} an orientation on $N^{\vir}$ is the same as an orientation on
$$
T^m\oplus(T^m)^*\oplus(E^m)^*\=N^{>0}\oplus(N^{>0})^*,
$$
where $N^{>0}:=T^{>0}\oplus(T^{<0})^*\oplus(E^{<0})^*$. We use $o\_{N^{>0}}$ \eqref{oTis}, with respect to which $N^{>0}$ is a positive maximal isotropic subbundle. Therefore the maximal isotropic subbundle
$$
T^m\oplus0\oplus(E^{<0})^*\ \subset\ T^m\oplus(T^m)^*\oplus(E^m)^*
$$
has sign $(-1)^{\rk T^{<0}}$. So applying Proposition \ref{por} to $o\_{N^{>0}}$ induces the orientation
$$
(-1)^{\rk T^{<0}}o\_{E^{>0}}\ \text{ on }\ E^m\,=\,E^{>0}\oplus(E^{>0})^*
$$
with respect to which the maximal isotropic subbundle $(E^{<0})^*=E^{>0}\subset E^m$ the sign\footnote{This fixes a sign error in the published version of this paper spotted by Arkadij Bojko in the course of his work \cite{Bo2} on the Calabi-Yau fourfold wall-crossing formula. It also shows that his resolution-free substitute $e\_{\T}\((N^{\vir})^{>0}\)=e\_{\T}\(T^{>0}\oplus(T^{<0})^*\)/e\_{\T}(E^{>0})$ for $\sqrt e\_{\T}(N^{\vir})$ indeed equals \eqref{eNvir}.\label{ABo}} $(-1)^{\rk T^{<0}}$. 

By \cite[Proposition 1]{GP} the induced map $E\udot_f\to\LL_{M^\T}$ is an obstruction theory. In Footnote \ref{F28} below we observe that $C_{E_f\udot}$ is contained in the fixed part of $C_{E\udot}|_{M^\T}$, which is isotropic in $E|_{M^\T}$. Thus $C_{E_f\udot}$ is isotropic in $E^f$. (Alternatively one could prove that the fixed part of the $(-2)$-shifted symplectic form on $M$ induces one on $M^\T$ and then invoke Proposition \ref{isoprop}.) Together with the orientation on $E\udot_f$ this therefore defines a virtual cycle on the fixed locus by \eqref{virMdef},
\beq{fixvc}
\big[M^\T\big]^{\vir}\=\sqrt{0_{E^f}^{\;!}}\,\big[C_{E_f\udot}\big]\ \in\ A_{\frac12(\vd-r)}^\T\(M^\T,\ZZ\).
\eeq
The complex dimension $\frac12(\vd-r)$ of this cycle can vary from one connected component of $M^{\T}$ to another along with that of
\beq{eNvir}
\sqrt{e\_{\T}}\;(N^{\vir})\ :=\ \frac{e\_{\T}(T^m)}{\sqrt{e\_{\T}}\;(E^m)}\ \in\ A^{r/2}_\T\(M^\T,\Q\)\big[t^{-1}\big].
\eeq
Here we have used the fact that $T^m$ and $E^m$ split into weight spaces \emph{all of whose weights are nonzero}, so the equivariant Chow cohomology classes $e\_{\T}(T^m),\,\sqrt{e\_{\T}}\;(E^m)$ and $\sqrt{e\_{\T}}(N^{\vir})$ are all invertible once we localise by inverting $t=c_1(\t)$, where $\t$ is the standard one dimensional representation of $\T$. Using \eqref{Kred} or Footnote \ref{ABo} it is elementary to show that \eqref{eNvir} is independent of choices;
since we do not strictly need this we leave the details to the reader.

\begin{Thm}\label{localChow} We can localise $[M]^{\vir}$ to $\iota\colon M^\T\into M$ by
$$
[M]^{\vir}\=\iota_*\,\frac{\big[M^\T\big]^{\vir}}{\sqrt{e\_{\T}}\;(N^{\vir})}\ \in\ A^\T_{\frac12\!\vd}(M,\Q)\big[t^{-1}\big].
$$
\end{Thm}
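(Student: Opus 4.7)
The plan is to adapt the Graber-Pandharipande virtual localisation proof \cite{GP} to the square-rooted setting. First I would verify that the constructions of Sections \ref{lEG}--\ref{CY4M} all extend $\T$-equivariantly, working as usual over the Borel construction: the cover $\rho\colon\wt Y\to Y$ of \eqref{cover}, deformation to the normal cone, and cosection localisation all have standard equivariant analogues, and the $\Z/2$ orientation on $\EE$ is preserved automatically because $\T$ is connected. This produces a $\T$-equivariant self-dual resolution $E\udot$, isotropic cone $C_{E\udot}\subset E$, and virtual cycle $[M]^{\vir}\in A^\T_{\frac12\!\vd}(M,\ZZ)$.

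On the fixed locus $\iota\colon M^\T\into M$ we have the orthogonal decompositions $\iota^*E=E^f\oplus E^m$ and $\iota^*E\udot=E_f\udot\oplus(N^{\vir})^\vee$ of \eqref{efemtftm}, with the canonical orientations described after \eqref{efemtftm}. The geometric heart of the Graber-Pandharipande argument \cite[Proposition 1]{GP} --- which uses only that the intrinsic normal cone commutes with restriction to the fixed locus of a connected torus --- shows that for the stupid truncation $\tau E\udot=\{E\to T^*\}$ the Behrend-Fantechi cone restricts compatibly with the splitting: $\iota^*C_{E\udot}\subset E^f\oplus E^m$ factors as $C_{E_f\udot}\oplus D$, where the moving piece $D$ lies in (and sweeps out) the image of $T^m$ inside $E^m$ under the first differential of $(N^{\vir})^\vee$.

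Given this, I would combine three established tools. Lemma \ref{shr} (compatibility of $\surd\;0^{\;!}$ with refined Gysin maps) lets us commute $\iota^{\;!}$ past $\surd\;0^!_E$. Proposition \ref{E1E2} (Whitney sum for the square root Gysin map) applied to $\iota^*E=E^f\oplus E^m$ then splits the computation:
\beqa
\iota^{\;!}\sqrt{0_E^{\;!}}\,[C_{E\udot}]\=\sqrt{0_{E^f}^{\;!}}\,[C_{E_f\udot}]\,\cdot\,\sqrt{0_{E^m}^{\;!}}\,[D].
\eeqa
The first factor is $\big[M^\T\big]^{\vir}$ of \eqref{fixvc}. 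The second factor is a pure Euler-class computation on the moving bundle $E^m$: using Proposition \ref{KperpK} for the isotropic subbundle $\im(T^m)\subset E^m$, together with the self-duality pairing $T^m$ with $(T^m)^*$ in $(N^{\vir})^\vee$, the cycle $D$ evaluates to $e_\T(T^m)\cdot\sqrt{e_\T}(E^m)^{-1}\cdot 1_{M^\T}$, whose inverse is precisely $\sqrt{e_\T}(N^{\vir})$ of \eqref{eNvir}. Finally the Edidin-Graham equivariant localisation isomorphism $\iota_*\colon A^\T_*(M^\T)[t^{-1}]\rt\sim A^\T_*(M)[t^{-1}]$ lets us invert the resulting identity to obtain the stated formula.

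The principal obstacle will be the cone-decomposition step: one must verify that $\iota^*C_{E\udot}$ has the \emph{scheme-theoretic} product structure required for Proposition \ref{E1E2} to apply directly (rather than merely up to rational equivalence), and that the signs $(-1)^{|\Lambda|}$ of Definition \ref{Lor} arising from the orientation choices on $E^f$, $E^m$, and the implicit maximal isotropics used in the Gysin computations, all combine to give the canonical class $\big[M^\T\big]^{\vir}$ without any spurious sign. This is the same sign bookkeeping that motivated the canonical convention made after \eqref{efemtftm}, and checking it systematically is where most of the technical work lies.
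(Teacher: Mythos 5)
Your strategy coincides with the paper's --- pass to the fixed locus via the localisation isomorphism, commute a refined Gysin map past $\surd\;0_E^{\;!}$ using Lemma \ref{shr}, split with the Whitney formula of Proposition \ref{E1E2}, and evaluate the moving factor with Proposition \ref{KperpK} --- and you have correctly located the hard step. But that step, the product decomposition of the restricted cone, is a genuine gap rather than a routine verification, and it is where essentially all of the work in the proof sits. First, $\iota\colon M^\T\into M$ is not a regular embedding, so ``$\iota^{\;!}$'' is undefined; one must choose a $\T$-equivariant embedding $M\into P$ into a smooth $\T$-variety and use the refined Gysin map $i^{\;!}$ of $P^\T\into P$. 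This brings in the factor $e_{\T}(N_{P^\T/P})=e_{\T}(T^m_P)$, and correspondingly the moving piece of the cone is $T^m/T^m_P$ rather than $\im(T^m)$; the two discrepancies only cancel at the very end. Second, the intrinsic normal cone does \emph{not} commute with restriction to the fixed locus scheme-theoretically (this is not what \cite[Proposition 1]{GP} says); what holds is Vistoli's rational equivalence $i^{\;!}[C_{M/P}]=[C_{M^\T/P^\T}]$ at the level of cycle classes. Even granting it, the cycle $[C_{M^\T/P^\T}\oplus\iota^*T]$ is not invariant under $T_P|_{M^\T}$, so it does not descend to a cycle in $\iota^*E$ of the product form needed for Proposition \ref{E1E2}. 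The paper repairs this by splitting $T^m_P\into T^m$ (after replacing $M^\T$ by an affine bundle over it via the Jouanolou trick) and deforming the inclusion by the resulting map $T^m\to\iota^*C_{M/P}$; only after this explicit rational equivalence does one obtain a $T_P$-invariant cycle of the form $\big(\frac{C_{M^\T/P^\T}\oplus T^f}{T^f_P}\big)\oplus\frac{T^m}{T^m_P}$ to which the Whitney formula applies. Without these ingredients the displayed identity in your third paragraph is unjustified.

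There is also an inversion slip in the final assembly: $e_{\T}(T^m)\cdot\sqrt{e_{\T}}(E^m)^{-1}$ is $\sqrt{e_{\T}}(N^{\vir})$ itself, not its inverse. The correct bookkeeping is $\sqrt{0^{\;!}_{E^m}}\big[T^m/T^m_P\big]=\sqrt{e_{\T}}(E^m)\,e_{\T}(T^m_P)/e_{\T}(T^m)$ by Proposition \ref{KperpK} and \eqref{Kred}, and the $e_{\T}(T^m_P)$ cancels against the ambient normal bundle factor from $i^{\;!}\iota_*$, leaving $1/\sqrt{e_{\T}}(N^{\vir})$ as required.
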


\begin{proof}
We follow \cite[Section 3]{GP}. The usual construction of the moduli space $M$ as a GIT quotient of an open set in a Quot scheme (of an equivariant compactification of $X$) can be done $\T$-equivariantly since $\cO_X(1)$ is a $\T$-linearised line bundle. Thus GIT endows $M$ with a $\T$-linearised ample line bundle $\cO_M(1)$. For large $N\gg0$ there is a $\T$-equivariant finite dimensional linear system $V\subset  H^0(\cO_M(N))$ inducing a $\T$-equivariant embedding $M\into P:=\PP(V^*)$ into a smooth $\T$-variety $P$. The $\T$-fixed locus $i:P^\T\into P$ is also smooth and so regularly embedded (but with codimension $c$ which can vary from one connected component to another). It fits into a Cartesian diagram
\beq{Cart}
\xymatrix@=18pt{
M^\T\ \ar[d]<-.8ex>\ar@{^(->}[r]^{\iota}&\,M \ar[d] \\
P^\T\ \ar@{^(->}[r]^i&\,P.}
\eeq
Furthermore, by Proposition \ref{form} we may assume that the map $E\udot\to\LL_M$ of \eqref{surjec} is represented by a $\T$-equivariant surjection of complexes
$$
\xymatrix@R=15pt{T \ar[r]& E \ar[r]\ar@{->>}[d]& T^* \ar@{->>}[d]<-0.5ex> \\
& I/I^2 \ar[r]^-d& \Omega_P|_M.\!}
$$
In particular, $T_P|_M\to T$ is injective over $M$.

By \cite[Theorem 6.3.5]{Kr}, for instance,
\beq{loc1}
\iota_*\ \colon\,A^\T_*(M^\T,\Q)\big[t^{-1}\big]\rt\sim
A^\T_*(M,\Q)\big[t^{-1}\big]
\eeq
is an isomorphism, so we may write $[M]^{\vir}=\iota_*\;a$ for some $a\in A_*(M^\T)[t^{-1}]$. Then by \cite[Corollary 6.3]{F},
\beq{shriek1}
i^{\;!}[M]^{\vir}\=i^{\;!}\iota_*\;a\=e\_\T(N_{P^\T/P})\cap a,
\eeq
where $i^{\;!}$ is the refined Gysin map associated to the Cartesian diagram \eqref{Cart}. Splitting $N_{P^\T/P}$ into weight spaces, all its weights are nonzero, so $e\_\T(N_{P^\T/P})$ is invertible. Thus we get the standard localisation formula
\beq{stand1}
\iota_*\left(\frac{i^{\;!}[M]^{\vir}}{e\_\T(N_{P^\T/P})}\right)\=\iota_*\;a\=[M]^{\vir}.
\eeq
We split $T_P|_{P^\T}=T^f_P\oplus T^m_P$ into fixed and moving parts, where $T_P^m=N_{P^\T/P}$. Substituting in the definition \eqref{virMdef} of $[M]^{\vir}$ now gives
\beq{nowg}
[M]^{\vir}\=\iota_*\,\frac{i^{\;!}\sqrt{0_E^{\;!}}\,[C_{E\udot}]}{e\_{\T}(T^m_P)}\ \stackrel{\eqref{Eshriek}}=\ \iota_*\,\frac{\sqrt{0_{\iota^*E}^{\;!}}\,i^{\;!}[C_{E\udot}]}{e\_{\T}(T^m_P)}\,,
\eeq
where the isotropic cone $C_{E\udot}\subset E$ is defined from the intrinsic normal cone $\mathfrak C_M$ by \eqref{recipe}. Since $\mathfrak C_M$ is by definition the stack quotient $C_{M/P}\big/\;T_P|_M$ we see from \eqref{recipe} that
\beq{Cemp}
C_{E\udot}\=\frac{C_{M/P}\oplus T}{T_P|_M}\,,
\eeq
where we take the diagonal action of $T_P|_M$. Expanding our Cartesian diagram to
$$
\xymatrix@=18pt{
\iota^*C_{M/P} \ar[r]\ar[d]<-.5ex>& C_{M/P} \ar[d] \\
M^\T\,\;\ar[d]<-.5ex>\ar@{^(->}[r]^{\iota}&\,M \ar[d] \\
P^\T\ \ar@{^(->}[r]^i&\,P,}
$$
Vistoli's rational equivalence \cite{Vi} implies
\beq{Vist}
i^{\;!}\big[C_{M/P}\big]\=\big[C_{M^\T/P^\T}\big]\quad\text{in }\,A_*^\T\(\iota^*C_{M/P}\),
\eeq
as in \cite[Equation 15]{GP}, or the equation before Proposition 3.3 in \cite{BF}. 
Thus
\beq{Vis2}
i^{\;!}\big[C_{M/P}\oplus T\big]\=\big[C_{M^\T/P^\T}\oplus\iota^*T\big],
\eeq
by \cite[Theorem 6.2(b)]{F}. We now quotient by $T_P|_M$ using the diagram
\beq{affn}
\xymatrix@R=20pt{
A_{*+t}^\T(C_{M/P}\oplus T) \ar[r]^-{i^{\;!}}& A_{*+t-c}^\T(\iota^*C_{M/P}\oplus\iota^*T) \\
A_*^\T(C_{E\udot}) \ar[u]^{q^*}_(.45)\wr\ar[r]^-{i^{\;!}}& A_{*-c}^\T(\iota^*C_{E\udot}). \ar[u]_{q^*}^(.45){\wr}}
\eeq
Here $q\colon C_{M/P}\;\oplus\,T\to C_{E\udot}$ is an affine bundle of dimension $t:=\rk T_P|_M$, giving the vertical isomorphisms by \cite[Corollary 3.6.4]{Kr}. Since $q$ is flat, the diagram commutes by \cite[Theorem 6.2(b)]{F}. By \eqref{Cemp} and \eqref{Vis2} we find
\beq{wrongcycle}
q^*i^{\;!}\big[C_{E\udot}\big]\=\big[C_{M^\T/P^\T}\oplus\iota^*T\big].
\eeq
We would like to deform $C_{M^\T/P^\T}\oplus\;\iota^*T$ to something invariant \vspace{-1mm}under the action of $T_P|_{M^\T}=T_P^f\oplus T_P^m$ in order to apply $(q^*)^{-1}$ to both sides.\footnote{Alternatively one can work entirely ``upstairs" on $C_{M^\T/P^\T}\oplus\;\iota^*T$ via an analogue of \cite[Lemma 1]{GP}.} To do this we may assume, without loss of generality, that the inclusion $T_P^m\into T^m$ is \emph{split}. This is an application of the Jouanolou trick \cite{J}, as used in \cite[Section 1.1]{GT} for instance, replacing $M^\T$ by a \emph{affine} variety on which all extensions split automatically. That is, there is an affine variety 
$$
\wt{M^\T}\To M^\T
$$
which is an \emph{affine bundle} over $M^\T$. It is therefore a homotopy equivalence inducing an isomorphism on Chow groups \cite[Corollary 3.6.4]{Kr}, so any relation we prove in the Chow group upstairs also holds downstairs on $M^\T$.

The splitting $T^m\onto T_P^m$ induces a map $f\colon T^m\to\iota^*C_{M/P}$. Deforming the inclusion induced by the Cartesian diagram \eqref{Cart}
$$
C_{M^\T/P^\T}\oplus T^f\oplus T^m\Into\iota^*C_{M/P}\oplus T^f\oplus T^m
$$
by $t.f$, for $t\in\C$, gives a rational equivalence from \eqref{wrongcycle} at $t=0$  to a $T_P|_{M^\T}$-invariant cycle at $t=1$. Thus \eqref{wrongcycle} becomes
\beq{vis3}
q^*i^{\;!}\big[C_{E\udot}\big]\=q^*\left[\frac{C_{M^\T/P^\T}\oplus\iota^*T}{T_P|_{M^\T}}\right],
\eeq
where on the right the $T^m$ in $\iota^*T$ embeds in $\iota^*C_{M/P}\;\oplus\;\iota^*T$ via $(f,1)$ instead of $(0,1)$. Thus the same relation is true with $q^*$ removed. So by Proposition \ref{E1E2}, \eqref{nowg} has now become
$$
\iota_*\,\frac{\sqrt{0_{E^f}^{\;!}}\left[\frac{C_{M^\T/P^\T}\oplus T^f}{T_P^f}\right]\sqrt{0_{E^m}^{\;!}}\left[\frac{T^m}{T_P^m}\right]}{e\_{\T}(T^m_P)}\=
\iota_*\,\frac{\sqrt{0_{E^f}^{\;!}}\big[C_{E\udot_f}\big]\sqrt{e\_{\T}}(E^m)\frac{e\_{\T}(T^m_P)}{e\_{\T}(T^m)}}{e\_{\T}(T^m_P)}\,.
$$
On the right we have used Proposition \ref{KperpK} to write
$$
\sqrt{0_{E^m}^{\;!}}\,[K]\=\sqrt{0_{K^\perp/K}^{\;!}}\,\big[0_{K^\perp/K}\big]
\=\sqrt e\_\T\;(K^\perp/K)\ \stackrel{\eqref{Kred}}=\ \sqrt e\_\T\;(E^m)\big/e\_\T\;(K),
$$
where $K=T^m/T^m_P$ injects into $E^m$ by taking moving parts of the inclusion\footnote{\label{F28}Notice the fixed part of the left hand side is $C_{E\udot_f}$ by the same argument as in \eqref{Cemp}. So $C_{E\udot_f}$ is contained in the fixed part of $C_{E\udot}|_{M^\T}$ which is isotropic in $E|_{M^\T}$. This shows that $C_{E\udot_f}\subset E^f$ is isotropic, as used earlier.}
$$
\frac{C_{M^{\T}/P^{\T}}\oplus T|_{M^{\T}}}{T_P|_{M^{\T}}}\ \subset\ \frac{C_{M/P}|_{M^{\T}}\oplus T|_{M^{\T}}}{T_P|_{M^{\T}}}\ \stackrel{\eqref{Cemp}}=\ C_{E\udot}|_{M^{\T}}\ \subset\ E\;|_{M^{\T}}.
$$
Thus by \eqref{fixvc},
\[
[M]^{\vir}\=\iota_*\,\frac{\big[M^\T\big]^{\vir}\sqrt{e\_{\T}}(E^m)}{e\_{\T}(T^m)}\=\iota_*\,\frac{\big[M^\T\big]^{\vir}}{\sqrt{e\_{\T}}(N^{\vir})}\,.\qedhere
\]
\end{proof}

We would like to do something similar with $\Ohat$ in the equivariant $K$-theory groups $K^0_\T,\,K_0^\T$. But our definition \eqref{sqdf} of $\surd L$ does not immediately work equivariantly because $1-L$ is usually not nilpotent in $K^0_{\T}(Y)$, so \eqref{sqdf} becomes an infinite series. (For instance $1-\t$ is not nilpotent, where $\t$ is the weight one irreducible representation of $\T$.) We thank Andrei Okounkov for pointing this out, and Noah Arbesfeld for a suggestion on how best to fix it.

Note first that given a $\T$-equivariant line bundle on a \emph{$\T$-fixed scheme} such as $M^\T$, it takes the form $L'\otimes\t^w$ for some $\T$-fixed line bundle $L'$ and locally constant integer-valued weight $w$. Thus we may use \eqref{sqdf} to define its square root as
\beq{sqdf2}
\sqrt{L'}\otimes\t^{w/2}\ \in\ K^0_\T(M^\T)\otimes\_{\;\Z[\t,\t^{-1}]}\Q\(\t^{1/2}\).
\eeq
Combining this with the localisation formula will allow us to construct an \emph{operator} $\surd L\,\star$ on a localised version of $K_0^{\T}$ (rather than an element of a localisation of $K^0_\T$). So for any quasi-projective $\T$-scheme $Y$, set
$$
K^{\T}_0(Y)\loc\ :=\ K_0^\T(Y)\otimes\_{\;\Z[\t,\t^{-1}]}\Q\(\t^{1/2}\).
$$
Denoting the fixed locus by $\iota\colon Y^{\T}\into Y$, we have the $K$-theoretic localisation theorem \cite[Theorem 3.3(a)]{EG:Lo},
\beqa
\iota_*\ \colon\ K_0^\T\(Y^\T\)\loc\rt\sim K_0^\T(Y)\loc\;,
\eeqa
and its inverse $\iota_*^{-1}$. Given a $\T$-equivariant line bundle $L$ this allows us to define an operator
$$
\sqrt L\,\star\ \colon\,K^{\T}_0(Y)\loc\To K^{\T}_0(Y)\loc
$$
by specifying it to be \eqref{sqdf2} on $Y^\T$, where $\iota^*L\cong L'\cdot\t^w$. 
That is, we define
\beq{stardef}
\sqrt L\,\star F\ :=\ \iota_*\left(\sqrt{L'}\cdot\t^{w/2}\otimes\iota_*^{-1}(F)\right).
\eeq
We check this reduces to our previous definition \eqref{sqdf} in the non-equivariant limit $\t^{1/2}\to 1$. Let $L_1\in K^0(Y)$ denote the line bundle $L|_{\t=1}$ given by forgetting about the $\T$ action.

\begin{Prop}
If $F\in K_0^\T\(Y\)\loc$ has no pole at $\t^{1/2}=1$ then setting $F_1:=\lim_{\;\t^{1/2}\to1}(F)\in K_0(Y)\otimes\Q$,
$$
\lim_{\t^{1/2}\to1}\sqrt L\,\star F \,\text{ exists and equals }\, \sqrt{L_1}\otimes F_1\ \in\ K_0(Y)\otimes\Q.
$$
\end{Prop}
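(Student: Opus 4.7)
The plan is to expand both factors in the definition \eqref{stardef} as Laurent series in $u:=\t^{1/2}-1$ around $u=0$, and then push forward and apply the projection formula termwise.

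The key identity underlying the whole argument is $\sqrt{L'}=\iota^*\sqrt{L_1}$. Indeed, $L_1|\_{Y^\T}$—the non-equivariant line bundle obtained from $\iota^*L=L'\otimes\t^w$ by forgetting the $\T$-action (equivalently, setting $\t=1$)—is exactly $L'$, and Lemma \ref{sqrts} constructs the square root via a universal polynomial expression in $1-L^*$ which commutes with pullback. This identity is what allows the projection formula to convert equivariant computations on $Y^\T$ into non-equivariant computations on $Y$.

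Next, I would expand $\alpha:=\sqrt{L'}\cdot\t^{w/2}=\sqrt{L'}\sum_{k\ge 0}\binom{w}{k}u^k$ as a power series in $u$, and $\iota_*^{-1}(F)=\sum_{j\ge -N}b_j\,u^j$ as a Laurent series with coefficients $b_j\in K_0(Y^\T)_\Q$ (which is possible since $K_0^\T(Y^\T)\loc=K_0(Y^\T)\otimes_\Z\Q(\t^{1/2})$ and rational functions have finite pole orders). The no-pole hypothesis on $F=\iota_*\iota_*^{-1}(F)=\sum_j\iota_*(b_j)u^j$ is then exactly the conditions $\iota_*(b_j)=0$ for $j<0$ and $\iota_*(b_0)=F_1$.

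Multiplying and pushing forward, the coefficient of $u^m$ in $\sqrt L\,\star F=\iota_*\bigl(\alpha\otimes\iota_*^{-1}(F)\bigr)$ becomes
\beqa
\iota_*(c_m)\=\sqrt{L_1}\otimes\sum_{k+j=m,\ k\ge 0}\binom{w}{k}\iota_*(b_j),
\eeqa
by the projection formula identity $\iota_*(\iota^*\sqrt{L_1}\otimes b_j)=\sqrt{L_1}\otimes\iota_*(b_j)$. For $m<0$, every index $j=m-k\le m<0$ forces $\iota_*(b_j)=0$, so $\iota_*(c_m)=0$ and $\sqrt L\,\star F$ has no pole at $u=0$. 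For $m=0$, only the pair $(k,j)=(0,0)$ contributes (any $k\ge 1$ forces $j<0$), giving $\iota_*(c_0)=\sqrt{L_1}\otimes F_1$, the claimed limit. The only real obstacle is the square-root compatibility identity $\iota^*\sqrt{L_1}=\sqrt{L'}$ from the second paragraph, which enables the projection formula; the remainder is clean Laurent-coefficient bookkeeping.
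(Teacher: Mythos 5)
Your identity $\iota^*\sqrt{L_1}=\sqrt{L'}$ and the expansion of $\t^{w/2}=(1+u)^w$ are fine, but the load-bearing step of your argument --- translating the hypothesis and conclusion into conditions on the individual Laurent coefficients, ``$F$ has no pole iff $\iota_*(b_j)=0$ for $j<0$, and $F_1=\iota_*(b_0)$'' --- is false. The problem is that the classes $\iota_*(b_j)$ are not constant in $\t$: pushing forward from the fixed locus reintroduces $\t$-dependence, so $\sum_j\iota_*(b_j)u^j$ is not a genuine Laurent expansion of $F$ with coefficients in a $\t$-independent group, and cancellation occurs across different powers of $u$. Concretely, take $Y=\PP^1$ with the standard action, $Y^\T=\{0,\infty\}$ and $b:=[\cO_0]-[\cO_\infty]\in K_0(Y^\T)$. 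Then $\iota_*b=[\cO_{p_0}]-[\cO_{p_\infty}]=\t^a(\t-1)[\cO(-1)]$ is nonzero in $K_0^\T(\PP^1)$ (which is free over $\Z[\t^{\pm1}]$ with basis $1,[\cO(-1)]$) but is divisible by $\t-1=(\t^{1/2}-1)(\t^{1/2}+1)$. Hence $F:=\iota_*(b\cdot u^{-1})=\t^a(\t^{1/2}+1)[\cO(-1)]$ has no pole at $\t^{1/2}=1$ and $F_1=2[\cO(-1)]\ne0$, whereas your recipe, applied to $b_{-1}=b\ne 0$ and $b_0=0$, declares either that $F$ has a pole or (reading $\iota_*$ non-equivariantly) that $F_1=0$. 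So both the vanishing of your $\iota_*(c_m)$ for $m<0$ and your identification of the $m=0$ coefficient collapse.

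The paper's proof is structured precisely to avoid this coefficientwise bookkeeping. It approximates $\sqrt{L'}\cdot\t^{w/2}$ by the polynomial $Sq\_k(\iota^*L)$ and uses the projection formula to write $\sqrt L\star F=Sq\_k(L)\otimes F-\iota_*\(E_k\otimes\iota_*^{-1}(F)\)$; the first term is an honest product in $K_0^\T(Y)\loc$, so the no-pole hypothesis on $F$ applies to it directly and its limit is $Sq\_k(L_1)\otimes F_1=\sqrt{L_1}\otimes F_1$. The error term is then killed by a divisibility count: $E_k$ is divisible by $(1-\t^{1/2})^{d+1}$ once $k\ge\dim Y^\T+d$, while $\iota_*^{-1}F$ has pole order at most $d=\dim Y^\T+\dim P$, proved via $\iota_*^{-1}=i^{\;!}/\Lambda\udot N^*$ and an explicit bound on the pole of $1/\Lambda\udot N^*$. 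If you want to salvage your approach you would need to replace the claim about individual $\iota_*(b_j)$ by an argument of this kind, i.e.\ one that only ever uses divisibility of whole classes by powers of $(1-\t^{1/2})$ rather than vanishing of Laurent coefficients after pushforward.
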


\begin{proof}
Let $Sq\_k(x)$ denote the approximation to the square root given by truncating the power series \eqref{sqdf},
$$
Sq\_k(x)\ :=\ 1-\sum\nolimits_{i=1}^ka_i(1-x)^i\ \in\ \Q[x].
$$
Applied to $\iota^*L$ this approximates $\surd L'\cdot\t^{w/2}$ with error
$$
E_k\ :=\ Sq\_k(L'\cdot\t^w)-\sqrt{L'}\cdot\t^{w/2}\ \in\ K^0\(Y^\T\)\otimes\Q[\t^{\pm1/2}].
$$
Working in $K_0^\T(Y)\loc$, by \eqref{stardef} we have
\begin{eqnarray}
\sqrt L\,\star F &=& \iota_*\left(\sqrt{L'}\cdot\t^{w/2}\otimes\iota_*^{-1}(F)\right) \nonumber \\
&=& \iota_*\left(Sq\_k(\iota^*L)\otimes\iota_*^{-1}(F)\right)-\iota_*\(E_k\otimes\iota_*^{-1}(F)\) \nonumber \\ 
&=&Sq\_k(L)\otimes F-\iota_*\(E_k\otimes\iota_*^{-1}(F)\), \label{sqke}
\end{eqnarray}
by the projection formula. We will show that, for $d\in\N$ specified below and then any fixed $k\ge\dim Y^\T+d$,
\begin{itemize}
\item[(a)] $E_k\in(1-\t^{1/2})^{d+1}K^0\(Y^\T\)\otimes\Q[\t^{\pm1/2}]$ and
\item[(b)] $(1-\t)^d\,\iota_*^{-1}F\in K_0(Y^\T)\loc$ has no pole at $\t^{1/2}=1$.
\end{itemize}
Thus $(1-\t^{1/2})$ divides $E_k\otimes\iota_*^{-1}(F)$, so $\lim_{\t^{1/2}\to1}$ of either side of \eqref{sqke} exists and equals $Sq\_k(L_1)\otimes F_1$. Since $k\ge\dim Y^\T$ this is $\surd L_1\otimes F_1$, as required.\medskip

We first prove (a). By \eqref{xhalf} we see that $(1-x)^{k+1}$ divides
$$
Sq\_k(x)^2-x\=\(Sq\_k(x)-x^{1/2}\)\(Sq\_k(x)+x^{1/2}\)\ \in\ \Q[x^{1/2}].
$$
Since the second factor is nonzero at $x^{1/2}=1$ and $\Q[x^{1/2}]$ is a principal ideal domain, we deduce that $\(1-x^{1/2}\)^{k+1}$ divides the first, so
\beq{sq12}
Sq\_k(x)-x^{1/2}\ \in\ \(1-x^{1/2}\)^{k+1}\Q[x^{1/2}].
\eeq
Taking the branch of $\sqrt x$ which is $1$ at $x=1$ and Taylor expanding the identity $\sqrt{xy}\equiv\sqrt x\sqrt y$ about $(x,y)=(1,1)$ shows that the formal power series $Sq(x):=1-\sum_{i=1}^\infty a_i(1-x)^i\in\Q[\![1-x]\!]$ satisfies
$$
Sq(xy)\=Sq(x)\;Sq(y)\ \in\ \Q[\![1-x,1-y]\!].
$$
Truncating, it follows that 
$$
Sq\_k(xy)-Sq\_k(x)\;Sq\_k(y)\ \in\ \m^{k+1}\;\Q[x,y],
$$
where $\m:=(1-x,1-y)\subset\Q[x,y]$ is the maximal ideal at $(1,1)$. Hence there is a polynomial $f(x,y)$ such that
\beq{111}
Sq\_k(L'\cdot\t^w)-Sq\_k(L')\;Sq\_k(\t^w)\=f(1-L',1-\t),
\eeq
where $f$ is a sum of monomials all of which have degree $\ge k+1$. All of these monomials are divisible by $(1-\t)^{k+1-\dim Y^\T}$ because $(1-L')^{\dim Y^\T+1}=0$. So combining \eqref{111} and \eqref{sq12} gives, for $k\ge\dim Y^\T+d$,
$$
Sq\_k(L'\cdot\t^w)-Sq\_k(L')\;\t^{w/2}\ \in\ (1-\t^{1/2})^{d+1}K^0\(Y^\T\)\otimes\Q[\t^{\pm1/2}].
$$
Since the left hand side is $E_k$ this proves (a). \medskip

To prove (b) we use a $\T$-equivariant embedding of $Y$ in a smooth projective $\T$-scheme $P$ with fixed locus $i\colon P^\T\into P$ as in \eqref{Cart}. Let $N:=N_{P^\T/P}|_{Y^\T}$. The refined Gysin map $i^{\;!}\colon K_0(Y)\to K_0(Y^\T)$ of \cite[Section 2.1]{YP} satisfies
\beq{YPLee}
\iota_*^{-1}\=\frac{i^{\;!}}{\Lambda\udot N^*}\ \colon\, K^\T_0(Y)\loc\To K^\T_0\(Y^\T\)\loc\;.
\eeq
Writing $N=\oplus_{i=1}^{\,c}N_i\;\t^{u_i}$ as a sum of Chern roots, where $c$ is the locally constant function $\dim P-\dim P^\T\le\dim P$, we have
$$
\Lambda\udot N^*\=\bigotimes_{i=1}^c\(1-N_i^*\t^{-u_i})\=\bigotimes_{i=1}^c(1-\t^{-u_i})\Big(1+\tfrac{\t^{-u_i}}{1-\t^{-u_i}}\(1-N_i^{-1}\)\Big).
$$
The first bracket contributes a pole of order $c\le\dim P$ to $1/\Lambda\udot N^*$. Expanding the product of the reciprocals of the second bracket gives a pole of order $\le\dim Y^\T$ because any product of $>\dim Y^\T$ terms of the form $(1-N_i^{-1})$ is zero. Thus $1/\Lambda\udot N^*$ has a pole of order $\le d:=\dim Y^\T+\dim P$ at $\t=1$. In particular, for $F$ whose limit $F_1$ exists as $\t^{1/2}\to1$, we see that $(1-\t)^d\,\iota_*^{-1}F$ has no pole at $\t^{1/2}=1$, as required.
\end{proof}

Thus we get a $\T$-equivariant analogues of everything in Section \ref{Ksec}, replacing each occurence of $\surd L\in K^0$ by the operator $\surd L\;\star\;\colon K_0^\T\to K_0^\T$ \eqref{stardef}. For instance, given a $\T$-equivariant $SO(2n,\C)$ bundle $(E,q,o)$ over a $\T$-scheme $Y$, the bundle $\rho\colon\wt Y\to Y$ \eqref{cover} is naturally $\T$-equivariant, as is $\Lambda_\rho\subset\rho^*E$. We then define
$$
\sqrt{\mathfrak e\_{\T}}(E)\ :=\ \rho_*\(\sqrt{\det\Lambda_\rho}\star\;\mathfrak e\_{\T}(\Lambda_\rho)\cap(\ \cdot\ )\)  \ \colon\, K_0^\T\(Y)\loc\To K_0^\T\(Y)\loc.
$$
Since the $K$-theoretic cosection localisation of \cite{KL:K} operates on $K_0$  anyway, we can work equivariantly in Definition \ref{def1} to get a $\T$-equivariant localised operator
$$
\e(E,s)\ \colon\, K_0^\T\(Y)\loc\To K_0^\T\(Z(s)\)\loc 
$$ 
when $s$ is a $\T$-equivariant isotropic section. Applying this to a $\T$-equivariant cone $C\subset E$, Definition \ref{def2} gives an operator
$$
\sqrt{0_E^*}\ \colon\,K_0^\T\(C)\loc\To K_0^\T\(Y)\loc\;.
$$
Combining this with the $\T$-equivariant resolution \eqref{surjec} defines, by Definition \ref{vss}, an equivariant virtual structure sheaf on the moduli space $M$,
\beq{TOhat}
\Ohat\ :=\ \sqrt{\det T^*}\star\sqrt{0_E^*}\,\big[\cO_{C_{E\udot}}\big]\ \in\ K_0^\T(M)\loc
\eeq
whose $\t^{1/2}\to1$ limit is \eqref{Ohatdef}.\footnote{Thus the right hand side of Theorem \ref{Kloc} has no poles at $\t^{1/2}=1$ and specialises there to \eqref{Ohatdef}. Beware, however, that before taking $\iota_*$ we typically have poles at $\t^{1/2}=1$.}
So we have the ingredients to give a virtual localisation result for $\Ohat$ in $\T$-equivariant $K$-theory. This is a square-rooted analogue of the usual virtual localisation formula in $K$-theory \cite{Qu}. In the notation of \eqref{efemtftm} we set
$$
\sqrt{\mathfrak e\_{\T}}\;(N^{\vir})\ :=\ \frac{\mathfrak e\_{\T}(T^m)\cdot\sqrt{\det T^m}}{\sqrt{\mathfrak e\_{\T}}\,(E^m)}\ \in\ K^0_\T(M^\T,\Q),
$$
where $\surd\!\det T^m$ is defined as in \eqref{sqdf}.

\begin{Thm}\label{Kloc} We can localise $\Ohat$ to $\iota\colon M^\T\into M$ by
$$
\Ohat\=\iota_*\,\frac{\widehat\cO^{\;\vir}_{\!M^\T}}{\sqrt{\mathfrak e\_{\T}}\;(N^{\vir})}\ \in\ K_0^\T(M)\loc.
$$
\end{Thm}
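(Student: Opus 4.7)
The plan is to mimic the Chow proof of Theorem \ref{localChow} step by step, replacing $\surd\;0_E^{\;!}$ by $\surd\;0_E^*$ and the Fulton--MacPherson refined Gysin maps by their $K$-theoretic analogues from \cite{AP,YP}, then keeping careful track of the extra twists by square roots of determinants that appear in the $K$-theoretic localised Whitney sum and reduction formulae \eqref{Klam} and \eqref{KKperpK}.

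First I would choose a $\T$-equivariant embedding $M\into P$ into a smooth projective $\T$-scheme, giving the Cartesian square \eqref{Cart} with $i\colon P^\T\into P$ and normal bundle $N_{P^\T/P}$. By Proposition \ref{form} (done equivariantly, as in the Chow case) I may represent the obstruction theory by a surjective $\T$-equivariant self-dual complex $E\udot=\{T\to E\to T^*\}\to\LL_M$ with $T_P|_M\into T$ an inclusion. The $K$-theoretic localisation theorem \cite[Theorem 3.3(a)]{EG:Lo} gives $\iota_*\colon K_0^\T(M^\T)\loc\rt\sim K_0^\T(M)\loc$ with inverse computed via \eqref{YPLee}, so as in \eqref{stand1} it suffices to compute $i^{\;!}\Ohat/\Lambda\udot N_{P^\T/P}^*$. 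Using the $K$-theoretic analogue \eqref{Kshriek} of Lemma \ref{shr} for $\surd\;0_E^*$, together with commutativity of the $\star$-operator $\surd\!\det T^*\star$ with $i^{\;!}$ (which holds because on the fixed locus $\surd\!\det T^*$ becomes an actual $K$-theory class), the computation reduces to evaluating $i^{\;!}\big[\cO_{C_{E\udot}}\big]$.

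Next I would run the Vistoli rational equivalence argument \eqref{Vist}--\eqref{wrongcycle} in $K$-theory using $i^{\;!}[\cO_{C_{M/P}}]=[\cO_{C_{M^\T/P^\T}}]$; this holds by the same double deformation argument of \cite{Vi,GP}, valid in $K_0$ since derived specialisation commutes with $i^{\;!}$. After passing through the flat quotient by $T_P|_M$ via \eqref{affn} (where flat pullback commutes with refined Gysin maps in $K$-theory as well) and applying the Jouanolou trick to split $T_P^m\into T^m$, I would deform to the $T_P|_{M^\T}$-invariant cycle exactly as in \eqref{vis3}, yielding
$$
i^{\;!}\big[\cO_{C_{E\udot}}\big]\=\left[\cO_{C_{E_f\udot}}\right]\boxplus\left[\cO_{T^m/T_P^m}\right]
$$
inside the two factors of the Whitney-type decomposition $\iota^*E=E^f\oplus E^m$. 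Then the $K$-theoretic Whitney formula (the analogue of Proposition \ref{E1E2}, proved by the same projective-completion argument with $\mathfrak e$ in place of $e$) lets me split
$$
\sqrt{0_{\iota^*E}^*}\,i^{\;!}\big[\cO_{C_{E\udot}}\big]\=
\sqrt{0_{E^f}^*}\big[\cO_{C_{E_f\udot}}\big]\cdot\sqrt{0_{E^m}^*}\left[\cO_{T^m/T^m_P}\right].
$$

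For the moving factor I would apply the $K$-theoretic reduction formula \eqref{KKperpK} with $K=T^m/T^m_P\subset E^m$: since $K\subset E^m$ is isotropic of top rank,
$$
\sqrt{\det K}\star\sqrt{0_{K^\perp/K}^*}\,[\cO]\=\sqrt{0_{E^m}^*}\,[\cO_K],
$$
whose left hand side gives $\surd\!\det K\star\sqrt{\mathfrak e\_\T}(E^m)/\mathfrak e\_\T(K)$ via \eqref{K3}. Combining with the $\Lambda\udot N^*_{P^\T/P}=\mathfrak e\_\T(T^m_P)$ in the denominator coming from \eqref{YPLee} and rearranging, the $\surd\!\det T^m_P$ cancels against the corresponding piece of the $\surd\!\det T^*\star$ twist in \eqref{TOhat}; what remains of $\surd\!\det T^*$ combines with $\surd\!\det T^{f*}$ (absorbed into $\widehat\cO^{\;\vir}_{M^\T}$) and $\surd\!\det T^m$ (absorbed into $\surd{\mathfrak e\_\T}(N^{\vir})$), producing precisely the formula claimed.

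The main obstacle is the bookkeeping of the various $\star$-twists: unlike in the Chow version, where all square roots are genuine classes multiplied in, in $K$-theory the symbol $\surd L\star$ is an \emph{operator} on $K_0^\T\loc$ defined via \eqref{stardef}, and one must verify that these operators commute with $i^{\;!}$, with proper pushforward under $\iota$, and with the deformation-to-the-normal-cone specialisation used in the cosection localisation. Each of these commutations is straightforward once one restricts to $M^\T$ where the relevant line bundles split as $L'\otimes\t^w$, but the manipulations must be organised so that every appearance of $\surd L\star$ is eventually reduced to multiplication by a genuine class on a fixed locus. Once this is done the identities $\surd\!\det(A\oplus B)=\surd\!\det A\cdot\surd\!\det B$ (Lemma \ref{sqrts}, extended equivariantly) and the definition of $\surd{\mathfrak e\_\T}(N^{\vir})$ close out the proof.
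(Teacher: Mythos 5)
Your proposal follows the same route as the paper's proof: equivariant embedding $M\into P$, the $K$-theoretic localisation theorem with $\iota_*^{-1}$ computed via \eqref{YPLee}, the $K$-theoretic Vistoli equivalence from \cite[Lemma 2]{YP}, the Jouanolou-trick splitting and deformation to a $T_P$-invariant cone, the Whitney splitting into $E^f\oplus E^m$, and the reduction formulae (\ref{KKperpK}, \ref{K3}) for the moving factor. One small wobble in your final bookkeeping: no factor $\surd\!\det T^m_P$ ever appears — rather the $\surd\!\det K$ coming from \eqref{KKperpK} cancels against that coming from \eqref{K3}, after which $\mathfrak e_\T(T^m_P)$ cancels against $\Lambda\udot N^*_{P^\T/P}$ and $\surd\!\det T^f\cdot\surd\!\det T^m$ cancels against $\surd\!\det\iota^*T$ — but the conclusion is unaffected.
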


\begin{proof}
We follow the proof of Theorem \ref{localChow} closely, highlighting small differences. 
The localisation formula \eqref{loc1} is replaced by its $K$-theoretic analogue
\beqa
\iota_*\ \colon\, K_0^\T(M^\T)\loc\rt\sim K_0^\T(M)\loc
\eeqa
of \cite[Theorem 3.3(a)]{EG:Lo}. By \eqref{YPLee},
$$
i^{\;!}\iota_*\=\Lambda\udot N_{P^\T/P}^*\big|_{M^\T}\otimes\ ,
$$
giving the $K$-theoretic analogue of \eqref{shriek1}, so the same argument there gives the following replacement for \eqref{stand1},
$$
\iota_*\left(\frac{i^{\;!}\;\Ohat}{\Lambda\udot N_{P^\T/P}^*\big|_{M^\T}}\right)\=\Ohat.
$$
Splitting $T_P|_{P^\T}=T^f_P\oplus T^m_P$ into fixed and moving parts, where $T^m_P=N_{P^\T/P}$, and substituting in the definition \eqref{TOhat} of $\Ohat$ now gives
\beq{nowh}
\iota_*\,\frac{i^{\;!}\(\sqrt{0_E^*}\,\big[\cO_{C_{E\udot}}\big]\cdot\sqrt{\det T^*}\;\)}{\mathfrak e\_{\T}(T^m_P)}\=
\iota_*\,\frac{\sqrt{0_{\iota^*E}^*}\,i^{\;!}\big[\cO_{C_{E\udot}}\big]}{\mathfrak e\_{\T}(T^m_P)\cdot\sqrt{\det\iota^*T}},
\eeq
where we have used the following consequence of \eqref{YPLee},
$$
i^!\(\sqrt{\det T^*}\,\star(\ \cdot\ )\)\=\sqrt{\iota^*\det T^*}\,\otimes\;i^!(\ \cdot\ ),
$$
and the $K$-theoretic analogue $i^{\;!}\sqrt{0_E^*}\=\sqrt{0_{\iota^*E}^*}\,i^{\;!}$ \eqref{Kshriek} of \eqref{Eshriek}.

From \cite[Lemma 2]{YP} we get the following $K$-theoretic analogue of Vistoli's rational equivalence \eqref{Vist},
$$
i^{\;!}\big[\cO_{C_{M/P}}\big]\=\big[\cO_{C_{M^\T/P^\T}}\big]\ \in\ K_0^\T(\iota^*C_{M/P})\loc\;.
$$
The $K$-theoretic analogue of \eqref{affn} is the isomorphism $K_0^\T(A)\loc\cong K_0^\T(B)\loc$ of \cite[Theorem 5.4.17]{CG} when $A\to B$ is a $\T$-equivariant affine bundle. Thus the same arguments as before show the analogue of \eqref{vis3}, namely $i^{\;!}\big[\cO_{C_{E\udot}}\big]$ is the structure sheaf of
$$
\frac{C_{M^\T/P^\T}\oplus\iota^*T}{\iota^*T_P}\=
\frac{C_{M^\T/P^\T}\oplus T^f}{T_P^f}\oplus\frac{T^m}{T^m_P}
$$
inside $E^f\oplus E^m$.
Therefore \eqref{nowh} becomes
\begin{eqnarray}
\Ohat &=& \iota_*\,\frac{\sqrt{0_{E^f}^*}\,\big[\cO_{C_{E_f\udot}}\big]\sqrt{0_{E^m}^*}\,\big[\cO_{T^m/T^m_P}\big]}{\mathfrak e\_{\T}\(T^m_P\)\cdot\sqrt{\det\iota^*T}} \nonumber \\
&=& \iota_*\,\frac{\widehat\cO^{\;\vir}_{\!M^\T}\sqrt{\det T^f}\e\_\T(E^m)\frac{\mathfrak e\_\T(T^m_P)}{\mathfrak e\_\T(T^m)}}{\mathfrak e\_{\T}\(T^m_P\)\cdot\sqrt{\det\iota^*T}} \nonumber \\
&=& \iota_*\,\frac{\widehat\cO^{\;\vir}_{\!M^\T}\e\_\T(E^m)}{\mathfrak e\_{\T}(T^m)\cdot\sqrt{\det T^m}} \nonumber \\
&=& \iota_*\,\frac{\widehat\cO^{\;\vir}_{\!M^\T}}{\e\_\T(N^{\vir})}\,,\label{KR}
\end{eqnarray}
where the second equality follows from applying \eqref{KKperpK} and \eqref{K3} to the isotropic  $K:=T^m/T^m_P\subset E^m$.
\end{proof}

\begin{Rmk} This allows us to define invariants for $X$ --- even though $X$ and $M$ were only assumed quasi-projective --- so long as $M$ parameterises compactly supported sheaves and $M^\T$ is projective. Over $[M]^{\vir}$ we integrate lifts of insertions to the equivariant cohomology of $M$, yielding invariants in $\Q[t,t^{-1}]$. Or in $K$-theory we lift the natural invariant $\chi(\Ohat)$ to its equivariant $\T$-character in $\Q(\t^{1/2})=K^0_\T(\;$point$)\loc\;$. Unless $M$ is projective the natural numerical specialisation of these invariants (to $t=0$ in the cohomological case and to $\t^{1/2}=1$ in $K$-theory) may not be defined. However the localisation theorems show the results are invariant under deformations through other $\T$-varieties satisfying the same conditions.

In \cite{KR} Kool and Rennemo give a simple formula (in particular explicitly determining the sign) for the contribution \eqref{KR} to $\Ohat$ of isolated reduced points of $M^\T$ with virtual dimension zero, i.e. when $\vd-\,r=0$ in \eqref{fixvc}.
\end{Rmk}

Finally we note that over $\Q$ the two localisation formulae are equivalent under the Riemann-Roch map of Section \ref{RRsec}. Firstly one can show that for any $SO(2n,\C)$ bundle $F$,
\beq{str}
\ch(\e(F))\=\sqrt\td(F)^{-1}\sqrt e\;(F)
\eeq
by working on the cover \eqref{cover} with the maximal isotropic $\Lambda_\rho$. Applying \eqref{FF*} to $\Lambda_\rho$ quickly reduces \eqref{str} to the identity $c_n(\Lambda_\rho)=\ch(\Lambda\udot\Lambda^*_\rho)\,\td(\Lambda_\rho)$. An easy calculation using \eqref{str} then gives
$$
\ch\(\sqrt{\mathfrak e\_\T}(N^{\vir})\)\=\frac{\sqrt{e\_\T}(N^{\vir})}{\sqrt{\td}(N^{\vir})}\,.
$$
By the module property \cite[Theorem 18.2(2)]{F} of $\tau\_{M^\T}$ this shows
$$
\tau\_{M^\T} \!\left( \frac{\widehat{\cO}^{\vir}_{M^\T} }{\sqrt{\mathfrak{e}_\T }  (N^{\vir}) }  \right) \= \sqrt{\td }(N^{\vir}) \cap \frac{\tau\_{M^\T}\!\left( \widehat{\cO}^{\vir}_{M^\T} \right)}{\sqrt{e_\T }  (N^{\vir})}\,.
$$
Applying Theorem \ref{virRR} to $M^\T$, with virtual tangent bundle $E_\bullet^f$, this gives
\begin{eqnarray} \nonumber
\tau\_{M^\T} \!\left( \frac{\widehat{\cO}^{\vir}_{M^\T} }{\sqrt{\mathfrak{e}_\T }  (N^{\vir}) }  \right) 
&=&  \sqrt{\td }\(E_\bullet^m\) \sqrt{\td }\(E_\bullet^f\)  \cap \frac{\big[M^\T\big]^{\vir} }{\sqrt{e_\T }  (N^{\vir})} \\
&=&  \sqrt{\td }(E_\bullet)  \cap \frac{\big[M^\T\big]^{\vir} }{\sqrt{e_\T }  (N^{\vir})}\,. \label{RRT}
\end{eqnarray}

\section{Local Calabi-Yau 4-folds}\label{KY} We thank Yukinobu Toda for suggesting we study the example of local Calabi-Yau 4-folds $X=K_Y$, i.e. canonical bundles of 3-folds $Y$. These have already been treated in \cite{CL, DSY} by conjecturing the form of the Borisov-Joyce virtual cycle in this case. We show briefly how to prove these conjectures for our virtual cycle. When combined with the sequel \cite{OT2} this makes many of the results in those papers rigorous. \medskip

The correct abstract setting is to start with a projective scheme $M$ with a perfect obstruction theory $\At\colon F\udot\to\LL_M$. Writing $F\udot$ as a 2-term complex of vector bundles $F^{-1}\to F^0$ we get the Behrend-Fantechi cone $j\colon C_{F\udot}\into F_1$ \eqref{recipe}, whose intersection with the zero section $0_{F_1}$,
\beq{BFvirt}
0_{F_1}^{\;!}\big[C_{F\udot}\big]\=[M]^{\vir},
\eeq
defines the usual Behrend-Fantechi virtual cycle of $M$. Intersecting in $K$-theory instead, then twisting by a square root\footnote{Nekrasov-Okounkov show such a square root exists \cite[Section 6.2]{NO} in some situations. By Lemma \ref{sqrts} any choice equals $\surd K_{\vir}$ once we invert 2 in our coefficients.} of $K_{\vir}:=\det F\udot$ gives the twisted virtual structure sheaf
\beq{vss3}
\Ohat\=0_{F_1}^*\big[\cO_{C_{F\udot}}\big]\otimes K_{\vir}^{\frac12}.
\eeq\smallskip

The \emph{$(-2)$-shifted cotangent bundle} of $(M,F\udot)$ is the same scheme $M$ with the different, non-perfect, obstruction theory
\beq{sum}
E\udot\ :=\ F\udot\oplus F_\bullet[2]\rt{(\At,\;0)}\LL_M.
\eeq
Here $E\udot=\{F_0\to F_1\oplus F^{-1}\to F^0\}$ is a self-dual 3-term complex of vector bundles. By \eqref{theta=Q} an orientation on $E\udot$ is the same as one on
$$
F_0\oplus F^0\oplus(F_1\oplus F^{-1})
$$
which in turn admits the canonical orientation\footnote{It would be nice to know the relationship between this orientation and those constructed for noncompact Calabi-Yau 4-folds in \cite{Bo1}.} $o\_{F_0\oplus F_1}$ of \eqref{oTis}. By the construction of Proposition \ref{por} this induces the orientation $o\_{F_1}$ on 
$F_1\oplus F^{-1}$ with respect to which $F_1\subset F_1\oplus F^{-1}$ is a positive maximal isotropic.

Therefore $M$ inherits a new virtual cycle from Definition \ref{vdef}.  We recall its construction. The procedure of Section \ref{vcsec} uses the truncation $\tau E\udot=\{F^{-1}\oplus F_1\to F^0\}$. Since the arrow is zero on the second summand $F_1$, the prescription \eqref{recipe} gives the cone
$$
\xymatrix{C_{F\udot}\ \,\ar@{^(->}[r]<-.15ex>^-{(j,0)}& \ F_1\oplus F^{-1}\= E_1}
$$
lying in the maximal isotropic $F_1\subset E_1$. Therefore the virtual cycle of Definition \ref{vdef} is
\beq{Chowa}
\sqrt{0^{\;!}_{F_1\oplus F^{-1}}}\,\big[C_{F\udot}\big]\ \stackrel{\eqref{Kevin34}}=\ 0_{F_1}^{\;!}\big[C_{F\udot}\big]\=[M]^{\vir}.
\eeq
Similarly we find the virtual structure sheaf \eqref{Ohatdef} is
\begin{align}\nonumber
\sqrt{0^*_{F_1\oplus F^{-1}}}\,\big[\cO_{C_{F\udot}}\big]\cdot\sqrt{\det F^0}\ \stackrel{\eqref{Klam}}=\ 0_{F_1}^*\big[\cO_{C_{F\udot}}\big]\cdot\sqrt{\det(F^{-1})^*}\cdot\sqrt{\det F^0}& \\ \label{vss4}
=\ 0_{F_1}^*\big[\cO_{C_{F\udot}}\big]\cdot\sqrt{\det F\udot}\=0_{F_1}^*\big[\cO_{C_{F\udot}}\big]\cdot\sqrt{K_{\vir}}\,.&
\end{align}
That is, the virtual cycle and virtual structure sheaf of the $(-2)$-shifted cotangent bundle of $(M, F\udot)$ recover the original virtual cycle \eqref{BFvirt} and the usual Nekrasov-Okounkov-twisted virtual structure sheaf \eqref{vss3}.
\medskip

This abstract situation arises in nature by letting $M=M_Y$ be a projective moduli space of stable sheaves of fixed Chern character on a smooth projective 3-fold $Y$ with $\deg K_Y<0$. This condition ensures the standard obstruction theory
$$
F\udot\=\tau^{\le0}\(R\hom_{\pi\_Y}(\cE,\cE)^\vee[-1]\)\rt{\mathrm{At}_Y}\LL_{M_Y}
$$
of $M_Y$, based on $\Ext^*_Y(E,E)$ at $E\in M_Y$, is perfect because $\Ext^3_Y(E,E)=\Hom_Y(E,E\otimes K_Y)^*=0$. (It is also possible to handle the case that $Y$ is Calabi-Yau by using the trace-free obstruction theory $\Ext^*_Y(E,E)\_0$.) Here At$_Y$ is the Atiyah class of the (twisted) universal sheaf $\cE$ over $\pi\_Y\colon Y\times M_Y\to M_Y$ as in \cite[Equation 4.2]{HT}.

Now let $X=K_Y$ with zero section $i\colon Y\into X$. Let $M_X$ denote the moduli space of compactly supported sheaves on $X=K_Y$ with the same Chern character as $i_*\;E$ for $E\in M_Y$. Since $Li^*i_*=\id\oplus K_Y^{-1}[1]$, adjunction shows that
$$
\Ext^i_X(i_*\;E,i_*\;E)\= \Ext^i_Y(E,E)\,\oplus\,\Ext^{i-1}_Y(E,E\otimes K_Y).
$$
Done in a family over moduli space this shows that $i_*\colon M_Y\to M_X$ is an isomorphism of schemes,\footnote{If instead we consider stable pairs on $X$, these are not pushed forward from $Y$, so the analysis is different. The results are much the same for irreducible curve classes, however \cite[Proposition 3.3]{CMT2}.} and that the virtual cotangent bundle of $M_X$ is
$$
E\udot\ :=\ \tau^{[-2,0]}\(R\hom_{\pi\_X}(i_*\;\cE,i_*\;\cE)^\vee[-1]\)\=F\udot\oplus F_\bullet[2].
$$
Moreover the standard obstruction theory given by the Atiyah class map \cite[Equation 4.2]{HT},
$$
E\udot\=F\udot\oplus F_\bullet[2]\rt{\mathrm{At}_X}\LL_{M_X},
$$
is shown in \cite[Proposition 3.2]{DSY} to be precisely $(\mathrm{At}_Y,0)$. Thus we are in the situation of \eqref{sum} with $M_X$ being the $(-2)$-shifted cotangent bundle of $M_Y$. We conclude from (\ref{Chowa},\,\ref{vss4}) that our virtual cycle \eqref{virMdef} for $M_X$ is the same as the 3-fold virtual cycle \cite{Th:Casson} for $M_Y$, and our virtual structure sheaf \eqref{Ohatdef} is just the Nekrasov-Okounkov-twisted 3-fold virtual structure sheaf:
$$
[M_X]^{\vir}\=[M_Y]^{\vir}, \qquad \widehat\cO^{\;\vir}_{\!M_X}\=\widehat\cO^{\;\vir}_{\!M_Y}.
$$
So for local Calabi-Yau 4-folds $X=K_Y$ the 4-fold theory reduces to the now classical 3-fold theory of $Y$.

\appendix
\section{Gauge theoretic motivation}\label{gauge}
On a complex manifold $X$, consider holomorphic bundles which are topologically isomorphic to a fixed $C^\infty$ bundle $F$. From a gauge theory point of view we study them as $\overline\partial$ operators on $F$ satisfying the integrability condition 
\begin{equation}\label{NN} 
\overline\partial\;^2\ =\ 0\ \in\ \Omega^{0,2}(\mathrm{End}\,F), 
\end{equation} 
all modulo the action of the gauge group of $C^\infty$ automorphisms of the bundle. In dimensions greater than three this set-up has ``Fredholm index" $-\infty$: the problem is over-determined. We could try to throw away infinitely many of the equations \eqref{NN} by noting, by the Bianchi identity, that they take values in the kernel of $\overline\partial\colon\Omega^{0,2}(\mathrm{End}\,F)\to\Omega^{0,3}(\mathrm{End}\,F)$, 
but this need not be a vector bundle --- its rank can jump as we vary the $\overline\partial$-operator. 

Equivalently, from an algebro-geometric point of view, the deformation and obstruction spaces $H^i(\mathrm{End}\,F)=\mathrm{Ext}^i(F,F),\ i=1,2$, do not fit together to form a ``perfect obstruction theory" over the moduli space of holomorphic bundles since the higher obstruction groups $H^{\ge3}(\mathrm{End}\,F)=\mathrm{Ext}^{\ge3}(F,F)$ need not vanish. 

Let $(X,\Omega)$ now be a Calabi-Yau 4-fold with a holomorphic $(4,0)$-form $\Omega$. Then $\Omega^{0,2}(\mathrm{End}\,F)$ carries a gauge-invariant complex quadratic form
\beq{cxquad}
q(\ \cdot\ ,\ \cdot\ )\ =\ \int_X\tr(\ \cdot\ \wedge\ \cdot\ )\wedge\Omega.
\eeq
There is an obvious topological obstruction to the existence of a holomorphic structure on $F$,
\beq{p1obs}
\int_Xp_1(F)\wedge[\Omega]\=0
\eeq
since the left hand side is proportional to the integral of $\tr\overline\partial\;^2\wedge\Omega$. So we are only interested in $C^\infty$ bundles $F$ satisfying \eqref{p1obs}. By construction this condition is equivalent to $\overline\partial\;^2\in\Omega^{0,2}(\mathrm{End}\,F)$ being \emph{isotropic} with respect to $q$. So we are in the model situation described in \eqref{model}, with an ambient space (of $\overline\partial$-operators modulo gauge), a bundle with quadratic form over it, and an isotropic section
\beq{dbarsec}
s\=\overline\partial\;^2, \quad q(s,s)=0,
\eeq
cutting out the moduli space of integrable $\overline\partial$-operators or, equivalently, holomorphic bundles\footnote{One can also handle sheaves by spherically twisting them about $\cO_X(-N),\ N\gg0,$ as in \cite[Section 8]{JS} to make them into bundles.} on $X$.

Now give $X$ its Ricci-flat metric, and endow $F$ with a hermitian metric. Then
there is a complex anti-linear Hodge star operator 
\beq{+-split}
\widetilde*\ :=\ \overline{*(\ \cdot\ \wedge\Omega)}
\eeq
on $\Omega^{0,2}(\mathrm{End}\,F)$, where $*$ is the usual Hodge star and $\overline{\,\cdot\,}$ denotes complex conjugation on forms tensored with hermitian transpose in $\mathrm{End}\,F$. Since $\widetilde*^2=\id$, it splits $\Omega^{0,2}$ into real (not complex) $\pm1$ eigenspaces
$$
\Omega^{0,2}(\mathrm{End}\,F)\=\Omega^{0,+}(\mathrm{End}\,F)\,\oplus\,\Omega^{0,-}(\mathrm{End}\,F).
$$
These are maximal positive (respectively negative) definite real subspaces for the quadratic form $q$, which restricts to the $L^2$ hermitian metric on $\Omega^{0,+}$ and its negative on $\Omega^{0,-}$. Splitting the section \eqref{dbarsec} accordingly into $s=s^+\oplus s^-$, the isotropic condition \eqref{dbarsec} becomes the identity
$$
\|s^+\|^2-\|s^-\|^2\ =\ 0
$$
on their $L^2$-norms. Therefore $\|s\|^2=\|s^+\|^2+\|s^-\|^2$ equals $2\|s^+\|^2$ so
$$
s\,=\,0\ \iff\ s^+\,=\,0,
$$
just as in \eqref{s+-}. That is, we are led to consider ``half" of the equations \eqref{NN} by projecting them to $\Omega^{0,+}(\mathrm{End}\,F)$, 
\begin{equation}\label{NN+} 
\big(\;\overline\partial\;^2\;\big)^{\!+}\ =\ 0\ \in\ \Omega^{0,+}(\mathrm{End}\,F).
\end{equation} 
Then we have recovered the standard result (see \cite[page 28]{RTthesis} for instance) that solutions are the same as solutions of the original equations \eqref{NN},
\begin{equation}\label{2eqns}
\overline\partial\;^2\=0\ \ \Longleftrightarrow\ \ \big(\;\overline\partial\;^2\;\big)^{\!+}\=0
\eeq
when the topological condition \eqref{p1obs} holds.

Unlike the overdetermined equations \eqref{NN}, the equations \eqref{NN+} are elliptic modulo gauge, forming part of the $SU(4)$ instanton equations \cite{DT}.
Therefore they have the advantage of endowing $ M$ with a Kuranishi structure and so a virtual cycle, as shown by Borisov-Joyce. Since the equations \eqref{NN+} are real rather than holomorphic they have to use real derived geometry \cite{BJ}. 
(In reality they use finite dimensional local models \eqref{model} projected to a real subbundle $E_{\RR}$ of $E$ in place of the infinite dimensional gauge theory approach.)

\subsection*{Deformation complex} Linearising the the action of the gauge group and the Newlander-Nirenberg equations \eqref{NN} at a fixed integrable $\dbar$-operator gives the first line of the elliptic complex
\begin{multline*}
0\To\Omega^0(\End E)\rt\dbar\Omega^{0,1}(\End E)\rt\dbar\Omega^{0,2}(\End E) \\
\rt\dbar\Omega^{0,3}(\End E)\rt\dbar\Omega^{0,4}(\End E)\To0.
\end{multline*}
The existence of the second line is what makes the equations overdetermined. But the whole complex is self-dual, with sections of $\Omega^{0,i}$ being dual to sections of $\Omega^{0,4-i}$ by integrating against the holomorphic $(4,0)$-form $\Omega$.\footnote{This gives both the quadratic form \eqref{cxquad} and the Serre duality $\Ext^i(E,E)^*\cong\Ext^{4-i}(E,E)$ so crucial to this paper.} Via \eqref{+-split} we can split the complex into two halves, both of which are also elliptic. The first is
$$
0\To\Omega^0(\End E)\rt\dbar\Omega^{0,1}(\End E)\rt{\dbar^+}\Omega^{0,+}(\End E)\To0.
$$
This is the deformation complex of the equations \eqref{NN+}. Its exactness is what  makes Borisov-Joyce theory work. It is the complex analogue of Donaldson theory on a real 4-manifold, where the deformation complex of the asd equations is half of the elliptic complex governing flat connections.\medskip

In this paper we halve the equations in a different way, effectively by intersecting the graph of the isotropic section $\dbar^2$ of $\Omega^{0,2}(\End E)$ with a choice of maximal isotropic subbundle of $\Omega^{0,2}(\End E)$ instead of either of the maximal real definite subbundles $\Omega^{0,\pm}(\End E)$. Since this choice breaks the $SU(4)$ symmetry of the problem it makes it less likely to have gauge-theoretic motivation in general, although Nikita Nekrasov pointed out that one can think of working on the cover $\wt M$ of \eqref{cover} as averaging over all maximal isotropics --- which \emph{is} $SU(4)$ invariant --- and is reminiscent of the twistor approach to solving gauge theory equations (replacing the twistor space of complex structures by the orthogonal Grassmannian of maximal isotropics).

\bigskip \noindent
{\tt{j.oh@imperial.ac.uk\\ richard.thomas@imperial.ac.uk}}\medskip

\noindent Department of Mathematics\\
\noindent Imperial College\\
\noindent London SW7 2AZ \\
\noindent United Kingdom

\end{document}